\RequirePackage{cmap} 
\documentclass[12pt,reqno]{amsart}
\usepackage[utf8]{inputenc}
\usepackage[T2A]{fontenc}
\usepackage{amsmath,amssymb,amsthm,blkarray,graphicx,mathtools,footmisc,bbm}
\usepackage[a4paper,hmargin=2.5cm,vmargin=2.5cm]{geometry}
\usepackage[english]{babel}
\usepackage[mathscr]{euscript}
\usepackage{tikz}
\usepackage{tikz-cd}
\usetikzlibrary{braids}
\usepackage{leftindex}
\usetikzlibrary{arrows.meta,decorations.markings}
\usepackage{blkarray}
\usepackage{listings}

\definecolor{darkspringgreen}{rgb}{0.09, 0.45, 0.27}

\usepackage[pdftex,bookmarks=false,colorlinks=black,citecolor=darkspringgreen,debug=true,
  naturalnames=true,pdfnewwindow=true]{hyperref}

\newtheorem{thm}{Theorem}[section]

\newtheorem{cor}[thm]{Corollary}

\newtheorem{lem}[thm]{Lemma}
\newtheorem{prop}[thm]{Proposition}
\newtheorem{conj}[thm]{Conjecture}

\theoremstyle{definition}
\newtheorem{defn}[thm]{Definition}
\newtheorem{ex}[thm]{Example}

\theoremstyle{remark}
\newtheorem{rem}[thm]{Remark}

\newcommand{\thmref}[1]{Theorem~\ref{#1}}

\numberwithin{equation}{section}

\newcommand{\nc}{\newcommand}
\nc{\on}{\operatorname} \nc{\wh}{\widehat}
\nc\ol{\overline} \nc\ul{\underline} \nc\wt{\widetilde}
\nc{\Hom}{\operatorname{Hom}} \nc{\reg}{\operatorname{reg}} \nc{\diag}{\operatorname{diag}}
\nc{\red}[1]{{\color{red}#1}}
\nc{\blue}[1]{{\color{blue}#1}}

\newcommand{\CC}{\mathbb{C}}

\newcommand{\GL}[1]{GL_{#1}(\mathbb{F}_q)}

\DeclareMathOperator{\im}{im}
\DeclareMathOperator{\tr}{Tr}
\DeclareMathOperator{\End}{End}

\DeclareMathOperator{\R}{\mathcal{R}}

\tikzset{middlearrow/.style={
        decoration={markings,
            mark= at position 0.55 with {\arrow{#1}},
        },
        postaction={decorate}
    }
}

\newcommand{\bbone}{\text{\usefont{U}{bbold}{m}{n}1}}
\MakeRobust{\bbone}

\title{Radon transform for $GL_n(\mathbb{F}_q)$}
\author{Ivan Motorin}
\address{Department of Mathematics, Massachusetts Institute of Technology, Cambridge, MA 02139-4307, USA}
\email{ivanm597@mit.edu}

\author{Kai Yamashita}
\address{PRIMES-USA}
\email{kaiyamashita08@gmail.com}

\begin{document}

\begin{abstract}
    In this paper, we study the Radon transform associated with unipotent radical subgroups of $GL_n(\mathbb{F}_q)$. We analyze properties of the Radon transform with a specific emphasis on its eigenvalues. 
    We provide a description of its eigenvalues for the cases of $\GL{2}$ and $\GL{3}$.
\end{abstract}
\maketitle

\tableofcontents

\section{Introduction}
Let $GL_n (\mathbb{F}_q)$ denote the general linear group of a vector space $\mathbb{F}_q^n$ of dimension $n$ over the finite field $\mathbb{F}_q$. Equivalently, these are all invertible $n\times n$ matrices with coefficients in $\mathbb{F}_q$.\\ 

We denote by $U_\lambda$ the unipotent radical of the parabolic subgroup corresponding to the ordered partition $\lambda$ of $n$ in $GL_n (\mathbb{F}_q)$ (as in Example \ref{PLU_examples}). When viewed as a matrix subgroup, these are all the upper block triangular matrices with identities on the diagonal, where the block sizes are determined by $\lambda$. Also, by $M_{\lambda}$ we denote the corresponding Levi subgroup and by $U_{\lambda}^{op}$ we denote the matrix subgroup transposed to the $U_{\lambda}$.\\


In general, Radon transform for a finite group $G$ is a certain $G$-equivariant map of $G$-modules associated with two finite $G$-sets $X$ and $Y$ (see \cite{O}). Specifically, if we have a $G$-equivariant incidence relation $\sim$ between $X$ and $Y$, then we may construct a map of $G$-modules
\[
R: \CC[X] \rightarrow \CC[Y],\qquad R(x)=\sum_{y: x\sim y} y.
\]
Similarly, we can define an adjoint map
\[
R': \CC[Y] \rightarrow \CC[X], \qquad R'(y)=\sum_{x:x\sim y}x.
\]
Then the map $R'\circ R$ is a $G$-module endomorphism of $\CC[X]$ to itself.\\

For the context of our paper the Radon transform is a natural map between function spaces of $\CC$-valued functions on the cosets $\GL{n} / U_\lambda$ and $\GL{n} / U_\lambda^{op}$ which intertwines the action of $\GL{n} \times M_\lambda$ (e.g. see \cite{S}) defined by

\begin{equation*}
    f \mapsto R(f) := \left \{ x \mapsto \sum_{\overline{u}\in U_\lambda^{op}} f(x \overline{u}) \right \}
\end{equation*}

and

\begin{equation*}
    f \mapsto R'(f) := \left \{ x \mapsto \sum_{u \in U_\lambda} f(xu) \right \}.
\end{equation*}

One can consider the associated operators $R,R'$ on the subspaces $\CC [ \GL{n} / U_\lambda]$ and $\CC [ \GL{n} / U_\lambda^{op} ]$ of right $U_\lambda$ invariants and $U_\lambda^{op}$ invariants respectively of the group algebra $\CC[\GL{n}]$. 
We will study the operator $\R=R'\circ R$, which is a map $\CC [ \GL{n} / U_\lambda] \to \CC [ \GL{n} / U_\lambda ]$. Specifically, we want to find the eigenvalues of this operator.\\


The solution to the eigenvalue problem is known for a particular case of the standard Borel subgroup of upper triangular matrices in $\GL{n}$. In \cite{L} G. Lusztig computed the action of the $T_{w_0}$ operator on the Hecke algebra explicitly in terms of the canonical basis. It is possible to show that the Radon transform corresponding to $\lambda_n:=1^{(n)}$ acting on the corresponding Iwahori-Hecke algebra is proportional to $T_{w_0}^2$. It follows that all the eigenvalues of $\R$ will be of the form $q^{2i}$ for some nonnegative integers $i$.\\

The main results of our paper are contained in Theorems \ref{eigenvalues_box}, \ref{eigenvalues_gl2}, \ref{eigenvalues_gl3_Yok}, \ref{eigenvalues_GGH_gl3}, Propositions \ref{Ariki_Koike_Yok}, \ref{max_parabolic} and Corollary \ref{1_1_1} which allow us to describe eigenvalues of all possible variants of Radon transform associated with the unipotent radical of different types in $\GL{n}$ for the cases when $n=2$ and $3$, and we also prove miscellaneous results regarding the operator $\R$ in Sections 3 and 7.\\




The paper is structured as follows. In Section 2 we go over some basic facts from representation theory of finite groups as well as a few structural results concerning general linear groups. We also define the Radon transform in the same section. In Section 3 we firstly prove that $\R$ has real eigenvalues the order of which is divisible by $q-1$. Secondly, we prove that the eigenvalue problem can be reduced to the case of $\lambda=(k,n-k)$. In Section 4 we prove that the eigenvalues of the only possible Radon transform for $n=2$ are equal to $1,q$ and $q^2$, and we find the multiplicities of those eigenvalues in the space of right $U_{(1,1)}$-invariant elements of the group algebra. In Section 5, we give a brief overview of Yokonuma-Hecke algebras and Gelfand-Graev Hecke algebras. In Section 6, we describe the eigenvalues for the Radon transforms of type $\lambda=(1,2),(2,1)$ or $(1,1,1)$ (the case of $n=3$) by restricting $\R$ to the Yokonuma-Hecke algebra or a certain class of Gelfand-Graev Hecke algebras as well as by utilizing our result from Section 4. Additionally, we provide a description of the eigenvalues of the Radon transform in the Yokonuma-Hecke algebra in the case $\lambda=(n-1,1)$. The Section 7 is devoted to description of the basis for the Hecke algebra $\mathcal{H}(\GL{n},U_{(k,n-k)})$ and some miscellaneous computations related to the Radon transform in that case. In the same section we also formulate a Conjecture \ref{Conjecture} which states that all the possible eigenvalues of the Radon transform are powers of $q$. Furthermore, we prove that in the parabolic case $\lambda=(k,n-k)$ the eigenvalues are also given by integer powers of $q$.\\

\subsection{Acknowledgments} This paper would not be possible without the support of MIT PRIMES-USA program and its organizers Prof.~Pavel Etingof, Dr.~Slava Gerovitch, and Dr.~Tanya Khovanova. We would like to thank Prof.~Roman Bezrukavnikov and  Prof.~Pavel Etingof for many useful discussions. The idea for this project was proposed by Prof.~Roman Bezrukavnikov. The proof of the Proposition \ref{max_parabolic} was suggested to us by ChatGPT 5.4 Pro.  


\section{Preliminaries}
\subsection{Classical Results}

\begin{defn}
    A group algebra $k[G]$, with $k$ a field and a finite group $G$, is the $k$-algebra generated by elements of the group $G$. The multiplication map is given by the following rule:
    \begin{equation*}
        m:g\cdot h \mapsto gh, \quad \forall g,h\in G.
    \end{equation*}
    And the identity element $e\in G$ gives us the unit of this algebra.
\end{defn} 

\begin{defn}
    A left $A$-module for a $k$-algebra $A$ is a $k$-vector space $V$ with a bilinear operation $A \times V \to V$ which satisfies $a(bv) = (ab)v$ for all $a, b \in A,v \in V$ and $1_A\cdot v=v,$ for all $v\in V$.\\
    
    A right $A$-module is defined similarly, and a $(A, B)$-bimodule has an operation of $A$ on the left and $B$ on the right such that both actions commute with each other.
\end{defn}

Left and right $A$-modules are specific cases of bimodules; namely, a left $A$-module is an $(A, k)$-bimodule, and a right $A$-module is the same as a $(k, A)$-bimodule.\\

The group algebra $k[G]$ has a natural structure of a left $k[G \times G]$-module given by
\begin{equation*}
    (g,h)\circ a := gah^{-1} \quad \forall g,h,a \in G.
\end{equation*}
Equivalently, it has a structure of a $(k[G],k[G])-$bimodule given by
\[
    (g,h)\circ a := gah \quad \forall g,h,a \in G.
\]

\begin{defn}\label{convolution}
    A space of functions $Fun(G,k)$ on a finite group is a commutative $k$-algebra of maps
    \begin{equation*}
        f: G \rightarrow k
    \end{equation*}
    with point-wise addition and multiplication. It admits a structure of a left $k[G\times G]$-module with the following action:
    \begin{equation*}
        (g,h)\circ f(a):= f(g^{-1} a h), \quad \forall g,h,a\in G.
    \end{equation*}
    Additionally, the space $Fun(G,k)$ admits another multiplication structure called the convolution product
    \begin{equation*}
        m': f*g(a) :=\sum_{h\in G} f(h)g(h^{-1}a)=\sum_{h\in G} f(ah)g(h^{-1}).
    \end{equation*}
    The unit for $m'$ is given by the following function
    \begin{equation*}
        \delta_e(a):=\begin{cases}
            1, & a=e,\\
            0, & \text{otherwise.}
        \end{cases}
    \end{equation*}
\end{defn}

In this paper we will work with function spaces, so we will need the following proposition.\\

The following Proposition relates convolution product on the space of functions on $G$ and the group algebra $k[G]$ (see \cite[Section 2, Lemma 1]{DB}).

\begin{prop}\label{Functions-group_algebra_correspondence}
    Let us denote by $\phi$ the following map
    \begin{equation}\label{fun_iso}
        \phi: Fun(G,k) \rightarrow k[G], \quad f \mapsto \sum_{g\in G} f(g) g.
    \end{equation}
    Then $\phi$ is an isomorphism of algebras $(Fun(G,k),m')$ and $(k[G],m)$. Furthermore, the algebra isomorphism $\phi$ is an isomorphism of left $k[G \times G]$-modules.
\end{prop}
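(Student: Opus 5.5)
The plan is to verify the three claims separately: $\phi$ is a linear bijection, it is multiplicative and unital, and it intertwines the two $k[G\times G]$-actions. Bijectivity is immediate. The functions $\delta_g$ (the indicator of $g$) form a basis of $Fun(G,k)$, and $\phi(\delta_g)=g$. So $\phi$ sends a basis to the basis $G$ of $k[G]$, and linearity is clear from the formula \eqref{fun_iso}. In particular $\phi(\delta_e)=e$, so the unit of $(Fun(G,k),m')$ goes to the unit of $(k[G],m)$.

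For multiplicativity I will compare coefficients. Expand
\[
\phi(f)\phi(f')=\sum_{h,b\in G} f(h)f'(b)\,hb .
\]
Substituting $a=hb$, i.e.\ $b=h^{-1}a$, the coefficient of $a$ becomes $\sum_{h} f(h)f'(h^{-1}a)=(f*f')(a)$. Hence $\phi(f*f')=\phi(f)\phi(f')$. Alternatively, it suffices to check $\delta_g*\delta_{g'}=\delta_{gg'}$ on basis elements, which follows directly from Definition \ref{convolution}.

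For equivariance the one real point is to match conventions, and I expect this to be the only place needing care. On $k[G]$ the action is $(g,h)\circ a=gah^{-1}$, while on functions it is $((g,h)\circ f)(a)=f(g^{-1}ah)$. I will compute
\[
\phi((g,h)\circ f)=\sum_a f(g^{-1}ah)\,a ,
\]
and substitute $b=g^{-1}ah$, so $a=gbh^{-1}$. This gives
\[
\sum_b f(b)\,gbh^{-1}=g\,\phi(f)\,h^{-1}=(g,h)\circ\phi(f).
\]
Since both actions are extended linearly to $k[G\times G]$, this finishes the proof.
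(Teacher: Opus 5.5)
Your proof is correct and is the standard direct verification: $\phi(\delta_g)=g$ gives bijectivity and unitality, the coefficient of $a$ in $\phi(f)\phi(f')$ matches $(f*f')(a)$, and the substitution $a=gbh^{-1}$ matches the action $f(g^{-1}ah)$ with $gah^{-1}$. The paper gives no argument of its own and simply cites \cite[Section 2, Lemma 1]{DB}, and your computation is the routine check that citation stands for.
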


\begin{defn}
    A representation of an associative algebra A (also called a left $A$-module) is a vector space $V$ equipped with a homomorphism $\rho$ : $A \to \End(V)$, i.e., a linear map preserving the multiplication and unit (see \cite{E}). 
\end{defn}

\begin{defn}
    A \textbf{counit} $\epsilon$ for a group algebra $k[G]$ is a linear map
    \[
    \epsilon: k[G] \rightarrow k, \quad \epsilon(g)=1, \ \forall g\in G.
    \]
\end{defn}

\begin{prop}\label{counit}
    For any two elements $a,b$ from $k[G]$ we have
    \[
    \epsilon(a\cdot b)=\epsilon(a)\epsilon(b).
    \]
\end{prop}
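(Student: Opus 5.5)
The plan is to reduce to basis elements and then extend by bilinearity. Both sides of the identity $\epsilon(a\cdot b)=\epsilon(a)\epsilon(b)$ are bilinear in the pair $(a,b)$. The left side is bilinear because the multiplication $m$ on $k[G]$ is bilinear and $\epsilon$ is linear. The right side is bilinear as a product of two linear functionals. It therefore suffices to check the identity when $a$ and $b$ run over a basis of $k[G]$, and the elements of $G$ form such a basis.

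For $g,h\in G$ we have $g\cdot h=gh\in G$ by the definition of the multiplication in $k[G]$. By the definition of the counit, $\epsilon(gh)=1=1\cdot 1=\epsilon(g)\epsilon(h)$. This settles the basis case.

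For the general case, write $a=\sum_{g\in G}a_g g$ and $b=\sum_{h\in G}b_h h$. Expanding,
\[
\epsilon(a\cdot b)=\sum_{g,h\in G}a_gb_h\,\epsilon(gh)=\sum_{g,h\in G}a_gb_h=\Big(\sum_{g}a_g\Big)\Big(\sum_h b_h\Big)=\epsilon(a)\epsilon(b).
\]
This gives the claim, and by the same computation $\epsilon$ also sends the unit $e$ to $1$.

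There is no real obstacle here. The only point needing care is to justify extending from the basis $G$ to all of $k[G]$ by bilinearity, which is exactly what the expansion above does.
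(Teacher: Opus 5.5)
Your proof is correct and follows the paper's argument: $\epsilon$ computes the sum of coefficients in the basis $G$, and expanding $a\cdot b$ in that basis shows that this coefficient sum is multiplicative. Your version simply writes out the bilinear expansion that the paper dismisses with ``clearly.''
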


\begin{proof}
    The evaluation of the linear function $\epsilon$ on an element $a$ corresponds to the sum of coefficients of $a$ with respect to the standard basis $g\in G$ of the group algebra. Clearly, multiplication in $k[G]$ preserves the sum of coefficients.
\end{proof}

\begin{defn}
    A \textbf{representation} of a finite group $G$ is a $k[G]$-module $V$. Such a representation is called \textbf{irreducible} iff $V$ does not contain a proper nontrivial $k[G]$-submodule $W\subsetneq V$. A \textbf{character} of $V$ is a linear function $\chi_V$ from $k[G]$ to $k$ given by the formula
    \[
    \chi_V(g)=\tr|_V(g), \quad g\in G.
    \]
    We call $\chi_V$ an \textbf{irreducible character} iff $V$ is an irreducible representation of $G$.
\end{defn}

\begin{defn}\label{idempotents}
    An \textbf{idempotent} (or a projector) $e$ in an algebra $A$ is an element such that $e^2 = e$. Two idempotents $e_1$ and $e_2$ are called \textbf{orthogonal} if $e_1 \cdot e_2 = 0$.
\end{defn}

\begin{ex}\label{subgroup-projector}
    One example of an idempotent corresponds to a subgroup $H$ of a group $G$, defined in the group algebra of $G$ as
    
    \begin{equation*}
        e_H:= \frac{1}{|H|}\sum_{h\in H}h.
    \end{equation*}
    Here we assume that $|H|$ is invertible in $k$.
\end{ex}

We have the following useful Proposition (see \cite[Section 2.4]{FH}).

\begin{prop}\label{e-projector}
    Let $k=\CC$, $G$ be a finite group and let $\psi$ be an irreducible character of $G$, then the element
    \[
    e_{\psi}:=\frac{\psi(e)}{|G|}\sum_{g\in G}\psi(g^{-1})g
    \]
    is an idempotent intertwining $G$-action, where $e$ is the identity element. Moreover, for different irreducible characters $\psi$ and $\phi$ of $G$ the idempotents $e_{\psi}$ and $e_{\phi}$ are orthogonal. We also have
    \[
        \sum_{\psi\in {\rm Irr}(G)}e_{\psi}=1\in \CC[G].
    \]
    Here, the summation is taken over all the irreducible characters of $G$.
\end{prop}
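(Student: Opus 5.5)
The plan is to derive everything from the Schur orthogonality relations for irreducible characters, taken as the standard input from \cite{FH}. For characters $\psi,\phi$ of irreducible representations $V_\psi,V_\phi$ we have the generalized (convolution) orthogonality relation
\[
\sum_{g\in G}\psi(g^{-1})\phi(g^{-1}h)=\delta_{\psi\phi}\,\frac{|G|}{\psi(e)}\,\psi(h^{-1}),\qquad h\in G .
\]
I would prove this from the matrix-coefficient form of Schur orthogonality: $\sum_g \rho_{ij}(g^{-1})\rho'_{kl}(g)=\delta_{\rho\rho'}\delta_{il}\delta_{jk}|G|/\dim\rho$. Write $\phi(g^{-1}h)=\sum_{k,l}\rho'_{kl}(g^{-1})\rho'_{lk}(h)$ and sum over $g$.

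\textbf{Central and $G$-intertwining.} Since $\psi$ is a class function, conjugating $e_\psi$ by any $x\in G$ permutes the terms: $x e_\psi x^{-1}=\frac{\psi(e)}{|G|}\sum_g\psi(g^{-1})\,xgx^{-1}=e_\psi$ after reindexing. So $e_\psi$ is central in $\CC[G]$, and multiplication by $e_\psi$ commutes with the $G$-action on any module.

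\textbf{Idempotence and orthogonality.} Compute
\[
e_\psi e_\phi=\frac{\psi(e)\phi(e)}{|G|^2}\sum_{h}\Big(\sum_g\psi(g^{-1})\phi(g^{-1}h)\Big)h ,
\]
using the substitution $g'=gh$ in the double sum. By the convolution relation above, the inner sum vanishes unless $\psi=\phi$, which gives orthogonality. When $\psi=\phi$ the inner sum equals $\frac{|G|}{\psi(e)}\psi(h^{-1})$, which gives $e_\psi^2=e_\psi$. The main obstacle is bookkeeping, namely getting the inversion conventions in the convolution relation consistent with the definition of $e_\psi$. I would check this step carefully using the matrix coefficients.

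\textbf{Sum equals $1$.} The coefficient of $h$ in $\sum_\psi e_\psi$ is $\frac1{|G|}\sum_\psi\psi(e)\psi(h^{-1})=\frac{1}{|G|}\chi_{\mathrm{reg}}(h^{-1})$. Here $\chi_{\mathrm{reg}}$ is the character of the regular representation, which decomposes as $\bigoplus_\psi V_\psi^{\oplus\psi(e)}$. Since $\chi_{\mathrm{reg}}(h)=|G|\delta_{h,e}$, the sum is $e=1\in\CC[G]$.
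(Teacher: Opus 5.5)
Your overall strategy is sound: the class-function property gives centrality, a convolution identity for characters gives idempotence and orthogonality, and $\chi_{\mathrm{reg}}$ gives $\sum_\psi e_\psi=1$. The centrality and $\sum_\psi e_\psi=1$ parts are correct as written. However, the convolution identity you state is false. This is exactly the inversion issue you flagged.

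In
\[
e_\psi e_\phi=\frac{\psi(e)\phi(e)}{|G|^2}\sum_{g,g'}\psi(g^{-1})\phi(g'^{-1})\,gg',
\]
collect the terms with $gg'=h$, i.e. $g'=g^{-1}h$. The coefficient of $h$ is then $\sum_g\psi(g^{-1})\phi(h^{-1}g)$, not $\sum_g\psi(g^{-1})\phi(g^{-1}h)$. The two differ termwise by complex conjugation, since $\phi(g^{-1}h)=\overline{\phi(h^{-1}g)}$.

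Your version detects $\phi=\overline{\psi}$ rather than $\phi=\psi$. For example, take $G=\mathbb{Z}/3$ and $\psi=\phi$ a nontrivial linear character. Then $\sum_g\psi(g^{-1})\psi(g^{-1}h)=\psi(h)\sum_g\psi(g)=0$, while your right-hand side is $3\psi(h^{-1})\neq 0$.

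Your matrix-coefficient derivation also does not go through as written. Expanding $\phi(g^{-1}h)=\sum_{k,l}\rho'_{kl}(g^{-1})\rho'_{lk}(h)$ leaves sums $\sum_g\rho_{ii}(g^{-1})\rho'_{kl}(g^{-1})$ with both factors at $g^{-1}$. That is not the form covered by Schur orthogonality.

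The fix is immediate. Expand $\phi(h^{-1}g)=\sum_{k,l}\rho'_{kl}(h^{-1})\rho'_{lk}(g)$. Then $\sum_g\rho_{ii}(g^{-1})\rho'_{lk}(g)=\delta_{\rho\rho'}\delta_{ik}\delta_{il}|G|/\dim\rho$ gives
\[
\sum_{g}\psi(g^{-1})\phi(h^{-1}g)=\delta_{\psi\phi}\,\frac{|G|}{\psi(e)}\,\psi(h^{-1}),
\]
and the rest of your computation of $e_\psi e_\phi$ goes through unchanged. With this correction you have a valid direct computation in $\CC[G]$.

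The paper gives no proof of its own and cites \cite[Section 2.4]{FH}, where the argument is different:
\begin{itemize}
\item Since $e_\psi$ is central, it acts on each irreducible $W$ by a scalar (Schur's lemma).
\item Taking traces and using ordinary character orthogonality, $\frac{1}{|G|}\sum_g\psi(g^{-1})\chi_W(g)=\delta_{\psi,\chi_W}$, shows this scalar is $\delta_{\psi,\chi_W}$.
\item Hence $e_\psi$ acts as the projector onto the $\psi$-isotypic component of every representation.
\item The three identities then hold in $\CC[G]$ because the regular representation is faithful.
\end{itemize}
That route needs only character orthogonality, not the matrix-coefficient version, and it sidesteps the inversion bookkeeping. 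It also explains in what sense $e_\psi$ ``intertwines'' the $G$-action. Your route, once corrected, has the advantage of producing the explicit convolution identity for irreducible characters.
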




    

The following statement is well known, but we provide a brief proof for it.
\begin{lem}\label{Epsi*U}
    For a one-dimensional character $\psi$ (i.e. $\psi(e)=1$) and $g\in G$ we have $e_\psi \cdot g = g \cdot e_\psi = \psi(g) e_\psi$.
\end{lem}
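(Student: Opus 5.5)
The plan is to compute directly from the definition of $e_\psi$ in Proposition \ref{e-projector}, specialised to a one-dimensional character. Since $\psi(e)=1$, the idempotent is
\[
e_\psi=\frac{1}{|G|}\sum_{h\in G}\psi(h^{-1})\,h .
\]
A one-dimensional character is a group homomorphism $\psi\colon G\to\CC^\times$, because the trace of a $1\times 1$ matrix is its only entry. So $\psi(h^{-1})=\psi(h)^{-1}$ and $\psi(ab)=\psi(a)\psi(b)$. This multiplicativity is the only input the argument needs.

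For the product $g\cdot e_\psi$, I will multiply term by term and reindex the sum by $h'=gh$, which is a bijection of $G$. Then $h=g^{-1}h'$, so $h^{-1}=h'^{-1}g$ and $\psi(h^{-1})=\psi(h'^{-1})\psi(g)$. This gives
\[
g\cdot e_\psi=\frac{1}{|G|}\sum_{h'\in G}\psi(h'^{-1})\psi(g)\,h'=\psi(g)\,e_\psi .
\]

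The product $e_\psi\cdot g$ is handled the same way, reindexing by $h'=hg$. Here $h^{-1}=g\,h'^{-1}$, so again $\psi(h^{-1})=\psi(g)\psi(h'^{-1})$ and the same factor $\psi(g)$ comes out. Since $\CC$ is commutative, the order of the factors $\psi(g)$ and $\psi(h'^{-1})$ does not matter.

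There is essentially no obstacle. The only points to state carefully are that a one-dimensional character is multiplicative, and that left or right translation by $g$ permutes $G$, so the reindexed sums are again sums over all of $G$.
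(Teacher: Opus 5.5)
Your proof is correct and is essentially the paper's argument: a direct computation that reindexes the sum by a translate of $h$ and uses only the multiplicativity of the one-dimensional character $\psi$. You write out both products explicitly, using the substitution $h'=hg$ for $e_\psi\cdot g$ and $h'=gh$ for $g\cdot e_\psi$, whereas the paper computes $e_\psi\cdot g$ and says the other case is identical.
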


\begin{proof} The first part follows from
    \begin{equation*}
        e_\psi \cdot g = \frac{1}{|G|} \sum_{h \in G} h \psi(h^{-1})g
        = \frac{1}{|G|} \sum_{h \in G} gh \psi((gh)^{-1}) \psi(g)
        = e_\psi \psi(g).
    \end{equation*}
    The proof for $g \cdot e_\psi$ is identical.
\end{proof}

The following statement follows directly from the Artin–Wedderburn Theorem (e.g.~see \cite[Theorem 4.1.1]{E}) and \cite[Section 6, Proposition 11]{Serre}. 

\begin{thm}\label{AMT} Let $k=\CC$, then as a left $\CC[G\times G]$-module
\begin{equation}\label{Bimod_decomp}
    \CC[G]\cong \bigoplus_{V\in {\rm Irr} (G)} V\otimes V^{*}
\end{equation}
where ${\rm Irr}(G)$ is the set of irreducible representations of $G$. The left $G$-action corresponds to the usual action of $G$ on the first tensor component $V$ and the right $G$-action corresponds to the usual action of $G$ on the second tensor component $V^*$.\\
\end{thm}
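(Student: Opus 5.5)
The plan is to combine the two cited ingredients: the Artin–Wedderburn decomposition of $\CC[G]$ as an algebra, and the identification of each matrix algebra with $V\otimes V^*$ as a $G\times G$-module.

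\textbf{Step 1: decompose the algebra.} Since $k=\CC$ and $|G|$ is invertible, Maschke's theorem makes $\CC[G]$ semisimple. Artin–Wedderburn \cite[Theorem 4.1.1]{E} then shows that the map
\[
\rho=\bigoplus_{V\in{\rm Irr}(G)}\rho_V:\ \CC[G]\to \bigoplus_{V\in{\rm Irr}(G)}\End(V)
\]
is an isomorphism of algebras. Here $\rho_V$ is the action map of the representation $V$. The index set is exactly ${\rm Irr}(G)$, by \cite[Section 6, Proposition 11]{Serre}. Concretely, the irreducible representations are the simple modules of $\CC[G]$, and the dimension count $\sum_V(\dim V)^2=|G|$ confirms that $\rho$ is bijective once injectivity is known. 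Injectivity holds because $\CC[G]$ acts faithfully on the regular representation, which is a sum of the $V$'s.

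\textbf{Step 2: check $G\times G$-equivariance.} Let $(g,h)$ act on $\CC[G]$ by $a\mapsto gah^{-1}$, and on $\End(V)$ by $T\mapsto \rho_V(g)\,T\,\rho_V(h)^{-1}$. Since $\rho$ is an algebra homomorphism, $\rho(gah^{-1})=\rho(g)\rho(a)\rho(h)^{-1}$. So $\rho$ intertwines these two actions.

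\textbf{Step 3: identify $\End(V)$ with $V\otimes V^*$.} Use the canonical isomorphism
\[
V\otimes V^*\to\End(V),\qquad v\otimes f\mapsto \bigl(w\mapsto f(w)v\bigr).
\]
Under this map, $\rho_V(g)\,T\,\rho_V(h)^{-1}$ corresponds to $g$ acting on $v$ and $h$ acting on $f$ through the contragredient action $(h f)(w)=f(h^{-1}w)$. This is a one-line computation. Composing it with Step 1 gives \eqref{Bimod_decomp}, with the left factor of $G\times G$ acting on $V$ and the right factor acting on $V^*$.

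\textbf{Main obstacle.} The only delicate point is keeping the conventions straight. With the $(g,h)\circ a=gah^{-1}$ convention, the right action lands on $V^*$ through the contragredient representation. If one instead uses the bimodule convention $a\mapsto gah$, then $V^*$ must be read as a right $G$-module via $f\mapsto f\circ\rho_V(h)$. I would check both versions explicitly against Step 3.
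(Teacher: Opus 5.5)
Your proposal is correct and matches the paper's approach. The paper gives no argument beyond citing Artin--Wedderburn (\cite[Theorem 4.1.1]{E}) and \cite[Section 6, Proposition 11]{Serre}, and your Steps 1--3 fill in what that citation leaves implicit: the $G\times G$-equivariance of $\rho$ under $(g,h)\circ a=gah^{-1}$, and the identification $\End(V)\cong V\otimes V^*$ with the contragredient action on $V^*$.
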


We also recall the following definition.\\

\begin{defn}
    Let $H\subset G$ be two finite groups and let $(V,\rho)$ be a representation of the group $G$. The restricted representation ${\rm Res}_H^{G} \rho$ is a representation of the subgroup $H$ on the vector space $V$ with the action given by
    \begin{equation*}
        h\circ v:=\rho(h)v, \quad \forall h\in H, v\in V.
    \end{equation*}
\end{defn}

Throughout this paper we will sometimes use restrictions of module structures to subalgebras.

\subsection{Bruhat Decomposition, Roots and Permutations}


Let $GL_n(\mathbb{F}_q)$ be a general linear group over the finite field $\mathbb{F}_q$. For a more detailed account on the subject of this subsection see \cite{DB}.

\begin{defn}
    We define the \textbf{Borel subgroup} $B_n \subset \GL{n}$ to be the subgroup of upper triangular matrices.
\end{defn}
Then the following Theorem is well known.

\begin{thm}\label{bruhat}
    Every matrix in $GL_n (\mathbb{F}_q)$ can be expressed as $b_1wb_2$, where $b_1$ and $b_2$ are elements of $B_n$, the Borel subgroup, and $w\in S_n \subset \GL{n}$ is a unique permutation matrix, i.e. an invertible matrix with a $1$ in every row and column and $0$ elsewhere.
\end{thm}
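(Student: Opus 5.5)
The plan is to prove existence by Gaussian elimination, using only row operations from $B_n$ acting on the left and column operations from $B_n$ acting on the right, and then to prove uniqueness of $w$ through a rank invariant. Left multiplication by an upper triangular matrix adds multiples of a row to rows above it and rescales rows. Right multiplication by an upper triangular matrix adds multiples of a column to columns to its right and rescales columns.

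\textbf{Existence.} Let $g\in\GL{n}$.
\begin{enumerate}
\item Start with the first column. Since $g$ is invertible, the first column is nonzero. Let $i_1$ be the \emph{lowest} row in which it has a nonzero entry, and rescale so that entry equals $1$.
\item Use left multiplication by elements of $B_n$ to clear the entries of column $1$ above row $i_1$. This is allowed, since it only adds multiples of row $i_1$ to rows above it. The entries below $i_1$ are already zero.
\item Use right multiplication by elements of $B_n$ to clear the rest of row $i_1$. This is allowed, since it only adds multiples of column $1$ to columns to its right.
\item Column $1$ and row $i_1$ now contain a single $1$. Proceed inductively with column $2$: pick the lowest nonzero entry among rows other than $i_1$, and repeat.
\end{enumerate}
The clearing steps never disturb the pivots already placed, because those pivots sit alone in their rows and columns. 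After $n$ steps we obtain $b g b' = w$ for some permutation matrix $w$, and hence $g = b^{-1} w b'^{-1}$.

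\textbf{Uniqueness.} For $g\in\GL{n}$ and $1\le i,j\le n$, define $r_{ij}(g)$ as the rank of the lower-left submatrix of $g$ formed by rows $i,\dots,n$ and columns $1,\dots,j$.
\begin{itemize}
\item Left multiplication by $B_n$ preserves the span of the rows $i,\dots,n$ restricted to any initial set of columns, because upper triangular matrices send rows $\ge i$ into combinations of rows $\ge i$.
\item Right multiplication by $B_n$ preserves the span of the first $j$ columns, for the dual reason.
\end{itemize}
So $r_{ij}$ is constant on double cosets $B_n g B_n$. For a permutation matrix $w$, the quantity $r_{ij}(w)$ counts the $k\le j$ with $w(k)\ge i$. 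These counts determine $w$: the number $r_{ij}(w)-r_{i+1,j}(w)-r_{i,j-1}(w)+r_{i+1,j-1}(w)$ equals $1$ exactly when $w(j)=i$, and $0$ otherwise. Hence $b_1wb_2=b_1'w'b_2'$ forces $w=w'$.

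The main obstacle is verifying carefully that the rank invariants are preserved under both the left and right actions, since the triangularity has to point the right way. The existence part is routine bookkeeping.
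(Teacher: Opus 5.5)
Your proof is correct. The elimination works because you only add rows to rows above and columns to columns on the right, and choosing the \emph{lowest} nonzero entry of each column, outside the rows already used, is exactly what makes that enough. The functions $r_{ij}$ are genuine invariants of $B_n g B_n$, and they recover $w$ by your inclusion--exclusion formula, once you adopt the natural conventions $r_{n+1,j}=r_{i,0}=0$ for the boundary terms.

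The paper does not actually prove this theorem. It quotes it as well known, and for the refined form $g=u_1twu_2$ it only remarks that it follows ``by row and column elimination'' or from Steinberg. Your existence argument is precisely the elimination the paper alludes to, and your rank-invariant argument supplies the uniqueness of $w$ that the paper leaves to the references.

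One extra observation: if you omit the rescaling steps, your elimination uses only elements of $U_{\lambda_n}$ and ends at a monomial matrix $tw$. So it also gives the existence half of the paper's stronger statement. Uniqueness of $t$ there is invisible to your rank functions, since $r_{ij}(tw)=r_{ij}(w)$, and needs a short separate argument. For instance, $tw=vt'wv'$ with $v,v'\in U_{\lambda_n}$ gives $v\,(t't^{-1})=(tw)v'^{-1}(tw)^{-1}$. The left side is upper triangular with diagonal $t't^{-1}$, and the right side is unipotent, so $t=t'$.
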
\label{unipotent_Bruhat}
In fact, if $U_{\lambda_n}\subset \GL{n},\lambda_n:=1^{(n)}$ is a subgroup of upper-triangular matrices with $1$s on the diagonal and $T_n\subset \GL{n}$ is a subgroup of diagonal matrices, it is not difficult to show by row and column elimination a stronger version of this Theorem (or see, for example, \cite[Theorem 4]{St}).

\begin{thm}
    Every matrix in $GL_n (\mathbb{F}_q)$ can be expressed as $u_1twu_2$, where $u_1$ and $u_2$ are elements of $U_{\lambda_n}$, $w\in S_n \subset \GL{n}$ is a unique permutation matrix and $t$ is a unique element of $T_n$.
\end{thm}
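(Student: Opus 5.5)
Both claims reduce to the earlier Bruhat theorem (Theorem~\ref{bruhat}) together with the factorization $B_n = T_n U_{\lambda_n} = U_{\lambda_n} T_n$, where $T_n$ normalizes $U_{\lambda_n}$. Existence is the easy part; the real content is the uniqueness of $t$.

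\emph{Existence.} Write $g = b_1 w b_2$ as in Theorem~\ref{bruhat}, and factor $b_1 = u_1 t_1$, $b_2 = t_2 u_2$ with $t_i \in T_n$ and $u_i \in U_{\lambda_n}$. Since $w$ is a permutation matrix, $w t_2 w^{-1}$ is again diagonal. So $t_1 w t_2 = (t_1\, w t_2 w^{-1}) w$, and we may take $t := t_1\, w t_2 w^{-1}$. Uniqueness of $w$ is already part of Theorem~\ref{bruhat}, because $u_1 t w u_2 \in B_n w B_n$.

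\emph{Uniqueness of $t$.} Suppose $u_1 t w u_2 = u_1' t' w u_2'$. Setting $u = u_1'^{-1} u_1 \in U_{\lambda_n}$ and $v = u_2 u_2'^{-1} \in U_{\lambda_n}$, this becomes
\[
u\, t w = t' w\, v^{-1}, \qquad\text{i.e.}\qquad t'^{-1} u t = w v^{-1} w^{-1}.
\]
The left side lies in $B_n$ and has diagonal part $t'^{-1} t$, because conjugating a unipotent upper triangular matrix by a diagonal matrix keeps it unipotent. The right side lies in $w U_{\lambda_n} w^{-1}$, which consists of unipotent matrices: $w v^{-1} w^{-1}$ is the conjugate of a unipotent matrix, so all of its eigenvalues equal $1$.

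An upper triangular matrix has its eigenvalues on the diagonal. Hence $t'^{-1} t$ has all diagonal entries equal to $1$, which gives $t = t'$. Equivalently, $B_n \cap w U_{\lambda_n} w^{-1} \subset U_{\lambda_n}$.

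The step that needs care is this last one: arguing that an element of $B_n$ conjugate to a unipotent element has trivial diagonal. The eigenvalue argument settles it cleanly, so no entrywise computation is needed. Note that the $u_i$ themselves need not be unique, and the statement does not claim they are.
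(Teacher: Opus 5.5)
Your proof is correct, and it follows a different route from the one the paper indicates. The paper gives no argument beyond saying the result follows ``by row and column elimination'' or from the cited theorem of Steinberg. That route reduces $g$ directly to a monomial matrix $tw$ by upward row operations and rightward column operations, which re-proves the Bruhat decomposition from scratch. Uniqueness would then have to come from $U_{\lambda_n}\times U_{\lambda_n}$-invariants: ranks of lower-left submatrices for $w$, and suitable minors for $t$. The paper does not spell these out.

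You instead take Theorem~\ref{bruhat} as given. Existence and uniqueness of $w$ then follow immediately from $B_n=U_{\lambda_n}T_n=T_nU_{\lambda_n}$ and $wT_nw^{-1}=T_n$. The only new input is the containment $B_n\cap wU_{\lambda_n}w^{-1}\subseteq U_{\lambda_n}$, which you obtain by comparing characteristic polynomials. This is short and works over any field.

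What your argument does not give is the sharper normal form behind the reference. If one restricts $u_2$ to $U_{\lambda_n}\cap w^{-1}U_{\lambda_n}^{op}w$, then the whole factor $u_1t$ becomes unique, because $B_n\cap U_{\lambda_n}^{op}=\{1\}$. That, however, requires factoring $U_{\lambda_n}$ into the part $w$ conjugates into $U_{\lambda_n}$ and the part it conjugates into $U_{\lambda_n}^{op}$. The statement only asks for uniqueness of $w$ and $t$, and your argument delivers exactly that.
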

We will need a few notions and facts regarding root systems.
\begin{defn}
    The \textbf{root system} $\Phi$ for $\GL{n}$ is a subset of $\mathbb{R}^n$ with a basis given by $\epsilon_i,1\le i\le n$ such that
    \[
    \Phi=\{\epsilon_i-\epsilon_j\in\mathbb{R}^n|1\le i\neq j \le n\}.
    \]
    We call the complementary subsets of $\Phi$
    \[
    \Phi^+=\{\epsilon_i-\epsilon_j\in\mathbb{R}^n|1\le i< j \le n\},\quad \Phi^-=\{\epsilon_i-\epsilon_j\in\mathbb{R}^n|1\le j< i \le n\}
    \]
    as \textbf{positive} and \textbf{negative} roots respectively. The subset
    \[
    \Delta=\{\alpha_i:=\epsilon_i-\epsilon_{i+1}|1\le i<n\}
    \]
    of $\Phi^+$ is called \textbf{simple} roots.
\end{defn}



The group $S_n$ acts on the set $\Phi$ via permutations on indices of $\epsilon_i$. It also acts on $T_n$ by conjugation. Let $s_i:=(i,i+1)\in S_n, 1\le i<n$ be the elementary transposition. For an element $w \in S_n$, we define the sets
    \[
    \Phi^-_w = \{\alpha \in \Phi^+ |\ w\cdot \alpha \in \Phi^-\}\ \text{ and }\ \Phi^+_w:=\Phi^+ \setminus \Phi^-_w.
    \]
    The following Theorem holds true (see \cite{DB,H}). 

\begin{thm}\label{reduced_decomposition}
    The symmetric group $S_n$ is generated by the elements $s_i$ satisfying the Coxeter relations
    \begin{equation}\label{Coxeter}
        s_is_j=s_js_i, \ |i-j|>1, \quad s_is_{i+1}s_i=s_{i+1}s_is_{i+1},\ 1\le i<n-1, \quad s_i^2=e.
    \end{equation}
    Any element $w\in S_n$ admits a reduced decomposition in terms of $s_i$ of minimal length $l(w)$. Any two reduced decompositions of $w$ can be obtained from each other by applying the first two relations from \eqref{Coxeter}, which are called the braid relations.\\
    
    Additionally, $l(ws_i) = l(w)+1$ iff $\alpha_i \in \Phi_w^+$ and $l(ws_i) = l(w)-1$ iff $\alpha_i \in \Phi_w^-$.
\end{thm}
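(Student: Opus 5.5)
We have to prove Theorem \ref{reduced_decomposition}: $S_n$ is a Coxeter group on the $s_i$, reduced words are connected by braid moves (Matsumoto), and $l(ws_i)=l(w)\pm1$ according to whether $\alpha_i\in\Phi^+_w$ or $\Phi^-_w$. The plan is to make inversion sets the central tool. Put $N(w):=|\Phi^-_w|$, the number of pairs $i<j$ with $w(i)>w(j)$. Since $s_i$ permutes $\Phi^+\setminus\{\alpha_i\}$ and sends $\alpha_i$ to $-\alpha_i$, we get
\[
\Phi^-_{ws_i}=s_i\bigl(\Phi^-_w\setminus\{\alpha_i\}\bigr)\cup\{\alpha_i\}\ \text{ if }\alpha_i\in\Phi^+_w,\qquad \Phi^-_{ws_i}=s_i\bigl(\Phi^-_w\setminus\{\alpha_i\}\bigr)\ \text{ if }\alpha_i\in\Phi^-_w .
\]
So $N(ws_i)=N(w)\pm1$ with the sign fixed by the location of $\alpha_i$.

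Next we show $l(w)=N(w)$. From the step above, $N(w)\le l(w)$, because each factor changes $N$ by one and $N(e)=0$. For the reverse inequality, take $w\neq e$. Then some simple root lies in $\Phi^-_w$, since otherwise $w(1)<\dots<w(n)$ and $w=e$. Right-multiplying by that $s_i$ lowers $N$ by one, and induction gives a word of length $N(w)$. Generation by the $s_i$ follows as a by-product. With $l=N$, the last assertion of the theorem is exactly the first computation.

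For the presentation, we check that the relations \eqref{Coxeter} hold in $S_n$, which is immediate. Let $W$ be the abstract group defined by these relations. There is a surjection $W\to S_n$, and the goal is to show it is injective. This follows from the word property: any two reduced words for the same $w\in S_n$ are connected by braid moves, and any non-reduced word can be shortened using the relations. Shortening goes as follows. If $s_{i_1}\cdots s_{i_k}$ is not reduced, pick the first $j$ where length drops. The exchange condition, derived below, rewrites the prefix $s_{i_1}\cdots s_{i_{j-1}}$ as a word ending in $s_{i_j}$ using only braid moves. Then $s_{i_j}^2=e$ cancels two letters. Hence every word in $W$ equals a reduced word, and reduced words for the same permutation are equal in $W$, so $W\cong S_n$.

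The main obstacle is Matsumoto's theorem, which also yields the exchange/deletion condition. We prove it by induction on length. Take two reduced words for $w$ ending in $s$ and $t$ respectively, with $s\neq t$. Then both $\alpha_s$ and $\alpha_t$ lie in $\Phi^-_w$. By the closure of $\Phi^-_w$ under positive combinations (checked directly on inversion pairs), the whole rank-two parabolic inversion set lies in $\Phi^-_w$. This gives $w=w'\,m_{st}$, where $m_{st}$ is the longest element of $\langle s,t\rangle$ (either $st$ or $sts$), and $l(w)=l(w')+l(m_{st})$. Both original words then connect, by the induction hypothesis applied to their prefixes, to words ending in the two reduced expressions of $m_{st}$. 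Those two expressions differ by a single braid move. The delicate point is that set-closure argument; I would verify it concretely with pairs $(a,b)$, $a<b$, $w(a)>w(b)$.
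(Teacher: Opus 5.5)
The paper does not prove this theorem; it quotes it as standard with a pointer to \cite{DB,H}, so there is no internal argument to compare against. Your proposal is a correct, self-contained proof, and its logical order differs from the usual textbook route. That route proves $l(w)=|\Phi^-_w|$, derives the deletion/exchange condition from reflections acting on roots, and then deduces both the Coxeter presentation and the word property (Matsumoto--Tits) from the exchange condition.

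You instead prove the word property first. Your key tool is a rank-two lemma: if $s,t$ are both right descents of $w$, then $w=w'm_{st}$ with $l(w)=l(w')+l(m_{st})$. You then obtain the shortening of non-reduced words, and hence injectivity of $W\to S_n$, as a corollary. What this buys is that everything reduces to counting inversions of permutations, with no appeal to reflections or to the general exchange condition. The price is that the rank-two lemma needs its own verification.

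The one step you should write out rather than assert is the passage from $\Phi^+_{st}\subseteq\Phi^-_w$ to that length-additive factorization. It follows by iterating your first formula. Take $s=s_i$, $t=s_{i+1}$:
\begin{itemize}
\item $\alpha_{i+1}\in\Phi^-_{ws_i}$, because $s_i\alpha_{i+1}=\alpha_i+\alpha_{i+1}\in\Phi^-_w$; this is where closure is used.
\item $\alpha_i\in\Phi^-_{ws_is_{i+1}}$, because $s_{i+1}\alpha_i=\alpha_i+\alpha_{i+1}\in\Phi^-_{ws_i}$, since $s_i(\alpha_i+\alpha_{i+1})=\alpha_{i+1}\in\Phi^-_w$.
\end{itemize}
Hence $l(ws_is_{i+1}s_i)=l(w)-3$, and you may set $w'=ws_is_{i+1}s_i$. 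The commuting case needs only one such step.
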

\subsection{Radon Transform}

\begin{defn}
    Let us fix a tuple of positive integers $\lambda=(\lambda_1,\dots, \lambda_l)$ partitioning $n$. Let $M_{\lambda}$ denote the subgroup of block-diagonal matrices $\prod_{i=1}^l GL_{\lambda_i}(\mathbb{F}_q)$ of sizes $\lambda$ in $GL_{n}(\mathbb{F}_q)$. We denote by $U_{\lambda}$ the subgroup of unipotent upper-triangular matrices with nontrivial entries above the blocks of $M_{\lambda}$. Then $U_{\lambda}^{op}$ is the subgroup of transposed matrices from $U_{\lambda}$. Finally, we denote by $P_{\lambda}$ ($P_{\lambda}^{op}$) the subgroup corresponding to the product of subgroups $M_{\lambda}\cdot U_{\lambda}=U_{\lambda} \cdot M_{\lambda}$ ($M_{\lambda}\cdot U_{\lambda}^{op}$ correspondingly). We call $P_{\lambda}$ a parabolic subgroup of type $\lambda$, $M_{\lambda}$ is the Levi subgroup of $P_{\lambda}$ and $U_{\lambda}$ is the unipotent radical of $P_{\lambda}$ (and the same for the opposite subgroups).
\end{defn}

\begin{ex}\label{PLU_examples}
    Let $\lambda=(2,3,1)$, then we can visualize $P_{\lambda},M_{\lambda},U_{\lambda}$ as follows
    \begin{equation*}
        P_{\lambda}=
        \begin{pmatrix}
            * & * & * & * & * & *\\
            * & * & * & * & * & *\\
            0 & 0 & * & * & * & *\\
            0 & 0 & * & * & * & *\\
            0 & 0 & * & * & * & *\\
            0 & 0 & 0 & 0 & 0 & *\\
        \end{pmatrix}
    \end{equation*}
    \begin{equation*}
        M_{\lambda}=
        \begin{pmatrix}
            * & * & 0 & 0 & 0 & 0\\
            * & * & 0 & 0 & 0 & 0\\
            0 & 0 & * & * & * & 0\\
            0 & 0 & * & * & * & 0\\
            0 & 0 & * & * & * & 0\\
            0 & 0 & 0 & 0 & 0 & *\\
        \end{pmatrix}
    \end{equation*}
    \begin{equation*}
        U_{\lambda}=
        \begin{pmatrix}
            1 & 0 & * & * & * & *\\
            0 & 1 & * & * & * & *\\
            0 & 0 & 1 & 0 & 0 & *\\
            0 & 0 & 0 & 1 & 0 & *\\
            0 & 0 & 0 & 0 & 1 & *\\
            0 & 0 & 0 & 0 & 0 & 1\\
        \end{pmatrix}
    \end{equation*}
\end{ex}

Note that $U_{\lambda}$ is a normal subgroup in $P_{\lambda}=M_{\lambda}\ltimes U_{\lambda}$.\\

\begin{defn}
    Let $H\subset G$ be two finite groups. For a field $k$ we denote by $Fun(G/H,k)$ ($Fun(H\backslash G,k)$ or $Fun(H\backslash G /H,k)$) the subspace of all the functions from $Fun(G,k)$ invariant under the right (respectively left or left and right) $H$-action.\\
\end{defn}

\begin{defn}
    In the same setting we denote by $k[G/H]$ ($k[H\backslash G]$ or $k[H\backslash G/H]$) the subspace of all elements from $k[G]$ invariant under the right (respectively left or left and right) $H$-multiplication. For a coset $gH\in G/H$ ($Hg$ or $HgH$) we associate an element $gH$ in $k[G/H]$ (similar for other cases) given by
    \begin{equation*}
        gH:=\sum_{h\in H} gh.
    \end{equation*}
    Note that the element $gH$ does not depend on the choice of a representative $g$.
\end{defn}

We also recall a general notion of a Hecke algebra which will be useful later.

\begin{defn}\label{finite_group_Hecke_algebra}
    A Hecke algebra $\mathcal{H}(G,H)$ for the pair of finite groups $H\subset G$ is an algebra of $H$-bi-invariant functions $Fun(H\backslash G/H)$ on $G$ with multiplication given by the convolution and a unit $1_H/|H|$ where $1_H$ is the function equal to identity on the double coset of the unit $e\in G$ and zero everywhere else.
\end{defn}

\begin{rem}\label{convol_Hecke_alg}
    By using the map $\phi$ from the Proposition \ref{Functions-group_algebra_correspondence} we may think of $\mathcal{H}(G,H)$ as follows.
    Under the map $\phi$ the Hecke algebra $\mathcal{H}(G,H)$ can be identified with the subalgebra $e_H k[G] e_H$ of $k[G]$ with a unit given by $e_H$ and the multiplication given by
    \begin{equation*}
        e_H g_1 e_H \cdot e_H g_2 e_H = \frac{1}{|H|} \sum_{h\in H} e_H g_1hg_2 e_H .
    \end{equation*}
\end{rem}
Due to Proposition \ref{Functions-group_algebra_correspondence} we have the following.\\

\begin{prop}\label{convol_Hecke_mod}
    Let $H\subset G$ be two finite groups. Then $\phi$ gives rise to the isomorphism between $Fun(G/H,k)  , Fun(H\backslash G,k) ,  Fun(H\backslash G/H,k)$ and $k[G/H]  , k[H\backslash G] ,  k[H\backslash G/H]$ respectively. The elements $g$ for corresponding cosets form a basis for the respective spaces. Moreover, the restrictions of $\phi$ are isomorphisms of left $k[G \times N_G(H)]  , k[N_G(H)\times G] ,$ $k[N_G(H)\times N_G(H)]$-modules respectively. Here $N_G(H)$ denotes the normalizer of $H$ in $G$.
\end{prop}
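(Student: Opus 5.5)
The plan is to restrict the isomorphism $\phi$ of Proposition \ref{Functions-group_algebra_correspondence} to each of the three subspaces and check three things: it lands in the right subspace, the indicator functions of cosets go to the coset elements, and the module structures match.

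\textbf{Step 1: invariants correspond.} Let $f\in Fun(G/H,k)$, so $f(ah)=f(a)$ for all $h\in H$. Then
\[
\phi(f)h=\sum_{g} f(g)\,gh=\sum_{g'} f(g'h^{-1})\,g'=\phi(f),
\]
so $\phi(f)\in k[G/H]$. Conversely, if $\sum_g c_g g$ is right $H$-invariant, then comparing coefficients gives $c_{gh}=c_g$ for all $h\in H$. Hence $\phi^{-1}$ of it is a right $H$-invariant function. The left and two-sided cases are identical. Since $\phi$ is bijective on all of $Fun(G,k)$, its restriction is a linear isomorphism onto the corresponding subspace.

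\textbf{Step 2: bases.} A right $H$-invariant function is constant on each coset $gH$. Therefore the indicator functions $\mathbbm{1}_{gH}$ form a basis of $Fun(G/H,k)$. Their images are
\[
\phi(\mathbbm{1}_{gH})=\sum_{h\in H} gh=gH.
\]
So the elements $gH$, one for each coset, form a basis of $k[G/H]$. They are linearly independent because distinct cosets have disjoint supports. The same argument works for $Hg$ and for $HgH$.

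\textbf{Step 3: module structures.} By Proposition \ref{Functions-group_algebra_correspondence}, $\phi$ is already $k[G\times G]$-equivariant, so it suffices to check that the subspaces are stable under the claimed subgroups.
\begin{itemize}
\item Take $n\in N_G(H)$ and $f$ right $H$-invariant. Then $((g,n)\circ f)(ah)=f(g^{-1}ahn)=f(g^{-1}an\cdot n^{-1}hn)=f(g^{-1}an)$, because $n^{-1}hn\in H$. So $Fun(G/H,k)$ is stable under $G\times N_G(H)$.
\item Similarly, for $x$ right $H$-invariant in $k[G]$, we have $gxn^{-1}h=gx(n^{-1}hn)n^{-1}=gxn^{-1}$.
\item The left and two-sided cases are symmetric: $N_G(H)$ acts on the invariant side and $G$ (respectively $N_G(H)$) on the other side.
\end{itemize}
Equivariance of the restriction is then inherited from equivariance of $\phi$.

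There is no real obstacle here. The only point needing care is Step 3: one must check that the normalizer, and not all of $G$, preserves invariance on the invariant side, and keep track of the inverse convention in the action $(g,h)\circ a=gah^{-1}$.
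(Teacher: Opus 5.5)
Your proof is correct and follows the paper's route: the paper gives no separate argument and deduces the statement directly from the $k[G\times G]$-equivariant isomorphism $\phi$ of Proposition~\ref{Functions-group_algebra_correspondence}, and you have written out the routine checks it leaves implicit (invariants correspond, coset indicators map to coset sums, and the invariant subspaces are stable under the normalizer on the invariant side). One small caveat is that $\sum_{h_1,h_2\in H}h_1gh_2=|H\cap gHg^{-1}|\cdot\phi(\mathbbm{1}_{HgH})$; if the paper's element $HgH$ is read as this double sum rather than as the sum over the set $HgH$, your basis agrees with it only up to that scalar, which is harmless over $\CC$ or whenever the scalar is invertible in $k$.
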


\begin{defn}[{\bf Radon transform}]\label{Radon_transform}
    Consider the following diagram. 
    \begin{equation*}
        \begin{tikzcd}
            & {Fun(GL_n(\mathbb{F}_q),\mathbb{C})} \arrow[rd, "\pi_*"', shift right] \arrow[ld, "\pi^{op}_*"', shift right] & \\
            {Fun(GL_n(\mathbb{F}_q)/U_{\lambda}^{op},\mathbb{C})} \arrow[ru, "\pi^{op *}"', shift right] &                                                                                                               & {Fun(GL_n(\mathbb{F}_q)/U_{\lambda},\mathbb{C})} \arrow[lu, "\pi^*"', shift right]
        \end{tikzcd}
.
    \end{equation*}
    Where the maps $\pi_*,\pi^*,\pi_*^{op},\pi^{op*}$ are given by the "pull back" and "push forward" operators. For instance, $\pi^*$ is the operator that is defined by $\pi^*(f)(g) = f(gU_\lambda)$, and $\pi_*$ is the operator $\pi_*(f)(gU_\lambda) = \sum_{u \in U_\lambda} f(gu)$. Using this notation, we denote $R = \pi^{*} \pi^{op}_*$ and $R' = \pi^{op*} \pi_*$. We will study the Radon operator which we will denote as $\R:=R' \circ R$.
\end{defn}


\section{Properties of the Radon Transform}

Here we explore some of the properties of the Radon transform. We make an observation below.\\


\begin{prop}\label{Radon Expansion}
    Under the map $\phi$ from the Proposition \ref{Functions-group_algebra_correspondence} the Radon transform $\R$ from Definition \ref{Radon_transform} corresponds to the endomorphism of the subspace $\CC[\GL{n}/U_{\lambda}]$ of right $U_{\lambda}$-invariants in $\CC[\GL{n}]$. Namely, it is given by the operation of the right multiplication
    \begin{equation}
        R_{\R}: \CC[\GL{n}]e_{U_{\lambda}} \rightarrow \CC[\GL{n}]e_{U_{\lambda}}, \quad g\cdot e_{U_{\lambda}} \mapsto g\cdot |U_{\lambda}|^2e_{U_{\lambda}}e_{U_{\lambda}^{op}}e_{U_{\lambda}}.
    \end{equation}
    From now on we work with the group element $\R:=|U_{\lambda}|^2e_{U_{\lambda}}e_{U_{\lambda}^{op}}e_{U_{\lambda}} \in \CC[\GL{n}]$.
    

\end{prop}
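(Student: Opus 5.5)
Under $\phi$ from Proposition \ref{Functions-group_algebra_correspondence}, the pullback and pushforward maps become right multiplications by averaging elements, and the composition $R'\circ R$ becomes right multiplication by $|U_\lambda|^2 e_{U_\lambda}e_{U_\lambda^{op}}e_{U_\lambda}$. So the plan is to translate each of the four maps separately and then compose.

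\textbf{Step 1: the subspaces.} By Proposition \ref{convol_Hecke_mod}, $\phi$ identifies $Fun(\GL{n}/U_\lambda,\CC)$ with $\CC[\GL{n}/U_\lambda]$. An element $a$ is right $U_\lambda$-invariant iff $a=a e_{U_\lambda}$, so this subspace equals $\CC[\GL{n}]e_{U_\lambda}$. The same holds for $U_\lambda^{op}$.

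\textbf{Step 2: the four maps.} For $f\in Fun(G)$ with $G=\GL{n}$, put $a=\phi(f)=\sum_g f(g)g$.
\begin{itemize}
\item For $\pi_*$: the pushed-forward function $F(gU)=\sum_{u}f(gu)$ is viewed, via the pullback, as a right $U$-invariant function on $G$. Its image under $\phi$ is
\[
\sum_{g}\sum_{u\in U} f(gu)\,g=\sum_{h}f(h)\sum_{u}hu^{-1}=a\cdot |U|e_U .
\]
\item For $\pi^*$: pulling back a function on $G/U$ to $G$ does not change the element $\sum_g F(g)g$, so $\pi^*$ corresponds to the inclusion $\CC[G]e_U\hookrightarrow \CC[G]$.
\item The opposite maps $\pi^{op}_*$ and $\pi^{op*}$ are handled identically, giving right multiplication by $|U^{op}|e_{U^{op}}$ and the inclusion.
\end{itemize}
Since $|U_\lambda^{op}|=|U_\lambda|$, we get $R=\pi^*\pi^{op}_*\mapsto a\mapsto a\,|U_\lambda|e_{U_\lambda^{op}}$ and $R'=\pi^{op*}\pi_*\mapsto b\mapsto b\,|U_\lambda|e_{U_\lambda}$.

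\textbf{Step 3: compose.} For $a=ge_{U_\lambda}$,
\[
\R(a)=a\,|U_\lambda|^2e_{U_\lambda^{op}}e_{U_\lambda}=g\,|U_\lambda|^2 e_{U_\lambda}e_{U_\lambda^{op}}e_{U_\lambda}.
\]
The inserted $e_{U_\lambda}$ costs nothing because $e_{U_\lambda}^2=e_{U_\lambda}$. The right-hand side is clearly right $U_\lambda$-invariant, so $\R$ is an endomorphism of $\CC[G]e_{U_\lambda}$. Finally, right multiplication is well defined: it depends only on $a$, not on the representative $g$.

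\textbf{Main obstacle.} The only delicate point is the bookkeeping in Step 2, namely identifying functions on $G/U$ with $U$-invariant functions on $G$ via $\pi^*$, and checking the inversion $u\mapsto u^{-1}$. This uses that $U$ is a group, so the sum over $u^{-1}$ is the same as the sum over $u$.
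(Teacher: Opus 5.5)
Your proof is correct. The paper gives no separate argument for this proposition and treats it as an immediate consequence of Proposition~\ref{Functions-group_algebra_correspondence}: push-forward followed by pull-back along $U_\lambda$ is right convolution with the indicator of $U_\lambda$, which $\phi$ sends to right multiplication by $|U_\lambda|e_{U_\lambda}$, and your reindexing in Step 2 is exactly that computation written out.
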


The classification of nonzero eigenvalues of $\R$ acting on $\CC[\GL{n}]e_{U_{\lambda}}$ is equivalent to classification of eigenvalues of $\R$ on $\CC[\GL{n}]$ since $\R$ projects the group algebra onto the subspace of right $U_{\lambda}$-invariants.\\

Note that $\CC[\GL{n}]e_{U_{\lambda}}$ admits a structure of a left $\GL{n}$ module and a right $M_{\lambda}$-module by multiplication on the left and on the right correspondingly, as $m e_{U_{\lambda}}=e_{U_{\lambda}} m$, $\forall m\in M_{\lambda}$.

\begin{lem}\label{intertwine}
    The Radon transform intertwines left $GL_n(\mathbb{F}_q)$ and right $M_{\lambda}$ actions. In particular, any of its eigenvalues comes from the action of $\R$ on an irreducible representation $V$ of $GL_n(\mathbb{F}_q)$.
\end{lem}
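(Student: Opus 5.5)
By Proposition \ref{Radon Expansion}, $\R$ acts on $\CC[\GL{n}]e_{U_{\lambda}}$ by right multiplication by the element $\R=|U_{\lambda}|^2e_{U_{\lambda}}e_{U_{\lambda}^{op}}e_{U_{\lambda}}$. The plan is therefore to check two commutation properties and then apply the decomposition of Theorem \ref{AMT}.

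\textbf{Left action.} For $g\in\GL{n}$ and $x\in\CC[\GL{n}]e_{U_{\lambda}}$ we have $(gx)\R=g(x\R)$ by associativity, so left multiplication commutes with $\R$ automatically.

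\textbf{Right action.} For $m\in M_{\lambda}$, $M_\lambda$ normalizes both $U_\lambda$ and $U_\lambda^{op}$. Conjugation by $m$ permutes the elements of each subgroup, so $m e_{U_\lambda} m^{-1}=e_{U_\lambda}$ and $m e_{U^{op}_\lambda} m^{-1}=e_{U^{op}_\lambda}$. Hence $m\R=\R m$, and therefore $(xm)\R = x m\R = x\R m = (x\R)m$. So $\R$ intertwines the right $M_\lambda$-action. This step only requires the definition of $P_\lambda$ and $P_\lambda^{op}$ as semidirect products.

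\textbf{Reduction to irreducibles.} Apply Theorem \ref{AMT}: $\CC[\GL{n}]\cong\bigoplus_V V\otimes V^*$ as a bimodule. Right multiplication by any element $a\in\CC[\GL{n}]$ acts on the summand $V\otimes V^*$ through the second factor only, namely as $\mathrm{id}_V\otimes \rho_{V^*}(a)$, where $\rho_{V^*}$ is the right action. Consequently right multiplication by $\R$ preserves each summand, and its spectrum on $\CC[\GL{n}]$ is the union over $V$ of the spectra of $\R$ acting on $V^*$ (equivalently, via the transpose, on $V$). The nonzero spectrum on $\CC[\GL{n}]e_{U_\lambda}$ coincides with the nonzero spectrum on $\CC[\GL{n}]$, because $\R=\R e_{U_\lambda}$ kills the complement $\CC[\GL{n}](1-e_{U_\lambda})$. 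Any eigenvector can be projected by the central idempotents $e_\psi$ of Proposition \ref{e-projector}. These commute with $\R$ since they are central, so at least one projection is a nonzero eigenvector lying in a single isotypic component. In this sense each eigenvalue comes from the action of $\R$ on an irreducible $V$. Explicitly, on $V$ the operator is $\rho_V(\R)$, and it factors through the subspace $V^{U_\lambda}$ because $\rho_V(e_{U_\lambda})$ is the projector onto $U_\lambda$-invariants.

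The main obstacle is not mathematical difficulty but bookkeeping of left versus right actions in the identification of Theorem \ref{AMT}. One must make sure the eigenvalues of right multiplication on $V\otimes V^*$ match those of $\rho_V(\R)$ up to transpose. This holds because the antipode $g\mapsto g^{-1}$ sends $\R$ to $|U_\lambda|^2 e_{U_\lambda}e_{U^{op}_\lambda}e_{U_\lambda}$ again, as each $e_H$ is invariant under inversion.
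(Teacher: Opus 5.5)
Your proof is correct and follows the paper's own argument. Right multiplication by $\R$ commutes with left multiplication by associativity, and with right $M_\lambda$-multiplication because $M_\lambda$ normalizes $U_\lambda$ and $U_\lambda^{op}$; Theorem \ref{AMT} then places the action on the second tensor factors, and your extra bookkeeping (isotypic projections, the complement $\CC[\GL{n}](1-e_{U_\lambda})$) only makes explicit what the paper leaves implicit. Two small remarks: it is the identity $\R=e_{U_\lambda}\R$, not $\R=\R e_{U_\lambda}$, that kills $\CC[\GL{n}](1-e_{U_\lambda})$ under right multiplication, and the antipode check is unnecessary, since right multiplication by any $a$ already acts on $V^*$ by the transpose of $\rho_V(a)$, which has the same eigenvalues.
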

\begin{proof}
    We know that Radon transform acts on $\mathbb{C}[GL_n(\mathbb{F}_q)]$ by right multiplication by $|U_{\lambda}|^2 e_{U_{\lambda}}e_{U_{\lambda}^{op}}e_{U_{\lambda}}$, so it commutes with left multiplication and right $M_{\lambda}$ action (as $U_{\lambda}$ and $U_{\lambda}^{op}$ are normal subgroups of $P_{\lambda}$ and $P_{\lambda}^{op}$ correspondingly). It follows then from \thmref{AMT} that $\R$ acts on second tensor factors in \eqref{Bimod_decomp}.\\
\end{proof}

The following proposition allows us to obtain some information about Radon transform on $\mathbb{C}[GL_n(\mathbb{F}_q)/U_{\lambda}]$ by considering $P_{\lambda}$-invariant functions.\\

\begin{prop}\label{parabolic-unipotent}
    Consider the following diagram
    \begin{equation}\label{parabolic-unipotent_maps}
        \begin{tikzcd}
{Fun(GL_n(\mathbb{F}_q)/P_{\lambda},\mathbb{C})} \arrow[rr, "i", hook, shift left] &  & {Fun(GL_n(\mathbb{F}_q)/U_{\lambda},\mathbb{C})} \arrow[ll, "j", two heads, shift left]
\end{tikzcd}
    \end{equation}
    where $i$ is the tautological inclusion and $j$ is given by the fiber integration with respect to the quotient map $\mu: GL_n(\mathbb{F}_q)/U_{\lambda} \twoheadrightarrow GL_n(\mathbb{F}_q)/P_{\lambda}$. Then $i,j$ commute with $\R$ and we have $j\circ i = |M_{\lambda}| \cdot id$.
\end{prop}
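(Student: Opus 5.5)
I will pass through the map $\phi$ of Proposition \ref{convol_Hecke_mod} and work inside the group algebra. There $Fun(\GL{n}/U_{\lambda},\CC)$ becomes $\CC[\GL{n}]e_{U_\lambda}$ and $Fun(\GL{n}/P_{\lambda},\CC)$ becomes $\CC[\GL{n}]e_{P_\lambda}$. By Proposition \ref{Radon Expansion}, $\R$ acts on both by right multiplication by $\R=|U_\lambda|^2e_{U_\lambda}e_{U_\lambda^{op}}e_{U_\lambda}$.

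First I write $i$ and $j$ explicitly. The inclusion $i$ is the tautological inclusion $\CC[\GL{n}]e_{P_\lambda}\subset\CC[\GL{n}]e_{U_\lambda}$; it makes sense because $e_{P_\lambda}=e_{M_\lambda}e_{U_\lambda}=e_{U_\lambda}e_{M_\lambda}$, so $P_\lambda$-invariant elements are $U_\lambda$-invariant. Fiber integration along $\mu$ sums over the fiber $gP_\lambda/U_\lambda\cong M_\lambda$. On functions it reads $j(f)(gP_\lambda)=\sum_{m\in M_\lambda}f(gmU_\lambda)$. In the group algebra this is right multiplication by $\sum_{m\in M_\lambda}m=|M_\lambda|e_{M_\lambda}$, which sends $\CC[\GL{n}]e_{U_\lambda}$ into $\CC[\GL{n}]e_{U_\lambda}e_{M_\lambda}=\CC[\GL{n}]e_{P_\lambda}$.

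The identity $j\circ i=|M_\lambda|\cdot id$ is then immediate. A $P_\lambda$-invariant $f$ satisfies $f(gm)=f(g)$ for all $m\in M_\lambda$, so the fiber sum gives $|M_\lambda|f$. Equivalently, $x e_{P_\lambda}\cdot|M_\lambda|e_{M_\lambda}=|M_\lambda|xe_{P_\lambda}$, because $e_{P_\lambda}e_{M_\lambda}=e_{P_\lambda}$.

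For commutation, both $i$ and $j$ are right multiplications: by $1$ and by $|M_\lambda|e_{M_\lambda}$ respectively. It therefore suffices to show that $e_{M_\lambda}$ commutes with $\R$. For each $m\in M_\lambda$, the element $m$ normalizes $U_\lambda$ and $U_\lambda^{op}$, so $me_{U_\lambda}m^{-1}=e_{U_\lambda}$ and $me_{U_\lambda^{op}}m^{-1}=e_{U_\lambda^{op}}$; this is the fact already used in Lemma \ref{intertwine}. Hence $m\R=\R m$, and averaging over $M_\lambda$ gives $e_{M_\lambda}\R=\R e_{M_\lambda}$. Consequently $\R$ preserves $\CC[\GL{n}]e_{P_\lambda}$, since $xe_{P_\lambda}\R=x\R e_{P_\lambda}$. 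This gives $\R\circ i=i\circ\R$, and $j(y\R)=y\R|M_\lambda|e_{M_\lambda}=y|M_\lambda|e_{M_\lambda}\R=\R(j(y))$.

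The only step needing care is matching the fiber-integration definition of $j$ with right multiplication by $|M_\lambda|e_{M_\lambda}$ under $\phi$, including the normalization of cosets $gU_\lambda=\sum_{u}gu$. I would check this on a basis element $gU_\lambda$, which maps to $\sum_m gmU_\lambda=gP_\lambda$. The same check confirms that the constant is exactly $|M_\lambda|$, with no stray factors of $|U_\lambda|$.
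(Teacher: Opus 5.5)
Your proposal is correct and follows the paper's own argument. Both identify $j$ under $\phi$ with right multiplication by $|M_\lambda|e_{M_\lambda}$, read off $j\circ i=|M_\lambda|\cdot id$ from the fiber $\mu^{-1}(gP_\lambda)=(gM_\lambda)U_\lambda$, and deduce commutation with $\R$ from $e_{M_\lambda}$ commuting with $e_{U_\lambda}$ and $e_{U_\lambda^{op}}$. You additionally spell out details the paper leaves implicit, namely the basis-element check $gU_\lambda\cdot\sum_m m=gP_\lambda$ and the fact that $\R$ preserves $\CC[\GL{n}]e_{P_\lambda}$.
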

\begin{proof}
    The last statement follows by observing that 
    \begin{equation*}
        \mu^{-1}(gP_{\lambda})=(g\cdot M_{\lambda}) U_{\lambda} \quad \forall gP_{\lambda}\in GL_n(\mathbb{F}_q)/P_{\lambda}.
    \end{equation*}
    The commutativity of $i$ and $\R$ follows from consideration of the isomorphism $\phi$ from \eqref{fun_iso}. We deduce the statement for $j$ by observing that $\phi\circ j \circ \phi^{-1}$ is given by the right multiplication by $|M_{\lambda}|e_{M_{\lambda}}$. We note that $e_{M_{\lambda}}e_{U_{\lambda}}=e_{U_{\lambda}}e_{M_{\lambda}}$ and $e_{M_{\lambda}}e_{U_{\lambda}^{op}}=e_{U_{\lambda}^{op}}e_{M_{\lambda}}$.\\
\end{proof}

In particular, we see that any eigenvalue of Radon transform appearing in \linebreak $Fun(GL_n(\mathbb{F}_q)/P_{\lambda},\mathbb{C})$ appears in $Fun(GL_n(\mathbb{F}_q)/U_{\lambda},\mathbb{C})$ as well. This is because an eigenvector in $Fun(GL_n(\mathbb{F}_q)/P_{\lambda},\mathbb{C})$ can be embedded into $Fun(GL_n(\mathbb{F}_q)/U_{\lambda},\mathbb{C})$ via $i$.\\


\begin{ex}
    The Proposition \ref{parabolic-unipotent} allows us to find the eigenvalues of the Radon transform in the case of $\lambda = (1, n-1)$ corresponding to the eigenvectors lying in the image of $i$ from \eqref{parabolic-unipotent_maps}. Namely, any such eigenvalue is either $q^{n-2}$ or $q^{2n-2}$.\\
\end{ex}

\begin{proof}
    Note that $GL_n(\mathbb{F}_q) / P_\lambda$ is relatively small, and the coset elements can be enumerated by points on the projective space $\mathbb{P}^{n-1}$. Recall that $P_\lambda$ is the set of invertible matrix of the following form
    \begin{equation*}
        \begin{pmatrix}
            * & * & \cdots & * \\
            0 & * & \cdots & *\\
            \vdots & \vdots & & \vdots \\
            0 & * & \cdots & *\\
        \end{pmatrix}.
    \end{equation*}

    Let $GL_n(\mathbb{F}_q)$ act on a vector space $V$ of dimension $n$ over $\mathbb{F}_q$. Let us fix a basis $b_1, \dots, b_n$ of $V$.\\

    Note that the image of $b_1$ under the action of any representative of the coset $gP_\lambda\in \GL{n}/P_{\lambda}$ for a $g \in \GL{n}$ is a vector proportional to the first column in $g$. In addition, all other basis elements $b_2,\dots,b_n$ can be sent to any collection of vectors in $V$ linearly independent from $g$ (and each other) for an appropriate choice of a representative of the coset $gP_\lambda$. Thus, we may identify the coset with the projective space $\mathbb{P}(V)$ of lines in $V$.\\

    Now we wish to compute the matrix of the action of the Radon transform on the coset space $\CC[\GL{n}/P_{\lambda}]=\CC[\mathbb{P}(V)]$. Recall that the action is given by
    \begin{equation}\label{example_mirabolic}
        \R \cdot gP_{\lambda}= \sum_{u \in U_\lambda}\sum_{u^{op} \in U_\lambda^{op}}g u  u^{op} P_\lambda.
    \end{equation}

    For a particular choice of $u$ and $u^{op}$, we need to find the image of the line $\langle b_1 \rangle$ corresponding to the first basis element $b_1$. 
    After application of the operator $\sum_{u^{op} \in U^{op}}u^{op}$ to $\langle b_1 \rangle$, we get
    
    \begin{equation*}
        \sum_{u^{op} \in U^{op}}u^{op} \cdot \langle b_1 \rangle=\sum_{c_i \in \mathbb{F}_q}\langle b_1 + c_2 b_2 + c_3 b_3 + \cdots + c_n b_n\rangle.
    \end{equation*}

    Now, after applying $\sum_{u \in U_\lambda}u$ we get 
    
    \begin{equation*}
        \sum_{c_i, d_i \in \mathbb{F}_q} \langle(1+c_2d_2+c_3d_3 + \cdots + c_nd_n)b_1+c_2b_2 + c_3b_3 + \cdots +c_n b_n\rangle.
    \end{equation*}
    
    If $c_i = 0$ for all $i$, the coefficient in front of $b_1$ is $1$. We can simplify the summation to
    
    \begin{equation*}
        \sum_{d_i \in \mathbb{F}_q} \langle b_1 \rangle = q^{n-1} \langle b_1 \rangle.
    \end{equation*}
    
    Otherwise, the coefficient in front of $b_1$ varies across the entire set $\mathbb{F}_q$ as we have at least one nonzero $c_i$ for some $i$, and by varying $d_i \in \mathbb{F}_q$ we can vary the coefficient in front of $b_1$ freely. So, we can replace it with $d = 1+c_2d_2+c_3d_3 + \cdots + c_nd_n$. Then, the sum corresponding to such terms can be written as
    
    \begin{equation}\label{lines_avoiding_b1}
        q^{n-2}\sum_{\substack{c_i, d \in \mathbb{F}_q\\\text{ not all $c_i$ are }0}} \langle db_1+c_2b_2 + c_3b_3 + \cdots +c_n b_n \rangle.
    \end{equation}

    Now, note that this expression covers all points not on the line $\langle b_1 \rangle$ (and thus all other lines) equally. The number of non-zero points on each line is $q-1$, so the sum \eqref{lines_avoiding_b1} becomes
    \begin{equation*}
         q^{n-2}(q-1) \sum_{\text{all lines } \ell \ne \langle b_1 \rangle} \ell.
    \end{equation*}


    This shows that the matrix of the action of the Radon transform on $\CC[\mathbb{P}(V)]$ is given by $q^{n-2}Id + q^{n-2}(q-1)M$, where $Id$ is the identity matrix and $M$ is the matrix with all entries equal to $1$.\\

    The size of $M$ is the number of lines, which is equal to $\frac{q^n-1}{q-1}$. Then, since ${\rm rk}(M)=1$, the eigenvalues of $M$ are $\frac{q^n-1}{q-1}$ with multiplicity $1$ and $0$ with multiplicity $\frac{q^n-1}{q-1}-1$.\\

    Finally, the eigenvalues of $\R$ are $q^{n-2} + q^{n-2}(q-1)\frac{q^n-1}{q-1}=q^{2n-2}$ and $q^{n-2}+0=q^{n-2}$ with multiplicities $1$ and $\frac{q^n-1}{q-1}-1$ respectively.
\end{proof}

\begin{prop}\label{symmetric_Rd}
    The matrix of the Radon transform with respect to the standard basis is symmetric.
\end{prop}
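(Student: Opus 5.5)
The standard basis here is the basis of cosets $gU_{\lambda}$, $g\in\GL{n}/U_{\lambda}$, of $\CC[\GL{n}/U_{\lambda}]=\CC[\GL{n}]e_{U_{\lambda}}$, as in Proposition \ref{convol_Hecke_mod}. I plan to compute the matrix coefficients of $\R$ in this basis directly and show that they are symmetric in the two indices. By Proposition \ref{Radon Expansion}, $\R$ acts by right multiplication by the element $|U_{\lambda}|^2e_{U_{\lambda}}e_{U_{\lambda}^{op}}e_{U_{\lambda}}$. Since $g e_{U_\lambda}$ is already right $U_\lambda$-invariant, we get
\[
\R(gU_{\lambda}) = \sum_{\bar u\in U_{\lambda}^{op}}\ \sum_{u\in U_{\lambda}} g\bar u u
= \sum_{hU_{\lambda}} N(g,h)\, hU_{\lambda},
\]
where
\[
N(g,h)=\frac{1}{|U_\lambda|}\#\{(\bar u,u)\in U_{\lambda}^{op}\times U_{\lambda} : g\bar u u\in hU_{\lambda}\}=\#\{\bar u\in U_{\lambda}^{op} : g\bar u\in hU_{\lambda}\}.
\]
The second equality holds because for each admissible $\bar u$ every $u$ works, which cancels the factor $|U_\lambda|$.

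The matrix entry in row $hU_\lambda$ and column $gU_\lambda$ is therefore $N(g,h)=\#\{\bar u\in U_\lambda^{op}: \bar u\in g^{-1}hU_\lambda\}=|U_{\lambda}^{op}\cap g^{-1}hU_{\lambda}|$. This quantity is well defined on cosets. Replacing $h$ by $hu$ does not change the coset $g^{-1}hU_\lambda$. Replacing $g$ by $gu$ gives $u^{-1}g^{-1}hU_\lambda$, which is a different coset, so here I will use the cleaner expression
\[
N(g,h)=\#\{(\bar u,u_1,u_2)\in U_{\lambda}^{op}\times U_\lambda\times U_\lambda: g u_1\bar u = h u_2\}/|U_\lambda|.
\]
This form is manifestly independent of the representatives, and it equals the count above, because right multiplication by $e_{U_\lambda}$ on the left of $e_{U_\lambda^{op}}$ is already present in $\R$.

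The key step is the symmetry $N(g,h)=N(h,g)$. Inversion $\bar u\mapsto \bar u^{-1}$ is a bijection of $U_{\lambda}^{op}$, and $u\mapsto u^{-1}$ is a bijection of $U_\lambda$. Solutions of $g u_1\bar u = h u_2$ correspond bijectively to solutions of $h u_2\bar u^{-1} = g u_1$, that is, to triples $(\bar u^{-1},u_2,u_1)$ for the equation with $g$ and $h$ swapped. So the counts agree, and the matrix is symmetric.

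The only delicate point is bookkeeping: making sure that the coefficient of $hU_\lambda$ in $\R(gU_\lambda)$ is exactly this triple count divided by $|U_\lambda|$, with the outer $e_{U_\lambda}$ on both sides properly accounted for. That is, expanding
\[
|U_\lambda|^2\, g\, e_{U_\lambda}e_{U_\lambda^{op}}e_{U_\lambda}
\]
and collecting the coefficient of the basis element $hU_\lambda=\sum_{u}hu$ must give a count symmetric in the two outer $U_\lambda$ factors. Equivalently, one can observe that $\R$ is self-adjoint for the inner product on $\CC[\GL{n}]$ in which group elements are orthonormal. This holds because the anti-involution $x\mapsto x^{-1}$ fixes $e_{U_\lambda}e_{U_\lambda^{op}}e_{U_\lambda}$: it reverses the order of the factors and fixes each $e_H$. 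The coset basis is orthogonal, with all vectors of equal norm $|U_\lambda|$, so the matrix of a self-adjoint operator in this basis is symmetric.
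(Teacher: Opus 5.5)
Your core argument is the paper's: the coefficient of $hU_\lambda$ in $\R(gU_\lambda)$ is the number of triples $(u_1,\bar u,u_2)$ with $gu_1\bar u=hu_2$, and $(\bar u,u_1,u_2)\mapsto(\bar u^{-1},u_2,u_1)$ is a bijection onto the solutions with $g$ and $h$ swapped.

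The bookkeeping that leads you there is wrong, although it does not affect the conclusion. Since $gU_\lambda=|U_\lambda|\,g e_{U_\lambda}$, one has
\[
\R(gU_\lambda)=|U_\lambda|^3\, g\,e_{U_\lambda}e_{U_\lambda^{op}}e_{U_\lambda}=\sum_{u_1\in U_\lambda,\ \bar u\in U_\lambda^{op}}(gu_1\bar u)U_\lambda .
\]
So your first formula $\sum_{\bar u}\sum_u g\bar u u$ drops the left factor $e_{U_\lambda}$. That is exactly why $|U_\lambda^{op}\cap g^{-1}hU_\lambda|$ came out depending on the representative. The triple count divided by $|U_\lambda|$ is the \emph{average} of that quantity over the representatives $gu_1$ of $gU_\lambda$, not equal to it.

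The actual matrix entry is the triple count itself, with no division. For instance, the diagonal entries are $|U_\lambda|$, as used in Proposition \ref{trace_and_square}, whereas your $N(g,g)=1$. Your final $N$ differs from the true entry only by the uniform factor $|U_\lambda|$, so symmetry survives, but you should state the count correctly.

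Your closing self-adjointness argument is correct, and it is a genuinely cleaner route. The linear anti-involution $g\mapsto g^{-1}$ fixes the palindromic product $e_{U_\lambda}e_{U_\lambda^{op}}e_{U_\lambda}$. Because this element has real coefficients, right multiplication by it is self-adjoint for the inner product in which group elements are orthonormal. The coset basis is orthogonal with all squared norms equal to $|U_\lambda|$ (so the norm is $\sqrt{|U_\lambda|}$, not $|U_\lambda|$). The entries are real, so the matrix is symmetric.

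This route needs no coefficient bookkeeping, and it applies verbatim to any palindromic product of subgroup idempotents such as $e_{H_1}e_{H_2}e_{H_1}$. The paper's explicit triple count, by contrast, is what it reuses later to get the coefficients $q^{(k-r)(n-k-r)}$ and $\tr(\R^2)$.
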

\begin{proof}
    This is equivalent to showing that the coefficient of $g_2$ in $\R (g_1)$ is the same as the coefficient of $g_1$ in $\R (g_2)$. Fix a representative $g_1' \in \GL{n}$ in $g_1 U_\lambda$ and a representative $g_2' \in \GL{n}$ in $g_2 U_\lambda$. The coefficient of $g_2$ is precisely the number of triples $(u_1, u^{op}, u_2)$, where $u_1, u_2 \in U_\lambda$ and $u^{op} \in U_\lambda^{op}$ that satisfy the equation $g_2 = g_1 u_1 u^{op} u_2$. However, since $U_\lambda$ and $U_\lambda^{op}$ are groups, we can take the inverse, and $g_1 = g_2 u_2^{-1} (u^{op})^{-1}u_1^{-1}$. In this way, we have a bijection between the triples that correspond to the coefficient of $g_2$ and the triples that correspond to the coefficient of $g_1$, so the number of these are equal and the matrix of the Radon transform with respect to the standard basis is symmetric.\\
\end{proof}

\begin{cor}
    All the eigenvalues of the Radon transform are real.\\
\end{cor}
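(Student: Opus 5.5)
The plan is to deduce the corollary directly from Proposition \ref{symmetric_Rd} and the spectral theorem for real symmetric matrices.

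First, fix the standard basis of $\CC[\GL{n}/U_{\lambda}]$ given by the coset elements $gU_{\lambda}$, as in Proposition \ref{convol_Hecke_mod}. The entries of the matrix of $\R$ in this basis are counts of triples $(u_1,u^{op},u_2)$, as in the proof of Proposition \ref{symmetric_Rd}. More precisely, they are nonnegative integers, possibly rescaled by the fixed positive normalization $|U_{\lambda}|^2$ and powers of $|U_\lambda|$ coming from $e_{U_{\lambda}}$. So this matrix has real entries. By Proposition \ref{symmetric_Rd} it is also symmetric, and therefore it is a real symmetric matrix.

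By the spectral theorem, a real symmetric matrix is orthogonally diagonalizable over $\mathbb{R}$. Directly: if $Av=\mu v$ with $v\neq 0$, then $\bar v^{T}Av=\mu\|v\|^2$, and this quantity is also equal to its own conjugate, so $\mu\in\mathbb{R}$. Hence every eigenvalue of $\R$ on $\CC[\GL{n}/U_{\lambda}]$ is real.

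To pass to the whole group algebra, I will use the remark after Proposition \ref{Radon Expansion}. Right multiplication by $\R$ on $\CC[\GL{n}]$ has image in the right $U_{\lambda}$-invariants, so its nonzero eigenvalues coincide with those on $\CC[\GL{n}]e_{U_{\lambda}}$. The remaining eigenvalue is $0$, which is real. The same conclusion holds for the restriction to any irreducible component as in Lemma \ref{intertwine}.

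There is no real obstacle here. The only step needing care is confirming that the basis in which Proposition \ref{symmetric_Rd} asserts symmetry is one in which the matrix entries are real. This is clear, because they are counts of group-element solutions.
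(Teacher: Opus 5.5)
Your proposal is correct and matches the paper: the corollary is stated there as an immediate consequence of Proposition~\ref{symmetric_Rd}. As you argue, the matrix of $\R$ in the coset basis has nonnegative integer entries and is symmetric, so the spectral theorem forces its eigenvalues to be real; your extra step handling the zero eigenvalue on all of $\CC[\GL{n}]$ is a harmless addition.
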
 

\begin{prop}
    The maximum eigenvalue of the Radon transform is $|U_\lambda|^2$.\\
\end{prop}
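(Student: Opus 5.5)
We need to show that every eigenvalue of $\R=|U_{\lambda}|^2e_{U_{\lambda}}e_{U_{\lambda}^{op}}e_{U_{\lambda}}$, acting by right multiplication on $\CC[\GL{n}]$, is at most $|U_{\lambda}|^2$ in absolute value, and that $|U_{\lambda}|^2$ is actually attained.

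\emph{Attainment.} Take the element $\sum_{g\in \GL{n}} g$. It is invariant under right multiplication by any group element, so right multiplication by $e_{U_{\lambda}}$ and by $e_{U_{\lambda}^{op}}$ fixes it. Hence $\R$ acts on it by the scalar $|U_{\lambda}|^2$. Equivalently, use Proposition \ref{counit}: applying $\epsilon$ gives $\epsilon(\R)=|U_{\lambda}|^2$, and the trivial representation factor in \eqref{Bimod_decomp} is an eigenvector with this eigenvalue.

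\emph{Upper bound.} Use the unitary structure of $\CC[\GL{n}]$ with respect to the standard inner product, in which the group elements form an orthonormal basis. Right multiplication by any $h\in \GL{n}$ permutes this basis, so it is a unitary operator. For a subgroup $H$, right multiplication by $e_H$ is then an average of unitaries, so its operator norm is at most $1$. (In fact it is the orthogonal projection onto right $H$-invariants, since $e_H$ is idempotent and self-adjoint, because $h^{-1}$ ranges over $H$ as $h$ does.) Therefore right multiplication by $e_{U_{\lambda}}e_{U_{\lambda}^{op}}e_{U_{\lambda}}$ has norm at most $1$, and every eigenvalue $\mu$ of $\R$ satisfies $|\mu|\le |U_{\lambda}|^2$.

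There is a second way to see the bound. The operator $P e_{U_{\lambda}^{op}} P$, where $P$ is right multiplication by $e_{U_{\lambda}}$, is a product of orthogonal projections of the form $PQP$. It is therefore positive semidefinite with spectrum in $[0,1]$. This matches Proposition \ref{symmetric_Rd} and shows that all eigenvalues lie in $[0,|U_{\lambda}|^2]$.

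The same bound also follows from the symmetric matrix of Proposition \ref{symmetric_Rd} by a row-sum argument: the matrix has nonnegative entries and every row sums to $|U_{\lambda}|^2$. Indeed, on coset elements $gU_{\lambda}$, the image $\R\cdot gU_{\lambda}$ has total coefficient $|U_{\lambda}|^2$ after normalizing by $|U_{\lambda}|$, as $\epsilon$ shows. By Perron–Frobenius, or Gershgorin, the spectral radius of such a matrix equals its row sum.

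The only care needed is bookkeeping of normalizations: making sure that "Radon transform" here means the operator $\R$ with the factor $|U_{\lambda}|^2$, and that it is restricted to $\CC[\GL{n}]e_{U_{\lambda}}$ rather than measured against unnormalized coset sums. The norm argument sidesteps this bookkeeping, so I would present that argument as the main proof.
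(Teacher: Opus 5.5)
Your proof is correct, and your main argument takes a different route from the paper's. The paper's proof is exactly your last paragraph: in the basis of cosets $gU_{\lambda}$ the matrix of $\R$ has nonnegative entries and constant row sums $|U_{\lambda}||U_{\lambda}^{op}|=|U_{\lambda}|^2$. By Perron--Frobenius the largest eigenvalue is therefore at most $|U_{\lambda}|^2$, and the all-ones vector (your $\sum_{g}g$) attains it.

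Your primary route uses the $\ell^2$ structure of $\CC[\GL{n}]$ instead. Right multiplication by $e_H$ is the orthogonal projection onto right $H$-invariants, so $|U_{\lambda}|^{-2}\R$ acts as $PQP$ with $P,Q$ orthogonal projections. This gives more than the paper's argument:
\begin{itemize}
\item it shows at once that $\R$ is self-adjoint, recovering Proposition \ref{symmetric_Rd} and the reality of the spectrum;
\item it shows $\R$ is positive semidefinite, so every eigenvalue lies in $[0,|U_{\lambda}|^2]$;
\item combined with the paper's later invertibility result, all eigenvalues are then strictly positive. Perron--Frobenius with row sums only bounds $|\mu|$, and even with symmetry it only places the spectrum in $[-|U_{\lambda}|^2,|U_{\lambda}|^2]$.
\end{itemize}
It also avoids the normalization bookkeeping between $e_{U_{\lambda}}$ and unnormalized coset sums, as you note. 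The paper's argument, in turn, is purely combinatorial and rests only on the nonnegativity of the coset-basis matrix. That is the same viewpoint the paper reuses later to compute the matrix coefficients and $\operatorname{Tr}(\R^2)$.
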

\begin{proof}
    Note that the matrix of the Radon transform $\R$ in the standard basis is given by non-negative numbers. Additionally, any row sum of such a matrix is equal to $|U_{\lambda}||U_{\lambda}^{op}|=|U_{\lambda}|^2$. Therefore, the Perron–Frobenius eigenvalue of $\R$ is not greater than $|U_{\lambda}|^2$. On the other hand, such an eigenvalue can be achieved by substituting all $1$s for vector coefficients.
\end{proof}

\begin{prop}\label{blocks}
    The matrix of the Radon transform can be divided into $q-1$ equal sized and equivalent blocks.\\
\end{prop}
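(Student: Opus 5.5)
Our plan is to use the determinant. Since every element of $U_\lambda$ and of $U_\lambda^{op}$ is unipotent, $\det(u)=\det(u^{op})=1$ for all $u\in U_\lambda$, $u^{op}\in U_\lambda^{op}$. Consider the group homomorphism
\[
\det:\GL{n}\to \mathbb{F}_q^{\times},
\]
whose target has $q-1$ elements. Because $U_\lambda\subset\ker\det$, the map descends to $\GL{n}/U_\lambda$. For each $a\in\mathbb{F}_q^\times$ set
\[
X_a:=\{gU_\lambda\,:\,\det g=a\}.
\]
These $q-1$ sets partition the standard basis of $\CC[\GL{n}/U_\lambda]$.

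First we show that $\R$ preserves each span $\CC[X_a]$. By Proposition \ref{Radon Expansion}, $\R(gU_\lambda)$ is a combination of cosets $g u_1 u^{op} u_2 U_\lambda$. Each of these has determinant $\det g$, since every factor after $g$ is unipotent. Hence, once the basis is ordered by determinant, the matrix of $\R$ is block diagonal with $q-1$ diagonal blocks.

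Next we show the blocks have equal size and are equivalent. Fix $a\in\mathbb{F}_q^\times$ and let $t_a=\diag(a,1,\dots,1)$. Left multiplication $gU_\lambda\mapsto t_a gU_\lambda$ is a bijection $X_b\to X_{ab}$, because $\det(t_a g)=a\det g$ and it has inverse $t_{a^{-1}}$. So all $X_a$ have cardinality $|\GL{n}|/((q-1)|U_\lambda|)$. By Lemma \ref{intertwine}, $\R$ commutes with left multiplication by $t_a$. Equivalently, the matrix coefficient of $g_2U_\lambda$ in $\R(g_1U_\lambda)$ equals that of $t_ag_2U_\lambda$ in $\R(t_ag_1U_\lambda)$. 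Using the bijection $X_1\to X_a$ to identify bases, the block on $X_a$ therefore coincides entry by entry with the block on $X_1$. In other words, the blocks are conjugate by permutation matrices and in particular have identical spectra.

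No step here is a serious obstacle. The one point that needs care is making sure that left multiplication by $t_a$ genuinely transports the matrix entries, rather than merely commuting with $\R$ up to reindexing. This holds because left multiplication permutes the basis cosets and the number of triples $(u_1,u^{op},u_2)$ with $g_2U_\lambda=g_1u_1u^{op}u_2U_\lambda$ does not change when both $g_1$ and $g_2$ are multiplied on the left by $t_a$.

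As a consequence, the multiplicity of every eigenvalue of $\R$ is divisible by $q-1$, which is the statement announced in the introduction.
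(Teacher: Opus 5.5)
Your proof is correct and follows the paper's own argument. The paper likewise splits the cosets by determinant, notes that $U_\lambda$ and $U_\lambda^{op}$ consist of determinant-one matrices, and gets equivalence of the blocks by "factoring out" a matrix of the given determinant via Lemma \ref{intertwine}. Your version makes that step explicit with left multiplication by $t_a=\diag(a,1,\dots,1)$ and the entry-by-entry transport of matrix coefficients.
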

\begin{proof}
    This is because we can consider the determinant of the matrix. Since $U_\lambda$ and $U_\lambda^{op}$ only contain determinant $1$ matrices, it acts separately on matrices of each determinant. It acts equivalently on each determinant matrix because we can simply factor out a matrix of the same determinant (i.e. determinant $2$ from a determinant $2$ matrix) using \ref{intertwine} to show that the action is symmetric across all determinant classes. Since there are $q-1$ different determinants, the matrix can be divided into $q-1$ equal sized and equivalent blocks.\\
\end{proof}

\begin{cor}
    All the multiplicities of eigenvalues of the Radon transform are divisible by $q-1$.\\
\end{cor}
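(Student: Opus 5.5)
The plan is to make the block structure of Proposition \ref{blocks} explicit through a single invertible operator that commutes with $\R$. Once the $q-1$ blocks are shown to be conjugate to one another, the characteristic polynomial of $\R$ on $\CC[\GL{n}]e_{U_{\lambda}}$ is the $(q-1)$-th power of the characteristic polynomial of one block, and the corollary follows.

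First, decompose $\CC[\GL{n}/U_{\lambda}]=\bigoplus_{a\in\mathbb{F}_q^\times}W_a$. Here $W_a$ is spanned by the cosets $gU_{\lambda}$ with $\det g=a$. This is well defined because every element of $U_{\lambda}$ has determinant $1$. By Proposition \ref{Radon Expansion}, $\R$ acts by right multiplication by elements of $U_{\lambda}$ and $U_{\lambda}^{op}$, and both subgroups lie in $SL_n(\mathbb{F}_q)$. Hence $\R(W_a)\subseteq W_a$, and $\R=\bigoplus_a \R|_{W_a}$.

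Next, for each $a$ fix $d_a=\diag(a,1,\dots,1)\in\GL{n}$. Left multiplication $L_{d_a}:gU_{\lambda}\mapsto d_agU_{\lambda}$ is an invertible linear map that sends $W_1$ bijectively onto $W_a$, since it permutes the basis cosets. By Lemma \ref{intertwine}, $\R$ commutes with the left $\GL{n}$-action, so $L_{d_a}\circ\R|_{W_1}=\R|_{W_a}\circ L_{d_a}$. Thus $\R|_{W_a}$ is conjugate to $\R|_{W_1}$. In particular, every eigenvalue $\mu$ has the same algebraic multiplicity, and the same geometric multiplicity, in each $W_a$.

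Finally, the multiplicity of $\mu$ on the whole space is the sum over the $q-1$ blocks, which equals $(q-1)$ times its multiplicity on $W_1$. Proposition \ref{symmetric_Rd} shows that $\R$ is diagonalizable, so algebraic and geometric multiplicities coincide and no further distinction is needed. The same conclusion holds on all of $\CC[\GL{n}]$, because the kernel of $\R$ there is also determinant-graded and permuted by the $L_{d_a}$.

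The only delicate point is making sure that the conjugation is by an operator commuting with $\R$. This holds because $\R$ is right multiplication while the $L_{d_a}$ act on the left. Using right multiplication by $d_a$ instead would fail, since $d_a$ need not normalize $U_{\lambda}$ in a way compatible with $e_{U_{\lambda}^{op}}$. So the choice of left multiplication is essential, and everything else is routine bookkeeping.
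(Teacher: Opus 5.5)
Your proof is correct and is essentially the paper's argument for Proposition~\ref{blocks}: split by determinant, then use the left $\GL{n}$-equivariance from Lemma~\ref{intertwine} to identify the $q-1$ blocks; you just make it explicit via $d_a$. One correction to your closing remark: right multiplication by $d_a$ would \emph{not} fail, because $d_a$ is diagonal and so lies in $M_\lambda$, which normalizes both $U_\lambda$ and $U_\lambda^{op}$, so it also commutes with $\R$ (this is the right $M_\lambda$-equivariance in Lemma~\ref{intertwine}), and the choice of left multiplication is therefore not essential.
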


Note that in fact, the Radon transform can be defined for the group algebra $A[\GL{n}]$ over any commutative ring $A$ such that $q$ is invertible in $A$.\\

\begin{prop}
    Let $A$ be a commutative ring in which $q$ is invertible. The Radon transform $\R$ is an isomorphism of left $A[\GL{n}\times M_{\lambda}]$-modules
    \[
    \R: A[\GL{n}/U_{\lambda}]\xrightarrow{\sim} A[\GL{n}/U_{\lambda}].
    \]
    Here $A[\GL{n}/U_{\lambda}]=A[\GL{n}]e_{U_{\lambda}}$ (since $q$ is invertible).
\end{prop}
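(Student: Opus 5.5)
Equivariance is immediate: $\R$ acts by right multiplication by the element $|U_\lambda|^2e_{U_\lambda}e_{U_\lambda^{op}}e_{U_\lambda}\in A[\GL{n}]$. This element makes sense over $A$ because $|U_\lambda|$ is a power of $q$, hence invertible. Right multiplication by it commutes with the left $\GL{n}$-action. It also commutes with the right $M_\lambda$-action, since $M_\lambda$ normalizes $U_\lambda$ and $U_\lambda^{op}$, exactly as in Lemma \ref{intertwine}. So the whole content of the statement is invertibility. We plan to prove this by an explicit inverse, not by computing a determinant.

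\textbf{Step 1: the rank-one case.} First treat $n=2$, $\lambda=(1,1)$, inside the group algebra of $SL_2(\mathbb{F}_q)$. Write $x(a)$ and $y(b)$ for the elementary upper and lower unipotents, and set $E=\sum_a x(a)$, $F=\sum_b y(b)$. Bruhat decompose $x(a)y(b)x(c)$: if $1+ab\neq 0$ it lies in $U\,t\,U^{op}$-type cells, and if $1+ab=0$ it lands in the big cell with the Weyl element $s$. From this we expect a quadratic relation in the algebra $e_U A[G] e_U$ generated by $EFE$ over the torus part. The relation should be of the form $(EFE)^2 = q\,EFE\cdot \tau + (\text{torus-averaged terms})$, and it expresses $q^{-k}(EFE)$ as an invertible combination. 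Equivalently, compute $\R$ on the basis of $A[U\backslash G/U]$-cosets, namely $U t U$ and $U t s U$. The result is a small triangular-plus-torus matrix whose determinant is a power of $q$ times units. This is consistent with the eigenvalues $1,q,q^2$ predicted in Section 4.

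\textbf{Step 2: general $\lambda$.} Reduce to invertibility of $\R$ acting on $A[\GL{n}]e_{U_\lambda}$ from the right, viewed as right multiplication by an element of the Hecke algebra $\mathcal{H}=e_{U_\lambda}A[\GL{n}]e_{U_\lambda}$. It suffices to show that this element is a unit in $\mathcal{H}$. Write $U_\lambda=\prod_{\alpha}X_\alpha$ over the positive roots $\alpha$ not in the Levi, and similarly for $U_\lambda^{op}$ with the negative roots. Order these roots compatibly with heights. Then factor $e_{U_\lambda}e_{U_\lambda^{op}}e_{U_\lambda}$, up to conjugation by elements of $P_\lambda$, into products of rank-one pieces $e_{X_\alpha}e_{X_{-\alpha}}e_{X_\alpha}$, each living in a copy of $SL_2$. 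Each such piece is invertible in its rank-one Hecke algebra, with inverse having coefficients in $\mathbb{Z}[q^{-1}]$, by Step 1.

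\textbf{Main obstacle and a cleaner alternative.} The main obstacle is that the factorization in Step 2 is not literal, because root subgroups do not commute. A cleaner route is to argue via base change. Work over $\mathbb{Z}[q^{-1}]$, and let $\R$ be given by an integer matrix $N$ in the coset basis $gU_\lambda$. Proving invertibility over every such $A$ is equivalent to showing $\det N=\pm q^m$. To compute $\det N$, use the decomposition from Theorem \ref{AMT} over $\CC$: $\det N$ is the product of the eigenvalues of $\R$ over all irreducible summands. The eigenvalues are algebraic integers, since $N$ is an integer matrix. So it suffices to show each eigenvalue divides a power of $q$ in the ring of algebraic integers.

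For that divisibility, we look for a relation $\R\cdot S=q^{N'}\cdot e_{U_\lambda}$ with $S\in\mathbb{Z}[\GL{n}]e_{U_\lambda}$. The natural candidate is $S=\R_{w}$, a Radon-type operator conjugated by the longest element $w$ of the minimal coset representatives, in the spirit of $T_{w_0}$ being invertible in the Iwahori–Hecke algebra. We would try first to establish $\R=T^2$ for a suitable intertwiner $T=|U_\lambda|\,e_{U_\lambda}\dot w\, e_{U_\lambda}$ up to Levi terms. Then we would use the classical fact that $T$ satisfies a relation with integral coefficients and constant term $\pm q^{\dim U_\lambda}$. This gives the inverse with coefficients in $\mathbb{Z}[q^{-1}]$.
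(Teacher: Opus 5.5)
\textbf{There is a genuine gap: the one nontrivial input is assumed rather than proved.} Your equivariance argument and the base change to $\mathbb{Z}[q^{-1}]$ are fine. Step~2 fails, as you note yourself, so Step~1 settles only $n=2$.

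The ``cleaner alternative'' is circular. An integral $S$ with $\R\cdot S=q^{N'}e_{U_\lambda}$ already \emph{is} invertibility of $\R$ over $\mathbb{Z}[q^{-1}]$. So the determinant/eigenvalue detour adds nothing; divisibility of the eigenvalues is what the paper deduces \emph{from} this proposition. Everything is therefore pushed into the ``classical fact'' that $T=|U_\lambda|e_{U_\lambda}\dot w e_{U_\lambda}$ satisfies an integral relation whose constant term is a unit times a power of $q$.

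That fact is classical only for $\lambda=\lambda_n$. There Lemma~\ref{word_multiplication} writes $e_{U_{\lambda_n}}\dot w_0e_{U_{\lambda_n}}$ as a product of generators $g_i$, each invertible by the quadratic relation of Theorem~\ref{Yok_generators}. Moreover $w_0U_{\lambda_n}w_0=U_{\lambda_n}^{op}$ gives $\R=T^2$, so your plan does go through in that case. For general $\lambda$, the algebra $\mathcal{H}(\GL{n},U_\lambda)$ has no such presentation.

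You also cannot reduce to the Yokonuma case by passing to $U_{\lambda_n}$-invariants. Over $\CC$, consider the right $M_\lambda$-isotypic summands of $\CC[\GL{n}]e_{U_\lambda}$ attached to representations of $M_\lambda$ with no $(U_{\lambda_n}\cap M_\lambda)$-fixed vectors (e.g.\ cuspidal ones). These summands are killed by right multiplication by $e_{U_{\lambda_n}}$, so the Yokonuma computation says nothing about them.

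What is missing is the invertibility of intertwiners between unipotent radicals of parabolics with a common Levi factor. For instance, right multiplication by $e_{U_\lambda^{op}}$ should be an isomorphism $A[\GL{n}]e_{U_\lambda}\to A[\GL{n}]e_{U_\lambda^{op}}$ when $q\in A^\times$, and symmetrically. This is exactly the nontrivial theorem the paper imports from \cite[Theorem 2.4]{HL}. With it, $R$ and $R'$ are isomorphisms and hence so is $\R=R'\circ R$.

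\textbf{A second obstruction: $\R=T^2$ fails beyond palindromic $\lambda$.} Your plan needs $w$ with $\dot wU_\lambda\dot w^{-1}=U_\lambda^{op}$. Then $T=|U_\lambda|e_{U_\lambda}e_{U_\lambda^{op}}\dot w$ and $T^2=\R\,\dot w^{2}$ with $\dot w^2\in M_\lambda$. Such a $w$ exists only for palindromic $\lambda$. In general $w_0w_J$ conjugates $U_\lambda$ onto the opposite radical of the \emph{reversed} composition, e.g.\ $U_{(2,1)}$ onto $U^{op}_{(1,2)}$. For $\lambda=(2,1)$, the groups $U_\lambda$ and $U_\lambda^{op}$ are not conjugate in $\GL{3}$ at all, because their common fixed subspaces in $\mathbb{F}_q^3$ have dimensions $2$ and $1$. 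The same caveat applies to the identity $w_0w_JU_\lambda(w_0w_J)^{-1}=U_\lambda^{op}$ used in the paper's proof.

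So for non-palindromic $\lambda$ there is no single intertwiner $T\in\mathcal{H}(\GL{n},U_\lambda)$ whose square is $\R$. You have to treat $\R$ as the composite $R'\circ R$ through $A[\GL{n}]e_{U_\lambda^{op}}$, and each factor again needs the cited invertibility theorem.
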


\begin{proof}
    This statement follows directly by applying \cite[Theorem 2.4]{HL} for $J=K,\ w=w_0w_J$ in the original notation. To be more precise, let $w_0$ be the longest permutation in $S_n$ and $w_J$ be the longest permutation from the subgroup of permutation matrices in $M_{\lambda}$. Then \cite[Theorem 2.4]{HL} states that the linear map
    \[
    T: A[\GL{n}/U_{\lambda}] \rightarrow A[\GL{n}/U_{\lambda}], \quad T(x)=xe_{U_\lambda}w_0w_Je_{U_\lambda}
    \]
    is an isomorphism. Observe that $w_0w_J U_{\lambda}(w_0w_J)^{-1}=U_{\lambda}^{op}$. Therefore, if we compose $T$ with the conjugation by $(w_0w_J)^{-1}$, we will see that the map
    \[
    R: A[\GL{n}/U_{\lambda}] \rightarrow A[\GL{n}/U_{\lambda}^{op}], \quad R(x)=(w_0w_J)^{-1} T(x)w_0w_J
    \]
    is an isomorphism. It follows that $\R$ is invertible as well.
\end{proof}

\begin{cor}
    The eigenvalues of $\R$ are nonzero algebraic integers dividing powers of $q$.
\end{cor}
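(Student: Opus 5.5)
The idea is to prove integrality and invertibility of $\R$ over two different rings, and then compare characteristic polynomials.

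\emph{Integrality.} In the standard basis of cosets $gU_{\lambda}$, the operator $\R$ has nonnegative integer entries. By Proposition \ref{symmetric_Rd} and the count of triples, each entry is the number of $(u_1,u^{op},u_2)$ with a prescribed product, so it lies in $\mathbb{Z}_{\ge 0}$. Hence the characteristic polynomial $\chi(t)=\det(t\cdot\mathrm{Id}-\R)$ is monic with integer coefficients. Every eigenvalue is therefore an algebraic integer.

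\emph{Nonvanishing and divisibility.} Apply the previous Proposition with $A=\mathbb{Z}[1/q]$. The module $A[\GL{n}/U_{\lambda}]$ is free over $A$ with basis the coset elements $gU_{\lambda}$; we use these rather than $ge_{U_\lambda}$, since $|U_\lambda|$ is a power of $q$, so no denominators other than powers of $q$ appear. In this basis $\R$ is given by the same integer matrix. The Proposition says this matrix is invertible over $\mathbb{Z}[1/q]$, so $\det\R$ is a unit in $\mathbb{Z}[1/q]$. Since $\det \R\in\mathbb{Z}$, this forces $\det\R=\pm q^m$ for some $m\ge 0$.

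Now $\det\R$ is the product of all eigenvalues with multiplicity. In particular no eigenvalue is zero. For a fixed eigenvalue $\mu$, the product of the remaining eigenvalues is an algebraic integer $\nu$ with $\mu\nu=\pm q^m$. So $\mu$ divides $q^m$ in the ring of algebraic integers, as claimed.

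\emph{Main obstacle.} The only delicate point is matching the abstract isomorphism over $A$ with the concrete integer matrix. One must check that the Proposition's map is literally right multiplication by $|U_\lambda|^2e_{U_\lambda}e_{U_\lambda^{op}}e_{U_\lambda}$, which lies in $\mathbb{Z}[\GL{n}]$ up to powers of $q$. One must also check that invertibility of $R$ and of $R'$ (the latter by the symmetric argument with $U_\lambda$ and $U_\lambda^{op}$ swapped) gives invertibility of the composite $\R$ on the free $A$-module. Once that is settled, the determinant argument above is formal.
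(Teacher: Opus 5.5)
Your argument is correct and is the intended derivation: the paper states this corollary without proof, directly after the Proposition on invertibility over rings in which $q$ is invertible. Integrality of the coset-basis matrix of $\R$ gives algebraic integrality of the eigenvalues. Taking $A=\mathbb{Z}[1/q]$ in that Proposition makes $\det\R$ a unit, so every eigenvalue is nonzero and divides $\det\R$ among algebraic integers.

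One harmless slip: $\mathbb{Z}[1/q]=\mathbb{Z}[1/p]$ with $p$ the characteristic of $\mathbb{F}_q$, and its units are $\pm p^m$, not $\pm q^m$. So you only get $\det\R=\pm p^m$, which still divides $q^m$.
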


The following theorem helps us to describe the set of possible eigenvalues for the Radon transform with general $\lambda$ via the special case $\lambda=(k,n-k)$.\\

\begin{thm}\label{eigenvalues_box}
    Let $\lambda=(\lambda_1,\dots,\lambda_l)$ be a tuple of positive integers partitioning $n$ and $\lambda'=(\lambda_1,\dots,\lambda_{l-1})$. Suppose that $S=\{a_1,\dots, a_r\}$ and $S'=\{b_1,\dots, b_p\}$ are the sets of possible eigenvalues for corresponding Radon transforms of $(GL_{n}(\mathbb{F}_q),(n-\lambda_l,\lambda_l))$ and $(GL_{|\lambda'|}(\mathbb{F}_q), \lambda')$. Then the set of possible eigenvalues for Radon transform in the case of $(GL_{n}(\mathbb{F}_q),\lambda)$ is a subset of $S\cdot S':=\{a_1b_1, a_1b_2,\dots, a_rb_p\}$.
\end{thm}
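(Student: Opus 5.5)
The plan is to factor the Radon element of type $\lambda$ as a product of two commuting Radon elements: one of type $(n-\lambda_l,\lambda_l)$ and one of type $\lambda'$, the latter living inside the Levi factor. Put $U_1:=U_{(n-\lambda_l,\lambda_l)}$, and let $U'$ be the copy of $U_{\lambda'}\subset GL_{|\lambda'|}(\mathbb{F}_q)$ embedded in the upper-left block of $M_{(n-\lambda_l,\lambda_l)}$. Then $U_\lambda=U'\ltimes U_1$ and $U_\lambda^{op}=U'^{op}\ltimes U_1^{op}$. Both $U'$ and $U'^{op}$ lie in $M_{(n-\lambda_l,\lambda_l)}$, which normalizes $U_1$ and $U_1^{op}$. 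Consequently
\[
e_{U_\lambda}=e_{U_1}e_{U'}=e_{U'}e_{U_1},\qquad e_{U_\lambda^{op}}=e_{U_1^{op}}e_{U'^{op}}=e_{U'^{op}}e_{U_1^{op}},
\]
and each of $e_{U'}, e_{U'^{op}}$ commutes with each of $e_{U_1}, e_{U_1^{op}}$.

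Substituting into the element $\R$ of Proposition \ref{Radon Expansion}, and using $|U_\lambda|=|U_1||U'|$ together with the commutations above, I will rearrange
\[
\R=|U_\lambda|^2 e_{U_\lambda}e_{U_\lambda^{op}}e_{U_\lambda}=\bigl(|U_1|^2e_{U_1}e_{U_1^{op}}e_{U_1}\bigr)\cdot\bigl(|U'|^2e_{U'}e_{U'^{op}}e_{U'}\bigr)=:\R_1\R'.
\]
Here $\R'$ lies in $\CC[M_{(n-\lambda_l,\lambda_l)}]$, so $\R_1$ and $\R'$ commute. The operators of right multiplication by $\R_1$ and by $\R'$ on $\CC[\GL{n}]$ are therefore commuting. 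Each is diagonalizable, being a symmetric real matrix in the standard basis by the argument of Proposition \ref{symmetric_Rd}. Hence they can be simultaneously diagonalized, and every eigenvalue of $\R$ on $\CC[\GL{n}]$ is a product $ab$ of an eigenvalue $a$ of $\R_1$ and an eigenvalue $b$ of $\R'$.

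It then remains to identify these eigenvalues, and to take care with zero eigenvalues. By the remark after Proposition \ref{Radon Expansion}, the eigenvalues on $\CC[\GL{n}]e_{U_\lambda}$ are the nonzero eigenvalues on $\CC[\GL{n}]$; they are nonzero by the invertibility proposition. If $ab\neq0$, then $a\neq0$ and $b\neq0$. A nonzero eigenvalue $a$ of $\R_1$ on $\CC[\GL{n}]$ is an eigenvalue of the type $(n-\lambda_l,\lambda_l)$ Radon transform, so $a\in S$. For $\R'$, I restrict the right action to $H:=GL_{|\lambda'|}(\mathbb{F}_q)$. As a right $\CC[H]$-module, $\CC[\GL{n}]$ is free, a direct sum of copies of the regular module indexed by cosets $H\backslash\GL{n}$. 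Right multiplication by $\R'$ preserves each summand and acts on it as the Radon element of $(H,\lambda')$ acts on $\CC[H]$. Thus every nonzero $b$ is an eigenvalue of the Radon transform of $(GL_{|\lambda'|}(\mathbb{F}_q),\lambda')$, i.e.\ $b\in S'$. This gives the inclusion in $S\cdot S'$.

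The main obstacle I expect is the first step: justifying carefully the semidirect decompositions and the commutation of $e_{U'}$ with $e_{U_1^{op}}$, in particular that $U'$ normalizes $U_1^{op}$ as well as $U_1$. This should follow from both lying in the Levi subgroup $M_{(n-\lambda_l,\lambda_l)}$, which normalizes both unipotent radicals. The identity $e_{U'}e_{U_1}=e_{U'U_1}$ then uses $U'\cap U_1=\{e\}$. The rest is linear algebra on commuting diagonalizable operators.
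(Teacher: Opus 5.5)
Your proposal is correct and follows essentially the same route as the paper: write $e_{U_\lambda}=e_{U_1}e_{U'}$ and $e_{U_\lambda^{op}}=e_{U_1^{op}}e_{U'^{op}}$, factor $\R$ as the commuting product of the Radon elements of types $(n-\lambda_l,\lambda_l)$ and $\lambda'$, and diagonalize the two factors simultaneously. Your extra steps make explicit what the paper leaves implicit: you discard zero eigenvalues using invertibility, and you identify the spectrum of $\R'$ through the decomposition $\CC[\GL{n}]=\bigoplus_{gH}g\,\CC[H]$ as a right $\CC[H]$-module. You also correctly place $U'$ in $M_{(n-\lambda_l,\lambda_l)}$, where the paper writes $M_\lambda$.
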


\begin{proof}
    Let $U_{(n-\lambda_l,\lambda_l)}$ be the unipotent radical in $GL_{n}(\mathbb{F}_q)$ of type $(n-\lambda_l,\lambda_l)$ and $e_1=e_{U_{(n-\lambda_l,\lambda_l)}},e_1^{op}:=e_{U_{(n-\lambda_l,\lambda_l)}^{op}}$ be the corresponding projectors of the same type. Let $GL_{|\lambda'|}(\mathbb{F}_q)$ be the subgroup of $GL_n(\mathbb{F}_q)$ embedded into the upper left corner and let $e_2:=e_{U_{\lambda'}},e_2^{op}:=e_{U_{\lambda'}^{op}}$ be the corresponding subgroup projectors corresponding to $U_{\lambda'},U_{\lambda'}^{op}\subset GL_{|\lambda'|}(\mathbb{F}_q) \subset GL_{n}(\mathbb{F}_q)$.\\

    Since $U_{\lambda'}\cdot U_{(n-\lambda_l,\lambda_l)}=U_{\lambda}$ and $U_{\lambda'}\subset M_{\lambda}$, we know that $e_1,e_1^{op}$ and $e_2,e_2^{op}$ commute and
    \begin{equation}
        e_1 e_2=e_{U_{\lambda}}, \quad e_1^{op} e_2^{op}=e_{U_{\lambda}^{op}}.
    \end{equation}
    Therefore the corresponding smaller Radon transforms commute and their product gives us the larger Radon transform.\\

    Now we note that for any irreducible representation $V$ of $GL_n(\mathbb{F}_q)$ its space $V^{U_{\lambda}}$ of $U_{\lambda}$-invariants is $U_{(n-\lambda_l,\lambda_l)}$ and $U_{\lambda'}$-invariant. Since both transforms are diagonalizable and they commute with each other, both transforms can be diagonalized simultaneously, and the statement follows.\\
\end{proof}



\section{The Case of $\GL{2}$}


From our results from the section above, we can figure out the calculations for the $\GL{2}$ case.

\begin{thm}\label{eigenvalues_gl2}
    The only possible eigenvalues of the Radon transform on $\GL{2}$ are $1$, $q$, and $q^2$.
\end{thm}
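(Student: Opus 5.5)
For $n=2$ the only ordered partition with a nontrivial unipotent radical is $\lambda=(1,1)$. Then $U:=U_{(1,1)}=\{u(x)\}$ with $u(x)=\begin{pmatrix}1&x\\0&1\end{pmatrix}$, and $U^{op}=\{\bar u(y)\}$ consists of the transposed matrices. So $\R=q^2e_Ue_{U^{op}}e_U$, acting by right multiplication on $\CC[\GL{2}]e_U$. I plan to compute the element $\R$ directly in the group algebra and then find a polynomial relation it satisfies. The roots of that relation bound the possible eigenvalues.

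First, write $q^2e_Ue_{U^{op}}e_U=e_U\bigl(\sum_{y}\bar u(y)\bigr)e_U$. Split off the term $y=0$, which contributes $e_U$. For $y\neq0$, use the standard Bruhat factorization
\[
\bar u(y)=u(y^{-1})\,\mathrm{diag}(-y^{-1},y)\,s\,u(y^{-1}),\qquad s=\begin{pmatrix}0&1\\-1&0\end{pmatrix},
\]
which can be checked by direct multiplication. The outer factors $u(y^{-1})$ are absorbed by the projectors $e_U$ on either side. This gives
\[
\R=e_U+\sum_{t\in\mathbb{F}_q^\times}e_U\,h(t)\,s\,e_U,\qquad h(t)=\mathrm{diag}(-t^{-1},t).
\]
Write $T:=\mathbb{F}_q^\times$-sum above; more precisely, set $A:=\sum_t h(t)$, an element of $\CC[T_2]$ commuting with $e_U$, so that $\R=e_U+e_UAse_U$.

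Second, I will compute $(e_Use_U)^2$. This is the Yokonuma/Gelfand–Graev-type quadratic relation, obtained from the same factorization applied to $s\,u(x)\,s$. Concretely, $q\,e_Use_Use_U$ should equal $e_U s^2 e_U$ from $x=0$, plus a sum over $x\neq0$ of $e_U\,h'(x)\,s\,e_U$ for suitable torus elements $h'(x)$. I will then use that $s$ normalizes $T_2$ and that $A$ is, up to twisting by $s$, essentially $\sum_{t\in T'}t$ over a one-parameter subtorus. From this I expect to derive an explicit quadratic relation for $X:=e_UAse_U$, of the form
\[
X^2=c_1\,e_U\,\Sigma\, e_U+c_2\,X\,\Sigma',
\]
where $\Sigma,\Sigma'$ are averaging elements of a subtorus with $\Sigma^2=(q-1)\Sigma$.

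Third, I will decompose with respect to the right action of the torus $M_{(1,1)}=T_2$, which commutes with $\R$ by Lemma~\ref{intertwine}. The space splits into characters $\chi$ of $T_2$. On each $\chi$-isotypic piece the averaging elements act by the scalar $q-1$ or $0$, according to whether $\chi$ is trivial on the relevant subtorus $\{\mathrm{diag}(t^{-1},t)\}$. On the piece where the scalars are $0$, $X$ satisfies $X^2=c\cdot\mathrm{id}$ with $c=q$, so $X=\pm\sqrt q$ up to a sign twist. I expect instead that $X^2=q$ with $\R=1+X$ is the wrong shape, and that one actually finds $\R$ acting by $q$ there. On the trivial piece, the relation reduces to the parabolic computation already done in the Example after Proposition~\ref{parabolic-unipotent} for $n=2$, which gives eigenvalues $q^{0}=1$ and $q^{2}$. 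As a cross-check, eigenvalues must be real (by Proposition~\ref{symmetric_Rd}) and divide powers of $q$; this already excludes $\pm\sqrt q$-type values once the relation is computed exactly.

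The main obstacle is the second and third steps: getting the torus bookkeeping in $X^2$ exactly right, with the signs from $s^2=-1$ and $h(t)$ conjugated by $s$. Only with that done correctly can I see that on nontrivial characters $\R$ is the scalar $q$, rather than having eigenvalues $1\pm\sqrt q$. If the algebra gets messy, my fallback is a check via Theorem~\ref{AMT}. For each irreducible $V$ of $\GL{2}$, $\dim V^U\le 2$; more precisely, it is $1$ for cuspidal and Steinberg-type representations and $2$ for principal series. I would then compute $\R$ on $V^U$ via the $T_2$-weights, using that $\R$ commutes with $T_2$ and with $\Sigma$. This should yield $q$ on one-dimensional pieces and $\{1,q^2\}$ or $\{q,q\}$ on the two-dimensional ones.
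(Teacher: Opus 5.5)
Your strategy (split $\CC[\GL{2}]e_U$ into isotypic pieces for the right action of $T_2=M_{(1,1)}$, and sort the characters $\chi$ by whether they are trivial on $T'=\{\mathrm{diag}(t^{-1},t)\}$) is viable. However, the computations that carry it are wrong as written.

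\textbf{Normalization.} Since $e_{U^{op}}=q^{-1}\sum_y\bar u(y)$, you have $\R=q\,e_U\bigl(\sum_y\bar u(y)\bigr)e_U=q\,(e_U+e_UAse_U)$, not $e_U+e_UAse_U$. As a check, on the constant function $\sum_g g$ your formula gives $q$ instead of the Perron--Frobenius value $q^2$.

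\textbf{Bruhat factorization.} Your identity is false: its right-hand side has determinant $-1$. The correct one is $\bar u(y)=u(y^{-1})\,\mathrm{diag}(-y^{-1},-y)\,s\,u(y^{-1})$. Hence $A=(-I)\,\Sigma$ with $\Sigma=\sum_{t\in\mathbb{F}_q^\times}\mathrm{diag}(t^{-1},t)$.

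\textbf{The real gap is your third step.} On a piece with $\chi|_{T'}\neq 1$ you assert $X^2=q$. You then say you expect $\R=q$, which contradicts your own formula $\R=1+X$; nothing in the proposal establishes that value. What you are missing is that $X$ itself vanishes there. For $f$ in the $\chi$-piece,
\[
fX=fAse_U=\chi(-I)\Bigl(\sum_t\chi(\mathrm{diag}(t^{-1},t))\Bigr)fse_U=0,
\]
so, with the correct normalization, $\R$ acts by $q$ on that piece and no quadratic relation is needed.

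\textbf{The pieces $\chi=\theta\circ\det$.} These are the pieces with $\chi|_{T'}=1$. The Example after Proposition~\ref{parabolic-unipotent} covers only $\theta=1$, i.e.\ right $B$-invariants. For $\theta\neq 1$ you need one of two arguments.
\begin{itemize}
\item Use the automorphism $g\mapsto\theta(\det g)g$ of $\CC[\GL{2}]$. It fixes $e_U$ and $e_{U^{op}}$, and carries the $\theta\circ\det$-piece onto the trivial one.
\item Argue directly. The identity $se_Us=(-I)e_{U^{op}}$ gives, for $Y\colon f\mapsto fse_U$, the relations $\R=q\bigl(1+(q-1)Y\bigr)$ and $Y^2=\R/q^2$ on that piece. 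Hence $(qY+1)(Y-1)=0$ and $\R\in\{1,q^2\}$.
\end{itemize}
Also, in your fallback, cuspidal representations have $V^U=0$, not a one-dimensional space.

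\textbf{Comparison with the paper.} Once repaired, your argument is a genuinely different and cleaner route. The paper works globally: it derives an identity expressing $\R^2$ through $\R$, $e_U$ and the torus sum $\sum_a\mathrm{diag}(a,a^{-1})$. It then eliminates that sum by squaring, arriving at $(\R-q^2)^2(\R-q)(\R-1)=0$, which only bounds the spectrum. You handle the same torus average by splitting into characters instead of squaring. This gives the exact eigenvalue on each piece: $q$ when $\chi|_{T'}\neq 1$, and $\{1,q^2\}$ otherwise. Since each piece has dimension $q+1$, the Example and the determinant twist also give the multiplicities $q-1$, $(q-1)(q^2-q-2)$, $q(q-1)$ of $q^2$, $q$, $1$, without any trace computation.
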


\begin{proof}
Let $\lambda=(1,1)$ and $e_{U_{\lambda}}=\frac{1}{|U_{\lambda}|}\sum_{u \in U_\lambda} u$ as defined in \ref{subgroup-projector}. Recall from the Definition \ref{Radon Expansion} that $\R =q^2 e_{U_{\lambda}}e_{U_{\lambda}^{op}}e_{U_{\lambda}}$.\\


We explicitly write out the minimal polynomial of $\R$. For this we will find similarities between the elements $q^3 e_{U_{\lambda}}e_{U_{\lambda}^{op}}e_{U_{\lambda}}$ and $q^3 e_{U_{\lambda}^{op}}e_{U_{\lambda}}e_{U_{\lambda}^{op}}$. We have

\[q^3 e_{U_{\lambda}}e_{U_{\lambda}^{op}}e_{U_{\lambda}} = \sum_{a, b, c \in \mathbb{F}_q} 
\begin{pmatrix}
1 & a \\
0 & 1 
\end{pmatrix}
\begin{pmatrix}
1 & 0 \\
b & 1 
\end{pmatrix}
\begin{pmatrix}
1 & c \\
0 & 1 
\end{pmatrix}
= \sum_{a, b, c \in \mathbb{F}_q} 
\begin{pmatrix}
1+ab & c+a+abc \\
b & bc+1 
\end{pmatrix}.\]

Similarly,

\[q^3 e_{U_{\lambda}^{op}}e_{U_{\lambda}}e_{U_{\lambda}^{op}} = \sum_{a', b', c' \in \mathbb{F}_q} 
\begin{pmatrix}
1 & 0 \\
a' & 1 
\end{pmatrix}
\begin{pmatrix}
1 & b' \\
0 & 1 
\end{pmatrix}
\begin{pmatrix}
1 & 0 \\
c' & 1 
\end{pmatrix}
= \sum_{a', b', c' \in \mathbb{F}_q} 
\begin{pmatrix}
1+b'c' & b' \\
c'+a'+a'b'c' & a'b'+1 
\end{pmatrix}.\]

Setting both matrices to be equal, we get the system of equations:
\[
    b'c' = ab, \quad a'b' = bc, \quad b' = a+c+abc, \quad b = a'+c'+a'b'c'.
\]

Using the first three equations, we get the following solutions
\[
    a' = \frac{bc}{a+c+abc}, \quad b' = a+c+abc, \quad c' = \frac{ab}{a+c+abc}.
\]
Here, solutions exist only if $a+c+abc \ne 0$. Note that this corresponds to the matrices where $b' = 0$. This makes sense as if $b' = 0$, then the corresponding summands of $\R$ are lying exclusively in $U^{op}_\lambda$. There are multiple choices for $a'$ and $c'$ for every element, so it makes sense that we cannot find a single solution for $a'$ $b'$ and $c'$. In addition, we also need to account for the matrices where $b=0$, or when $c'+a'+a'b'c' = 0$. This is again a problem as there are multiple values of $a$ and $c$ that satisfy this equality, so we need to remove these matrices before comparing the two expressions.\\

The matrices where $b' = 0$ are equivalent to the matrices where $a+c+abc = 0$. This can be split into two cases, one where $a$ and $c$ are both $0$, and the other where $b = -\frac{a+c}{ac}$.\\

The other case where $b = 0$ is equivalent. Then, we also have to make sure we do not double-count the cases where both $b = 0$ and $b' = 0$, which is simply just the cases where the matrix is the identity.\\

From the observations above we get that

\[q^3 e_{U_{\lambda}}e_{U_{\lambda}^{op}}e_{U_{\lambda}} - \sum_{a, c \in \mathbb{F}_q} 
\begin{pmatrix}
1 & a+c \\
0 & 1 
\end{pmatrix} - \sum_{b \in \mathbb{F}_q}
\begin{pmatrix}
1 & 0 \\
b & 1 
\end{pmatrix}
- \sum_{a, c \in \mathbb{F}_q^{\times}}
\begin{pmatrix}
-a/c & 0 \\
-(a+c)/ac & -c/a 
\end{pmatrix}
+\sum_{a \in \mathbb{F}_q}
\begin{pmatrix}
1 & 0 \\
0 & 1 
\end{pmatrix}\]

is equal to

\[q^3 e_{U_{\lambda}^{op}}e_{U_{\lambda}}e_{U_{\lambda}^{op}} - \sum_{a', c' \in \mathbb{F}_q} 
\begin{pmatrix}
1 & 0 \\
a'+c' & 1 
\end{pmatrix} - \sum_{b' \in \mathbb{F}_q}
\begin{pmatrix}
1 & b' \\
0 & 1 
\end{pmatrix}
- \sum_{a', c' \in \mathbb{F}_q^{\times}}
\begin{pmatrix}
-c'/a' & -(a'+c')/a'c' \\
0 & -a'/c' 
\end{pmatrix}
+\sum_{a' \in \mathbb{F}_q}
\begin{pmatrix}
1 & 0 \\
0 & 1 
\end{pmatrix}.\]

Simplifying similar terms and remembering that $\sum_{a \in \mathbb{F}_q} \begin{pmatrix}
1 & a \\
0 & 1 
\end{pmatrix} = qe_{U_{\lambda}}$, we get the following relation
\begin{multline*}
    q^3 e_{U_{\lambda}}e_{U_{\lambda}^{op}}e_{U_{\lambda}} = q^3 e_{U_{\lambda}^{op}}e_{U_{\lambda}}e_{U_{\lambda}^{op}} + (q-1) qe_{U_{\lambda}} - (q-1) qe_{U_{\lambda}^{op}} +\\
    +
    \sum_{a, c \in \mathbb{F}_q^{\times}}\begin{pmatrix}
    -a/c & 0 \\
    -(a+c)/ac & -c/a 
    \end{pmatrix}
    - \sum_{a', c' \in \mathbb{F}_q^{\times}}
    \begin{pmatrix}
    -c'/a' & -(a'+c')/a'c' \\
    0 & -a'/c' 
    \end{pmatrix}.
\end{multline*}

Now, we can write $\R^2$ as $ q^4 e_{U_{\lambda}}e_{U_{\lambda}^{op}}e_{U_{\lambda}}e_{U_{\lambda}}e_{U_{\lambda}^{op}}e_{U_{\lambda}}=q^4 e_{U_{\lambda}}e_{U_{\lambda}^{op}}e_{U_{\lambda}}e_{U_{\lambda}^{op}}e_{U_{\lambda}}$. Noting that $e_{U_{\lambda}}e_{U_{\lambda}^{op}}e_{U_{\lambda}}e_{U_{\lambda}^{op}}e_{U_{\lambda}} = e_{U_{\lambda}}(e_{U_{\lambda}^{op}}e_{U_{\lambda}}e_{U_{\lambda}^{op}})e_{U_{\lambda}}$, we can expand and get

\begin{multline*}
    \R^2 =  q^4e_{U_{\lambda}}e_{U_{\lambda}^{op}}e_{U_{\lambda}} -  (q-1) q^2e_{U_{\lambda}} +  (q-1) q^2e_{U_{\lambda}}e_{U_{\lambda}^{op}}e_{U_{\lambda}}-\\
    - qe_{U_{\lambda}} \sum_{a, c \in \mathbb{F}_q^{\times}}\begin{pmatrix}
    -a/c & 0 \\
    -(a+c)/ac & -c/a 
    \end{pmatrix} e_{U_{\lambda}}
    + qe_{U_{\lambda}} \sum_{a', c' \in \mathbb{F}_q^{\times}}
    \begin{pmatrix}
    -c'/a' & -(a'+c')/a'c' \\
    0 & -a'/c' 
    \end{pmatrix} e_{U_{\lambda}}.
\end{multline*}

Let's consider the first sum, the second sum is essentially identical. Fix a value of $a$ and $c$.\\

Now, consider the pair $ka, kc$. The matrix then becomes 
\begin{equation*}
    \begin{pmatrix}
-ka/kc & 0 \\
-(ka+kc)/kakc & -kc/ka 
\end{pmatrix} = \begin{pmatrix}
-a/c & 0 \\
-(a+c)/kac & -c/a 
\end{pmatrix}.    
\end{equation*}

Note that this is simply multiplying the bottom left element by $k^{-1}$. Thus, if $a+c \ne 0$, we can always make any value appear in the bottom left square. In addition, by varying $a$, we can obtain any value in the top left (other than $0$) with equal frequency. However, note that if $a+c = 0$, then the element on the diagonal is $1$. Thus, we can characterize all possible matrices in the following way: the element on the top left runs over all values in $\mathbb{F}_q^{\times}$ with equal frequency, the element in the bottom right is its inverse, and the element in the bottom left is in $\mathbb{F}_q^{\times}$ with equal frequency as long as $-a/c \ne 1$, and it is always $0$ otherwise. It is simple to check that all possible matrices show up exactly once with the exception of the identity matrix, which shows up $|\mathbb{F}_q^\times|$ times.\\

So we have
\[
\sum_{a, c \in \mathbb{F}_q^{\times}}\begin{pmatrix}
    -a/c & 0 \\
    -(a+c)/ac & -c/a 
    \end{pmatrix} = 
\sum_{\substack{a \in \mathbb{F}_q^\times \setminus \{1\}\\ t\in \mathbb{F}_q^{\times}}} \begin{pmatrix}
a & 0 \\
t & a^{-1}
\end{pmatrix} + (q-1) I.\]

This simplifies to
\begin{align*}
    \sum_{a, c \in \mathbb{F}_q^{\times}}\begin{pmatrix}
    -a/c & 0 \\
    -(a+c)/ac & -c/a 
    \end{pmatrix} &= \sum_{a \in \mathbb{F}_q^\times} \begin{pmatrix}
    a & 0 \\
    0 & a^{-1}
    \end{pmatrix} (qe_{U_{\lambda}^{op}} - I)- qe_{U_{\lambda}^{op}}+q I.
\end{align*}
Plugging this back into our original equation we get

\begin{multline*}
    \R^2 =  q^4e_{U_{\lambda}}e_{U_{\lambda}^{op}}e_{U_{\lambda}} -  (q-1) q^2e_{U_{\lambda}} +  (q-1) q^2e_{U_{\lambda}}e_{U_{\lambda}^{op}}e_{U_{\lambda}}-\\
    - qe_{U_{\lambda}} \left( \sum_{a \in \mathbb{F}_q^\times} \begin{pmatrix}
    a & 0 \\
    0 & a^{-1}
    \end{pmatrix} (qe_{U_{\lambda}^{op}} - I)- qe_{U_{\lambda}^{op}}+q I\right) e_{U_{\lambda}}+\\
    + qe_{U_{\lambda}} \left(  \sum_{a \in \mathbb{F}_q^\times} \begin{pmatrix}
    a & 0 \\
    0 & a^{-1}
    \end{pmatrix} (qe_{U_{\lambda}} - I)- qe_{U_{\lambda}}+q I \right) e_{U_{\lambda}}.
\end{multline*}

We can distribute out and cancel a few terms:

\begin{multline*}
    \R^2 =  q^4e_{U_{\lambda}}e_{U_{\lambda}^{op}}e_{U_{\lambda}} - q^3 e_{U_{\lambda}} + q^3e_{U_{\lambda}}e_{U_{\lambda}^{op}}e_{U_{\lambda}} -\\- qe_{U_{\lambda}} \left( \sum_{a \in \mathbb{F}_q^\times} \begin{pmatrix}
    a & 0 \\
    0 & a^{-1}
    \end{pmatrix} (qe_{U_{\lambda}^{op}} - I)\right) e_{U_{\lambda}}+
     qe_{U_{\lambda}} \left(  \sum_{a \in \mathbb{F}_q^\times} \begin{pmatrix}
    a & 0 \\
    0 & a^{-1}
    \end{pmatrix} (qe_{U_{\lambda}} - I) \right) e_{U_{\lambda}}.
\end{multline*}

Now, we can factor out the Levi part from the equation

\begin{equation*}
     \R^2=  q^4e_{U_{\lambda}}e_{U_{\lambda}^{op}}e_{U_{\lambda}} - q^3 e_{U_{\lambda}} + q^3e_{U_{\lambda}}e_{U_{\lambda}^{op}}e_{U_{\lambda}} + q^2\left(  \sum_{a \in \mathbb{F}_q^\times} \begin{pmatrix}
    a & 0 \\
    0 & a^{-1}
    \end{pmatrix} \right) ( e_{U_{\lambda}} - e_{U_{\lambda}}e_{U_{\lambda}^{op}}e_{U_{\lambda}}).
\end{equation*}

Now, we can substitute back in $\R = q^2e_{U_{\lambda}}e_{U_{\lambda}^{op}}e_{U_{\lambda}}$ to get the next equality:
\begin{equation*}
    \R^2 =  q^2 \R - q^3 e_{U_{\lambda}} + q \R + \left(  \sum_{a \in \mathbb{F}_q^\times} \begin{pmatrix}
    a & 0 \\
    0 & a^{-1}
    \end{pmatrix} \right) ( q^2 e_{U_{\lambda}} - \R).
\end{equation*}

We move some terms to the other side
\begin{equation*}
    \R^2 -  q^2 \R + q^3 e_{U_{\lambda}} - q \R  = -\left(  \sum_{a \in \mathbb{F}_q^\times} \begin{pmatrix}
    a & 0 \\
    0 & a^{-1}
    \end{pmatrix} \right) ( \R - q^2 e_{U_{\lambda}})
\end{equation*}
and square both sides
\begin{multline*}
    \left(\R^2 -  q^2 \R + q^3 e_{U_{\lambda}} - q \R\right)^2 =\\
    \left(  \sum_{a \in \mathbb{F}_q^\times} \begin{pmatrix}
    a & 0 \\
    0 & a^{-1}
    \end{pmatrix} \right) (q^2 e_{U_{\lambda}} - \R) \left(  \sum_{b \in \mathbb{F}_q^\times} \begin{pmatrix}
    b & 0 \\
    0 & b^{-1}
    \end{pmatrix} \right) ( q^2 e_{U_{\lambda}} - \R).
\end{multline*}

However, since $\R$ and $e_{U_{\lambda}}$ commute with diagonal matrices, we can actually write it as follows:
\begin{equation*}
    \left(\R^2 -  q^2 \R + q^3 e_{U_{\lambda}} - q \R\right)^2 =\\
    \left(  \sum_{a \in \mathbb{F}_q^\times} \begin{pmatrix}
    a & 0 \\
    0 & a^{-1}
    \end{pmatrix} \right)^2 ( q^2 e_{U_{\lambda}} - \R)^2.
\end{equation*}

Therefore,
\begin{equation*}
    \left(\R^2 -  q^2 \R + q^3 e_{U_{\lambda}} - q \R\right)^2 =
    (q-1) \left(  \sum_{a \in \mathbb{F}_q^\times} \begin{pmatrix}
    a & 0 \\
    0 & a^{-1}
    \end{pmatrix} \right) ( q^2 e_{U_{\lambda}} - \R)^2.
\end{equation*}

Note that the right side has a multiple that looks very similar to the equation we had before we squared it, so we can substitute that back in, allowing us to get rid of the sum
\begin{equation*}
    \left(\R^2 -  q^2 \R + q^3 e_{U_{\lambda}} - q \R\right)^2 =
    (1-q) \left(\R^2 -  q^2 \R + q^3 e_{U_{\lambda}} - q \R\right) ( \R - q^2e_{U_{\lambda}} ),
\end{equation*}
\begin{equation*}
    \left(\R^2 -  q^2 \R + q^3 e_{U_{\lambda}} - q \R\right)
    \left(\R^2 -  q^2 \R + q^3 e_{U_{\lambda}} - q \R-(1-q)(\R - q^2e_{U_{\lambda}})\right) = 0.
\end{equation*}

Finally, since we are studying Radon transform on $U_\lambda$-invariant functions, we may substitute $1$ for $e_{U_{\lambda}}$, and we can simplify and factorize this equation.

\begin{equation*}
    (\R - q^2)^2(\R - q)(\R -1) = 0.
\end{equation*}
 
From this we get that all the possible eigenvalues of $\R$ are $1$, $q$, and $q^2$. An explicit computation (e.g. for prime $q$) shows that every eigenvalue actually appears in the spectrum.
\end{proof}

\begin{prop}
    For the action of the Radon transform on $\CC[\GL{2}/U_{\lambda}]$, the eigenvalue $q^2$ occurs $q-1$ times, $q$ occurs $(q-1)(q^2-q-2)$ times, and $1$ occurs $(q-1)q$ times
\end{prop}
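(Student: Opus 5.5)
The plan is to compute the multiplicities representation by representation, using the decomposition of Theorem \ref{AMT}. By Lemma \ref{intertwine}, we have
\[
\CC[\GL{2}]e_{U_\lambda}\cong\bigoplus_{V} V\otimes (V^*)^{U_\lambda},
\]
and $\R$ acts only on the second factor. Hence the multiplicity of an eigenvalue $\mu$ equals $\sum_V \dim V\cdot m_V(\mu)$, where $m_V(\mu)$ is the multiplicity of $\mu$ on the $U_\lambda$-invariants. I will use the standard classification of irreducible representations of $\GL{2}$:
\begin{itemize}
\item the $q-1$ one-dimensional characters $\chi\circ\det$, with $\dim V^{U_\lambda}=1$;
\item the $q-1$ twisted Steinberg representations $St\otimes\chi\circ\det$, of dimension $q$, with $\dim V^{U_\lambda}=1$;
\item the $(q-1)(q-2)/2$ principal series representations ${\rm Ind}_B^G(\chi_1\boxtimes\chi_2)$ with $\chi_1\neq\chi_2$, of dimension $q+1$, with $\dim V^{U_\lambda}=2$;
\item the cuspidal representations, which have $V^{U_\lambda}=0$ and so contribute nothing.
\end{itemize}
As a sanity check, $(q-1)\cdot1+(q-1)\cdot q+(q-1)(q-2)(q+1)=(q-1)(q^2-1)=\dim\CC[\GL{2}/U_\lambda]$.

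For the first two families I will use Proposition \ref{parabolic-unipotent} together with the example following it (with $n=2$). On a one-dimensional character the unipotent elements act trivially, so $\R$ acts by $|U_\lambda|^2=q^2$; this gives multiplicity $(q-1)\cdot 1$ for $q^2$. For $St\otimes\chi\circ\det$, the $U_\lambda$-invariants lie in the $B$-invariants of ${\rm Ind}_B^G(\chi\circ\det)=\chi\circ\det\otimes\CC[\mathbb P^1]$. On $\CC[\mathbb P^1]$ the example gives eigenvalues $q^{2}$ (on the constants) and $q^{0}=1$ (on the complement). Twisting by $\chi\circ\det$ commutes with $\R$, since $U_\lambda,U_\lambda^{op}\subset SL_2$ (compare Proposition \ref{blocks}). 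So $\R$ acts by $1$ on the Steinberg line, giving multiplicity $(q-1)\cdot q$ for the eigenvalue $1$.

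It remains to show that $\R$ acts by $q$ on the two-dimensional space $V^{U_\lambda}$ of every principal series representation. Then the multiplicity of $q$ is $(q-1)(q-2)(q+1)=(q-1)(q^2-q-2)$, as claimed. By Lemma \ref{intertwine}, $\R$ commutes with the right action of $T=M_\lambda$. Since $\chi_1\neq\chi_2$, $V^{U_\lambda}$ splits into the two distinct $T$-weight lines $\chi_1\boxtimes\chi_2$ and $\chi_2\boxtimes\chi_1$, so $\R$ is diagonal with eigenvalues $e_1,e_2\in\{1,q,q^2\}$ by Theorem \ref{eigenvalues_gl2}.

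The main obstacle is pinning these down individually. I would first try a global trace count. In the basis of cosets $gU_\lambda$, the diagonal coefficient of $\R$ counts pairs $(u,\bar u)\in U_\lambda\times U_\lambda^{op}$ with $u\bar u\in U_\lambda$, which forces $\bar u=e$. Hence the trace is $q\cdot(q-1)(q^2-1)$. Subtracting the known contributions $q^2(q-1)+1\cdot q(q-1)$ shows that the average of the $e_i$ over all principal series weight lines is exactly $q$. To turn this average into equality I would also compute ${\rm tr}\,\R^2$. This amounts to counting quadruples $(u_1,\bar u_1,u_2,\bar u_2)$ with $u_1\bar u_1u_2\bar u_2\in U_\lambda$, an explicit $2\times2$ matrix computation. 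If ${\rm tr}\,\R^2$ also matches the value forced by $e_i=q$, then the variance of the $e_i$ about $q$ vanishes, so every $e_i=q$.

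Alternatively, one can compute $\R$ directly in a model of ${\rm Ind}_B^G(\chi_1\boxtimes\chi_2)$. The $U_\lambda$-invariants are spanned by functions supported on the cells $B$ and $BwB$, where $w$ is the nontrivial permutation matrix. A Gauss-sum-type computation with $\chi_1\chi_2^{-1}\neq1$ then shows that $\R$ acts on each weight line by $q$.
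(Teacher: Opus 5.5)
Your proposal is correct, and it takes a genuinely different route from the paper.

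\textbf{The paper's route.} The paper treats the multiplicities of $q^2$, $q$, $1$ as three unknowns. It solves the Vandermonde system given by $\dim\CC[\GL{2}/U_\lambda]$, $\tr\R=|\GL{2}|$ and $\tr\R^2=(2q-1)|\GL{2}|$ (Proposition \ref{trace_and_square}). For this it needs Theorem \ref{eigenvalues_gl2} as input, to know that no other eigenvalues occur.

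\textbf{Your route.} You use the bimodule decomposition to fix two multiplicities structurally:
\begin{itemize}
\item the one-dimensional characters give $q^2$;
\item the twisted Steinberg lines give $1$, by the $\mathbb{P}^1$ example together with the determinant twist;
\item the cuspidals contribute nothing.
\end{itemize}
You then use the two trace identities only on the principal-series weight lines, as a mean-and-variance argument.

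\textbf{The conditional step checks out.} Give $u_1,\bar u_1,u_2,\bar u_2$ off-diagonal entries $a,b,c,d$. The product $u_1\bar u_1u_2\bar u_2$ lies in $U_\lambda$ exactly when $bc=0$ and $d=-b$, with $a$ free. So each coset contributes $q(2q-1)$, and $\tr\R^2=q(2q-1)(q-1)(q^2-1)$, in agreement with Proposition \ref{trace_and_square}. Combined with your first-moment computation, this gives $\sum_i(q+1)(e_i-q)^2=0$, hence every $e_i=q$.

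\textbf{What each route buys.} Your argument never actually uses Theorem \ref{eigenvalues_gl2}; it only needs the $e_i$ to be real, which follows from Proposition \ref{symmetric_Rd}. So it gives an independent proof that the spectrum is $\{1,q,q^2\}$. It also identifies which irreducible representations carry which eigenvalue. The paper's route is shorter once Theorem \ref{eigenvalues_gl2} is available, and it needs no knowledge of $\mathrm{Irr}(\GL{2})$.

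\textbf{One small imprecision.} For nontrivial $\chi$, the $U_\lambda$-invariant line of $St\otimes\chi\circ\det$ is not $B$-invariant. It is your determinant-twist reduction to $\chi=1$ that actually handles those representations.
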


\begin{proof}
    Including multiplicity, there are a total of $|\GL{2} / U_\lambda| = (q^2-1)(q^2-q)/q = (q^2-1)(q-1)$ eigenvalues. The sum of eigenvalues is $|\GL{2}| = (q^2-1)(q^2-q)$ and the sum of squares of the eigenvalues is $(q^2-1)(q^2-q)(2q-1)$ due to the Proposition \ref{trace_and_square}. Let the multiplicity of the eigenvalue $q^2$ be $a$, $q$ be $b$, and $1$ be $c$. Then, we have the equations
    \begin{align*}
        a+b+c &= (q^2-1)(q-1)\\
        q^2a+qb+c &= (q^2-1)(q^2-q)\\
        q^4a+q^2b+c &= (q^2-1)(q^2-q)(2q-1)
    \end{align*}

    This can be solved to obtain our desired result. (note that $a, b$, and $c$ must be divisible by $q-1$ by \thmref{blocks} which makes this system of equations slightly easier)
\end{proof}

\section{Hecke Algebras}

Now, we look into Hecke Algebras, defined in \ref{finite_group_Hecke_algebra}. Certain Hecke algebras are well studied, and we take an overview of the relations here.\\

The case of double coset algebra $\mathcal{H}(\GL{n},B_n)$ for $\GL{n}$ with respect to the Borel group $B_n$ given by upper triangular matrices leads us specifically to Iwahori-Hecke algebra, which is an algebra defined by generators $g_1, g_2, \dots g_{n-1}$ (e.g. see \cite{DB}) subject to the relations
\begin{align*}
g_i g_{i+1} g_i &= g_{i+1}g_i g_{i+1},\\
g_i g_j &= g_j g_i \quad (|i-j| > 1),\\
g_i^2 &= (q-1)g_i+q.
\end{align*}

The first two are known as the braid relations, which are closely related to the symmetric group. Each $g_i$ corresponds to an elementary transposition $(i,i+1)$ in the Bruhat decomposition as in \thmref{bruhat}
\begin{equation}\label{Bruhat_decomposition_B}
    \GL{n}=\bigsqcup_{w\in S_n} B_n w B_n.
\end{equation}
Remarkably, this set of braid relations with specialization to $q=1$ gives us a realization of the symmetric group algebra $\CC[S_n]$.\\

It is known that the Iwahori-Hecke algebra describes convolution of Bruhat cells $B_n w B_n$ in the Bruhat decomposition, as shown in Theorem 11 in \cite{DB}. 

\subsection{Yokonuma-Hecke Algebras}

Now, we consider a generalization of the Iwahori-Hecke algebra known as the Yokonuma-Hecke Algebra. It is the Hecke algebra $\mathcal{H}(\GL{n},U_{\lambda_n})$ associated with the unipotent radical $U_{\lambda_n}$ of the Borel subgroup $B_n\subset \GL{n}$ or the subgroup of upper triangular matrices with $1$'s on the diagonal. Recall from Theorem \ref{unipotent_Bruhat} that here we have an analogue of the Bruhat decomposition given by
\begin{equation}\label{Bruhat_decomposition_U}
    \GL{n}=\bigsqcup_{\nu\in S_n \ltimes T_n} U_{\lambda_n} \nu U_{\lambda_n}.    
\end{equation}



We have the following Theorem describing generators and relations for the Yokonuma-Hecke algebra $\mathcal{H}(\GL{n},U_{\lambda_n})$. Note that we use a different normalization of the generators $g_i$.
\begin{thm}[\cite{JJ,Yok}]\label{Yok_generators}
    Let $q$ be a power of an odd prime, $\alpha$ is a primitive generator of $\mathbb{F}_q^{\times}$ and let $s_i$ be the permutation matrix corresponding to the elementary transposition $(i,i+1)$. The Yokonuma-Hecke algebra $\mathcal{H}(\GL{n},U_{\lambda_n})$ is generated by elements $g_i:=e_{U_{\lambda_n}} s_ie_{U_{\lambda_n}},\ 1\le i<n$ and $t_i:=e_{U_{\lambda_n}} {\rm diag}(1^{(i-1)},\alpha, 1^{(n-i)})e_{U_{\lambda_n}}$ with the following list of relations
    \begin{align}
        g_i g_{i+1} g_i &= g_{i+1}g_i g_{i+1},\\\label{braid_1}
        g_i g_j &= g_j g_i \quad (|i-j| > 1),\\\label{braid_2}
        t_i t_j &= t_j t_i,\\\label{torus_1}
        t_j g_i &= g_i t_{s_i(j)},\\\label{torus-permutation}
        t_j^{q-1} &= 1,\\\label{torus_2}
        g_i^2 = q^{-1}(1 +& \sum_{k=1}^{q-1}t_i^kt_{i+1}^{-k+(q-1)/2} g_i).
    \end{align}
    If $q$ is a power of $2$, then the last relation is replaced by
    \begin{equation}
        g_i^2 = q^{-1}(1 + \sum_{k=1}^{q-1}t_i^kt_{i+1}^{-k} g_i).
    \end{equation}
\end{thm}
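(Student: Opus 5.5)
The plan is to follow the standard route for presentations of Hecke algebras: identify a basis from the unipotent Bruhat decomposition \eqref{Bruhat_decomposition_U}, check that the proposed generators satisfy the listed relations, and then use a dimension count to show that no further relations are needed.

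\textbf{Basis and multiplication rules.} By \eqref{Bruhat_decomposition_U} and Remark \ref{convol_Hecke_alg}, the elements $T_\nu:=e_{U_{\lambda_n}}\nu e_{U_{\lambda_n}}$, $\nu=wt\in S_n\ltimes T_n$, form a basis of $\mathcal{H}(\GL{n},U_{\lambda_n})$. Its dimension is $n!(q-1)^n$, and its unit is $e_{U_{\lambda_n}}$.
\begin{itemize}
\item Since $T_n$ normalizes $U_{\lambda_n}$, we get $T_tT_\nu=T_{t\nu}$ and $T_\nu T_t=T_{\nu t}$ for $t\in T_n$. This gives relations \eqref{torus_1}, \eqref{torus_2}, and \eqref{torus-permutation}, the last because $s_i\,{\rm diag}(\dots)\,s_i$ permutes the diagonal entries.
\item Next we prove $T_wT_{s_i}=T_{ws_i}$ whenever $l(ws_i)=l(w)+1$. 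Write $U_{\lambda_n}=U_{\alpha_i}\cdot U'$, where $U'$ is normalized by $s_i$ and $U_{\alpha_i}$ is the root subgroup of $\alpha_i$. By Theorem \ref{reduced_decomposition}, $\alpha_i\in\Phi^+_w$, so $wU_{\alpha_i}w^{-1}\subset U_{\lambda_n}$ and is absorbed. The product therefore collapses to $e_U w s_i e_U$.
\item Iterating along a reduced word gives $T_w=g_{i_1}\cdots g_{i_k}$. Because $T_w$ depends only on $w$, the braid relations \eqref{braid_1} and \eqref{braid_2} follow.
\end{itemize}

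\textbf{Quadratic relation.} This reduces to a computation in the copy of $GL_2$ sitting in rows and columns $i,i+1$. We have $g_i^2=e_U s_i e_{U_{\alpha_i}} s_i e_U$, and we split the sum over $x_{\alpha_i}(a)$ into two parts.
\begin{itemize}
\item The term $a=0$ contributes $q^{-1}e_U$.
\item For $a\neq0$, use the $GL_2$ identity
\[
s\begin{pmatrix}1&a\\0&1\end{pmatrix}s=\begin{pmatrix}1&0\\a&1\end{pmatrix}=\begin{pmatrix}1&a^{-1}\\0&1\end{pmatrix}\begin{pmatrix}-a^{-1}&0\\0&a\end{pmatrix}s\begin{pmatrix}1&a^{-1}\\0&1\end{pmatrix},
\]
with the torus factor to be double-checked. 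The outer unipotent factors are absorbed into $e_U$, so each such $a$ contributes $q^{-1}e_U h(a)\,s_i e_U$ with $h(a)={\rm diag}(\dots,-a^{-1},a,\dots)$.
\end{itemize}
To match the stated form, write the torus factor as $a=\alpha^{k}$ (re-indexing $a\mapsto a^{-1}$ if needed) and $-1=\alpha^{(q-1)/2}$. Then $h(a)=t_i^{k}t_{i+1}^{-k+(q-1)/2}$, which gives the stated relation. In characteristic $2$ we have $-1=1$, and the exponent $(q-1)/2$ disappears.

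\textbf{Completeness.} Let $\mathcal{A}$ be the abstract algebra defined by the listed generators and relations. Two facts give an upper bound on its dimension:
\begin{itemize}
\item relations \eqref{torus-permutation}--\eqref{torus_2} let us move all $t$'s to the right;
\item the braid relations together with the quadratic relation let us reduce any word in the $g_i$ to words $g_{i_1}\cdots g_{i_k}$ indexed by reduced expressions. This uses Matsumoto's theorem, i.e. the braid-relation part of Theorem \ref{reduced_decomposition}.
\end{itemize}
Hence $\mathcal{A}$ is spanned by elements $g_w t$ with $w\in S_n$ and $t\in T_n$, so $\dim\mathcal{A}\le n!(q-1)^n$. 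The previous steps give a surjection $\mathcal{A}\to\mathcal{H}(\GL{n},U_{\lambda_n})$, since the images of the $g_wt$ are exactly the basis elements $T_{wt}$. Comparing dimensions shows this surjection is an isomorphism.

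\textbf{Main obstacle.} I expect the hardest step to be the spanning argument. The reduction must show that a word which is not reduced can be shortened using only the quadratic relation, and that the torus terms produced along the way can be pushed to the right without increasing length. I would handle this by induction on word length, using the exchange condition from Theorem \ref{reduced_decomposition}. Getting the exact sign and exponent in the torus factor of the quadratic relation is the second delicate point.
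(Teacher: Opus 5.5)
Your proposal is correct and follows essentially the same route as the paper: a basis from the unipotent Bruhat decomposition, length-additivity $T_wT_{s_i}=T_{ws_i}$ by conjugating the root subgroup $U_{\alpha_i}$ (the paper's Lemma \ref{word_multiplication}), and a rank-two computation for the quadratic relation, where the torus factor ${\rm diag}(-a^{-1},a)$ you flagged is in fact correct and matches the paper's representative. The differences are minor and in your favor: you read off the scalar $q^{-1}$ directly from $e_{U_{\alpha_i}}=q^{-1}\sum_a x_{\alpha_i}(a)$ rather than through the counit, and your dimension count $\dim\mathcal{A}\le n!(q-1)^n$ makes explicit the completeness of the relations, which the paper only asserts via reduced decompositions.
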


For reader's convenience we will provide a proof of this Theorem below.

\begin{proof}
    By using the Lemma \ref{word_multiplication} below as well as the analogue of the Bruhat decomposition \eqref{Bruhat_decomposition_U} we can see that $g_i,t_i$ generate the Yokonuma-Hecke algebra and elements of the form $e_{U_{\lambda_n}} t w e_{U_{\lambda_n}}, t\in T_n,w\in S_n$ form a basis of the algebra. The braid relations \eqref{braid_1}, \eqref{braid_2} also follow from the Lemma \ref{word_multiplication}. Since $U_{\lambda_n}$ is a normal subgroup of $B_n$ the elements $e_Ute_U,t\in T_n$ satisfy the same relations \eqref{torus_1}, \eqref{torus_2} as $t\in T_n\subset \GL{n}$ and they commute with the permutation elements $g_i$ up to a permutation action \eqref{torus-permutation} on the labels. In order for us to describe the multiplication table of $\mathcal{H}(\GL{n},U_{\lambda_n})$ we only have to show the last relation due to the existence of a reduced decomposition of $w\in S_n$ in terms of $s_i$ (recall the Theorem \ref{reduced_decomposition}). \\

Note that the application of $U_{\lambda_n}$ on the left corresponds to the "upward" row addition. Thus, if $s_i$ is an elementary permutation matrix

\[\begin{pmatrix}
1 &  &  &  &  &  \\
 & \ddots &  &  &  &  \\
 &  & 0 & 1 &  &  \\
 &  & 1 & 0 &  &  \\
 &  &  &  & \ddots &  \\
 &  &  &  &  & 1 
\end{pmatrix},\]

then $U_{\lambda_n} s_i$ corresponds to the set

\[\begin{pmatrix}
1 &  & * & * &  & * \\
 & \ddots &  &  &  &  \\
 &  & * & 1 &  & * \\
 &  & 1 & 0 &  & * \\
 &  &  &  & \ddots &  \\
 &  &  &  &  & 1 
\end{pmatrix}.\]

Therefore, $s_i U_{\lambda_n} s_i$ corresponds to the set

\[\begin{pmatrix}
1 &  & * & * &  & * \\
 & \ddots &  &  &  &  \\
 &  & 1 & 0 &  & * \\
 &  & a & 1 &  & * \\
 &  &  &  & \ddots &  \\
 &  &  &  &  & 1 
\end{pmatrix}\]
for any $a\in \mathbb{F}_q$.\\

Now, for any such matrix we need to figure out its corresponding double $U_{\lambda_n}$ coset. Note that we can factor out elements of $U_{\lambda_n}$ by adding rows upwards and columns to the right. If $a = 0$, then this is clearly an element of $U_{\lambda_n}$ and corresponds to the double coset $U_{\lambda_n} eU_{\lambda_n}$. Otherwise, we can add the $i+1$th row upwards with a multiple of $-a^{-1}$ to get

\[\begin{pmatrix}
1 &  & * & * &  & * \\
 & \ddots &  &  &  &  \\
 &  & 0 & -a^{-1} &  & * \\
 &  & a & 1 &  & * \\
 &  &  &  & \ddots &  \\
 &  &  &  &  & 1 
\end{pmatrix}.\]

From here, it is a simple process to add all the rows rightward in order to obtain the representative 

\[\begin{pmatrix}
1 &  &  &  &  &  \\
 & \ddots &  &  &  &  \\
 &  & 0 & -a^{-1} &  &  \\
 &  & a & 0 &  &  \\
 &  &  &  & \ddots &  \\
 &  &  &  &  & 1 
\end{pmatrix}.\]

This, in terms of generators, corresponds to the element $t_i^k t_{i+1}^{-k+(q-1)/2} g_i$ with a unique $k$ such that $\alpha^{-k+(q-1)/2}=a$. Also, note that in the set $s_i U_{\lambda_n} s_i$ the value of $a$ runs over $\mathbb{F}_q$ with equal frequency. Thus, we can conclude that the element $g_i^2$ is proportional to $e_{U_{\lambda_n}} + \sum_{k=1}^{q-1}t_i^kt_{i+1}^{-k+(q-1)/2}g_i$.\\ 

Now we note that $\epsilon(g_i)=\epsilon(t_i^k t_{i+1}^{-k+(q-1)/2}g_i)=\epsilon(e_{U_{\lambda_n}})=1$ where $\epsilon$ is the counit of the group algebra $\CC[\GL{n}]$. On the other hand, due to the Proposition \ref{counit} we have $\epsilon(g_i^2)=\epsilon(g_i)=1$. Therefore, we should get
\[
g_i^2=q^{-1}(1 + \sum_{k=1}^{q-1}t_i^kt_{i+1}^{-k+(q-1)/2} g_i).
\]
For the case when $q$ is a power of $2$, observe that $-a^{-1}=a^{-1}$ in $\mathbb{F}_q$.
\end{proof}



Now, we consider relations between unipotent Bruhat cells. To do this, we modify Lemma 6 from \cite{DB}.

\begin{lem}\label{word_multiplication}
    Let $w\in S_n$, then $U_{\lambda_n} w U_{\lambda_n} \cdot U_{\lambda_n} s_iU_{\lambda_n} = U_{\lambda_n} ws_i U_{\lambda_n}$ when $l(ws_i)=l(w)+1$.
\end{lem}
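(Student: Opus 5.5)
We need to show $U w U \cdot U s_i U = U w s_i U$ as sets, where $U:=U_{\lambda_n}$, under the hypothesis $l(ws_i)=l(w)+1$. The inclusion $UwsU\subseteq UwU\cdot Us_iU$ is trivial, since $ws_i = w\cdot s_i$ with $w\in UwU$ and $s_i\in Us_iU$. So the whole content lies in the reverse inclusion.

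For the reverse inclusion it suffices to show $w U s_i \subseteq U w s_i U$, because multiplying by $U$ on both sides then gives the result. Let $U_{\alpha_i}$ denote the root subgroup $\{1+cE_{i,i+1}\}$ for the simple root $\alpha_i$, and let $U^{(i)}$ be the product of the root subgroups $U_\beta$ for $\beta\in\Phi^+\setminus\{\alpha_i\}$. Then $U^{(i)}$ is a normal subgroup of $U$ of index $q$ with $U = U^{(i)}U_{\alpha_i}$. The key standard fact is that $s_i$ permutes $\Phi^+\setminus\{\alpha_i\}$, so $s_iU^{(i)}s_i = U^{(i)}$. Writing $u = u' x$ with $u'\in U^{(i)}$ and $x\in U_{\alpha_i}$, we get
\[
w u s_i = w u' x s_i = w x s_i\,(s_i^{-1} u' s_i)\in w U_{\alpha_i} s_i\, U .
\]
It therefore remains to treat $w x s_i = (w s_i)(s_i x s_i)$. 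Here $s_i x s_i$ lies in the root subgroup $U_{-\alpha_i}$, and conjugating by $ws_i$ we get
\[
(ws_i)U_{-\alpha_i}(ws_i)^{-1}=U_{ws_i(-\alpha_i)}=U_{w(\alpha_i)}.
\]

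The main step, and the only place the length hypothesis enters, is to show $w(\alpha_i)\in\Phi^+$. By Theorem \ref{reduced_decomposition}, $l(ws_i)=l(w)+1$ means $\alpha_i\in\Phi^+_w$, which says exactly that $w\cdot\alpha_i\in\Phi^+$ (one should check that the paper's action convention matches the convention for permutation matrices, $wE_{jk}w^{-1}=E_{w(j)w(k)}$). Hence $U_{w(\alpha_i)}\subseteq U$, and therefore
\[
w x s_i = (ws_i)(s_ixs_i) = y\,(ws_i)\quad\text{with } y\in U_{w(\alpha_i)}\subseteq U .
\]
This gives $wxs_i \in U w s_i$.

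Combining the two displays, $wus_i\in U ws_i U$, and thus $UwU\cdot Us_iU = U w U s_i U \subseteq U ws_i U$. The expected obstacle is purely bookkeeping: justifying that $U^{(i)}$ is a subgroup normalized by $s_i$ (via the commutator relations $[U_\beta,U_\gamma]\subseteq U_{\beta+\gamma}$, noting that $\alpha_i$ is never a sum of two other positive roots), and keeping the root-action convention consistent.
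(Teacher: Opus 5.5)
Your proof is correct and is essentially the paper's argument: you split $u$ into its $x_{\alpha_i}$-component and a component in the radical $U_{(1^{(i-1)},2,1^{(n-i-1)})}$ (your $U^{(i)}$), push the latter through $s_i$, and use $l(ws_i)=l(w)+1 \Rightarrow w(\alpha_i)\in\Phi^+$ to get $wxw^{-1}\in U$. One small slip: the identity $wu'xs_i = wxs_i(s_i^{-1}u's_i)$ is false as written, because it needs the factorization in the order $u = xu'$ (as the paper writes $u = x_{\alpha_i}(\lambda)v$); that order is available since $U^{(i)}$ is normal in $U$.
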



\begin{proof}
    Clearly $U_{\lambda_n} ws_iU_{\lambda_n} \subseteq U_{\lambda_n} w U_{\lambda_n} \cdot U_{\lambda_n} s_iU_{\lambda_n}$. We will show that $wU_{\lambda_n}s_i\subseteq U_{\lambda_n} w s_i U_{\lambda_n}$, as multiplying both sides by $U_{\lambda_n}$ gives us our desired inclusion.\\

    Let $\alpha_i$ be the simple root corresponding to $\epsilon_i-\epsilon_{i+1}$. For any root $\alpha\in \Phi,\ \alpha=\epsilon_i-\epsilon_j$ we will denote by
    \begin{equation}\label{1-dim_subgroup}
        x_{\alpha}(\lambda)=1+\lambda E_{ij}, \quad \lambda\in \mathbb{F}_q,
    \end{equation}
    the elementary matrix with the entry $\lambda$ at $(i,j)$.\\
    
    Let $u \in U_{\lambda_n}$, then note that we can write $u$ as $x_{\alpha_i}(\lambda) v$ for some $v \in U_{\alpha_i}:=U_{(1^{(i-1)},2,1^{(n-i-1)})}$ and $\lambda\in \mathbb{F}_q$.\\

    Then, $wus_i = w x_{\alpha_i}(\lambda) w^{-1} \cdot w s_i \cdot s_i^{-1} v s_i$. We have that $s_i^{-1} v s_i \in U_{\alpha_i}\subset U_{\lambda_n}$ because $s_i$ is an element of $M_{(1^{(i-1)},2,1^{(n-i-1)})}$. In addition, we have $w x_\alpha(\lambda) w^{-1} \in x_{w(\alpha)}(\mathbb{F}_q) \subset U_{\lambda_n}$ as well due to the Theorem \ref{reduced_decomposition}, so $wus_i \in U_{\lambda_n} w s_i U_{\lambda_n}$ and we are done.
\end{proof}



\subsection{Gelfand-Graev Hecke Algebras}

Gelfand-Graev Hecke algebras are very similar to Yokonuma-Hecke algebras, with both falling under the class of unipotent Hecke algebras (see \cite{T}).\\


Let $U_{\lambda_n}$ be the unipotent subgroup of $\GL{n}$
. Also, let us fix a primitive additive character $\psi: (\mathbb{F}_q,+)\rightarrow \CC^*$. For example, when $q=p^n$ we may take
\[
\psi(x):=e^{\frac{2\pi {\rm i}}{p}\tr_{\mathbb{F}_q/\mathbb{F}_p}(x)} , \quad \tr_{\mathbb{F}_q/\mathbb{F}_p} (x):=x+x^p+x^{p^2}\dots+x^{p^{n-1}}\in \mathbb{F}_p.
\]
Recall that any other additive character of $\mathbb{F}_q$ can be obtained from it by $\psi(-)\mapsto \psi_{a}(-):=\psi(a\cdot -),a\in \mathbb{F}_q$. Indeed, note that since the trace map is not identically zero (the finite extension $\mathbb{F}_q/\mathbb{F}_p$ is separable), any such character is non-trivial for $a\neq 0$. We also have $\psi_a(x)\psi_b(x)=\psi_{a+b}(x)$ for $a,b,x\in \mathbb{F}_q$.\\

For a tuple $\mu$ of $n-1$ elements $\mu_{i}\in \mathbb{F}_q, 1\le i<n$ we define a nontrivial linear character of $U_{\lambda_n}$
\[\psi_{\mu}: U_{\lambda_n} \to \mathbb{C}^*, \quad x_{\alpha}(t)\mapsto \psi(\mu_{\alpha}t)\]
where $x_{\alpha}(t)$ is defined as in \eqref{1-dim_subgroup}, $\mu_{\alpha}$ is $0$ if $\alpha$ is not a simple root and $\mu_i$ if $\alpha$ is a simple root $\epsilon_i-\epsilon_{i+1}$.\\

Now, we define $e_{\psi_{\mu}}$ as

\[e_{\psi_{\mu}} = \frac{1}{|U_{\lambda_n}|}\sum_{u \in U_{\lambda_n}}\overline{\psi_{\mu}(u)}u\]

This is an projector as defined in \ref{e-projector}, and $e_{\psi_s}$ corresponding to different characters are orthogonal.

\begin{defn}
    The Gelfand-Graev Hecke algebra $\mathcal{H}(\GL{n}, U_{\lambda_n}, \psi_{\mu})$ is a subalgebra $e_{\psi_{\mu}} \mathbb{C}[\GL{n}] e_{\psi_{\mu}}\subset \CC[\GL{n}]$.    
\end{defn}

Note that in the case of the trivial linear character, the corresponding Gelfand-Graev Hecke algebra is precisely the Yokonuma-Hecke algebra.\\

We recall the following fact about Gelfand-Graev Hecke algebras (see \cite[Proposition 11.30]{CR}).\\

\begin{prop}\label{GGH_basis}
    Let 
    \begin{align*}
        N_{\mu}&:=\{\nu \in S_n \ltimes T_n\ |\ e_{\psi_{\mu}}\nu e_{\psi_{\mu}}\neq 0\}=\\
        &=\{\nu\in S_n \ltimes T_n\ | \ u,\nu u\nu^{-1}\in U \Rightarrow \psi_{\mu}(u)=\psi_{\mu}(\nu u\nu^{-1})\},
    \end{align*}
    then the set $\{e_{\psi_{\mu}} \nu e_{\psi_{\mu}}\ |\ \nu \in N_{\mu}\}$ forms a basis for $\mathcal{H}(\GL{n},U_{\lambda_n},\psi_{\mu})$.
\end{prop}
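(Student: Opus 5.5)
We will prove Proposition \ref{GGH_basis} in two steps: first that the elements $e_{\psi_{\mu}}\nu e_{\psi_{\mu}}$, $\nu\in S_n\ltimes T_n$, span the algebra; then that the nonzero ones are linearly independent and are characterized by the stated condition.

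\textbf{Spanning.} The algebra $e_{\psi_{\mu}}\CC[\GL{n}]e_{\psi_{\mu}}$ is spanned by $e_{\psi_{\mu}} g e_{\psi_{\mu}}$ with $g\in\GL{n}$. By the unipotent Bruhat decomposition \eqref{Bruhat_decomposition_U} we may write $g=u_1\nu u_2$ with $u_1,u_2\in U_{\lambda_n}$ and $\nu\in S_n\ltimes T_n$. Lemma \ref{Epsi*U} then gives
\[
e_{\psi_{\mu}}ge_{\psi_{\mu}}=\psi_{\mu}(u_1)\psi_{\mu}(u_2)\,e_{\psi_{\mu}}\nu e_{\psi_{\mu}}.
\]
(Here we use the convention for $e_{\psi_\mu}$ with $\overline{\psi_\mu}$, so the scalar is some root of unity; its exact form is irrelevant.) Hence the elements $e_{\psi_{\mu}}\nu e_{\psi_{\mu}}$, $\nu\in S_n\ltimes T_n$, span, and we may discard those equal to zero.

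\textbf{Linear independence.} The element $e_{\psi_{\mu}}\nu e_{\psi_{\mu}}$ is supported on the double coset $U_{\lambda_n}\nu U_{\lambda_n}$. These double cosets are pairwise disjoint for distinct $\nu$ by the uniqueness in Theorem \ref{unipotent_Bruhat}. So the nonzero elements have disjoint supports and are therefore linearly independent.

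\textbf{Criterion for nonvanishing.} It remains to show that $e_{\psi_{\mu}}\nu e_{\psi_{\mu}}\neq 0$ if and only if $\psi_{\mu}(u)=\psi_{\mu}(\nu u\nu^{-1})$ whenever $u$ and $\nu u\nu^{-1}$ both lie in $U:=U_{\lambda_n}$. This is the main obstacle.

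Set $U_\nu:=U\cap \nu^{-1}U\nu$. For $u\in U_\nu$ we have $\nu u=(\nu u\nu^{-1})\nu$, so Lemma \ref{Epsi*U} gives
\[
\psi_{\mu}(u)\,e\nu e=e\nu u e=e(\nu u\nu^{-1})\nu e=\psi_{\mu}(\nu u\nu^{-1})\,e\nu e,
\]
writing $e=e_{\psi_{\mu}}$.

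\emph{Necessity.} If $\psi_\mu(u)\neq\psi_\mu(\nu u\nu^{-1})$ for some $u\in U_\nu$, the displayed identity forces $e\nu e=0$.

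\emph{Sufficiency.} We compute the coefficient of $\nu$ itself in
\[
|U|^2\,e\nu e=\sum_{u_1,u_2\in U}\overline{\psi_\mu(u_1u_2)}\,u_1\nu u_2.
\]
The equation $u_1\nu u_2=\nu$ holds exactly when $u_2=\nu^{-1}u_1^{-1}\nu$. This requires $u_1^{-1}\in \nu U\nu^{-1}\cap U$, equivalently $u_2\in U_\nu$ with $u_1=\nu u_2^{-1}\nu^{-1}$. The coefficient is therefore
\[
\sum_{u_2\in U_\nu}\overline{\psi_\mu(\nu u_2^{-1}\nu^{-1})\,\psi_\mu(u_2)}.
\]
Under the hypothesis, each summand equals $\overline{\psi_\mu(u_2)^{-1}\psi_\mu(u_2)}=1$. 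The coefficient is thus $|U_\nu|\neq0$, so $e\nu e\ne0$.

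This establishes the equality of the two descriptions of $N_\mu$ and, combined with the first two steps, the basis statement.
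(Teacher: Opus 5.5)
Your proof is correct. The paper gives no argument of its own for this statement. It quotes \cite[Proposition 11.30]{CR}, which treats a general subgroup $H\subset G$ with a linear character of $H$ and an arbitrary system of double coset representatives.

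Your argument is that standard proof specialized to $H=U_{\lambda_n}$, with representatives $\nu\in S_n\ltimes T_n$:
\begin{itemize}
\item spanning follows from the unipotent Bruhat decomposition and Lemma~\ref{Epsi*U};
\item linear independence follows from the disjointness of the double cosets $U_{\lambda_n}\nu U_{\lambda_n}$, which needs uniqueness of both the permutation part and the torus part;
\item necessity follows from the identity $\nu u=(\nu u\nu^{-1})\nu$ for $u\in U\cap\nu^{-1}U\nu$;
\item sufficiency follows from showing that $\nu$ occurs in $|U|^2e_{\psi_\mu}\nu e_{\psi_\mu}$ with coefficient $|U\cap\nu^{-1}U\nu|\neq 0$.
\end{itemize}

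The citation gives the result in greater generality. Your version is self-contained and shows exactly where the uniqueness in the Bruhat decomposition enters. It also gives an explicit nonvanishing certificate, the coefficient of $\nu$, rather than leaving the reader to translate the notation of \cite{CR}.
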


\section{The Case of $\GL{3}$}

In this section we find the eigenvalues of the Radon transform in the case of $\lambda=(2,1)$. The case of $\lambda=(1,2)$ can be treated similarly. In order for us to reach our goal we will have to work with the Yokonuma-Hecke algebra for $\GL{3}$ (which covers the only nontrivial case left corresponding to the partition $\lambda=(1,1,1)$) and a certain family of Gelfand-Graev Hecke algebras.\\


\subsection{Hecke Algebra $\mathcal{H}(\GL{3},U_{(2,1)})$}

In this subsection we will work with the Radon transform for the Hecke algebra $\mathcal{H}(\GL{3},U_{(2,1)})$. Consider the set of orthogonal idempotents (recall the Definition \ref{idempotents})
\begin{equation*}
    e_{\psi_s}:= q^{-1}\sum_{a\in \mathbb{F}_q} \overline{\psi(sa)} e_{U_{(2,1)}} x_{12}(a)e_{U_{(2,1)}}=q^{-1}e_{U_{(2,1)}}\sum_{a\in \mathbb{F}_q} \overline{\psi(sa)} x_{12}(a)=e_{\psi_{(s,0)}} \in \CC[\GL{n}].
\end{equation*}
enumerated by elements $s\in \mathbb{F}_q$. Note that the subalgebra $e_{\psi_0}\mathcal{H}(\GL{3},U_{(2,1)})e_{\psi_0}$ is isomorphic to the Yokonuma-Hecke algebra $\mathcal{H}(\GL{3},U_{\lambda_3})$ and each subalgebra\linebreak $e_{\psi_s}\mathcal{H}(\GL{3},U_{(2,1)})e_{\psi_s}$ for $s\neq 0$ is isomorphic to the Gelfand-Graev Hecke algebra $\mathcal{H}(\GL{3},U_{\lambda_3},\psi_{(s,0)})$.\\

We also have the decomposition of the identity as in the following Proposition









\begin{prop}\label{sum_of_idempotents}
    The following equality holds true
    \[
    \sum_{s\in \mathbb{F}_q}e_{\psi_s} = e_{U_{(2,1)}}\in \CC[\GL{3}]
    \]
\end{prop}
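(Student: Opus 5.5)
The plan is to use the second expression for $e_{\psi_s}$ given in the definition, namely $e_{\psi_s}=q^{-1}e_{U_{(2,1)}}\sum_{a\in\mathbb{F}_q}\overline{\psi(sa)}\,x_{12}(a)$, and to sum over $s$ first, exchanging the order of summation:
\[
\sum_{s\in\mathbb{F}_q}e_{\psi_s}=q^{-1}e_{U_{(2,1)}}\sum_{a\in\mathbb{F}_q}\Bigl(\sum_{s\in\mathbb{F}_q}\overline{\psi(sa)}\Bigr)x_{12}(a).
\]
The inner sum is a character sum. For $a=0$ it equals $q$. For $a\neq 0$ the map $s\mapsto \psi(sa)=\psi_a(s)$ is a nontrivial additive character of $\mathbb{F}_q$, as recalled just before the definition of $\psi_\mu$. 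So its sum over the group $\mathbb{F}_q$ vanishes. This follows from the standard shift argument: choosing $s_0$ with $\psi_a(s_0)\neq 1$, the substitution $s\mapsto s+s_0$ multiplies the sum by $\psi_a(s_0)$, hence the sum is zero. The same holds for the complex conjugate.

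After this step only the term $a=0$ survives, and $x_{12}(0)$ is the identity matrix. The right-hand side therefore collapses to $q^{-1}e_{U_{(2,1)}}\cdot q\cdot e=e_{U_{(2,1)}}$, which is the claimed equality.

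The only points needing care are bookkeeping ones. First, the interchange of the finite sums is harmless. Second, the equality of the two expressions for $e_{\psi_s}$ in the definition should be confirmed. It holds because $x_{12}(a)$ lies in the Levi $M_{(2,1)}$, which normalizes $U_{(2,1)}$, so $e_{U_{(2,1)}}x_{12}(a)e_{U_{(2,1)}}=e_{U_{(2,1)}}x_{12}(a)$. I expect no real obstacle: the main step is the orthogonality relation for additive characters, which rests on the nontriviality of $\psi_a$ for $a\neq0$, and the paper has already justified that nontriviality.
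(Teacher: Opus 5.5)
Your proof is correct and follows the same route as the paper: you factor out $e_{U_{(2,1)}}$ and reduce to the identity $q^{-1}\sum_{s}\sum_{a}\overline{\psi(sa)}\,x_{12}(a)=1$ in the group algebra of the abelian subgroup $\{x_{12}(a)\}\cong\mathbb{F}_q$. The paper obtains this identity by citing $\sum_{\psi}e_\psi=1$ from Proposition~\ref{e-projector}, whereas you verify it directly via the orthogonality relation $\sum_{s}\overline{\psi(sa)}=q\,\delta_{a,0}$, which amounts to the same thing.
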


\begin{proof}
    This is due to the fact that
    \[
    \sum_{s\in \mathbb{F}_q}e_{\psi_s}=e_{U_{(2,1)}}\sum_{s\in \mathbb{F}_q}q^{-1}\sum_{a\in \mathbb{F}_q}\overline{\psi(sa)}x_{12}(a)=e_{U_{(2,1)}}\cdot 1=e_{U_{(2,1)}}.
    \]
    Here, the second equality follows from the Proposition \ref{e-projector} for an abelian subgroup corresponding to $x_{12}$.
\end{proof}

We wish to describe other properties of $e_{\psi_s}$.

\begin{prop}\label{Epsi*N}
    \begin{equation*}
        e_{\psi_s}
        \begin{pmatrix}
            a & &\\
            & b &\\
            & & c
        \end{pmatrix} = 
        \begin{pmatrix}
            a & &\\
            & b &\\
            & & c
        \end{pmatrix}
        e_{\psi_{sab^{-1}}}, \quad a,b,c\in \mathbb{F}_q^{\times}
    \end{equation*}
\end{prop}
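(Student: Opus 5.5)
We write $d=\diag(a,b,c)$ and prove the identity by moving $d$ past each of the two factors of
\[
e_{\psi_s}=q^{-1}e_{U_{(2,1)}}\sum_{x\in \mathbb{F}_q}\overline{\psi(sx)}\,x_{12}(x).
\]

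\textbf{Step 1: the factor $e_{U_{(2,1)}}$.} Since $d\in M_{(2,1)}$ and $U_{(2,1)}$ is normal in $P_{(2,1)}$, conjugation by $d$ permutes $U_{(2,1)}$. Hence $e_{U_{(2,1)}}d=d\,e_{U_{(2,1)}}$, as already used before Lemma \ref{intertwine}.

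\textbf{Step 2: the elementary matrices.} A direct matrix computation gives
\[
d^{-1}x_{12}(x)\,d=x_{12}(a^{-1}bx),
\]
because conjugating $1+xE_{12}$ by $d^{-1}$ rescales the $(1,2)$ entry by $a^{-1}\cdot b$. Therefore $x_{12}(x)\,d=d\,x_{12}(a^{-1}bx)$.

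\textbf{Step 3: reindex the sum.} Combining Steps 1 and 2,
\[
e_{\psi_s}d=q^{-1}d\,e_{U_{(2,1)}}\sum_{x}\overline{\psi(sx)}\,x_{12}(a^{-1}bx).
\]
Substitute $y=a^{-1}bx$. This is a bijection of $\mathbb{F}_q$ because $a,b\neq 0$, and it gives $x=ab^{-1}y$. The sum becomes $\sum_y\overline{\psi(sab^{-1}y)}\,x_{12}(y)$, so the whole expression equals $d\,e_{\psi_{sab^{-1}}}$, as claimed.

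The only point that needs care is the direction of the conjugation scaling, i.e. checking that the factor is $a^{-1}b$ and not $ab^{-1}$. Getting this right is what makes the final subscript $sab^{-1}$ rather than $sa^{-1}b$.
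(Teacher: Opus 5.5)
Your proof is correct and takes essentially the same route as the paper: move $\diag(a,b,c)$ through the projector by conjugation, then reindex the character sum via $x'=ba^{-1}x$. The only difference is organizational. The paper expands $e_{\psi_s}$ over the full upper unitriangular group and rescales all three off-diagonal entries at once. You instead split off $e_{U_{(2,1)}}$ and dispose of it by normality, which also avoids the reindexing of the $(1,3)$ and $(2,3)$ entries that the paper leaves implicit.
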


\begin{proof}
    This can be shown by simple expansion, and substituting $x' = ba^{-1}x$

    \begin{align*}
        e_{\psi_s} \begin{pmatrix}
            a & &\\
            & b &\\
            & & c
        \end{pmatrix} &= 
        \sum_{x, y, z \in \mathbb{F}_q}
        \begin{pmatrix}
            1 & x & y\\
            & 1 & z\\
            & & 1
        \end{pmatrix}
        \begin{pmatrix}
            a & & \\
            & b & \\
            & & c
        \end{pmatrix}
        \overline{\psi_s(x)}\\
        &= \sum_{x, y, z \in \mathbb{F}_q}\begin{pmatrix}
            a & bx & cy\\
            & b & cz\\
            & & c
        \end{pmatrix} \overline{\psi_s(x)}\\
        &= \begin{pmatrix}
            a & & \\
            & b &\\
            & & c
        \end{pmatrix} \sum_{x, y, z \in \mathbb{F}_q}
        \begin{pmatrix}
            1 & ba^{-1}x & ca^{-1}y\\
            & 1 & cb^{-1}z\\
            & & 1
        \end{pmatrix} \overline{\psi_s(x)}\\
        &= \begin{pmatrix}
            a & & \\
            & b &\\
            & & c
        \end{pmatrix} \sum_{x', y, z \in \mathbb{F}_q}
        \begin{pmatrix}
            1 & x' & ca^{-1}y\\
            & 1 & cb^{-1}z\\
            & & 1
        \end{pmatrix} \overline{\psi_s(xab^{-1})}\\
        &= \begin{pmatrix}
            a & & \\
            & b &\\
            & & c
        \end{pmatrix} \sum_{x', y, z \in \mathbb{F}_q}
        \begin{pmatrix}
            1 & x' & ca^{-1}y\\
            & 1 & cb^{-1}z\\
            & & 1
        \end{pmatrix} \overline{\psi_{sab^{-1}}(x)}\\
        &= \begin{pmatrix}
            a & & \\
            & b &\\
            & & c
        \end{pmatrix} e_{\psi_{sab^{-1}}}.
    \end{align*}
\end{proof}

We will also need the following result on element vanishing.

\begin{prop}\label{g2is0}
    The element 
    \[
    e_{\psi_s} t \begin{pmatrix}
        1 & & \\
        & & 1\\
        & 1 &
    \end{pmatrix} e_{\psi_r}
    \]
    vanishes for any $s$, $r \in \mathbb{F}_q$ other than $s = r = 0$, and all $t \in T$.
\end{prop}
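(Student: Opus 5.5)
The plan is to use the standard vanishing criterion for elements of the form $e_{\psi}\,\nu\, e_{\psi'}$, which is the mechanism behind Proposition \ref{GGH_basis}. I will exhibit, for each pair $(s,r)\neq(0,0)$, a root subgroup element that conjugates across $\nu:=t\,s_2$ into $U_{\lambda_3}$ but on which the two characters disagree. Here $s_2$ denotes the permutation matrix in the statement and $t={\rm diag}(a,b,c)$. Recall that $e_{\psi_s}=e_{\psi_{(s,0)}}$ is the idempotent of the linear character of $U_{\lambda_3}$ which is $\psi(s\,\cdot)$ on $x_{12}$ and trivial on $x_{23}$ and $x_{13}$.

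\textbf{Step 1 (general identity).} By Lemma \ref{Epsi*U} applied to the one-dimensional characters $\psi_{(s,0)}$ and $\psi_{(r,0)}$ of $U_{\lambda_3}$, we have $e_{\psi_s}u=\chi_s(u)e_{\psi_s}$ and $u'e_{\psi_r}=\chi_r(u')e_{\psi_r}$ for $u,u'\in U_{\lambda_3}$. Here $\chi_s$ is the character of which $e_{\psi_s}$ is the projector; with the paper's convention, $\chi_s(x_{12}(x))=\psi(sx)$, and the conjugation convention is irrelevant for the argument. Suppose $u\in U_{\lambda_3}$ satisfies $u':=\nu^{-1}u\nu\in U_{\lambda_3}$. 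Then
\[
\chi_s(u)\,e_{\psi_s}\nu e_{\psi_r}=e_{\psi_s}u\nu e_{\psi_r}=e_{\psi_s}\nu u' e_{\psi_r}=\chi_r(u')\,e_{\psi_s}\nu e_{\psi_r}.
\]
Hence $e_{\psi_s}\nu e_{\psi_r}=0$ as soon as $\chi_s(u)\neq\chi_r(u')$ for some such $u$.

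\textbf{Step 2 (case $s\neq0$).} Take $u=x_{12}(x)$. We have $t^{-1}x_{12}(x)t=x_{12}(a^{-1}bx)$, and conjugating by $s_2$, which swaps indices $2$ and $3$, gives $u'=x_{13}(a^{-1}bx)\in U_{\lambda_3}$. Thus $\chi_r(u')=1$ because $x_{13}$ is not a simple root, while $\chi_s(u)=\psi(sx)$. Since $\psi$ is primitive and $s\neq0$, we can choose $x$ with $\psi(sx)\neq1$, so the element vanishes.

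\textbf{Step 3 (case $r\neq0$).} Symmetrically, take $u'=x_{12}(y)$. Then $\nu u'\nu^{-1}=t\,x_{13}(y)\,t^{-1}=x_{13}(ac^{-1}y)=:u\in U_{\lambda_3}$. Now $\chi_s(u)=1$ while $\chi_r(u')=\psi(ry)\neq1$ for a suitable $y$, and the element again vanishes.

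The only delicate point is bookkeeping: tracking the direction of conjugation by $s_2$ and $t$, and making sure which character (or its conjugate) appears on each side in Lemma \ref{Epsi*U}. The key geometric fact is that $s_2$ sends the root $\epsilon_1-\epsilon_2$ to the non-simple positive root $\epsilon_1-\epsilon_3$, on which both characters are trivial. This makes the argument insensitive to those conventions. Since $\chi$ takes the value $1$ on one side and a nontrivial root of unity on the other, no cancellation subtlety arises.
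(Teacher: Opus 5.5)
Your proposal is correct, but it takes a different route from the paper's proof.

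\textbf{The paper's route.} The paper first moves $t$ to the outside using Proposition \ref{Epsi*N}, which only rescales the index $s$. It then multiplies out $e_{\psi_s}s_2e_{\psi_r}$ explicitly as a sum over $x,y,z,x',y',z'$. After the change of variables $y''=y+x'$, $y'''=y'+x-x'z'$, the group elements no longer depend on $x$ and $x'$. The coefficient therefore factors as $\sum_x\overline{\psi_s(x)}\cdot\sum_{x'}\overline{\psi_r(x')}$, which is zero unless $s=r=0$.

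\textbf{Your route.} You prove directly the easy direction of the intertwining criterion behind Proposition \ref{GGH_basis}. Your version has different characters on the two sides, which that proposition does not cover. You then exhibit explicit witnesses: $x_{12}(x)$ conjugates to $x_{13}(a^{-1}bx)$ when $s\neq 0$, and $x_{13}(ac^{-1}y)$ corresponds to $x_{12}(y)$ when $r\neq0$. Your conjugation computations and the character values are correct. Since $t$ is absorbed into $\nu$, you do not need Proposition \ref{Epsi*N}.

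\textbf{Comparison.} Your argument is shorter and does not depend on the sign or conjugation conventions in Lemma \ref{Epsi*U}. It also explains the vanishing conceptually: $s_2$ sends the simple root $\epsilon_1-\epsilon_2$ to the non-simple root $\epsilon_1-\epsilon_3$, on which every $\psi_{(\cdot,0)}$ is trivial. It carries over verbatim to other $\nu\in S_n\ltimes T_n$ and other pairs of characters, which is the kind of check used repeatedly in Lemmas \ref{B_3}--\ref{B_6}. The paper's brute-force expansion needs no choice of a witnessing root subgroup and shows directly what survives when $s=r=0$, but it is tailored to this single element.
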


\begin{proof}
    Note that $t$ does not matter here, as by the Proposition \ref{Epsi*N}, we can commute $t$ through $e_{\psi_s}$ (also note that the character changes, but we need to show this for any character anyways, so this is a valid operation). We show the statement by expansion
        \begin{align*}
        e_{\psi_s}
        \begin{pmatrix}
            1 & &\\
            &  & 1\\
            & 1 & 
        \end{pmatrix} e_{\psi_r} &= 
        \sum_{x, y, z \in \mathbb{F}_q}
        \begin{pmatrix}
            1 & x & y\\
            & 1 & z\\
            & & 1
        \end{pmatrix} \overline{\psi_s(x)}
        \begin{pmatrix}
            1 & &\\
            & & 1\\
            & 1 &
        \end{pmatrix}
        \sum_{x', y', z' \in \mathbb{F}_q}
        \begin{pmatrix}
            1 & x' & y'\\
            & 1 & z'\\
            & & 1
        \end{pmatrix} \overline{\psi_r(x')}\\
        &=
        \sum_{x, y, z, x', y', z' \in \mathbb{F}_q}
        \begin{pmatrix}
            1 & y + x' & x+ yz'+y'\\
            0 & z & zz'+1\\
            0 & 1 & z'
        \end{pmatrix}
        \overline{\psi_s(x) \psi_r(x')}.
    \end{align*}

    Now, assign $y'' = y + x'$ and $y''' = y'+x-x'z'$ to replace $y$ and $y'$. Note that as $y$ goes over all of $\mathbb{F}_q$, $y''$ does so as well for any $x'$, and similarly for $y'''$ and $y'$. Thus, we can rewrite the sum as follows

    \begin{equation*}
        \sum_{x, z, x', z', y'', y''' \in \mathbb{F}_q}
        \begin{pmatrix}
            1 & y'' & y''z'+y'''\\
            0 & z & zz'+1\\
            0 & 1 & z'
        \end{pmatrix}
        \overline{\psi_s(x) \psi_r(x')}.
    \end{equation*}

    Now, we can separate $x$ and $x'$ to get

    \begin{equation*}
        \sum_{z, z', y'', y''' \in \mathbb{F}_q} \begin{pmatrix}
            1 & y'' & y''z'+y'''\\
            0 & z & zz'+1\\
            0 & 1 & z'
        \end{pmatrix}
        \sum_{x \in \mathbb{F}_q}\overline{\psi_s(x)} \sum_{x' \in \mathbb{F}_q}\overline{\psi_r(x')}.
    \end{equation*}

    The sum $\sum_{x \in \mathbb{F}_q} \overline{\psi_s(x)} $ is $0$ if $s \ne 0$, and similarly for $r$, so we can say that this sum vanishes whenever $s$ and $r$ are not both $0$.
\end{proof}

Finally, we turn to the expansion of the Radon transform in terms of the Hecke algebra $\mathcal{H}(\GL{3},U_{(2,1)})$. 

\begin{prop}\label{Radon_transform_decomposition}
    The Radon transform $\R$ in $\mathcal{H}(\GL{3}, U_\lambda)$ for $\lambda = (2,1)$ can be expressed as the sum of elements from unipotent Hecke subalgebras corresponding to the characters $\psi_s,s\in\mathbb{F}_q$. Specifically, $\R$ can be expressed as follows

    \begin{equation*}
        q^2\left(\sum_{b \in \mathbb{F}_q^\times}e_{\psi_0} \begin{pmatrix}
            1 & &\\
            & & -b^{-1}\\
            & b &
        \end{pmatrix}e_{\psi_0} + 
        \sum_{s \in \mathbb{F}_q}\left( e_{\psi_s} 1 e_{\psi_s} + q\sum_{a \in \mathbb{F}_q^\times}e_{\psi_s}\begin{pmatrix}
            & & -a^{-1}\\
            & 1 &\\
            a & &
        \end{pmatrix} e_{\psi_s}\right)\right)
    \end{equation*}
\end{prop}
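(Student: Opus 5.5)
The plan is to start from $\R=|U_{(2,1)}|^2e_Ue_{U^{op}}e_U=q^4e_Ue_{U^{op}}e_U$, with $U=U_{(2,1)}$. First I write $e_{U^{op}}$ as an explicit sum and reduce each summand to a normal form modulo $U_{\lambda_3}$ on both sides. Then I insert the decomposition $e_U=\sum_{s}e_{\psi_s}$ of Proposition \ref{sum_of_idempotents} on both sides and show that all off-diagonal terms $e_{\psi_s}(\cdots)e_{\psi_r}$ with $s\neq r$ vanish.

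\textbf{Step 1: split by Bruhat cell.} Every element of $U^{op}$ has the form $v(c,d)$, the identity matrix with entries $c,d$ in positions $(3,1),(3,2)$, and $e_{U^{op}}=q^{-2}\sum_{c,d}v(c,d)$. I split this sum into three pieces.
\begin{itemize}
\item The term $c=d=0$ gives $e_Ue_U=e_U=\sum_s e_{\psi_s}1e_{\psi_s}$. Here orthogonality of the $e_{\psi_s}$ is used.
\item The terms with $c=0$, $d=b\neq0$ form a $GL_2$ computation in rows and columns $2,3$. As in the proof of Theorem \ref{Yok_generators}, I write $v(0,b)=x_{23}(b^{-1})\,w_b\,x_{23}(b^{-1})$, where $w_b$ is the monomial matrix with entries $1,-b^{-1},b$. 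Since $x_{23}\in U$, this gives $e_Uv(0,b)e_U=e_Uw_be_U$.
\item The terms with $c=a\neq0$ are the remaining ones. For these I do the analogous $2\times2$ factorization in rows and columns $1,3$: $v(a,0)=x_{13}(a^{-1})\,w'_a\,x_{13}(a^{-1})$, with $w'_a$ the antidiagonal matrix with entries $-a^{-1},1,a$. The extra entry $d$ is then pushed through. One checks that $v(a,d)=v(a,0)\,y$ for a unipotent $y$, and that $w'_a\,x_{13}(a^{-1})\,y$ can be rewritten as $u'\,w'_a\,u''$ with $u''\in U$ and $u'$ involving only $x_{13},x_{23}\in U$ and possibly an $x_{12}$-factor.
\end{itemize}

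\textbf{Step 2: the coefficients.} The factor $q$ in front of the $w'_a$ terms should come from the $q$ values of $d$, which all land in the same double coset. If an $x_{12}(t)$-factor does appear with $t$ depending on $d$, I sandwich with $e_{\psi_s}$. By Lemma \ref{Epsi*U} applied to the abelian group $\{x_{12}(t)\}$, such a factor contributes a character value $\psi(st)$. I then either check that these characters cancel against the symmetric factor on the right, or that they reassemble into $e_{\psi_s}$. As a consistency check on all normalizations I use the counit of Proposition \ref{counit}. The left side has $\epsilon(\R)=q^4$, and the right side gives $q^2\bigl((q-1)\epsilon(e_{\psi_0}\cdot e_{\psi_0})+\cdots\bigr)$, which fixes the constants.

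\textbf{Step 3: off-diagonal terms.} With $e_U=\sum_s e_{\psi_s}$ inserted on both sides, I handle each cell separately.
\begin{itemize}
\item For the $w_b$ cell, Proposition \ref{Epsi*N} moves the diagonal part of $w_b$ past the idempotent at the cost of rescaling the character index, which is harmless. Then Proposition \ref{g2is0} kills $e_{\psi_s}w_be_{\psi_r}$ unless $s=r=0$, which explains why only $e_{\psi_0}$ appears in that sum.
\item For the $w'_a$ cell, the permutation $(13)$ conjugates $x_{12}(t)$ to $x_{32}$-type elements, so I cannot simply transport the character. Instead I expand $e_{\psi_s}w'_ae_{\psi_r}$ as in the proof of Proposition \ref{g2is0}. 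After a change of variables that absorbs the $y$-entries, I expect the sum over the $x_{12}$-coordinates to produce a factor $\sum_x\overline{\psi_{s-r'}(x)}$, where $r'$ is $r$ rescaled by the torus part of $w'_a$. That factor vanishes unless the indices match, giving exactly the diagonal terms $e_{\psi_s}w'_ae_{\psi_s}$.
\end{itemize}

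\textbf{Main obstacle.} The hardest part is this last cell: the bookkeeping of the $x_{12}$-factors that arise when eliminating $d$. I must make sure the character twists cancel, so that the surviving pairs are exactly $r=s$ with the same antidiagonal representative, and not a rescaled index such as $r=sa^{2}$. I would verify this first by a direct $3\times3$ multiplication of $u\,w'_a\,u'$.
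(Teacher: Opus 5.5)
There is a genuine gap in the cell $c=a\neq 0$. Your Step~3 plan is to show, by an expansion as in Proposition~\ref{g2is0}, that $e_{\psi_s}w'_ae_{\psi_r}$ vanishes when the indices do not match. In fact this element never vanishes. Since $w'_a$ is a monomial matrix for the longest permutation $(13)$, we have $U_{\lambda_3}\cap w'_aU_{\lambda_3}(w'_a)^{-1}=\{1\}$. Hence the map $(u,u')\mapsto uw'_au'$ is injective on $U_{\lambda_3}\times U_{\lambda_3}$, and
\[
e_{\psi_s}w'_ae_{\psi_r}=q^{-6}\sum_{u,u'\in U_{\lambda_3}}\overline{\psi_s(u)\psi_r(u')}\,uw'_au'
\]
is a combination of $q^6$ distinct group elements with nonzero coefficients. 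So it is nonzero for every pair $(s,r)$, and no factor $\sum_x\overline{\psi_{s-r'}(x)}$ can arise.

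Proposition~\ref{g2is0} works for $s_2$ only because $s_2$ conjugates $x_{12}$ into $x_{13}$. There, a right $x_{12}$-variable can be traded for a left $x_{13}$-variable on which every $\psi_s$ is trivial. For $(13)$ this is impossible, as you note yourself.

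The same injectivity shows that your Step~1 claim, $v(a,d)=u'w'_au''$ with $u''\in U_{(2,1)}$, is false. The unique factorization is
\[
v(a,d)=x_{13}(a^{-1})\,x_{12}(-a^{-1}d)\;w'_a\;x_{12}(a^{-1}d)\,x_{13}(a^{-1}),
\]
with an $x_{12}$-factor on each side of $w'_a$.

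The missing idea is that both the factor $q$ and the off-diagonal vanishing come from the sum over $d$, not from any individual sandwiched element. First absorb the two $x_{13}$-factors into $e_{\psi_s}$ and $e_{\psi_r}$. Then extract the $x_{12}$-factors by Lemma~\ref{Epsi*U}, which gives
\[
e_{\psi_s}v(a,d)e_{\psi_r}=\psi\bigl((r-s)a^{-1}d\bigr)\,e_{\psi_s}w'_ae_{\psi_r}.
\]
Finally, $\sum_{d\in\mathbb{F}_q}\psi\bigl((r-s)a^{-1}d\bigr)=q\,\delta_{sr}$. Your concern about a twisted index such as $r=sa^2$ disappears, because the $x_{12}$-factors sit outside $w'_a$ and are never conjugated through it. This is exactly the paper's argument. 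Your treatment of the cells $c=d=0$ and $c=0\neq d$ agrees with the paper's.
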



\begin{proof} We recall the formula for the Radon transform
\[
\R= |U_{\lambda}|^2e_{U_{\lambda}}e_{U_{\lambda}^{op}}e_{U_{\lambda}}=q^2e_{U_{\lambda}}\sum_{a,b\in\mathbb{F}_q}\begin{pmatrix}
    1 & 0 & 0\\
    0 & 1 & 0\\
    a & b & 1
\end{pmatrix}
e_{U_{\lambda}}
\]





According to the Proposition \ref{sum_of_idempotents} we may decompose it further
    \[\R =q^2\sum_{s,t\in \mathbb{F}_q} \left( e_{\psi_s} 1 e_{\psi_t} + \sum_{b \in \mathbb{F}_q^\times} e_{\psi_s} 
    \begin{pmatrix}
        1 & & \\
        & 1 & \\
        & b & 1
    \end{pmatrix} e_{\psi_t}+ \sum_{\substack{a \in \mathbb{F}_q^\times\\b \in \mathbb{F}_q}} e_{\psi_s}
    \begin{pmatrix}
        1 & & \\
         & 1 & \\
        a & b & 1
    \end{pmatrix} e_{\psi_t}\right)\]
    
    We can work with the first term. We already know that $e_{\psi_s} e_{\psi_t} = 0$ if $s \ne t$ due to the fact that $e_{\psi_s}$ and $e_{\psi_t}$ are orthogonal idempotents.\\
    
    We can apply the Bruhat decomposition to simplify the expression
    \[
    \begin{pmatrix}
        1 & & \\
        & 1 &\\
        & b & 1
    \end{pmatrix} = 
    \begin{pmatrix}
        1 & & \\
        & 1 & b^{-1}\\
        & & 1
    \end{pmatrix}
    \begin{pmatrix}
        1 & &\\
        & & -b^{-1}\\
        & b &
    \end{pmatrix}
    \begin{pmatrix}
        1 & &\\
        & 1 & b^{-1}\\
        & & 1
    \end{pmatrix}
    \]
    
    Note that, by the Proposition \ref{Epsi*U}

    \begin{equation*}
        \begin{pmatrix}
            1 & & \\
            & 1 & b^{-1}\\
            & & 1
        \end{pmatrix}
        e_{\psi_s} = e_{\psi_s} = e_{\psi_s}
        \begin{pmatrix}
            1 & & \\
            & 1 & b^{-1} \\
            & & 1
        \end{pmatrix}.
    \end{equation*}

    This allows us to change the second term to the following

    \begin{equation*}
        e_{\psi_s}
        \begin{pmatrix}
            1 & &\\
            &  & -b^{-1}\\
            & b & 
        \end{pmatrix} e_{\psi_t}.
    \end{equation*}

    It vanishes whenever $s$ and $t$ are not both zero by the Proposition \ref{g2is0}.\\

    The third term is a bit more complicated. We begin by writing the matrix's Bruhat decomposition

    \begin{equation*}
        \begin{pmatrix}
            1 & & \\
            & 1 &\\
            a & b & 1
        \end{pmatrix} = \begin{pmatrix}
            1 & -a^{-1}b & a^{-1} \\
            & 1 & \\
            & & 1
        \end{pmatrix} \begin{pmatrix}
            & & -a^{-1}\\
            & 1 &\\
            a & &
        \end{pmatrix} \begin{pmatrix}
            1 & a^{-1}b & a^{-1}\\
            & 1 & \\
            & & 1
        \end{pmatrix}.
    \end{equation*}

    Note that

    \begin{equation*}
        \begin{pmatrix}
            1 & -a^{-1}b & a^{-1} \\
            & 1 & \\
            & & 1
        \end{pmatrix} = \begin{pmatrix}
            1 & & a^{-1}\\
            & 1 & \\
            & & 1
        \end{pmatrix} \begin{pmatrix}
            1 & -a^{-1}b &\\
            & 1 & \\
            & & 1
        \end{pmatrix}
    \end{equation*}

    and 

    \begin{equation*}
        \begin{pmatrix}
            1 & a^{-1}b & a^{-1} \\
             & 1 & \\
            & & 1
        \end{pmatrix} = \begin{pmatrix}
            1 & a^{-1}b & \\
            & 1 & \\
            & & 1
        \end{pmatrix} \begin{pmatrix}
            1 & & a^{-1}\\
            & 1 & \\
            & & 1
        \end{pmatrix}.
    \end{equation*}

    The full term then turns into

    \begin{align*}
        \sum_{\substack{a \in \mathbb{F}_q^\times\\b \in \mathbb{F}_q}}e_{\psi_s} \begin{pmatrix}
            1 & -a^{-1}b & a^{-1} \\
            & 1 & \\
            & & 1
        \end{pmatrix}& \begin{pmatrix}
            & & -a^{-1}\\
            & 1 &\\
            a & &
        \end{pmatrix} \begin{pmatrix}
            1 & a^{-1}b & a^{-1}\\
            & 1 & \\
            & & 1
        \end{pmatrix} e_{\psi_t}\\
        = \sum_{\substack{a \in \mathbb{F}_q^\times\\b \in \mathbb{F}_q}}e_{\psi_s}\begin{pmatrix}
            1 & -a^{-1}b & \\
            & 1 & \\
            & & 1
        \end{pmatrix}&\begin{pmatrix}
            & & -a^{-1}\\
            & 1 &\\
            a & &
        \end{pmatrix} \begin{pmatrix}
            1 & a^{-1}b &\\
            & 1 & \\
            & & 1
        \end{pmatrix} e_{\psi_t}.
    \end{align*}

    According to the Proposition \ref{Epsi*U} we can rewrite the latter as
    \[
    \sum_{\substack{a \in \mathbb{F}_q^\times\\b \in \mathbb{F}_q}}e_{\psi_s}\overline{\psi_s(a^{-1}b)}\begin{pmatrix}
            & & -a^{-1}\\
            & 1 &\\
            a & &
        \end{pmatrix} \overline{\psi_t(-a^{-1}b)}e_{\psi_t}
    \]
    or as
    \[
    \sum_{b'\in \mathbb{F}_q}\overline{\psi_s(b')}\psi_t(b')\cdot\sum_{a \in \mathbb{F}_q^\times}e_{\psi_s}\begin{pmatrix}
            & & -a^{-1}\\
            & 1 &\\
            a & &
        \end{pmatrix} e_{\psi_t}.
    \]
    The factor $\sum_{b'\in \mathbb{F}_q}\overline{\psi_s(b')}\psi_t(b')$ is $0$ if $s\neq t$ and $q$ otherwise.
\end{proof}

\begin{cor}
    The set of eigenvalues of $\R$ is the union of the eigenvalues of $\R$ for the individual unipotent Hecke subalgebras.
\end{cor}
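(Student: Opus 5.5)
The plan is to read off the corollary from Proposition \ref{Radon_transform_decomposition} together with the orthogonal decomposition of Proposition \ref{sum_of_idempotents}. Write $e:=e_{U_{(2,1)}}$. The idempotents $e_{\psi_s}$, $s\in\mathbb{F}_q$, are pairwise orthogonal and sum to $e$. Each of them commutes with $e$, since $e_{\psi_s}=e\cdot q^{-1}\sum_a\overline{\psi(sa)}x_{12}(a)$ and $x_{12}(a)\in M_{(2,1)}$ normalizes $U_{(2,1)}$.

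Consequently the space on which $\R$ acts splits as a direct sum of the pieces $e_{\psi_s}$ applied to it. Concretely, $\CC[\GL{3}]e=\bigoplus_{s\in\mathbb{F}_q}\CC[\GL{3}]e_{\psi_s}$, where $\R$ acts by right multiplication (Proposition \ref{Radon Expansion}). Since $\R$ acts on the right, we should work with right ideals here, and these summands are indeed right-multiplication-stable.

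The key step is to show that $\R$ is block diagonal with respect to this decomposition. Proposition \ref{Radon_transform_decomposition} writes
\[
\R=\sum_{s\in\mathbb{F}_q}\R_s,
\]
where each $\R_s$ lies in $e_{\psi_s}\CC[\GL{3}]e_{\psi_s}$. Here $\R_0$ collects both the $e_{\psi_0}(\cdots)e_{\psi_0}$ terms and the $s=0$ summand. By orthogonality, $\R_s e_{\psi_t}=e_{\psi_t}\R_s=0$ for $s\neq t$. It follows that for $x\in\CC[\GL{3}]e_{\psi_t}$ we have $x\R=x e_{\psi_t}\R=x\R_t\in\CC[\GL{3}]e_{\psi_t}$. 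So each summand is $\R$-stable, and on it $\R$ acts as right multiplication by $\R_t$, which is the Radon transform restricted to the unipotent Hecke subalgebra $e_{\psi_t}\mathcal{H}(\GL{3},U_{(2,1)})e_{\psi_t}$.

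Because $\R$ is diagonalizable (its matrix is symmetric, Proposition \ref{symmetric_Rd}), its spectrum on a direct sum of invariant subspaces is the union of the spectra on the summands. The same holds on the level of the Hecke algebra: left $\GL{3}$-isotypic components do not matter, by Lemma \ref{intertwine}.

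The only point needing care is the identification of "eigenvalues of $\R$ for the individual Hecke subalgebra." We interpret it as the spectrum of right multiplication by $\R_t$ on $\CC[\GL{3}]e_{\psi_t}$, or equivalently on the algebra $e_{\psi_t}\CC[\GL{3}]e_{\psi_t}$ viewed via its faithful action. The two spectra coincide because both are computed on $\bigoplus_V V\otimes (V^*e_{\psi_t})$ using Theorem \ref{AMT}. I expect this bookkeeping, rather than any computation, to be the main (mild) obstacle.
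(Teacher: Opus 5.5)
Your proposal is correct and is essentially the argument the paper leaves implicit: the orthogonal idempotents $e_{\psi_s}$ sum to $e_{U_{(2,1)}}$, and Proposition \ref{Radon_transform_decomposition} places each piece of $\R$ in a corner $e_{\psi_s}\CC[\GL{3}]e_{\psi_s}$, so $\R$ is block diagonal and its spectrum is the union of the spectra of the blocks. Two harmless slips: the summands $\CC[\GL{3}]e_{\psi_s}$ are left ideals, stable under right multiplication only by the corner algebra (which is all you use), and diagonalizability is not needed, since any operator preserving a direct sum decomposition has spectrum equal to the union of the spectra of its restrictions.
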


\subsection{Radon Transform for the Yokonuma-Hecke Algebra}
We can begin with the case of $\psi_0$, in which case we end up working with the Yokonuma-Hecke algebra.\\ 

From the Proposition \ref{Radon_transform_decomposition} we see that we need to work with the following element of $\mathcal{H}(\GL{3},U_3)$
\[
q^2\left( \sum_{b \in \mathbb{F}_q^\times}e_{\psi_0} \begin{pmatrix}
            1 & &\\
            & & -b^{-1}\\
            & b &
        \end{pmatrix}e_{\psi_0} + 
         e_{\psi_0} 1 e_{\psi_0} + q\sum_{a \in \mathbb{F}_q^\times}e_{\psi_0}\begin{pmatrix}
            & & -a^{-1}\\
            & 1 &\\
            a & &
        \end{pmatrix} e_{\psi_0}\right).
\]








Alternatively, by using notation of the Theorem \ref{Yok_generators}, it corresponds to

\begin{equation}\label{Radon_transform_Yok}
    q^2e_{\psi_0}\left( \sum_{k=1}^{q-1} t_2^k t_3^{-k+(q-1)/2} g_2 +1+ q\sum_{k=1}^{q-1} t_1^k t_3^{-k+(q-1)/2} g_1 g_2 g_1\right)e_{\psi_0}.
\end{equation}

\begin{rem}
    We know that the Radon transform for $\mathcal{H}(\GL{n},U_{(n-1,1)})$ commutes with the Radon transform for $\mathcal{H}(\GL{n-1},U_{(n-2,1)})$ when we view $\GL{n-1}$ as a subgroup of $\GL{n}$ embedded in the upper leftmost $(n-1)\times (n-1)$ square. Interestingly enough, the element from \eqref{Radon_transform_Yok} has terms with permutation types $e,(23)$ and $(13)$ from $S_3$, while the Radon transform for $\GL{2}$ has terms associated with $e$ and $(12)$ from $S_3$. Since both transforms commute, this situation has a certain similarity with the Jucys–Murphy elements in $\CC[S_3]$. It would be an interesting problem to find the formula for the Radon transform in the $\mathcal{H}(\GL{n},U_{(n-1,1)})$ case and check the permutation type of the terms involved in the decomposition.
\end{rem}












\begin{lem}\label{NCAlgebra_relations}
    For odd $q$ let us denote $\sum_{k=1}^{q-1} t_1^k t_2^{-k + (q-1)/2}$ as $l_{3}$, $\sum_{k=1}^{q-1} t_1^{k} t_3^{-k+(q-1)/2}$ as $l_2$, and \linebreak $\sum_{k=1}^{q-1} t_2^{k} t_3^{-k+(q-1)/2}$ as $l_1$. We can see that $\R$ can be expressed via $l_1,l_2,l_3,g_1$ and $g_2$. The elements $l_1,l_2,l_3,g_1$ and $g_2$ satisfy the following relations.
    \begin{align*}
        g_1g_2g_1 &= g_2g_1g_2,\\
        g_1^2 &= q^{-1}(1 + l_3g_1),\\
        g_2^2 &= q^{-1}(1 + l_1g_2),\\
        l_i g_1 &= g_1 l_{s_1(i)},\\
        l_i g_2 &= g_2 l_{s_2(i)},\\
        l_i^3 &= (q-1)^2l_i,\\
        l_i l_j &= l_j l_i,\\
        l_j l_i &= l_j l_k \quad (j \ne i, k).
    \end{align*}
    If $q$ is a power of $2$ we work with sums of the form $\sum_{k=1}^{q-1}t_i^kt_j^{-k}$ instead. In that case we replace the cubic relation for $l_i$ with $l_i^2=(q-1)l_i$.
\end{lem}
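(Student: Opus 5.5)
The plan is to derive every relation from Theorem~\ref{Yok_generators}, applied with $n=3$. The torus part of the Yokonuma-Hecke algebra behaves like the group algebra of $T_3$. Indeed, $U_{\lambda_3}$ is normal in $B_3$, so $t\mapsto e_{U_{\lambda_3}}te_{U_{\lambda_3}}$ is an algebra homomorphism $\CC[T_3]\to\mathcal{H}(\GL{3},U_{\lambda_3})$. By the relations \eqref{torus_1}, \eqref{torus_2} the $t_i$ commute and satisfy $t_i^{q-1}=1$, so every exponent below may be read modulo $q-1$.

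First I would show that $\R$ is expressed through these elements. By \eqref{Radon_transform_Yok},
\[
\R=q^2e_{\psi_0}\bigl(l_1g_2+1+q\,l_2g_1g_2g_1\bigr)e_{\psi_0},
\]
using the definitions of $l_1$ and $l_2$. The braid relation is copied from Theorem~\ref{Yok_generators}. The two quadratic relations are the last relation of that theorem for $i=1,2$, because $\sum_k t_1^kt_2^{-k+(q-1)/2}=l_3$ and $\sum_k t_2^kt_3^{-k+(q-1)/2}=l_1$. The commutation $l_il_j=l_jl_i$ holds because the $t$'s commute.

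For the permutation relations I would apply \eqref{torus-permutation} termwise. This gives $l\,g_j=g_j\,s_j(l)$, where $s_j$ permutes the indices of the $t$'s. It then remains to check the effect of $s_1$ and $s_2$ on the $l_i$. Write $h=(q-1)/2$.
\begin{itemize}
\item $s_1$ sends $l_1$ to $l_2$ and $l_2$ to $l_1$ directly.
\item $s_1$ sends $l_3$ to $\sum_k t_2^kt_1^{-k+h}$. Reindexing with $m=h-k$ modulo $q-1$ turns this back into $l_3$.
\item The case of $s_2$ is analogous.
\end{itemize}
Thus $l_i$ is indexed by the index missing from its $t$'s, and $s_j$ acts on these labels by the permutation $s_j$.

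For the cubic relation and the absorption relation, set $z_{ab}=t_at_b^{-1}$ and $S_{ab}=\sum_{k=1}^{q-1}z_{ab}^k$. Then
\[
l_3=t_2^hS_{12},\qquad l_2=t_3^hS_{13},\qquad l_1=t_3^hS_{23}.
\]
Since $z_{ab}$ has order dividing $q-1$, we get $S_{ab}z_{ab}=S_{ab}$ and $S_{ab}^2=(q-1)S_{ab}$. Also $(t_b^h)^2=t_b^{q-1}=1$. Therefore $l_i^2=(q-1)S$ for the corresponding $S$, and hence $l_i^3=(q-1)^2l_i$.

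For $l_jl_i=l_jl_k$, take for instance $j=1$, so that $l_1$ contains $S_{23}$. Because $S_{23}$ absorbs powers of $t_2t_3^{-1}$, inside any product with $S_{23}$ we may replace $t_2$ by $t_3$. This converts $t_2^hS_{12}$ into $t_3^hS_{13}$, i.e.\ $l_1l_3=l_1l_2$. The other values of $j$ follow from the same argument, or by conjugating with the relations $l_ig_j=g_jl_{s_j(i)}$.

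For $q$ a power of $2$ everything is the same with $h=0$. In that case $l_i=S_{ab}$ and $l_i^2=(q-1)l_i$.

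The main obstacle is bookkeeping rather than any deep step. One must track the shift $h$ modulo $q-1$ carefully, and in particular notice that $t_b^h$ is an involution. Exactly this fact makes the reindexing in $s_1(l_3)=l_3$ work and yields the cubic rather than a quadratic relation.
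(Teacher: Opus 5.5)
Your proposal is correct and takes the same route as the paper, whose proof just says that the relations follow by direct computation from the relations of Theorem~\ref{Yok_generators}; you carry out that computation, with the factorization $l_i=t_b^{h}S_{ab}$ (where $S_{ab}$ absorbs $t_at_b^{-1}$) giving the cubic and absorption relations and the reindexing $m=h-k$ giving the permutation relations. Your checks of how $s_1,s_2$ act on the labels and of the characteristic-$2$ variant with $h=0$ are also accurate.
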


\begin{proof}
    Follows from a direct computation and relations from the Theorem \ref{Yok_generators}.
\end{proof}

We have the following Theorem.

\begin{thm}\label{eigenvalues_gl3_Yok}
    The eigenvalues of the Radon transform of $\lambda = (2,1)$ in the Yokonuma-Hecke algebra form a subset of $\{1, q, q^2, q^3,q^4\}$.
\end{thm}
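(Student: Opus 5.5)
The plan is to decompose the Yokonuma--Hecke algebra $\mathcal{H}(\GL{3},U_{\lambda_3})$ along characters of the torus $T_3$, which reduces the operator \eqref{Radon_transform_Yok} to ordinary Iwahori--Hecke algebras of stabilizer subgroups of $S_3$. The element $\sum_{t\in T_3}$ spans a commutative subalgebra generated by $t_1,t_2,t_3$. For each character $\theta=(\theta_1,\theta_2,\theta_3)$ of $T_3\cong(\mathbb{F}_q^\times)^3$ there is a primitive idempotent $E_\theta=\frac{1}{(q-1)^3}\sum_{t}\theta(t^{-1})t$ (Proposition \ref{e-projector} applied inside the Yokonuma--Hecke algebra). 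These idempotents sum to $1$, and by the relation $t_jg_i=g_it_{s_i(j)}$ of Theorem \ref{Yok_generators} they satisfy $g_iE_\theta=E_{s_i\theta}g_i$.

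For each $S_3$-orbit $\mathcal{O}$ of characters, $E_{\mathcal{O}}=\sum_{\theta\in\mathcal{O}}E_\theta$ is central. It therefore suffices to find the eigenvalues of $\R$ on each block $E_{\mathcal{O}}\mathcal{H}$. On $E_\theta$ the elements $l_i$ of Lemma \ref{NCAlgebra_relations} act by scalars. For example,
\[
l_3E_\theta=\Big(\sum_{k}\theta_1(\alpha)^k\theta_2(\alpha)^{-k}\Big)\theta_2(-1)E_\theta ,
\]
which equals $(q-1)\theta_2(-1)E_\theta$ if $\theta_1=\theta_2$ and $0$ otherwise. The analogous statements hold for $l_1$ (the pair $2,3$) and $l_2$ (the pair $1,3$); for $q$ even the sign factor disappears.

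I would then split into cases according to the stabilizer $W_\theta\subset S_3$.

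\textbf{Case $W_\theta=1$} (all $\theta_i$ distinct). Both $l_1g_2$ and $l_2g_1g_2g_1$ vanish on the block, because $l_1$ and $l_2$ kill every $E_{\theta'}$ with $\theta'$ in the orbit. Hence $\R=q^2$ there.

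\textbf{Case $W_\theta\cong S_2$.} Either $l_1$ or $l_2$ can be nonzero on suitable $E_{\theta'}$ in the orbit. Using a Mackey-type identification, $E_\theta\mathcal{H}E_\theta$ is a two-dimensional Hecke algebra spanned by $E_\theta$ and $E_\theta g_wE_\theta$, where $w$ is the reflection generating $W_\theta$. This element satisfies a quadratic relation derived from $g_i^2=q^{-1}(1+l g_i)$, conjugated by $g$'s when $w=s_1s_2s_1$. The block is Morita equivalent to this small algebra. On it, $\R$ becomes $q^2(1+(q-1)\epsilon\,q\,g_w)$, where $\epsilon=\pm1$ is the sign coming from $\theta(-1)$, or the analogous expression with the factor $q^3$ in the $w=s_1s_2s_1$ case. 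The eigenvalues are then read off from the two one-dimensional characters, $T_w\mapsto q$ or $-1$; I expect them to land in $\{1,q,q^2,q^3,q^4\}$.

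\textbf{Case $W_\theta=S_3$} ($\theta_1=\theta_2=\theta_3$). Here $E_\theta\mathcal{H}E_\theta$ is the Iwahori--Hecke algebra of $S_3$. Set $T_i=\epsilon q\,g_iE_\theta$ with $\epsilon=\theta_1(-1)$, so that $T_i^2=(q-1)T_i+q$. Writing $T_{w_0}=T_1T_2T_1$, the relation \eqref{Radon_transform_Yok} becomes
\[
\R E_\theta=q^2+q(q-1)T_2+(q-1)T_{w_0}.
\]
On the trivial representation this gives $q^2+q^2(q-1)+q^3(q-1)=q^4$. On the sign representation it gives $q^2-q(q-1)-(q-1)=1$.

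The main obstacle is the two-dimensional reflection representation. My first attempt would be to write explicit $2\times2$ matrices: $T_1=\begin{pmatrix}q&*\\0&-1\end{pmatrix}$ and $T_2$ lower triangular in a standard basis, then compute the trace and determinant of $\R E_\theta$ and check that its characteristic polynomial factors with roots among powers of $q$. I expect the roots to be $q$ and $q^3$. As a consistency check, $T_{w_0}^2=q^3$ on this representation, and $\R$ should commute with the $\GL{2}$-Radon transform, whose eigenvalues are known from Theorem \ref{eigenvalues_gl2}.

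Finally, the $q$ even case is handled identically with $\epsilon=1$. Collecting all blocks gives the claimed inclusion in $\{1,q,q^2,q^3,q^4\}$.
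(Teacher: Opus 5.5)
Your route differs from the paper's. The paper feeds the relations of Lemma \ref{NCAlgebra_relations} into a noncommutative Gr\"obner basis computation and checks $(\R-1)(\R-q)(\R-q^2)(\R-q^3)(\R-q^4)=0$ directly. You instead decompose along torus characters and reduce to Iwahori--Hecke algebras, which would give a computer-free proof and show which block produces which eigenvalue. Your cases $W_\theta=1$ and $W_\theta=S_3$ are correct. In the reflection representation your expectation holds: with $T_1=\begin{pmatrix}-1&0\\1&q\end{pmatrix}$ and $T_2=\begin{pmatrix}q&q\\0&-1\end{pmatrix}$ one gets $\R E_\theta=\begin{pmatrix}q^3&0\\-q(q-1)&q\end{pmatrix}$, with eigenvalues $q^3,q$.

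The gap is the case $W_\theta\cong S_2$, which you leave at ``I expect'' and which, as written, does not work. Write $\epsilon=\chi(-1)$, where $\chi$ is the repeated character.

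\textbf{Commutation with the idempotents.} Morita equivalence of the block with a two-dimensional algebra does not let you read the spectrum of the non-central element $\R$ off a single compression. What you need, and never state, is that $\R$ commutes with every $E_\theta$. This holds because $l_1E_{\theta'}=0$ unless $s_2\theta'=\theta'$, and $l_2E_{\theta'}=0$ unless $s_1s_2s_1\theta'=\theta'$; equivalently, $\R$ commutes with $T_3\subset M_{(2,1)}$. The spectrum on a module $M$ is then the union over \emph{all} $\theta$ of the spectra of $\R E_\theta$ on $E_\theta M$.

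\textbf{The three characters in an orbit.} They give different restrictions. For $\theta=(\chi,\chi,\chi')$ one gets $\R E_\theta=q^2E_\theta$. For $\theta=(\chi',\chi,\chi)$ one gets $\R E_\theta=q^2\bigl(1+(q-1)\epsilon g_2\bigr)E_\theta$, not $q^2\bigl(1+(q-1)\epsilon q g_2\bigr)$. With your extra factor of $q$, the characters $T\mapsto q,-1$ would give $q^4-q^3+q^2$ and $2q^2-q^3$, which are not powers of $q$.

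\textbf{The quadratic relation for $w=s_1s_2s_1$.} For $\theta=(\chi,\chi',\chi)$ the relation has to be computed. One finds $g_{w_0}^2E_\theta=q^{-3}\bigl(1+q(q-1)\epsilon g_{w_0}\bigr)E_\theta$. So the correctly normalized generator is $\tilde T=\epsilon q^2g_{w_0}E_\theta$ (not $\epsilon q^3 g_{w_0}E_\theta$), and it satisfies $\tilde T^2=(q-1)\tilde T+q$.

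In both nontrivial cases $\R E_\theta=q^2+q(q-1)T$ for the normalized generator $T$ (namely $\epsilon q g_2E_\theta$, resp.\ $\tilde T$). This gives eigenvalues $q^3$ and $q$, so the $S_2$ blocks contribute $\{q,q^2,q^3\}$. With these fixes your argument is complete.
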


\begin{proof}
    We can do this by showing that the minimal polynomial of the Radon transform is 
    \begin{equation}\label{Yok_Rd_relation}
        (\R - 1)(\R - q)(\R - q^2)(\R - q^3)(\R - q^4)    
    \end{equation}
     through a direct calculation.\\

    We can see that $\R$ can be expressed via $l_1,l_2,l_3,g_1$ and $g_2$ from the Lemma \ref{NCAlgebra_relations}:
    \[
    \R =q^2 (1+l_1g_2+ql_2g_1g_2g_1).
    \]
    We can see that the dimension of the algebra from Lemma \ref{NCAlgebra_relations} has a fixed dimension whenever $q$ is not a power of $2$ and a different fixed dimension whenever $q$ is a power of $2$. In fact, one can show that they are equal to $54$ and $30$ respectively.\\
    
    Therefore, these relations are sufficient to simplify $(\R - 1)(\R - q)(\R - q^2)(\R - q^3)(\R - q^4)$. By using the NCAlgebra mathematica package, we can demonstrate that $\R$ satisfies the relation given by the polynomial \eqref{Yok_Rd_relation} (see the Appendix for the Wolfram Mathematica code).\\ 
    
    A direct computation for the case $q = 3$ shows that all of the listed eigenvalues $1, 3, 9, 27$ and $81$ show up.
\end{proof}

As a result of the Theorem \ref{eigenvalues_gl3_Yok}, we get the Corollary below.\\

\begin{cor}\label{1_1_1}
    Observe that the Radon transform for $\GL{3},\lambda=(1,1,1)$ lies in the Hecke algebra $\mathcal{H}(\GL{3},U_{\lambda_3})\subset \mathcal{H}(\GL{3},U_{(2,1)})$, and thus it admits an action on $\CC[\GL{3}/U_{(2,1)}]$. The eigenvalues of such an action form a subset of $\{0,1,q,q^2,q^3,q^4,q^5,q^6\}$.
\end{cor}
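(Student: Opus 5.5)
The plan is to factor the $(1,1,1)$ Radon transform, as in the proof of Theorem \ref{eigenvalues_box}, into two commuting pieces. Write $\lambda=(1,1,1)$, $\lambda'=(1,1)$ and $e_1=e_{U_{(2,1)}}$, $e_2=e_{U_{(1,1)}}$, the latter for $GL_2(\mathbb{F}_q)$ embedded in the upper left corner. As shown in that proof, $e_1e_2=e_{U_{\lambda_3}}$ and $e_1^{op}e_2^{op}=e_{U^{op}_{\lambda_3}}$. The pair $e_1,e_1^{op}$ commutes with the pair $e_2,e_2^{op}$, because $GL_2(\mathbb{F}_q)\subset M_{(2,1)}$ normalizes $U_{(2,1)}$ and $U_{(2,1)}^{op}$. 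Hence
\[
\R_{(1,1,1)}=q^6e_{U_{\lambda_3}}e_{U_{\lambda_3}^{op}}e_{U_{\lambda_3}}=\big(q^4e_1e_1^{op}e_1\big)\big(q^2e_2e_2^{op}e_2\big)=\R_{(2,1)}\cdot \R_{(1,1)},
\]
and the two factors commute. In particular $\R_{(1,1,1)}\in e_{U_{\lambda_3}}\CC[\GL{3}]e_{U_{\lambda_3}}\subset \mathcal{H}(\GL{3},U_{(2,1)})$. It therefore acts on $\CC[\GL{3}/U_{(2,1)}]=\CC[\GL{3}]e_1$ by right multiplication.

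Next decompose $\CC[\GL{3}]e_1=\CC[\GL{3}]e_{U_{\lambda_3}}\oplus \CC[\GL{3}](e_1-e_{U_{\lambda_3}})$. Since $\R_{(1,1,1)}=\R_{(1,1,1)}e_{U_{\lambda_3}}$, it kills the second summand, which accounts for the eigenvalue $0$. On the first summand it preserves $\CC[\GL{3}]e_{U_{\lambda_3}}$, as does each of the two commuting factors. The factor $\R_{(2,1)}$ acts there through its component $e_{\psi_0}(\cdots)e_{\psi_0}$ in the Yokonuma-Hecke algebra, by Proposition \ref{Radon_transform_decomposition}, since $e_{\psi_0}=e_{U_{\lambda_3}}$. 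Theorem \ref{eigenvalues_gl3_Yok} then gives that $\R_{(2,1)}$ is annihilated there by $\prod_{i=0}^4(x-q^i)$. The factor $\R_{(1,1)}$ is right multiplication by the $GL_2$ Radon element. The proof of Theorem \ref{eigenvalues_gl2} gives the identity $(\R-q^2)^2(\R-q)(\R-1)=0$ modulo replacing $e_{U_{(1,1)}}$ by the unit, and this holds on any right $U_{(1,1)}$-invariant subspace. In particular it holds on $\CC[\GL{3}]e_{U_{\lambda_3}}$, so the spectrum of $\R_{(1,1)}$ there lies in $\{1,q,q^2\}$.

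Both operators are diagonalizable, since the standard-basis matrices are symmetric (Proposition \ref{symmetric_Rd}), and they commute, so they can be diagonalized simultaneously. Every eigenvalue of $\R_{(1,1,1)}$ on $\CC[\GL{3}]e_{U_{\lambda_3}}$ is therefore a product $q^iq^j$ with $0\le i\le4$ and $0\le j\le 2$, that is, some $q^k$ with $0\le k\le 6$. Together with $0$ this gives the set $\{0,1,q,\dots,q^6\}$.

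The main obstacle is justifying that the minimal polynomial identities from Theorems \ref{eigenvalues_gl2} and \ref{eigenvalues_gl3_Yok} apply on the specific subspace $\CC[\GL{3}]e_{U_{\lambda_3}}$. The $GL_2$ identity is an identity in $e_2\CC[GL_2]e_2$, so it holds verbatim in the larger group algebra. For $\R_{(2,1)}$, I would use Proposition \ref{Radon_transform_decomposition} together with the orthogonality of the $e_{\psi_s}$. This shows that right multiplication by $\R_{(2,1)}$ on $\CC[\GL{3}]e_{\psi_0}$ equals right multiplication by its $\psi_0$-component, which is exactly the element analyzed in Theorem \ref{eigenvalues_gl3_Yok}.
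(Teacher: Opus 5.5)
Your proposal is correct and follows the paper's route. The paper's one-line proof combines the factorization $\R_{(1,1,1)}=\R_{(2,1)}\R_{(1,1)}$ from Theorem \ref{eigenvalues_box} with Theorems \ref{eigenvalues_gl2} and \ref{eigenvalues_gl3_Yok}, and gets $0$ from elements like $e_{\psi_s}$, $s\neq 0$, which span exactly your summand $\CC[\GL{3}](e_{U_{(2,1)}}-e_{U_{\lambda_3}})$; you have just written out the restriction to the $\psi_0$-component that the paper leaves implicit, with one cosmetic slip (killing that summand under right multiplication uses $\R_{(1,1,1)}=e_{U_{\lambda_3}}\R_{(1,1,1)}$ rather than $\R_{(1,1,1)}e_{U_{\lambda_3}}$, though both hold, and diagonalizability is not actually needed since commuting operators can be simultaneously triangularized).
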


\begin{proof}
    We combine the Theorems \ref{eigenvalues_box},\ref{eigenvalues_gl2} and \ref{eigenvalues_gl3_Yok} to get the list of possible eigenvalues. Note that the eigenvalue $0$ is possible because, for example $e_{\psi_s}\in \CC[\GL{3}/U_{(2,1)}]$ even if $s\neq 0$.
\end{proof}

\begin{rem}
    The list can be refined down to just $1,q,q^2,q^3,q^4,q^5$ and $q^6$ if we wish to work with $\R$ acting on $\CC[\GL{3}/U_{\lambda_3}]$ instead. We can apply the same Theorems to see this.
\end{rem}


    





\subsection{Radon Transform for the Gelfand-Graev Hecke Algebra}
The main result of this subsection is the following.\\

\begin{thm}\label{eigenvalues_GGH_gl3}
    The eigenvalues of $\R$ in the unipotent Hecke algebra corresponding to an idempotent $e_{\psi_s},s\neq 0$ form a subset of $\{1,q,q^2,q^3\}$. 
\end{thm}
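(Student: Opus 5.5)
By Proposition \ref{Radon_transform_decomposition}, for $s\neq 0$ the Radon transform restricted to $e_{\psi_s}$ is the single element
\[
\R_s:=q^2 e_{\psi_s}+q^3\sum_{a\in\mathbb{F}_q^\times} e_{\psi_s} w_a e_{\psi_s},\qquad w_a:=\begin{pmatrix} & & -a^{-1}\\ & 1 & \\ a & & \end{pmatrix},
\]
because the $(23)$-type terms vanish by Proposition \ref{g2is0}. I would first fix the right basis. Let $h_a:=e_{\psi_s}w_ae_{\psi_s}$ and $d_a:={\rm diag}(a,1,-a^{-1})$, so that $w_a=d_a w_0$ with $w_0$ the antidiagonal permutation matrix. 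Then $\R_s=q^2e_{\psi_s}(1+qX)$, where $X:=\sum_a h_a$.

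The main step is to compute $X^2$. I would expand $e_{\psi_s}w_a e_{\psi_s} w_b e_{\psi_s}$ using the full unipotent $U_{\lambda_3}$ together with the character $\psi_{(s,0)}$. This means conjugating the middle unipotent $u$ by $w_0$, which takes $U_{\lambda_3}$ to the lower unipotent group, and then performing Bruhat reduction of $w_a u w_b$ in the style of the computation for $\GL{2}$ in Theorem \ref{eigenvalues_gl2}. The case split is by which lower-triangular entries vanish. When they all vanish we get a torus element $d_ad_b$-type term. When the entries are generic we land in the big cell again. The intermediate cells are the ones of type $(12),(23),(123),(132)$.

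Proposition \ref{g2is0} (and its analogue for $s_1$, namely that $e_{\psi_s}ts_1e_{\psi_r}$ pairs the characters nontrivially) will kill or simplify most of the intermediate cells. For the survivors, Proposition \ref{GGH_basis} tells us which $e_{\psi_s}\nu e_{\psi_s}$ are nonzero. The torus terms are moved through using Proposition \ref{Epsi*N}. Only tori with $ab^{-1}=1$ in the first two entries preserve $\psi_s$, that is, those of the form ${\rm diag}(c,c,c')$.

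I expect the outcome to be a quadratic relation of the form
\[
X^2=\alpha\, e_{\psi_s}+\beta\, X+(\text{terms }Z\,X\text{ or }Z\text{ with }Z\text{ central-ish toral sums}),
\]
where the toral sums are like $\sum_c {\rm diag}(c,c,c^{-2})$ restricted appropriately. As in the $\GL{2}$ proof, if a toral sum $Z$ appears I would isolate it, use that it commutes with $X$ (by Proposition \ref{Epsi*N}), and use that $Z^2$ is proportional to $Z$ to eliminate it. That gives a polynomial in $X$ of degree at most $4$, which factors. Rescaling to $\R_s$ should yield roots among $1,q,q^2,q^3$. Checking via the counit $\epsilon$ (Proposition \ref{counit}) fixes the constant normalizations: $\epsilon(h_a)=1$, and the top eigenvalue corresponds to the trivial-like part. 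As a sanity check against the $\GL{2}$ case, the $e_{\psi_s}$ part should give eigenvalues $q^2\cdot(1+q\mu)$ with $\mu\in\{\pm q^{-1}\cdot\ldots\}$.

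The main obstacle is the product $h_ah_b$. Tracking which Bruhat cells survive the character twist, and getting the exact coefficients, is the laborious part. If the hand computation becomes unwieldy, I would instead proceed as in Theorem \ref{eigenvalues_gl3_Yok}. That is, I would present the Gelfand--Graev Hecke algebra $e_{\psi_s}\mathcal{H}e_{\psi_s}$ via the basis of Proposition \ref{GGH_basis} with the few relations needed, and verify the annihilating polynomial $(\R-1)(\R-q)(\R-q^2)(\R-q^3)$ symbolically.
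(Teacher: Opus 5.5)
\textbf{The gap: your expected form of $X^2$ is wrong.} You assume the intermediate Bruhat cells are killed or reduce to toral terms, but the $s_1$-cell survives. For $\mu=(s,0)$, every $ts_1$ with $t\in T_3$ lies in $N_\mu$ of Proposition \ref{GGH_basis}. The reason is that $U_{\lambda_3}\cap s_1U_{\lambda_3}s_1$ consists of matrices with zero $(1,2)$-entry, on which $\psi_{(s,0)}$ and its conjugates are trivial. So there is no ``analogue of Proposition \ref{g2is0} for $s_1$''.

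Consequently the cell $y=x=0$, $z\neq 0$ in the expansion of $e_{\psi_s}w_a\,u\,w_b\,e_{\psi_s}$ contributes a genuinely new element. This is what the paper computes in Lemma \ref{B_3}:
\[
q^3X^2=(q-1)e_{\psi_s}+(q-1)A_5+q(q-1)^2X-A_2X.
\]
Here $A_2=\sum_c e_{\psi_s}\,{\rm diag}(c,c,c^{-2})\,e_{\psi_s}$ is your toral sum $Z$. However, $A_5=\sum_{a,b}\overline{\psi_s(a+b^{-1})}\,e_{\psi_s}\nu_{a,b}e_{\psi_s}$, with $\nu_{a,b}$ the $(12)$-type monomial matrices, lies in a different Bruhat cell. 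It is therefore linearly independent of $e_{\psi_s},X,Z,ZX$. So no quadratic relation of your form exists, and squaring to eliminate $Z$ cannot remove $A_5$.

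Two further steps of your plan also fail:
\begin{itemize}
\item The counit normalization is unavailable. For $s\neq 0$ we have $\epsilon(e_{\psi_s})=q^{-1}\sum_a\overline{\psi(sa)}=0$, so $\epsilon$ vanishes on all of $e_{\psi_s}\CC[\GL{3}]e_{\psi_s}$. In particular $\epsilon(h_a)=0$, not $1$, and there is no ``trivial-like'' top eigenvalue.
\item The symbolic fallback is not a proof as stated. This algebra's dimension grows with $q$, and you have no $q$-independent presentation to feed to a computer.
\end{itemize}

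\textbf{What is missing.} You need a finite, $q$-independent family containing $e_{\psi_s}$ and stable under right multiplication by $X$. The paper takes $A_1,\dots,A_6$, adding a second $(12)$-cell element $A_6$ with Kloosterman-type coefficients. It computes $A_2X$, $A_4X$, $A_5X$, $A_6X$ (Lemmas \ref{B_4}, \ref{B_5}, \ref{B_6}, \ref{B_2}). It then reads the spectrum off a $6\times 6$ matrix with characteristic polynomial $(\lambda-1)(\lambda-q)(\lambda-q^2)^2(\lambda-q^3)^2$.

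Your elimination idea can be rescued, but only one degree higher. Lemma \ref{B_5} gives $q^3A_5X=q^2ZX+q^2(q^2-2q)X$. Right-multiplying the relation for $X^2$ by $X$ then yields
\[
q^3X^3-q(q-1)^2X^2-(q-1)^2X=Z\bigl(\tfrac{q-1}{q}X-X^2\bigr).
\]
Here $Z$ commutes with $X$ and $Z^2=(q-1)Z$. Splitting along the idempotent $Z/(q-1)$ puts the eigenvalues of $\R=q^2e_{\psi_s}+q^3X$ in $\{1,q^2,q^3\}$ on one piece and in $\{q,q^2,q^3\}$ on the other. This still requires computing the $(12)$-cell contributions explicitly, which is exactly what your plan assumed away.
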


    By the Proposition \ref{Radon_transform_decomposition}, in the unipotent Hecke algebra corresponding to an idempotent $e_{\psi_s},s\neq 0$,

    \begin{equation*}
        \R = q^2 e_{\psi_s} 1 e_{\psi_s} + q^3 \sum_{a \in \mathbb{F}_q^\times} e_{\psi_s} \begin{pmatrix}
            & & -a^{-1}\\
            & 1 &\\
            a & &
        \end{pmatrix} e_{\psi_s}
    \end{equation*}

    We claim that powers of $\R$ only require finitely many elements to express fully, and we will prove it by multiplying various elements by
    
    \[
        q^3 \sum_{a \in \mathbb{F}_q^\times} e_{\psi_s} \begin{pmatrix}
        & & -a^{-1}\\
        & 1 &\\
        a & &
    \end{pmatrix} e_{\psi_s}.
    \]
    
    Here are the calculations below. 


    \begin{lem}\label{B_3}
        \begin{align*}
            q^3\left(\sum_{a \in \mathbb{F}_q^\times} e_{\psi_s} \begin{pmatrix}
                & & -a^{-1}\\
                & 1 &\\
                a & &
            \end{pmatrix} e_{\psi_s}\right)^2 &= (q-1) e_{\psi_s} \begin{pmatrix}
                1 & & \\
                & 1 &\\
                & & 1
            \end{pmatrix} e_{\psi_s} +\\+ (q-1) \sum_{a, b \in \mathbb{F}_q^\times} e_{\psi_s} \begin{pmatrix}
                & a & \\
                -b & & \\
                & & a^{-1}b^{-1}
            \end{pmatrix} e_{\psi_s} &\overline{\psi_s(a + b^{-1})}+q(q-1)^2 \sum_{a \in \mathbb{F}_q^\times} e_{\psi_s} \begin{pmatrix}
                & & -a^{-1}\\
                & 1 &\\
                a & &
            \end{pmatrix} e_{\psi_s}-\\-& \sum_{c, d \in \mathbb{F}_q^\times} e_{\psi_s}
            \begin{pmatrix}
                & & -d^{-1}c^{-1}\\
                & d & \\
                c & &
            \end{pmatrix} e_{\psi_s}.
        \end{align*}
    \end{lem}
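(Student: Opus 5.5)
The plan is to expand the square term by term and reduce every product to the normalized basis of Proposition~\ref{GGH_basis}. Write $w_a$ for the antidiagonal-type matrix with entries $-a^{-1}$ at $(1,3)$, $1$ at $(2,2)$ and $a$ at $(3,1)$. Using $e_{\psi_s}^2=e_{\psi_s}$, the square is
\[
\sum_{a,a'\in\mathbb{F}_q^\times} e_{\psi_s} w_a\, e_{\psi_s}\, w_{a'} e_{\psi_s}.
\]
Since $e_{\psi_s}=e_{U_{(2,1)}}\cdot q^{-1}\sum_x\overline{\psi_s(x)}x_{12}(x)$ equals the Gelfand–Graev idempotent for the full $U_{\lambda_3}$, we have
\[
e_{\psi_s}=q^{-3}\sum_{x,y,z}\overline{\psi(sx)}\,u(x,y,z).
\]
So the middle factor is a weighted sum of the products $w_a\,u(x,y,z)\,w_{a'}$, and the key step is to compute these products and put each of them in the unipotent Bruhat form $u_1\,\nu\,u_2$ with $\nu\in S_3\ltimes T_3$.

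First compute $w_a u(x,y,z) w_{a'}$ explicitly. Conjugating $u(x,y,z)$ by $w_a$ gives a lower-unitriangular matrix with entries $-a^{-2}z$-type at $(2,1)$, $ax$-type at $(3,2)$ and $-a^{2}y$-type at $(3,1)$, up to signs I will track. Hence
\[
w_a u w_{a'} = (w_a u w_a^{-1})\, w_a w_{a'},
\]
where $w_aw_{a'}=\operatorname{diag}(-a^{-1}a',1,-a a'^{-1})$ is diagonal. The problem therefore reduces to the Bruhat decomposition of a lower unitriangular matrix $\bar u$ times a torus element, which I will split into cases.

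The cases are as follows. If $\bar u=1$ (that is, $x=y=z=0$), the product is a pure torus element. By Proposition~\ref{Epsi*N} together with orthogonality, $e_{\psi_s} t e_{\psi_s}$ survives only when $t$ fixes the character, which forces $a=a'$ and gives the $e_{\psi_s}1e_{\psi_s}$ term. If the $(3,1)$ entry of $\bar u$ is nonzero, the Bruhat cell is that of $w_0$; factoring exactly as in the proof of Proposition~\ref{Radon_transform_decomposition}, the unipotent factors are absorbed by Lemma~\ref{Epsi*U} at the cost of $\psi_s$-phases. Summing over the free parameters produces the terms with $w_{a}$ and with the general $\begin{pmatrix} & & -d^{-1}c^{-1}\\ & d& \\ c&&\end{pmatrix}$. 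Because $N_\mu$ contains only torus elements compatible with $\psi_s$, one coefficient comes out to $q(q-1)^2$ on the $d=1$ locus and $-1$ on the rest. If the $(3,1)$ entry is zero but the $(2,1)$ or $(3,2)$ entry is nonzero, the matrix lies in the cell of $s_1$, $s_2$ or of a length-two element. Proposition~\ref{g2is0} kills the $s_2$-type contributions for $s\neq0$. The remaining survivors are the $s_1$-type elements $\begin{pmatrix}& a&\\ -b&&\\&&a^{-1}b^{-1}\end{pmatrix}$, which carry the phase $\overline{\psi_s(a+b^{-1})}$ coming from the two $x_{12}$ factors produced when $\begin{pmatrix}1&0\\ c&1\end{pmatrix}$ is decomposed as in the $GL_2$ computation.

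The main obstacle is the bookkeeping of the character sums. After reparametrising (for instance $c=$ the $(3,1)$ entry, $d$ a ratio), each coefficient is a sum of products of $\psi_s$-values over the remaining variables. These must collapse, via $\sum_{x\in\mathbb{F}_q}\psi(sx)=q\delta_{s,0}$, into the exact coefficients $(q-1)$, $q(q-1)^2$ and $-1$. I will first separate out the variables that appear only linearly in a phase, since these yield delta functions. I will then check the remaining coefficients by comparing counits (Proposition~\ref{counit}, applied after multiplying by the trivial-character parts) or by a sanity check at $q=3$. The overall factor $q^3$ should compensate the normalisation $q^{-3}$ of the middle idempotent.
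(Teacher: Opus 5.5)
Your overall route is the paper's: expand the middle $e_{\psi_s}$ as $q^{-3}\sum_{x,y,z}\overline{\psi_s(x)}\,u(x,y,z)$, compute the product, sort terms by unipotent Bruhat cell, discard cells outside $N_{(s,0)}$, absorb the unipotent factors into $e_{\psi_s}$ at the cost of $\psi_s$-phases, and evaluate character sums. However, one step of your case analysis is false as stated.

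A nonzero $(3,1)$ entry does not place the product in the $w_0$-cell. Writing $b$ for your $a'$, the product $w_a\,u(x,y,z)\,w_b$ is $\begin{pmatrix}-ba^{-1}&&\\ bz&1&\\ aby&ax&-ab^{-1}\end{pmatrix}$, whose lower-left $2\times2$ minor is $ab(xz-y)$. So on the locus $y=xz$ (equivalently $\bar u_{31}=\bar u_{21}\bar u_{32}$) it lies in the cell of the $3$-cycle $\begin{pmatrix}&1&\\&&1\\1&&\end{pmatrix}$. Factoring as in Proposition \ref{Radon_transform_decomposition} breaks down there, since the middle entry $d=1-y^{-1}xz$ of the would-be $w_0$-representative is $0$. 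These terms must instead be discarded via Proposition \ref{GGH_basis}: this permutation conjugates $x_{23}$ into $x_{12}$, where $\psi_{(s,0)}$ is nontrivial. That exclusion is also exactly what lets $d$ run once over $\mathbb{F}_q^\times$ in the later count.

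Likewise, the length-two cell in your $(3,1)=0$ case (the other $3$-cycle, with $x,z\neq0$ and $y=0$) is not covered by Proposition \ref{g2is0}; it also needs Proposition \ref{GGH_basis}. A smaller slip concerns the torus term. By Proposition \ref{Epsi*N}, $w_aw_{a'}=\operatorname{diag}(-a'/a,1,-a/a')$ preserves $\psi_s$ iff $a'=-a$, not $a'=a$. For odd $q$, $a'=a$ gives $\operatorname{diag}(-1,1,-1)$, which sends $\psi_s$ to $\psi_{-s}$, so that term is killed.

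The larger gap is that the actual content of the lemma, the coefficients, is deferred, and your prediction of them does not match the statement. Every $w_0t$ lies in $N_{(s,0)}$, since all positive roots become negative. So $N_{(s,0)}$ constrains nothing in the $w_0$-part, and the coefficients come entirely from character sums.

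\begin{itemize}
\item For $x=0$ you get $d=1$ with trivial phase. The free $z$ together with the fibres of $\lambda=aby$ gives the coefficient $q(q-1)^2$.
\item For $x\neq0$, substitute $d=1-y^{-1}xz$ and $\lambda=aby$. The phase becomes $\overline{\psi_s\bigl(x(1-\lambda^{-1}a-\lambda^{-1}d^{-1}b)\bigr)}$, and summing over $a,b,x\in\mathbb{F}_q^\times$ gives $-1$ for \emph{every} $(\lambda,d)$, including $d=1$ (which comes from $z=0$).
\end{itemize}

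So the coefficient at $d=1$ is $q(q-1)^2-1$. This is what the statement's form $q(q-1)^2\sum_a(\cdots)-\sum_{c,d}(\cdots)$ encodes, not $q(q-1)^2$ on $d=1$ and $-1$ elsewhere. The $s_1$-part similarly needs the explicit reparametrisation $a'=a^{-1}z^{-1}$, $b'=-bz$, with the leftover $z$ producing the factor $q-1$.

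Your fallbacks cannot replace these computations. For $s\neq0$ one has $\epsilon(e_{\psi_s})=0$, so by Proposition \ref{counit} the counit vanishes identically on $e_{\psi_s}\CC[\GL{3}]e_{\psi_s}$; moreover $e_{\psi_s}e_{\psi_0}=0$, so multiplying by the trivial-character part gives nothing either. A check at $q=3$ is not a proof.
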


    \begin{proof} We start by expanding
    \begin{align*}
        q^3\left(\sum_{a \in \mathbb{F}_q^\times} e_{\psi_s} \begin{pmatrix}
        & & -a^{-1}\\
        & 1 &\\
        a & &
    \end{pmatrix} e_{\psi_s}\right)^2 &= q^3\sum_{a, b \in \mathbb{F}_q^\times} e_{\psi_s}\begin{pmatrix}
        & & -a^{-1}\\
        & 1 &\\
        a & &
    \end{pmatrix}e_{\psi_s}\begin{pmatrix}
        & & -b^{-1}\\
        & 1 &\\
        b & &
    \end{pmatrix}e_{\psi_s}\\
    = e_{\psi_s}\sum_{\substack{a, b \in \mathbb{F}_q^\times\\x, y, z \in \mathbb{F}_q}}&\begin{pmatrix}
        & & -a^{-1}\\
        & 1 &\\
        a & &
    \end{pmatrix} \begin{pmatrix}
        1 & x & y\\
        & 1 & z\\
        & & 1
    \end{pmatrix}\begin{pmatrix}
        & & -b^{-1}\\
        & 1 &\\
        b & &
    \end{pmatrix} \overline{\psi_s(x)} e_{\psi_s}\\
    = e_{\psi_s}\sum_{\substack{a, b \in \mathbb{F}_q^\times\\x, y, z \in \mathbb{F}_q}}&\begin{pmatrix}
        -ba^{-1} & &\\
        bz & 1 & \\
        aby & ax & -ab^{-1}
    \end{pmatrix} \overline{\psi_s(x)} e_{\psi_s}.
    \end{align*}

    Now, we split into two cases, based on whether $y = 0$ or not.\\ 

    \textbf{Case 1 ($y = 0$).} If $y = 0$, then according to the Proposition \ref{GGH_basis} the only possible permutations that can correspond to this expression which don't vanish are the identity permutation and the elementary transposition $(12)$. Thus, $x$ must also be equal to $0$.\\

    \textbf{Subcase 1 ($z = 0$).} If $z = 0$, then this becomes a very simple calculation:
    \begin{align*}
        e_{\psi_s} \sum_{a, b \in \mathbb{F}_q^\times} \begin{pmatrix}
            -ba^{-1} & & \\
            & 1 & \\
            & & -ab^{-1}
        \end{pmatrix} e_{\psi_s} &= (q-1) e_{\psi_s} \sum_{a' \in \mathbb{F}_q^\times} \begin{pmatrix}
            a' & & \\
            & 1 &\\
            & & a'^{-1}
        \end{pmatrix} e_{\psi_s}=\\
        &= (q-1)e_{\psi_s}\begin{pmatrix}
            1 & & \\
            & 1 & \\
            & & 1
        \end{pmatrix}e_{\psi_s}.
    \end{align*}

    Here, the last step follows from Propositions \ref{Epsi*N} and \ref{e-projector}.\\
    
    \textbf{Subcase 2 ($z \ne 0$).} We can write the Bruhat decomposition of the matrix to get
    \begin{align*}
        e_{\psi_s}\sum_{a, b, z \in \mathbb{F}_q^\times}\begin{pmatrix}
        -ba^{-1} & &\\
        bz & 1 & \\
         &  & -ab^{-1}
    \end{pmatrix} e_{\psi_s} &=\\= \sum_{a, b, z \in \mathbb{F}_q^\times} e_{\psi_s}\begin{pmatrix}
        1 & -a^{-1} z^{-1} & \\
        & 1 & \\
        & & 1
    \end{pmatrix}\begin{pmatrix}
        & a^{-1} z^{-1} &\\
        bz & & \\
        & & -ab^{-1}
    \end{pmatrix}&\begin{pmatrix}
        1 & b^{-1}z^{-1} &\\
        & 1 &\\
        & & 1
    \end{pmatrix} e_{\psi_s}=\\
    = \sum_{a, b, z \in \mathbb{F}_q^\times} e_{\psi_s} \begin{pmatrix}
        & a^{-1}z^{-1} &\\
        bz & & \\
        & & -ab^{-1}
    \end{pmatrix}&e_{\psi_s}\overline{\psi_s(a^{-1}z^{-1}-b^{-1}z^{-1})}.
    \end{align*}

    Now, if we let $a' = a^{-1}z^{-1}$ and $b' = -bz$ we get
    \begin{align*}
        \sum_{a, b, z \in \mathbb{F}_q^\times} e_{\psi_s} \begin{pmatrix}
        & a^{-1}z^{-1} &\\
        bz & & \\
        & & -ab^{-1}
    \end{pmatrix}e_{\psi_s}\overline{\psi_s(a^{-1}z^{-1}-b^{-1}z^{-1})} =\\= (q-1) \sum_{a', b' \in \mathbb{F}_q^\times} e_{\psi_s} \begin{pmatrix}
        & a' & \\
        -b' & & \\
        & & a'^{-1}b'^{-1}
    \end{pmatrix} e_{\psi_s} \overline{\psi(a' + b'^{-1})}.
    \end{align*}\\
    
    \textbf{Case 2 ($y \ne 0$).} Here we note that if $y = xz$, then the permutation matrix corresponding to the matrix in question is
    \[\begin{pmatrix}
        & 1 & \\
        & & 1 \\
        1 & &
    \end{pmatrix},
    \]
    so the respective element in the Gelfand-Graev Hecke algebra vanishes due to the Proposition \ref{GGH_basis}. Otherwise, we have the permutation 
    \[\begin{pmatrix}
        & & 1\\
        & 1 & \\
        1 & &
    \end{pmatrix}\]
    where the algebra element does not vanish. Thus, we can assume $y \ne xz$ and continue on, writing out the Bruhat decomposition of the matrix
    \begin{equation*}
        e_{\psi_s}\sum_{\substack{a, b \in \mathbb{F}_q^\times\\x, y, z \in \mathbb{F}_q\\y \ne xz}}\begin{pmatrix}
        -ba^{-1} & &\\
        bz & 1 & \\
        aby & ax & -ab^{-1}
    \end{pmatrix} \overline{\psi_s(x)} e_{\psi_s} =
    \end{equation*}
    \begin{align*}
    = e_{\psi_s}\sum_{\substack{a, b, y \in \mathbb{F}_q^\times\\x, z \in \mathbb{F}_q\\ y \ne xz}} \begin{pmatrix}
        1 & (1-y^{-1}xz)^{-1}a^{-1}y^{-1}x & -a^{-2}y^{-1}\\
        & 1 & a^{-1}y^{-1}z\\
        & & 1
    \end{pmatrix}\cdot \\ \cdot \begin{pmatrix}
        & & -(1-y^{-1}xz)^{-1}a^{-1}b^{-1}y^{-1}\\
        & 1-y^{-1}xz &\\
        aby & &
    \end{pmatrix} \cdot \\ \cdot \begin{pmatrix}
        1 & b^{-1}y^{-1}x & -b^{-2}y^{-1}\\
        & 1 & (1-y^{-1}xz)^{-1}b^{-1}y^{-1}z\\
        & & 1
    \end{pmatrix} \overline{\psi_s(x)} e_{\psi_s}=\\
    = \sum_{\substack{a, b, y \in \mathbb{F}_q^\times\\x, z \in \mathbb{F}_q\\y \ne xz}}e_{\psi_s} \begin{pmatrix}
        & & -(1-y^{-1}xz)^{-1}a^{-1}b^{-1}y^{-1}\\
        & 1-y^{-1}xz & \\
        aby & &
    \end{pmatrix} \cdot \\ \cdot e_{\psi_s} \overline{\psi_s(x-(1-y^{-1}xz)^{-1}a^{-1}y^{-1}x-b^{-1}y^{-1}x)}.
    \end{align*}\\
    
    To simplify, we can write $d = 1-y^{-1}xz$. We know this is not equal to zero since $y \ne xz$
    \begin{equation*}
        \sum_{\substack{a, b, y \in \mathbb{F}_q^\times\\x, z \in \mathbb{F}_q\\ y \ne xz}}e_{\psi_s} \begin{pmatrix}
        & & -d^{-1}a^{-1}b^{-1}y^{-1}\\
        & d & \\
        aby & &
    \end{pmatrix} e_{\psi_s} \overline{\psi_s(x-d^{-1}a^{-1}y^{-1}x-b^{-1}y^{-1}x)}.
    \end{equation*}

    Now, we do casework on $x$.\\

    \textbf{Subcase 1 ($x = 0$).} If $x = 0$, then $d = 1$ and $x-d^{-1}a^{-1}y^{-1}x-b^{-1}y^{-1}x = 0$. Then, we are left with
    \begin{equation*}
        \sum_{\substack{a, b, y \in \mathbb{F}_q^\times\\z \in \mathbb{F}_q}} e_{\psi_s} \begin{pmatrix}
            & & -a^{-1}b^{-1}y^{-1}\\
            & 1 &\\
            aby & &
        \end{pmatrix}e_{\psi_s} = q(q-1)^2 \sum_{a' \in \mathbb{F}_q^\times} e_{\psi_s} \begin{pmatrix}
            & & -a'^{-1}\\
            & 1 &\\
            a' & &
        \end{pmatrix} e_{\psi_s}.
    \end{equation*}\\
    
    \textbf{Subcase 2 ($x \ne 0$).} If $x \ne 0$, then $d = 1-y^{-1}xz$ now becomes evenly distributed across $\mathbb{F}_q^\times$ (note we exclude zero since $y \ne xz$). Thus, we can replace $z$ with $d$ to get

    \begin{equation*}
        \sum_{a, b, x, y, d \in \mathbb{F}_q^\times} e_{\psi_s} \begin{pmatrix}
            & & -d^{-1}a^{-1}b^{-1}y^{-1}\\
            & d &\\
            aby & &
        \end{pmatrix} e_{\psi_s} \overline{\psi_s(x(1-d^{-1}a^{-1}y^{-1}-b^{-1}y^{-1}))}
    \end{equation*}

    Substituting $\lambda = aby$  to replace $y$ we get

    \begin{equation*}
        \sum_{a, b, x, \lambda, d \in \mathbb{F}_q^\times} e_{\psi_s} \begin{pmatrix}
            & & -d^{-1}\lambda^{-1}\\
            & d &\\
            \lambda & &
        \end{pmatrix} e_{\psi_s} \overline{\psi_s(x(1-d^{-1}\lambda^{-1}b-\lambda^{-1}a))}.
    \end{equation*}

    Summing over $a$, $b$, then $x$ leads to a coefficient of $-1$ and a final result of 

    \begin{equation*}
        -\sum_{\lambda, d \in \mathbb{F}_q^\times} e_{\psi_s} \begin{pmatrix}
            & & -d^{-1}\lambda^{-1}\\
            & d &\\
            \lambda & &
        \end{pmatrix} e_{\psi_s}
    \end{equation*}
    
    When we sum all the values up, we are done with our proof.
\end{proof}

\begin{lem}\label{B_4}
    \begin{multline*}
        q^3\sum_{c, d \in \mathbb{F}_q^\times} e_{\psi_s}
        \begin{pmatrix}
            & & -d^{-1}c^{-1}\\
            & d & \\
            c & &
        \end{pmatrix} e_{\psi_s} \cdot \sum_{a \in \mathbb{F}_q^\times} e_{\psi_s} \begin{pmatrix}
                & & -a^{-1}\\
                & 1 &\\
                a & &
            \end{pmatrix} e_{\psi_s} = \\ = (q-1)\sum_{a \in \mathbb{F}_q^\times} e_{\psi_s} \begin{pmatrix}
            a & & \\
            & a & \\
            & & a^{-2}
        \end{pmatrix}e_{\psi_s} + \\ + (q-1) \sum_{a, b, c \in \mathbb{F}_q^\times}e_{\psi_s} \begin{pmatrix}
            & b &\\
            a & &\\
            & & -a^{-1}b^{-1}
        \end{pmatrix}e_{\psi_s}\overline{\psi_s(bc^{-1}-a^{-1}c)}+\\+(q-1)(q^2-q-1) \sum_{a, b \in \mathbb{F}_q^\times} e_{\psi_s} \begin{pmatrix}
            & & -a^{-1}b^{-1} \\
            & b &\\
            a & &
        \end{pmatrix} e_{\psi_s}.
    \end{multline*}
\end{lem}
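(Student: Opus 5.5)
We follow the same strategy as in the proof of Lemma \ref{B_3}. First expand the product: writing $e_{\psi_s}=q^{-3}\sum_{x,y,z}\overline{\psi_s(x)}u(x,y,z)$ for the middle idempotent, the factor $q^3$ cancels and the left-hand side becomes
\[
e_{\psi_s}\sum_{\substack{a,c,d\in\mathbb{F}_q^\times\\ x,y,z\in\mathbb{F}_q}}
\begin{pmatrix} & & -d^{-1}c^{-1}\\ & d & \\ c & & \end{pmatrix}
\begin{pmatrix} 1 & x & y\\ & 1 & z\\ & & 1\end{pmatrix}
\begin{pmatrix} & & -a^{-1}\\ & 1 & \\ a & & \end{pmatrix}\overline{\psi_s(x)}\,e_{\psi_s}.
\]
This product is a lower triangular matrix. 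Its diagonal is $(-ad^{-1}c^{-1},\,d,\,-ca^{-1})$, and its strictly lower entries are $adz$ at $(2,1)$, $acy$ at $(3,1)$ and $cx$ at $(3,2)$. It is the analogue of the matrix in Lemma \ref{B_3} with the extra parameter $d$ in the middle slot. We then sort the terms by the Bruhat cell of this matrix, as in Lemma \ref{B_3}.

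\textbf{Case $y=0$.} By Proposition \ref{GGH_basis} only the cells $e$ and $(12)$ survive, so $x=0$.

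If also $z=0$, we get diagonal terms $\mathrm{diag}(-ad^{-1}c^{-1},d,-ca^{-1})$. Using Proposition \ref{Epsi*N} and Proposition \ref{e-projector}, the sum over the torus coset collapses. The plan is to show that the surviving diagonal elements are exactly those of the form $\mathrm{diag}(a,a,a^{-2})$, i.e.\ those with equal first two entries. These come with multiplicity $q-1$ from the free parameter, which would give the first term $(q-1)\sum_a e_{\psi_s}\mathrm{diag}(a,a,a^{-2})e_{\psi_s}$. This collapse needs care: unlike in Lemma \ref{B_3}, the torus elements now have a nontrivial ratio $t_1t_2^{-1}$, and Proposition \ref{Epsi*N} twists $\psi_s$ into $\psi_{s t_1 t_2^{-1}}$. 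Orthogonality of the $e_{\psi_s}$ then kills every term except those with $t_1=t_2$. We will verify this against the stated coefficient.

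If $z\neq0$, we write the Bruhat factorization through the cell $(12)$, exactly as in Subcase 2 of Lemma \ref{B_3}. Lemma \ref{Epsi*U} absorbs the unipotent factors and produces a character value. After renaming variables this should give the $(q-1)\sum_{a,b,c}\ldots\overline{\psi_s(bc^{-1}-a^{-1}c)}$ term, where the leftover free variable accounts for the factor $q-1$.

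\textbf{Case $y\neq0$.} When $y=xz$, the matrix lies in the cell of the $3$-cycle, and these terms vanish by Proposition \ref{GGH_basis}. Otherwise it lies in the cell of $(13)$. We factor it using the same explicit decomposition as in Lemma \ref{B_3}, with $a$ replaced by $c$, $b$ by $a$, and the middle entry $d$ carried along, and strip the unipotent factors via Lemma \ref{Epsi*U}.

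For $x=0$ there is no character factor. Each resulting term $\mathrm{antidiag}(\lambda,\mu)$ is then counted $q(q-1)$ times: $q$ from $z$ and $q-1$ from the redundant parameter.

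For $x\neq0$ we substitute $d'=d(1-y^{-1}xz)$ and $\lambda=acy$, and then sum the character over the free variables (say $a$, then $x$). The plan is to show that this contributes $-(q-1)$ times each term. Combining the two subcases would give the coefficient $q(q-1)-(q-1)=(q-1)^2$. This does not yet match the claimed $(q-1)(q^2-q-1)=(q-1)(q(q-1)-1)$, so the count must be redone carefully: the extra $d$ summation changes the multiplicities, and the character sum there is likely $-1$ rather than $-(q-1)$.

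This final bookkeeping is the main obstacle. It requires tracking which variables are free in each subcase, and the character sums over $\mathbb{F}_q^\times$ (each equal to $-1$ for $s\neq0$) must be counted precisely. We would check the resulting coefficients using the counit $\epsilon$ from Proposition \ref{counit}, after twisting appropriately, and by a direct numerical check at $q=3$.
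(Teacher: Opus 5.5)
You follow the paper's proof closely, and your treatment of the first two terms is correct. This covers the diagonal term, via orthogonality of the twisted idempotents, and the $(12)$-cell term, with one leftover free variable. However, the proposal does not prove the lemma. It never reaches the coefficient $(q-1)(q^2-q-1)$, and the correction you suggest at the end points the wrong way.

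The mistake is in the subcase $y\neq 0$, $x=0$. There the unipotent factors of the Bruhat decomposition have zero $(1,2)$-entries, so no character appears. The representative is $\left(\begin{smallmatrix} & & -\lambda^{-1}\mu^{-1}\\ & \mu & \\ \lambda & & \end{smallmatrix}\right)$ with $\lambda=acy$ and $\mu=d$. The sum runs over $a,c,d,y\in\mathbb{F}_q^\times$ and $z\in\mathbb{F}_q$. The variable $d$ is a label here, not a redundant parameter. For fixed $(\lambda,\mu)$, the redundancy is the fibre of $(a,c,y)\mapsto acy$, which has $(q-1)^2$ points, together with the $q$ values of $z$. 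Each term therefore occurs $q(q-1)^2$ times, not $q(q-1)$ times. This is the same count as in Case~2, Subcase~1 of Lemma~\ref{B_3}: the new variable $d$ adds one parameter and one label, so the redundancy is unchanged.

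For $x\neq 0$, your first estimate of $-(q-1)$ per term is correct. After substituting $\lambda=acy$ and $\mu=d(1-y^{-1}xz)$, the phase is $\overline{\psi_s\bigl(x(1-d^{-1}\lambda^{-1}\mu^{-1}a-\lambda^{-1}c)\bigr)}$. Summing over $a$ and $x$ alone does not give a constant: the result is $1$ if $c\neq\lambda$ and $-(q-1)$ if $c=\lambda$. Summing over $c$ as well gives $(q-2)-(q-1)=-1$. The remaining free variable $d$ then supplies a factor $q-1$.

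The correct total is therefore $q(q-1)^2-(q-1)=(q-1)(q^2-q-1)$, as claimed. With your $x=0$ count, neither $-(q-1)$ nor $-1$ for the $x\neq 0$ part produces the stated coefficient. So the step to fix is the $x=0$ multiplicity, not the character sum.

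Your proposed fallback check does not work either. The counit is multiplicative and $\epsilon(e_{\psi_s})=0$ for $s\neq 0$, so it annihilates both sides of the identity. A numerical check at $q=3$ does not replace the count.
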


\begin{proof}
    We follow a very similar structure to the previous proof.\\

    Expanding, we get
    \begin{align*}
        \sum_{\substack{a, c, d \in \mathbb{F}_q^\times\\x, y, z \in \mathbb{F}_q}}& e_{\psi_s}
        \begin{pmatrix}
            & & -d^{-1}c^{-1}\\
            & d & \\
            c & &
        \end{pmatrix} \begin{pmatrix}
            1 & x & y\\
            & 1 & z\\
            & & 1
        \end{pmatrix} \begin{pmatrix}
                & & -a^{-1}\\
                & 1 &\\
                a & &
        \end{pmatrix} e_{\psi_s} \overline{\psi_s(x)}=\\
        &= e_{\psi_s} \sum_{\substack{a, c, d \in \mathbb{F}_q^\times\\x, y, z \in \mathbb{F}_q}} \begin{pmatrix}
            -ac^{-1}d^{-1} & &\\
            adz & d & \\
            acy & cx & -ca^{-1}
        \end{pmatrix} e_{\psi_s} \overline{\psi_s(x)}.
    \end{align*}

    Again, we split into cases of whether $y = 0$ or not.\\

    \textbf{Case 1 ($y = 0$).} If $y = 0$, $x$ must also equal to zero or else the permutation becomes one that vanishes by the Proposition \ref{GGH_basis}. Thus, we can split into cases based on $z$.\\

    \textbf{Subcase 1.1 ($z = 0$).} This becomes a very simple calculation
    \begin{align*}
        e_{\psi_s} \sum_{a, c, d \in \mathbb{F}_q^\times} \begin{pmatrix}
            -ac^{-1}d^{-1} & &\\
            & d & \\
            & & -ca^{-1}
        \end{pmatrix} e_{\psi_s} &= (q-1) \sum_{a', d \in \mathbb{F}_q^\times}e_{\psi_s} \begin{pmatrix}
            a'^{-1}d^{-1} & &\\
            & d &\\
            & & a'
        \end{pmatrix}e_{\psi_s}=\\
        &= (q-1) \sum_{b \in \mathbb{F}_q^\times} e_{\psi_s} \begin{pmatrix}
            b & & \\
            & b & \\
            & & b^{-2}
        \end{pmatrix} e_{\psi_s}.
    \end{align*}\\
    
    \textbf{Subcase 1.2 ($z \ne 0$).} In this case we have
    \begin{align*}
        &e_{\psi_s} \sum_{a, c, d, z \in \mathbb{F}_q^\times} \begin{pmatrix}
            -ac^{-1}d^{-1} & &\\
            adz & d & \\
            & & -ca^{-1}
        \end{pmatrix} e_{\psi_s}=\\
        &= e_{\psi_s} \sum_{a, c, d, z \in \mathbb{F}_q^\times} \begin{pmatrix}
            1 & -d^{-2}z^{-1}c^{-1} &\\
            & 1 &\\
            & & 1
        \end{pmatrix} \begin{pmatrix}
            & d^{-1}z^{-1}c^{-1} & \\
            adz & & \\
            & & -ca^{-1}
        \end{pmatrix} \begin{pmatrix}
            1 & a^{-1}z^{-1} &\\
            & 1 &\\
            & & 1
        \end{pmatrix} e_{\psi_s}=\\
        &=e_{\psi_s} \sum_{a, c, d, z \in \mathbb{F}_q^\times}
        \begin{pmatrix}
            & d^{-1}z^{-1}c^{-1} & \\
            adz & & \\
            & & -ca^{-1}
        \end{pmatrix} e_{\psi_s} \overline{\psi_s(d^{-2}z^{-1}c^{-1}-a^{-1}z^{-1})}.
    \end{align*}

    Setting $c' = d^{-1}z^{-1}c^{-1}$ and $a' = adz$ we get
    \begin{equation*}
        (q-1)\sum_{a', c', d \in \mathbb{F}_q^\times}e_{\psi_s}
        \begin{pmatrix}
            & c' &\\
            a' & & \\
            & & -a'^{-1}c'^{-1}
        \end{pmatrix}e_{\psi_s} \overline{\psi_s(c'd^{-1}-a'^{-1}d)}.
    \end{equation*}\\
    
    \textbf{Case 2 ($y \ne 0$).} Again, we can assume $y \ne xz$, as then the permutation matrix ensures that this element does not vanish by the Proposition \ref{GGH_basis}. Now, we can write the Bruhat decomposition of our matrices
    \begin{align*}
        e_{\psi_s} \sum_{\substack{a, c, d, y \in \mathbb{F}_q^\times\\x, z \in \mathbb{F}_q\\ y \ne zx}} \begin{pmatrix}
            1 & d^{-2}c^{-1}y^{-1}x(1-y^{-1}zx)^{-1} & -c^{-2}y^{-1}d^{-1}\\
            & 1 & c^{-1}y^{-1}dz\\
            & & 1
        \end{pmatrix} \cdot \\ \cdot \begin{pmatrix}
            & & -d^{-1}a^{-1}c^{-1}y^{-1}(1-y^{-1}zx)^{-1}\\
            & d(1-y^{-1}zx) &\\
            acy & &
        \end{pmatrix} \cdot \\ \cdot \begin{pmatrix}
            1 & a^{-1}y^{-1}x & -a^{-2}y^{-1}\\
            & 1 & (1-y^{-1}zx)^{-1}a^{-1}y^{-1}z\\
            & & 1
        \end{pmatrix} e_{\psi_s} \overline{\psi_s(x)}.
    \end{align*}

    Now, the unipotent part can go into $e_{\psi_s}$, yielding
    \begin{multline*}
        \sum_{\substack{a, c, d, y \in \mathbb{F}_q^\times\\x, z \in \mathbb{F}_q\\ y \ne zx}}e_{\psi_s}\begin{pmatrix}
            & & -d^{-1}a^{-1}c^{-1}y^{-1}(1-y^{-1}zx)^{-1}\\
            & d(1-y^{-1}zx) &\\
            acy & &
        \end{pmatrix}\cdot \\ \cdot e_{\psi_s} \overline{\psi_s(x - d^{-2}c^{-1}y^{-1}x(1-y^{-1}zx)^{-1} - a^{-1}y^{-1}x)}.
    \end{multline*}\\
    
    We do the casework based on $x$.\\

    \textbf{Subcase 2.1 ($x = 0$).} Here, $1-y^{-1}zx = 1$. Then, we have
    \begin{equation*}
        \sum_{\substack{a, c, d, y \in \mathbb{F}_q^\times\\z \in \mathbb{F}_q}}e_{\psi_s}\begin{pmatrix}
            & & -d^{-1}a^{-1}c^{-1}y^{-1}\\
            & d &\\
            acy & &
        \end{pmatrix} e_{\psi_s}.
    \end{equation*}

    Then, we can substitute $a' = acy$ to get
    \begin{equation*}
        q(q-1)^2 \sum_{a', d \in \mathbb{F}_q^\times} e_{\psi_s} \begin{pmatrix}
            & & -d^{-1}a'^{-1}\\
            & d & \\
            a' & &
        \end{pmatrix} e_{\psi_s}.
    \end{equation*}\\
    
    \textbf{Subcase 2.2 ($x \ne 0$).} Then, we can substitute $z' = (1 - y^{-1}zx)$, as $y, x \ne 0$ and $z \in \mathbb{F}_q$, and we can sum over all $z' \in \mathbb{F}_q^\times$
    \begin{equation*}
        \sum_{a, c, d, y, x, z'
        \in \mathbb{F}_q^\times}e_{\psi_s}\begin{pmatrix}
            & & -d^{-1}a^{-1}c^{-1}y^{-1}z'^{-1}\\
            & dz' &\\
            acy & &
        \end{pmatrix} e_{\psi_s} \overline{\psi_s(x (1 - d^{-2}c^{-1}y^{-1}z'^{-1} - a^{-1}y^{-1}))}.
    \end{equation*}

    Substituting $\lambda = acy$ to replace $y$ and $\mu = dz'$ to replace $z'$, we get

    \begin{equation*}
        \sum_{a, c, d, \lambda, x, \mu
        \in \mathbb{F}_q^\times}e_{\psi_s}\begin{pmatrix}
            & & -\lambda^{-1}\mu^{-1}\\
            & \mu &\\
            \lambda & &
        \end{pmatrix} e_{\psi_s} \overline{\psi_s(x (1 - d^{-1}\lambda^{-1}a\mu^{-1} - \lambda^{-1}c))}.
    \end{equation*}

    Summing over $c$, $a$ and $x$ we arrive at a total coefficient of $-1$ and are left with

    \begin{equation*}
        -\sum_{ d, \lambda, \mu
        \in \mathbb{F}_q^\times}e_{\psi_s}\begin{pmatrix}
            & & -\lambda^{-1}\mu^{-1}\\
            & \mu &\\
            \lambda & &
        \end{pmatrix} e_{\psi_s} = -(q-1) \sum_{\lambda, \mu \in \mathbb{F}_q^\times} e_{\psi_s}\begin{pmatrix}
            & & -\lambda^{-1}\mu^{-1}\\
            & \mu &\\
            \lambda & &
        \end{pmatrix} e_{\psi_s}
    \end{equation*}

    When we combine all the equations, we get
    \begin{align*}
        (q-1)\sum_{a \in \mathbb{F}_q^\times} e_{\psi_s} \begin{pmatrix}
            a & & \\
            & a & \\
            & & a^{-2}
        \end{pmatrix}e_{\psi_s} + (q-1) \sum_{a, b, c \in \mathbb{F}_q^\times} e_{\psi_s}\begin{pmatrix}
            & b &\\
            a & &\\
            & & -a^{-1}b^{-1}
        \end{pmatrix}e_{\psi_s}\overline{\psi_s(bc^{-1}-a^{-1}c)}+\\+(q-1)(q^2-q-1) \sum_{a, b \in \mathbb{F}_q^\times} e_{\psi_s} \begin{pmatrix}
            & & -a^{-1}b^{-1} \\
            & b &\\
            a & &
        \end{pmatrix} e_{\psi_s}.
    \end{align*}
\end{proof}

\begin{lem}\label{B_5}
    \begin{align*}
        q^3\left(\sum_{a, b \in \mathbb{F}_q^\times}e_{\psi_s} \begin{pmatrix}
            & a &\\
            -b & &\\
            & & a^{-1}b^{-1}
        \end{pmatrix} e_{\psi_s} \overline{\psi_s(a+b^{-1})} \right)\cdot \left(\sum_{c \in \mathbb{F}_q^\times} e_{\psi_s} \begin{pmatrix}
            & & -c^{-1}\\
            & 1 &\\
            c & &
        \end{pmatrix}e_{\psi_s}\right) =\\= q^2 \sum_{c', b' \in \mathbb{F}_q^\times}e_{\psi_s} \begin{pmatrix}
            & & -c'^{-1}b'^{-1}\\
            & b' & \\
            c' & &\\
        \end{pmatrix} e_{\psi_s}+q^2((q-1)^2-1) \sum_{a \in \mathbb{F}_q^\times}e_{\psi_s} \begin{pmatrix}
            & & -c'^{-1}\\
            & 1 & \\
            c' & &\\
        \end{pmatrix} e_{\psi_s}.
    \end{align*}
\end{lem}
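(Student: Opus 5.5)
We follow the same strategy as in Lemmas \ref{B_3} and \ref{B_4}: expand the product, absorb the middle idempotent into a sum over $U_{\lambda_3}$, and then sort the resulting terms by Bruhat cell. Writing $e_{\psi_s}=q^{-3}\sum_{x,y,z}\overline{\psi_s(x)}\,u(x,y,z)$ with $u(x,y,z)$ the unipotent matrix with entries $x,y,z$, the factor $q^3$ cancels the normalization. The left-hand side then becomes
\[
e_{\psi_s}\sum_{\substack{a,b,c\in\mathbb{F}_q^\times\\ x,y,z\in\mathbb{F}_q}} n_{a,b}\, u(x,y,z)\, w_c\; e_{\psi_s}\,\overline{\psi_s(a+b^{-1})}\,\overline{\psi_s(x)},
\]
where $n_{a,b}$ is the monomial matrix of type $(12)$ and $w_c$ is the monomial matrix of type $(13)$. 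We compute the product $n_{a,b}\,u\,w_c$ explicitly. Since $n_{a,b}w_c$ has permutation type $(12)(13)$, a $3$-cycle, the generic term lies in a cell that vanishes by Proposition \ref{GGH_basis}. Only degenerate choices of $x,y,z$ land in the surviving cells $e$, $(12)$ and $(13)$.

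We split into cases according to which entries of the product vanish, exactly as before: first $y=0$ versus $y\neq 0$, then on $x$ and $z$. In each case we determine the permutation type. We discard every term whose type is a $3$-cycle or $(23)$, using Proposition \ref{GGH_basis}; Proposition \ref{g2is0} also covers $(23)$ here since $s\neq0$. For each surviving term we write an explicit Bruhat factorization $u_1\,\nu\,u_2$ with $\nu\in S_3\ltimes T_3$. We absorb $u_1$ and $u_2$ into the idempotents using Lemma \ref{Epsi*U}, which picks up $\psi_s$-phases from their $(1,2)$ entries. We move any torus parts through with Proposition \ref{Epsi*N}.

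The statement says the final answer contains only terms of type $(13)$, of the two shapes $\begin{pmatrix} & & -c^{-1}b^{-1}\\ & b & \\ c & & \end{pmatrix}$ and the special case $b=1$. So we expect the $e$- and $(12)$-type contributions to cancel. This should happen through character sums $\sum_{t\in\mathbb{F}_q}\psi_s(\alpha t)$, which vanish unless $\alpha=0$, together with the phase $\overline{\psi_s(a+b^{-1})}$. For the $(13)$-type terms we reparametrize, substituting $\lambda=$ the $(3,1)$ entry and $\mu=$ the middle entry as in Lemma \ref{B_4}. We then sum out the free variables $a$, $b$ and $x$, using
\[
\sum_{t\in\mathbb{F}_q^\times}\psi_s(\alpha t)=q\,\delta_{\alpha,0}-1 .
\]
This yields coefficient $q^2$ on the generic term and an extra $q^2((q-1)^2-1)$ concentrated on $\mu=1$. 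That concentration should come from the locus where the phase argument vanishes identically, presumably $a+b^{-1}$ combined with the shift from $u_1,u_2$.

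The main obstacle is the bookkeeping of the phases. The phase $\overline{\psi_s(a+b^{-1})}$ must combine with the phases from the Bruhat factorization into a linear form in a free summation variable, so that orthogonality of characters collapses the sum. We expect the trouble to be in identifying exactly which subloci give $q\delta$ versus $-1$ and in checking that the $e$- and $(12)$-type pieces cancel completely. We would first carry this out in the generic subcase $y\neq0$, $y\neq xz$, $x\neq0$, and then verify the degenerate subcases separately. As a sanity check, we would compare counits: by Proposition \ref{counit}, applying $\epsilon$ to both sides should give consistent totals.
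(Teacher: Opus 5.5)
You have the cell bookkeeping inverted, and this breaks the plan as written. Computing the product explicitly gives
\[
n_{a,b}\,u(x,y,z)\,w_c=\begin{pmatrix} acz & a & 0\\ -bcy & -bx & bc^{-1}\\ ca^{-1}b^{-1} & 0 & 0\end{pmatrix}.
\]
Its $(3,1)$ entry is never zero, and its lower-left $2\times2$ minor equals $a^{-1}cx$.

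Hence the \emph{generic} term ($x\neq0$, with $y,z$ arbitrary) lies in the cell of $(13)$, which survives. It is exactly the degenerate locus $x=0$ that lies in the $3$-cycle cell of $n_{a,b}w_c$ and vanishes by Proposition~\ref{GGH_basis}. Discarding the ``generic'' terms as you plan would throw away the entire right-hand side.

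Moreover, no terms of type $e$ or $(12)$ can arise at all, because $B_3s_1B_3\cdot B_3w_0B_3=B_3s_1w_0B_3\cup B_3w_0B_3$. The cancellation of such terms, which you expect to drive the proof, concerns terms that do not exist. The case split on $y=0$ and $y=xz$ borrowed from Lemmas~\ref{B_3} and~\ref{B_4} is also irrelevant here. The variables $y$ and $z$ affect neither the cell nor the phase, and they simply contribute the factor $q^2$.

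What is actually needed is a single factorization, valid for all $x\neq0$. Its middle factor is $\nu=\begin{pmatrix} & & ac^{-1}x^{-1}\\ & -bx & \\ ca^{-1}b^{-1} & & \end{pmatrix}$. The left unipotent factor has $(1,2)$ entry $-ab^{-1}x^{-1}$, and the right unipotent factor has $(1,2)$ entry $0$. So the total phase is $\overline{\psi_s\bigl(a(1+b^{-1}x^{-1})+b^{-1}+x\bigr)}$.

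Write $b'=-bx$ and $c'=ca^{-1}b^{-1}$. The special value $b'=1$ is precisely where both the coefficient of $a$ and the constant $b^{-1}+x=b^{-1}(1-b')$ vanish, which gives $q^2(q-1)^2$. For $b'\neq1$, the sum over $a$ gives $-1$ and the sum over the fiber $-bx=b'$ gives another $-1$, so the coefficient is $q^2$. Completing the sum over all $b'$ then produces the correction $-q^2$ at $b'=1$.

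Your proposal identifies neither this phase nor this mechanism, so as written it restates the expected output rather than deriving it. Note also that the counit check you propose is vacuous: $\epsilon(e_{\psi_s})=0$ for $s\neq0$, so both sides map to $0$.
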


\begin{proof}
    We start by expanding
    \begin{equation*}
        \sum_{\substack{a, b, c \in \mathbb{F}_q^\times\\x, y, z, \in \mathbb{F}_q}}e_{\psi_s} \begin{pmatrix}
            & a &\\
            -b & &\\
            & & a^{-1}b^{-1}
        \end{pmatrix} \begin{pmatrix}
            1 & x & y\\
            & 1 & z\\
            & & 1
        \end{pmatrix} \begin{pmatrix}
            & & -c^{-1}\\
            & 1 &\\
            c & &
        \end{pmatrix} e_{\psi_s} \overline{\psi_s(a+b^{-1}+x)}=
    \end{equation*}

    \begin{equation*}
        =\sum_{\substack{a, b, c \in \mathbb{F}_q^\times\\x, y, z \in \mathbb{F}_q}}e_{\psi_s} \begin{pmatrix}
            acz & a &\\
            -bcy & -bx & bc^{-1}\\
            ca^{-1}b^{-1} & &
        \end{pmatrix} e_{\psi_s} \overline{\psi_s(a+b^{-1}+x)}.
    \end{equation*}

    Note that if $-bx = 0$, this turns into a permutation that vanishes by \ref{GGH_basis}. Thus, we can assume $x \in \mathbb{F}_q^\times$ and that $x$ is invertible. This allows us to find the Bruhat decomposition
    \begin{align*}
        \sum_{\substack{a, b, c, x \in \mathbb{F}_q^\times\\y, z \in \mathbb{F}_q}}e_{\psi_s}& \begin{pmatrix}
            1 & -b^{-1}x^{-1}a & a^2 bz\\
            & 1 & -b^2ay\\
            & & 1
        \end{pmatrix} \begin{pmatrix}
            & & c^{-1}x^{-1}a\\
            & -bx & \\
            a^{-1}b^{-1}c & &\\
        \end{pmatrix} \begin{pmatrix}
            1 & &\\
            & 1 & -x^{-1}c^{-1}\\
            & & 1
        \end{pmatrix} e_{\psi_s}\cdot \\ \cdot \overline{\psi_s(a+b^{-1}+x)}&
        =\sum_{\substack{a, b, c, x \in \mathbb{F}_q^\times\\y, z \in \mathbb{F}_q}}e_{\psi_s} \begin{pmatrix}
            & & c^{-1}x^{-1}a\\
            & -bx & \\
            a^{-1}b^{-1}c & &\\
        \end{pmatrix} e_{\psi_s} \overline{\psi_s(a+b^{-1}+x+b^{-1}x^{-1}a)}.
    \end{align*}
    
    We can substitute $c' = a^{-1}b^{-1}c$ and get rid of the variables $y, z$

    \begin{equation*}
        q^2 \sum_{a, b, c', x \in \mathbb{F}_q^\times}e_{\psi_s} \begin{pmatrix}
            & & c'^{-1}x^{-1}b^{-1}\\
            & -bx & \\
            c' & &\\
        \end{pmatrix} e_{\psi_s} \overline{\psi_s(a(1+b^{-1}x^{-1})+b^{-1}+x)}.
    \end{equation*}

    Now, we do cases.\\

    \textbf{Case 1 ($bx = -1$).} Then, $1+b^{-1}x^{-1} = 0$ and $x = -b^{-1}$
    \begin{equation*}
        q^2 \sum_{a, b, c' \in \mathbb{F}_q^\times}e_{\psi_s} \begin{pmatrix}
            & & -c'^{-1}\\
            & 1 & \\
            c' & &\\
        \end{pmatrix} e_{\psi_s} \overline{\psi_s(-b^{-1}+b^{-1})} = q^2(q-1)^2 \sum_{c'\in \mathbb{F}_q^\times} e_{\psi_s} \begin{pmatrix}
            & & -c'^{-1}\\
            & 1 &\\
            c' & &
        \end{pmatrix}
        e_{\psi_s}.
    \end{equation*}

    \textbf{Case 2 ($bx \ne -1$).} Then, summing over all $a \in \mathbb{F}_q^\times$, $a(1+b^{-1}x^{-1})$ takes on all nonzero values, and we are left with
    \begin{equation*}
        -q^2 \sum_{\substack{b, c', x \in \mathbb{F}_q^\times\\bx \ne -1}}e_{\psi_s} \begin{pmatrix}
            & & c'^{-1}x^{-1}b^{-1}\\
            & -bx & \\
            c' & &\\
        \end{pmatrix} e_{\psi_s} \overline{\psi_s(b^{-1}+x)}.
    \end{equation*}

    There are $(q-2)(q-1)$ different $b, x$ that satisfy $bx \ne -1$, and in addition, $b^{-1}+x$ go over all nonzero values for every value of $-bx$ (can be seen by $b \to k^{-1}b$ and $x \to kx$), so this simplifies to
    \begin{equation*}
        q^2 \sum_{\substack{c', b' \in \mathbb{F}_q^\times\\b' \ne 1}}e_{\psi_s} \begin{pmatrix}
            & & -c'^{-1}b'^{-1}\\
            & b' & \\
            c' & &\\
        \end{pmatrix} e_{\psi_s}.
    \end{equation*}

    We can get rid of the $b' \ne 1$ condition by adding in
    \begin{equation*}
        q^2 \sum_{a \in \mathbb{F}_q^\times}e_{\psi_s} \begin{pmatrix}
            & & -c'^{-1}\\
            & 1 & \\
            c' & &\\
        \end{pmatrix} e_{\psi_s}
    \end{equation*}
     to get a total of
     \begin{equation*}
        q^2 \sum_{c', b' \in \mathbb{F}_q^\times}e_{\psi_s} \begin{pmatrix}
            & & -c'^{-1}b'^{-1}\\
            & b' & \\
            c' & &\\
        \end{pmatrix} e_{\psi_s}-q^2 \sum_{a \in \mathbb{F}_q^\times}e_{\psi_s} \begin{pmatrix}
            & & -c'^{-1}\\
            & 1 & \\
            c' & &\\
        \end{pmatrix} e_{\psi_s}.
    \end{equation*}

    Adding in the first case, we get
    \begin{equation*}
        q^2 \sum_{c', b' \in \mathbb{F}_q^\times}e_{\psi_s} \begin{pmatrix}
            & & -c'^{-1}b'^{-1}\\
            & b' & \\
            c' & &\\
        \end{pmatrix} e_{\psi_s}+q^2((q-1)^2-1) \sum_{a \in \mathbb{F}_q^\times}e_{\psi_s} \begin{pmatrix}
            & & -c'^{-1}\\
            & 1 & \\
            c' & &\\
        \end{pmatrix} e_{\psi_s}.
    \end{equation*}
\end{proof}

\begin{lem}\label{B_6}
    \begin{multline*}
        q^3\left(\sum_{a, b,d \in \mathbb{F}_q^\times}e_{\psi_s} \begin{pmatrix}
            & a &\\
            -b & &\\
            & & a^{-1}b^{-1}
        \end{pmatrix} e_{\psi_s} \overline{\psi_s(ad^{-1}+b^{-1}d)} \right) \left(\sum_{c \in \mathbb{F}_q^\times} e_{\psi_s} \begin{pmatrix}
            & & -c^{-1}\\
            & 1 &\\
            c & &
        \end{pmatrix} e_{\psi_s}\right) =
        \\ = q^2(q^2-q-1)\sum_{a',b'\in\mathbb{F}_q^{\times}}e_{\psi_s} \begin{pmatrix}
            & & -(a'b')^{-1}\\
            & a' & \\
            b' & &\\
        \end{pmatrix} e_{\psi_s}. 
    \end{multline*}
\end{lem}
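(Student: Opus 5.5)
The proof follows Lemma \ref{B_5} step by step; the only new feature is the extra parameter $d$, which enters through the character weight. First we absorb one factor of $q^3$ into the middle $e_{\psi_s}$, which becomes $\sum_{x,y,z}\overline{\psi_s(x)}\,u(x,y,z)$. Multiplying out the three matrices exactly as in Lemma \ref{B_5} gives
\[
\sum_{\substack{a,b,c,d \in \mathbb{F}_q^\times\\x,y,z\in\mathbb{F}_q}} e_{\psi_s}\begin{pmatrix} acz & a & \\ -bcy & -bx & bc^{-1}\\ ca^{-1}b^{-1} & & \end{pmatrix} e_{\psi_s}\,\overline{\psi_s(ad^{-1}+b^{-1}d+x)}.
\]
By Proposition \ref{GGH_basis}, the terms with $x=0$ lie in a double coset of a vanishing permutation type, so we may restrict to $x\in\mathbb{F}_q^\times$. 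We then use the same Bruhat factorisation as in Lemma \ref{B_5}. Lemma \ref{Epsi*U} pushes the unipotent factors into the idempotents and contributes the extra character value $\overline{\psi_s(b^{-1}x^{-1}a)}$. The entries $y,z$ only appear inside those unipotent factors, so they contribute a factor $q^2$. After the substitution $c'=a^{-1}b^{-1}c$ we arrive at
\[
q^2\sum_{a,b,c',d,x\in\mathbb{F}_q^\times} e_{\psi_s}\begin{pmatrix} & & c'^{-1}x^{-1}b^{-1}\\ & -bx & \\ c' & & \end{pmatrix}e_{\psi_s}\,\overline{\psi_s\bigl(a(d^{-1}+b^{-1}x^{-1})+b^{-1}d+x\bigr)}.
\]

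Next we sum over $a$, which now only appears in the character. If $d^{-1}+b^{-1}x^{-1}=0$, i.e.\ $d=-bx$, the sum over $a$ gives $q-1$, and the remaining phase is $\overline{\psi_s(b^{-1}d+x)}=\overline{\psi_s(-x+x)}=1$. Otherwise the sum over $a$ gives $-1$ and leaves the phase $\overline{\psi_s(b^{-1}d+x)}$. We set $b'=-bx$, which is the middle entry. The basis element depends only on $(c',b')$, and it equals the one on the right-hand side (with $a'=b'$ and the corner entry $c'$). So we only need to compute, for fixed $b'$, the total scalar weight
\[
W(b')=\sum_{\substack{b,x,d\\ -bx=b'}}\Bigl((q-1)[d=b'] \;-\;[d\neq b']\,\overline{\psi_s(b^{-1}d+x)}\Bigr).
\]
The first part contributes $(q-1)^2$, since there are $q-1$ pairs $(b,x)$ with $-bx=b'$. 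For the second part, parametrise $x=-b'b^{-1}$, so that $b^{-1}d+x=b^{-1}(d-b')$. For fixed $d\neq b'$, as $b$ runs over $\mathbb{F}_q^\times$ this runs over all of $\mathbb{F}_q^\times$, so the inner sum of $\overline{\psi_s}$ equals $-1$ (here $s\ne0$ is used). The second part therefore contributes $-(q-2)(-1)=q-2$. Altogether $W(b')=(q-1)^2+(q-2)=q^2-q-1$, which is independent of $b'$.

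Multiplying by $q^2$ and summing over $c',b'$ yields $q^2(q^2-q-1)\sum_{a',b'}e_{\psi_s}(\cdots)e_{\psi_s}$, as claimed. The main obstacle is the bookkeeping of the Bruhat factorisation for the generic term, in particular getting the exact unipotent entries that feed into the character, since one sign error changes the degenerate locus $d=-bx$. We would take these entries verbatim from the proof of Lemma \ref{B_5} and check them on the identity $d=1$, which should recover that lemma's character argument $a(1+b^{-1}x^{-1})+b^{-1}+x$.
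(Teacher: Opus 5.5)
Your proposal is correct and follows the paper's proof. You use the same Bruhat factorisation inherited from Lemma \ref{B_5}, the same substitutions $c'=a^{-1}b^{-1}c$ and $b'=-bx$, and the same split into $d=b'$ (weight $(q-1)^2$) and $d\neq b'$ (weight $q-2$); the only cosmetic difference is that you sum over $a$ before fixing $b'$, whereas the paper factors $(d^{-1}-b'^{-1})$ out of the $a$- and $x$-terms and sums over both together.
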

\begin{proof}
    Using the Bruhat decomposition from the Lemma \ref{B_5}, we get a similar expansion for the product
    \begin{equation*}
        \sum_{\substack{a, b, c,d, x \in \mathbb{F}_q^\times\\y, z, \in \mathbb{F}_q}}e_{\psi_s} \begin{pmatrix}
            & & c^{-1}x^{-1}a\\
            & -bx & \\
            a^{-1}b^{-1}c & &\\
        \end{pmatrix} e_{\psi_s} \overline{\psi_s(ad^{-1}+b^{-1}d+x+b^{-1}x^{-1}a)}.
    \end{equation*}

    Substituting $c' = a^{-1}b^{-1}c$ and $b' = -bx$ we get

    \begin{equation*}
        \sum_{\substack{a, b', c',d, x \in \mathbb{F}_q^\times\\y, z, \in \mathbb{F}_q}}e_{\psi_s} \begin{pmatrix}
            & & -c'^{-1}b'^{-1}\\
            & b' & \\
            c' & &\\
        \end{pmatrix} e_{\psi_s} \overline{\psi_s(ad^{-1}-b'^{-1}xd+x-b'^{-1}a)}=
    \end{equation*}
    \begin{equation*}
        = \sum_{\substack{a, b', c',d, x \in \mathbb{F}_q^\times\\y, z, \in \mathbb{F}_q}}e_{\psi_s} \begin{pmatrix}
            & & -c'^{-1}b'^{-1}\\
            & b' & \\
            c' & &\\
        \end{pmatrix} e_{\psi_s} \overline{\psi_s(a(d^{-1}-b'^{-1})+xd(d^{-1}-b'^{-1}))}.
    \end{equation*}

    If $b' \ne d$, then summing over $a$ and $x$, which are both free variables leads to a total coefficient of $1$, and the sum becomes

    \begin{equation*}
        \sum_{\substack{b', c', d \in \mathbb{F}_q^\times\\y, z, \in \mathbb{F}_q\\b' \ne d}}e_{\psi_s} \begin{pmatrix}
            & & -c'^{-1}b'^{-1}\\
            & b' & \\
            c' & &\\
        \end{pmatrix} e_{\psi_s}= q^2(q-2)\sum_{b', c' \in \mathbb{F}_q^\times}e_{\psi_s} \begin{pmatrix}
            & & -c'^{-1}b'^{-1}\\
            & b' & \\
            c' & &\\
        \end{pmatrix} e_{\psi_s}.
    \end{equation*}

    If $b' = d$, then we are left with

    \begin{equation*}
        \sum_{\substack{a, b', c', x \in \mathbb{F}_q^\times\\y, z, \in \mathbb{F}_q}}e_{\psi_s} \begin{pmatrix}
            & & -c'^{-1}b'^{-1}\\
            & b' & \\
            c' & &\\
        \end{pmatrix} e_{\psi_s}= q^2 (q-1)^2 \sum_{b', c' \in \mathbb{F}_q^\times}e_{\psi_s} \begin{pmatrix}
            & & -c'^{-1}b'^{-1}\\
            & b' & \\
            c' & &\\
        \end{pmatrix} e_{\psi_s}.
    \end{equation*}

    Summing both cases up we get
    
    \begin{equation*}
         q^2(q^2-q-1)\sum_{a',b'\in\mathbb{F}_q^{\times}}e_{\psi_s} \begin{pmatrix}
            & & -(a'b')^{-1}\\
            & a' & \\
            b' & &\\
        \end{pmatrix} e_{\psi_s}.
    \end{equation*}
\end{proof}

\begin{lem}\label{B_2}
    \begin{multline*}
        \left(\sum_{b \in \mathbb{F}_q^\times}e_{\psi_s} \begin{pmatrix}
            b & &\\
            & b &\\
            & & b^{-2}
        \end{pmatrix} e_{\psi_s} \right)\cdot \left(\sum_{a \in \mathbb{F}_q^\times} e_{\psi_s} \begin{pmatrix}
            & & -a^{-1}\\
            & 1 &\\
            a & &
        \end{pmatrix}e_{\psi_s}\right) =\\= \sum_{a', b \in \mathbb{F}_q^\times} e_{\psi_s} \begin{pmatrix}
            & & -a'^{-1}b^{-1}\\
            & b &\\
            a' & &
        \end{pmatrix} e_{\psi_s}.
    \end{multline*}
\end{lem}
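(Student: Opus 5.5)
The proof is a direct computation in $\CC[\GL{3}]$. The key observation is that the torus elements in the first factor commute with $e_{\psi_s}$ without changing the character. By Proposition \ref{Epsi*N} with $(a,b,c)=(b,b,b^{-2})$, the character index becomes $s\cdot b\cdot b^{-1}=s$. Writing $t_b:=\diag(b,b,b^{-2})$, this gives
\[
e_{\psi_s}t_b=t_be_{\psi_s}.
\]
Unlike in Lemmas \ref{B_3}--\ref{B_6}, no unipotent matrix appears between the two Weyl-type elements. So no Bruhat decomposition and no character sums will be needed.

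\textbf{Step 1 (collapse the middle idempotents).} For fixed $a,b\in\mathbb{F}_q^\times$ I will use $e_{\psi_s}^2=e_{\psi_s}$ and the commutation above to rewrite
\[
e_{\psi_s}t_be_{\psi_s}\cdot e_{\psi_s}w_ae_{\psi_s}=e_{\psi_s}t_b\,e_{\psi_s}\,w_ae_{\psi_s}=e_{\psi_s}e_{\psi_s}t_bw_ae_{\psi_s}=e_{\psi_s}t_bw_ae_{\psi_s},
\]
where $w_a$ is the antidiagonal matrix with entries $-a^{-1},1,a$ (reading rows $1,2,3$). Here I move $t_b$ leftward past the middle $e_{\psi_s}$ and then absorb that idempotent into the left one.

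\textbf{Step 2 (multiply the matrices).} A direct multiplication gives
\[
t_bw_a=\begin{pmatrix} & & -ba^{-1}\\ & b & \\ b^{-2}a & & \end{pmatrix}.
\]

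\textbf{Step 3 (reindex).} I substitute $a'=b^{-2}a$. For each fixed $b$, the map $a\mapsto a'$ is a bijection of $\mathbb{F}_q^\times$. Under it the $(1,3)$ entry becomes $-ba^{-1}=-b\,(b^2a')^{-1}=-a'^{-1}b^{-1}$. Summing over $(a',b)\in(\mathbb{F}_q^\times)^2$ then yields exactly the right-hand side of the Lemma.

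There is no real obstacle. The only points needing care are the normalization and the direction of commutation. The product here is the plain group-algebra product with no factor $q^3$, so collapsing the idempotents introduces no scalar. I also must check that Proposition \ref{Epsi*N} applies with the character preserved, which it does precisely because the first two diagonal entries of $t_b$ are equal.
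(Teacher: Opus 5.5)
Your proposal is correct and is essentially the paper's argument. The paper's proof consists only of the observation that $\diag(b,b,b^{-2})$ commutes with $e_{\psi_s}$, which is your Step 1 via Proposition \ref{Epsi*N} with $sbb^{-1}=s$; your explicit product $t_bw_a$ and the reindexing $a'=b^{-2}a$ fill in the routine details the paper leaves implicit.
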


\begin{proof}
    We observe that the matrices on the left commute with $e_{\psi_s}$.
\end{proof}

\begin{proof}[Proof of Theorem \ref{eigenvalues_GGH_gl3}.]
    By the Lemmas \ref{B_3}, \ref{B_4}, \ref{B_5}, \ref{B_6} and \ref{B_2}, the transform $\R$ generates a finite-dimensional (with dimension independent on $q$) commutative subalgebra of the Gelfand-Graev Hecke algebra. Let us fix a subspace spanned by six elements $A$ in the Gelfand-Graev Hecke algebra such that the span of $A$ contains the subalgebra generated by $\R$:
    \begin{align*}
        A_1 &:= e_{\psi_s}\begin{pmatrix}
            1 & &\\
            & 1 &\\
            & & 1
        \end{pmatrix}e_{\psi_s},\\
        A_2 &:= \sum_{a \in \mathbb{F}_q^\times} e_{\psi_s} \begin{pmatrix}
            a & & \\
            & a & \\
            & & a^{-2}
        \end{pmatrix}e_{\psi_s},\\
        A_3 &:=\sum_{a \in \mathbb{F}_q^\times} e_{\psi_s} \begin{pmatrix}
            & & -a^{-1}\\
            & 1 &\\
            a & &
        \end{pmatrix} e_{\psi_s},\\
        A_4 &:= \sum_{a, b \in \mathbb{F}_q^\times} e_{\psi_s} \begin{pmatrix}
            & & -a^{-1}b^{-1}\\
            & b &\\
            a & &
        \end{pmatrix} e_{\psi_s},\\
        A_5 &:= \sum_{a, b \in \mathbb{F}_q^\times} e_{\psi_s} \begin{pmatrix}
            & a & \\
            -b & & \\
            & & a^{-1}b^{-1}
        \end{pmatrix} e_{\psi_s} \overline{\psi_s(a+b^{-1})},\\
        A_6 &:= \sum_{a, b,d \in \mathbb{F}_q^\times}e_{\psi_s} \begin{pmatrix}
            & a &\\
            -b & &\\
            & & a^{-1}b^{-1}
        \end{pmatrix} e_{\psi_s} \overline{\psi_s(ad^{-1}+b^{-1}d)}.
    \end{align*}
    Note that due to an explicit description of the basis of Gelfand-Graev Hecke algebras from \cite[Section 4.2]{T}, $A_5$ is linearly independent from $A_6$ because the coefficients in front of the matrices with parameters $a,b$ differ by $p$'th roots of unity, where $q=p^l$ for some $l$, while the respective coefficients in $A_6$ are the real numbers known as the lifted Kloosterman sums which are not zero for $p>2$ due to the distinctness result (see \cite{BB} and references therein). In the case of $p=2,\ l>1$, it is known that the zeroes of Kloosterman sums always exist (\cite{LM09}), i.e. there exists an element $u\in \mathbb{F}_q^{\times}$, such that
    \[
    1+\sum_{d\in \mathbb{F}_q^{\times}}\psi_s(d+u/d)=0.
    \]    
    On the other hand, it is known that $\sum_{d\in \mathbb{F}_q^{\times}}\psi_s(d+u/d) \equiv -1$ (mod $4$) for $p=2$ (\cite{LM09}), so we can deduce the linear independence of $A_5$ and $A_6$ from the divisibility property of the Kloosterman sums whenever $l>3$ (\cite[Theorem 3.6]{LM11}). The values of Kloosterman sums for $\mathbb{F}_4$ and $\mathbb{F}_8$ are known to be $3$ and $\{-5,-1,3\}$ respectively, so we can rule out those cases as well.\\

    According to the Proposition \ref{Radon_transform_decomposition} the summand of the Radon transform corresponding to the Gelfand-Graev Hecke algebra $\mathcal{H}(\GL{3},U_{\lambda_3},\psi_s)$ is
    \[
    q^2e_{\psi_s}+q^3\sum_{a \in \mathbb{F}_q^\times} e_{\psi_s} \begin{pmatrix}
            & & -a^{-1}\\
            & 1 &\\
            a & &
        \end{pmatrix} e_{\psi_s}.
    \]
    
    We can express the action of $\R$ on the elements $A_1$ to $A_6$ by a 6 by 6 matrix
    
    \begin{equation*}
    \begin{blockarray}{cccccc}
        \begin{block}{(cccccc)}
            q^2 & 0                      & q-1       & 0              & 0   & 0\\
            0 & q^2                        & 0     & q-1              & 0   & 0\\
            q^3 & 0                 & q(q-1)^2+q^2       & 0 & q^3(q-2)   & 0\\
            0 & q^3 & -1 & (q-1)(q^2-q-1)+q^2  & q^2 & q^2(q^2-q-1)\\
            0 & 0                      & q-1       & 0              & q^2   & 0\\
            0 & 0                        & 0  & q-1              & 0   & q^2\\
        \end{block}
    \end{blockarray}..
    \end{equation*}

    It yields the following characteristic polynomial
    \[
    (\lambda-1)(\lambda-q)(\lambda-q^2)^2(\lambda-q^3)^2
    \]
    which finishes the proof.
\end{proof}
\begin{rem}
    The list of eigenvalues of the Radon transform remains the same even in characteristic $2$. Indeed, if $q=2^l,\ l>1$, then the only detail we have to change in the Theorem \ref{eigenvalues_gl3_Yok} is the relation $l_i^3=(q-1)^2l_i$ from the Lemma \ref{NCAlgebra_relations}, which simplifies to just $l_i^2=(q-1)l_i$ (then one can check that the minimal polynomial relation still holds true). The proof of the Theorem \ref{eigenvalues_GGH_gl3} remains the same. For the case of $q=2$ we can simply find the eigenvalues numerically.
\end{rem}

\subsection{The case $\lambda=(n-1,1)$ in Ariki-Koike and Yokonuma-Hecke algebras}
In this subsection we deal with the case of $\lambda=(n-1,1)$ within the Yokonuma-Hecke algebra. We observe that the following formula holds in the Yokonuma-Hecke algebra
\[
\R_n = q^{n-1}(1+l_{n-1,n}g_{n-1,n}+ql_{n-2,n}g_{n-2,n}+\dots+ q^{n-2}l_{1,n}g_{1,n}).
\]
Here we denote by $g_{ij}$ the reduced product of generators $g_i$ in the Yokonuma-Hecke algebra corresponding to the transposition $(ij)$. Similarly,
\[
l_{ab}= e_{U_{(1)^n}}\sum_{k=1}^{q-1}t_a^{k}t_{b}^{-k+\frac{q-1}{2}} e_{U_{(1)^n}}.
\]
Without loss of generality we assume $q$ is not a power of $2$ (although the latter case can be treated similarly). Again, we observe that
\[
\R_n = q^n(g_{n-1,n}^2+ ql_{n-2,n}g_{n-2,n}+\dots)=q^2 g_{n-1,n} \R_{n-1} g_{n-1,n},
\]
where $\R_{n-1}$ is viewed as an element of the subalgebra corresponding to $\GL{n-1}$ (in the upper left corner of $\GL{n}$).\\

We note that elements $\R_n,\R_{n-1},l_{n-1,n}$ and $g_{n-1,n}$ satisfy the following relations:
\begin{align*}
    \R_n\R_{n-1}&=\R_{n-1}\R_n,\\
    l_{n-1,n}^3&=(q-1)^2l_{n-1,n},\\
    g_{n-1,n}^2&=q^{-1}(1+l_{n-1,n} g_{n-1,n})
\end{align*}
and $l_{n-1,n}$ commutes with everything in the subalgebra generated by the four elements above.\\

Now, we recall the following (see e.g. \cite{AK}).

\begin{defn}\label{Ariki-Koike_algebra}
    The Ariki-Koike algebra or cyclotomic Hecke algebra $\mathcal{H}_{p,r}$ is an algebra with generators $a_i, i\in \{1,\dots,p\}$ and relations
\begin{align}
    (a_1-u_1)&\cdots (a_1-u_r)=0,\\
    a_i^{2}&=(q-1)a_i+q, \quad i=2,\dots,p,\\
    a_1a_2&a_1a_2=a_2a_1a_2a_1,\\
    &a_ia_j=a_ja_i, \quad |i-j|>1,\\
    a_i &a_{i+1}a_i=a_{i+1}a_ia_{i+1}, \quad i>1.
\end{align}    
\end{defn}

Its representations $V_{\underline{\alpha}}$ are given by multipartitions $\underline{\alpha}=(\alpha^1,\dots,\alpha^r)$ such that $\sum_{i=1}^r |\alpha^i|=n$. The basis of $V_{\alpha}$ is given by the $r$-tuple of Young tableaux $\widehat\alpha$ of the shape $\underline{\alpha}$. For each such a representation the action of $\mathcal{H}_{p,r}(A)$ is given by the following set of rules
\begin{itemize}
    \item The action of $a_1$ is given by
    \begin{equation}
        a_1 \cdot \widehat\alpha = u_{\tau(1)}\widehat\alpha
    \end{equation}
    where $\tau(1)$ is the number of the Young tableaux where $1$ appears.\\

    For $i>1$ we have the following.
    \item If $i-1$ and $i$ appear in the same row of the same tableaux, then
    \begin{equation}
        a_i \cdot \widehat\alpha = q\widehat\alpha.
    \end{equation}
    \item If $i-1$ and $i$ appear in the same column of the same tableaux, then
    \begin{equation}
        a_i \cdot \widehat\alpha = -\widehat\alpha.
    \end{equation}
    \item Otherwise, the action is defined by a $2\times 2$ matrix on the span of $\widehat\alpha$ and $(i-1,i)\cdot \widehat\alpha$ (here $(i-1,i)\cdot \widehat\alpha$ is the Tableaux obtained from $\widehat\alpha$ via a transposition $(i-1,i)$). We have
    \begin{multline}
        a_i\cdot (\widehat{\alpha},(i-1,i)\widehat{\alpha})=(\widehat{\alpha},(i-1,i)\widehat{\alpha})\cdot \frac{1}{1-q^{c(\underline\alpha;i-1)-c(\underline\alpha;i)} \cdot 
         \frac{u_{\tau(i-1)}}{u_{\tau(i)}}}\cdot \\ \cdot \begin{pmatrix}
        q-1 & 1-q^{c(\underline\alpha;i-1)-c(\underline\alpha;i)+1}\frac{u_{\tau(i-1)}}{u_{\tau(i)}}\\
        q-q^{c(\underline\alpha;i-1)-c(\underline\alpha;i)}\frac{u_{\tau(i-1)}}{u_{\tau(i)}} & -q^{c(\underline\alpha;i-1)-c(\underline\alpha;i)} \frac{u_{\tau(i-1)}}{u_{\tau(i)}} (q-1)
    \end{pmatrix}    
    \end{multline}
    Where $c(\underline{\alpha},i)$ is the content of the box $i$.
\end{itemize}
We are interested in the case $p=2$. Namely, consider the algebra homomorphism $f$ from $\mathcal{H}_{2,2n-3}$ over $\CC$ with parameters $u_i=q^{2n-3-i}, 1\le i \le 2n-3$ to the Yokonuma-Hecke algebra:
\[
f: \begin{cases}
    a_1 \mapsto \R_{n-1},\\
    a_2 \mapsto g_{n-1,n},\\
    q \mapsto q\\
\end{cases}.
\]
We will also assume without loss of generality that $l_{n-1,n}=\pm (q-1)$ or $0$.\\

\begin{prop}\label{Ariki_Koike_Yok}
    The eigenvalues of $\R_n$ are contained in a subset $\{1,\dots, q^{2n-2}\}$.
\end{prop}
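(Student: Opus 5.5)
The plan is to argue by induction on $n$ through the recursion $\R_n=q^2\,g_{n-1,n}\R_{n-1}g_{n-1,n}$ established above. The base case $n=2$ is Theorem \ref{eigenvalues_gl2}, which gives eigenvalues in $\{1,q,q^2\}$. For the inductive step we assume that the eigenvalues of $\R_{n-1}$ lie in $\{1,q,\dots,q^{2n-4}\}$. Since $\R_{n-1}$ is diagonalizable (its matrix is symmetric by Proposition \ref{symmetric_Rd}), it satisfies $\prod_{i=1}^{2n-3}(\R_{n-1}-q^{2n-3-i})=0$.

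Because $l_{n-1,n}$ is central in the subalgebra generated by $\R_n,\R_{n-1},l_{n-1,n},g_{n-1,n}$ and satisfies $l^3=(q-1)^2l$, we split the module into the eigenspaces $l_{n-1,n}=q-1$, $-(q-1)$ and $0$. This is the reduction behind the paper's assumption that $l_{n-1,n}$ takes one of these three values.
\begin{itemize}
\item If $l_{n-1,n}=0$, then $g:=g_{n-1,n}$ satisfies $g^2=q^{-1}$. Hence $g^{-1}=qg$ and $\R_n=q\,g\R_{n-1}g^{-1}$ is conjugate to $q\R_{n-1}$. Its eigenvalues therefore lie in $\{q,\dots,q^{2n-3}\}$.
\item If $l_{n-1,n}=\pm(q-1)$, put $a_2:=\pm q\,g$. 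A direct check shows that $a_2^2=(q-1)a_2+q$. Put $a_1:=\R_{n-1}$; it satisfies the cyclotomic relation with $u_i=q^{2n-3-i}$. The relation $a_1a_2a_1a_2=a_2a_1a_2a_1$ is equivalent to $[\R_{n-1},\R_n]=0$, since $a_2a_1a_2=\R_n$ in this case (the two signs cancel). So the map $f$ really is a homomorphism from $\mathcal{H}_{2,2n-3}$, and $\R_n$ is the image of the element $a_2a_1a_2$.
\end{itemize}

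In the second case we compute the spectrum of $a_2a_1a_2$ on each irreducible $V_{\underline\alpha}$ of $\mathcal{H}_{2,2n-3}$, which is enough since the module we care about is semisimple. The multipartitions have total size $2$. The element $L_2=q^{-1}a_2a_1a_2$ is the Jucys–Murphy element, and on the seminormal basis it acts diagonally by $u_{\tau(2)}q^{2c(\underline\alpha;2)}$. This can be verified directly from the $2\times2$ matrices in the list of rules above.

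It follows that the eigenvalues of $\R_n$ are of the form $q^{1+2c}u_j$ with $c\in\{-1,0,1\}$ and $u_j\in\{1,\dots,q^{2n-4}\}$. All of these are integer powers of $q$, lying in the range from $q^{-1}$ to $q^{2n-1}$.

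It remains to trim the two ends of this range. Every eigenvalue of $\R$ is a nonzero algebraic integer, by the corollary to the invertibility proposition, so $q^{-1}$ cannot occur. By the Perron–Frobenius proposition, the largest eigenvalue of $\R_n$ is $|U_{(n-1,1)}|^2=q^{2n-2}$, which excludes $q^{2n-1}$. Together these give the subset $\{1,\dots,q^{2n-2}\}$.

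The main obstacle is the third step. The formulas for Ariki–Koike representations require generic parameters, and here the ratios $u_{\tau(1)}/u_{\tau(2)}$ are powers of $q$, so some of the denominators $1-q^{c(\underline\alpha;1)-c(\underline\alpha;2)}u_{\tau(1)}/u_{\tau(2)}$ may vanish. I would first avoid the seminormal form altogether. Since $a_1$ and $L_2$ commute and generate a commutative subalgebra, I would derive the polynomial identity $\prod_{j,\,c}(L_2-u_jq^{2c})=0$ directly in $\mathcal{H}_{2,r}$, proving it for generic $u_i$ and then specializing by continuity. The conclusion only needs eigenvalues, not semisimplicity of $\mathcal{H}_{2,r}$, so specialization is harmless.
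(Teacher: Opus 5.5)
Your route is the paper's own: induction through $\R_n=q^2g_{n-1,n}\R_{n-1}g_{n-1,n}$, splitting along the central $l_{n-1,n}$, the map $f$ from $\mathcal{H}_{2,2n-3}$, and reading off the spectrum on two-box multipartitions. However, your key eigenvalue formula is wrong.

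With the normalization $a_2^2=(q-1)a_2+q$, the Jucys--Murphy element $L_2=q^{-1}a_2a_1a_2$ acts by $u_{\tau(2)}q^{c(\underline\alpha;2)}$, not $u_{\tau(2)}q^{2c(\underline\alpha;2)}$. The exponent $2c$ belongs to the other normalization $(T-q)(T+q^{-1})=0$. The paper's one-dimensional rules show this directly. On $\alpha^i=(2)$ one has $a_2=q$, so $L_2=qu_i$. On $\alpha^i=(1,1)$ one has $a_2=-1$, so $L_2=q^{-1}u_i$.

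Hence your list $q^{1+2c}u_j$ is not the spectrum. It gives $q^3u_i$ on $(2)$ and $q^{-1}u_i$ on $(1,1)$, where the true values are $q^2u_i$ and $u_i$. The check you call direct fails on exactly these one-dimensional modules; the $2\times2$ blocks have $c=0$, where the two formulas agree.

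Your proposed rigorous fix fails for the same reason. For generic $u_i$ the eigenvalue $qu_j$ (on $\alpha^j=(2)$) is not of the form $u_kq^{2c}$. So $\prod_{j,c}(L_2-u_jq^{2c})\neq 0$ in the generic algebra, and there is nothing to specialize.

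The repair is immediate. With $q^{c}$, the element $\R_n=qL_2$ has eigenvalues $u_j$, $qu_j$, $q^2u_j$ for $c=-1,0,1$. These are exactly the paper's values: $u_i$ on $(1,1)$, $q^2u_i$ on $(2)$, and $qu_i,qu_j$ on $((1),(1))$. They already lie in $\{1,\dots,q^{2n-2}\}$.

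Your trimming by algebraic integrality and Perron--Frobenius is valid, but then superfluous. In your write-up it only absorbs the exponent error, because both the true and the claimed spectra consist of powers of $q$ between $q^{-1}$ and $q^{2n-1}$.

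The identity to prove generically is $\prod_{j}\prod_{c=-1}^{1}(L_2-u_jq^{c})=0$. It specializes because $\mathcal{H}_{2,r}$ is free of rank $2r^2$ over the parameter ring.

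This part of your plan is sharper than the paper. The denominators $1-u_i/u_j$ never vanish here, since both contents are $0$. But because $u_{i+1}=q^{-1}u_i$, the algebra is not semisimple at these parameters. For $j=i+1$ the entry $q-u_i/u_j$ vanishes and $x_{ij}$ spans a submodule. So the paper's claim that these modules are irreducible and exhaust the simples by a dimension count is not literally correct, although its eigenvalue conclusion survives.

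Your explicit check that $f$ respects the defining relations is fine. So is starting the induction at $n=2$, since the recursion already holds at $n=3$.
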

\begin{proof}
We argue by induction on $n$. From Theorems \ref{eigenvalues_gl2} and \ref{eigenvalues_gl3_Yok} we have the base of the induction. 

Firstly, note that if $l_{n-1,n}=0$, then $g_{n-1,n}^{-1}=qg_{n-1,n}$, so if $\R_{n-1}$ had eigenvalues $1,\dots, q^{2n-4}$, then $\R_n$ will have eigenvalues $q,\dots, q^{2n-3}$.

We have an element $a_1$ and $s:=a_2:=g_{n-1,n}$ satisfying $s^2=q^{-1}(1+ls),l:=l_{n-1,n}$. We wish to compute the action of $\R_n=q^2sa_1s$ on the irreducible representations of $\mathcal{H}_{2,2n-3}$. Assume without loss of generality that $l=(q-1)$, then
\[
s^2 = q^{-1}(1+(q-1)s)
\]
let $\tilde s=qs$
\[
\tilde s^2 = q+ (q-1)\tilde s.
\]
Then $\R_n=\tilde s a_1 \tilde s$. The case of $t=-(q-1)$ is similar: we just have to multiply $\tilde s$ by $-1$ which doesn't change the eigenvalues of $\R_n$. In our case the representations will be either $1$ or $2$ dimensional. 
If we have $\alpha^i=(2)$ for some $i$, then
\[
\tilde sa_1 \tilde s \cdot \widehat\alpha=q^2u_i \widehat\alpha=q^{2+2n-3-i} \widehat\alpha, \quad 1\le i\le 2n-3.
\]
If $\alpha^i=(1,1)$ for some $i$, then
\[
\tilde sa_1 \tilde s \cdot \widehat\alpha=u_i \widehat\alpha=q^{2n-3-i}\widehat\alpha, \quad 1\le i\le 2n-3.
\]
Finally, assume that $\alpha^i=\alpha^j=(1), i\neq j$. Therefore we have to compute
\[
a_2\cdot ((1)^i,(1)^j)=s\cdot ((1)^i,(1)^j), \quad i<j
\]
For $\widehat\alpha$ consisting of two boxes let us fix a basis $x_{ij}$ for $i\neq j$ which corresponds to a Young tableaux with $1$ at the $i$'th box and $2$ at the $j$'th box. We have
\[
\tilde s\cdot (x_{ij},x_{ji})=(x_{ij},x_{ji})\cdot \frac{1}{1-\frac{u_i}{u_j}} \begin{pmatrix}
    q-1 & 1-q\frac{u_i}{u_j}\\
    q-\frac{u_i}{u_j} & -\frac{u_i}{u_j}(q-1)
\end{pmatrix}, \quad i<j.
\]
or
\[
\tilde s\cdot (x_{ij},x_{ji})=\frac{1}{1-\frac{u_i}{u_j}} ((q-1)x_{ij}+(q-\frac{u_i}{u_j})x_{ji},(1-q\frac{u_i}{u_j})x_{ij}-\frac{u_i}{u_j}(q-1)x_{ji}).
\]
then
\[
q^{-1} \tilde sa_1 \tilde s\cdot (x_{ij},x_{ji})=q^{-1}(x_{ij},x_{ji}) \cdot \begin{pmatrix}
    \frac{(q-1)u_j}{u_j-u_i} & \frac{u_j-qu_i}{u_j-u_i}\\
    \frac{qu_j-u_i}{u_j-u_i} & -\frac{u_i(q-1)}{u_j-u_i}
\end{pmatrix}
\cdot \begin{pmatrix}
    u_i & 0\\
    0 & u_j
\end{pmatrix} \cdot \begin{pmatrix}
    \frac{(q-1)u_j}{u_j-u_i} & \frac{u_j-qu_i}{u_j-u_i}\\
    \frac{qu_j-u_i}{u_j-u_i} & -\frac{u_i(q-1)}{u_j-u_i}
\end{pmatrix}
\]
A simple check shows that
\[
q^{-1}\tilde sa_1 \tilde s\cdot (x_{ij},x_{ji})=(u_jx_{ij},u_ix_{ji}).
\]
Note that in our case the representations above are well-defined and irreducible. Since $2^2 \cdot {r\choose 2} + r+r=2r^2 = \dim(\mathcal{H}_{2,r})$ they give the full set of irreducible representations. Thus, the eigenvalues of $\tilde R_n= \tilde sa_1 \tilde s$ are given by $1,\dots,q^{2n-1-i}, 1\le i\le 2n-1$.
\end{proof}
We have the following.
\begin{cor}\label{triang_unipotent_Yok}
    The eigenvalues of the Radon transform for the case $(1)^n$ in the Yokonuma-Hecke algebra form a subset of $\{1,q,\dots, q^{2n-2}\}$.
\end{cor}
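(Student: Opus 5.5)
The plan is to run the same induction as in the proof of Proposition \ref{Ariki_Koike_Yok}, but with the full Borel-type Radon transform in place of $\R_{n-1}$. Throughout I write $\R^{(1)^n}$ for the Radon transform of type $(1)^n$ in the Yokonuma-Hecke algebra $\mathcal{H}(\GL{n},U_{\lambda_n})$. The aim is to show that passing from $n-1$ to $n$ raises the top exponent of $q$ in the spectrum by at most $2$.

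The base cases come from earlier results. For $n=2$, Theorem \ref{eigenvalues_gl2} gives eigenvalues in $\{1,q,q^2\}$. For $n=3$ the base would come from Theorem \ref{eigenvalues_gl3_Yok}.

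\textbf{Step 1: a conjugation identity.} I would first try to establish an identity of the form
\[
\R^{(1)^n}=q^2\,g_{n-1,n}\,\R^{(1)^{n-1}}\,g_{n-1,n},
\]
with $\R^{(1)^{n-1}}$ embedded in the upper-left $\GL{n-1}$. This is the analogue of the identity $\R_n=q^2g_{n-1,n}\R_{n-1}g_{n-1,n}$ used above. To get it, I would expand $e_{U}e_{U^{op}}e_{U}$ through the Bruhat decomposition \eqref{Bruhat_decomposition_U}. I would then use Lemma \ref{word_multiplication} to move the last column's root subgroups past $g_{n-1,n}$, exactly as in the derivation of $\R_n$.

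\textbf{Step 2: an Ariki-Koike representation.} Given the identity, I would check the relations of Definition \ref{Ariki-Koike_algebra} for the assignment
\[
a_1\mapsto \R^{(1)^{n-1}},\qquad a_2\mapsto g_{n-1,n}.
\]
The first relation is a polynomial relation for $a_1$. It holds by the inductive hypothesis, using that $\R^{(1)^{n-1}}$ is diagonalizable with eigenvalues in $\{1,\dots,q^{2n-4}\}$. The quadratic relation for $a_2$ is the relation $g_{n-1,n}^2=q^{-1}(1+l_{n-1,n}g_{n-1,n})$. The braid-type relation $a_1a_2a_1a_2=a_2a_1a_2a_1$ is equivalent to $\R^{(1)^{n-1}}$ commuting with $\R^{(1)^{n}}$. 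That commutation is exactly the commutation of the factor transforms in the proof of Theorem \ref{eigenvalues_box}.

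\textbf{Step 3: reading off the spectrum.} Next I would split according to the value of $l_{n-1,n}$, which may be assumed to be $0$ or $\pm(q-1)$.
\begin{itemize}
\item If $l_{n-1,n}=0$, then $g_{n-1,n}^{-1}=qg_{n-1,n}$. So $\R^{(1)^n}$ is conjugate to $q\,\R^{(1)^{n-1}}$, and the exponents shift by exactly one.
\item If $l_{n-1,n}=\pm(q-1)$, I set $\tilde s=\pm q g_{n-1,n}$ and use the irreducible representations of $\mathcal{H}_{2,2n-3}$ listed before Proposition \ref{Ariki_Koike_Yok}. The computation carried out in that proof shows that $\tilde s a_1\tilde s$ acts on each irreducible representation by one of the scalars $q^2u_i$, $u_i$, $qu_i$ or $qu_j$.
\end{itemize}
Since the $u_i$ range over $\{1,\dots,q^{2n-4}\}$, every eigenvalue in both cases lies in $\{1,\dots,q^{2n-2}\}$, which completes the induction.

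\textbf{Main obstacle.} I expect Step 1 to be the hardest part. The difficulty is that the conjugation formula is only known for the maximal-parabolic piece $\R_n$, not for the full Borel transform. I would try to prove it first by factoring $\R^{(1)^n}=\R_n\cdot\R^{(1)^{n-1}}$, which follows from the proof of Theorem \ref{eigenvalues_box}, and then analysing the joint spectrum of the commuting pair $(\R_n,\R^{(1)^{n-1}})$ inside a single Ariki-Koike representation.

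There is a serious caveat to check early. The maximal eigenvalue of $\R$ is $|U_\lambda|^2=q^{n(n-1)}$, which exceeds $q^{2n-2}$ once $n\ge 3$. So the identity in Step 1, and with it the stated bound, can only hold on the part of the Yokonuma-Hecke algebra where the relevant factor spectra are correlated. If that fails, the honest conclusion is the bound for the $(n-1,1)$ piece $\R_n$, which is already Proposition \ref{Ariki_Koike_Yok}.
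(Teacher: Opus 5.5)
There is a genuine gap, and it is the step you flagged: the identity in Step~1 is false for every $n\ge 3$. Apply the counit $\epsilon$ of Proposition~\ref{counit}. We have $\epsilon(g_{n-1,n})=1$ and $\epsilon(\R^{(1)^m})=|U_{\lambda_m}|^2=q^{m(m-1)}$. So the left side has counit $q^{n(n-1)}$, while $q^2g_{n-1,n}\R^{(1)^{n-1}}g_{n-1,n}$ has counit $q^{2+(n-1)(n-2)}$, and these agree only when $n=2$. The conjugation formula $\R_n=q^2g_{n-1,n}\R_{n-1}g_{n-1,n}$ holds only for the $(n-1,1)$-pieces. For the Borel transform the correct relation is the factorization $\R^{(1)^n}=\R_n\,\R^{(1)^{n-1}}$ from the proof of Theorem~\ref{eigenvalues_box}.

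The rest of the induction collapses with Step~1. Your $n=3$ base case cites Theorem~\ref{eigenvalues_gl3_Yok}, which concerns $\lambda=(2,1)$, not $(1,1,1)$. The first Ariki--Koike relation $\prod_i(a_1-u_i)=0$ also fails for $a_1=\R^{(1)^{n-1}}$ once $n-1\ge 3$. Your closing caveat is in fact a counterexample, not just a warning. The element $e_{\GL{n}}=e_{U_{\lambda_n}}e_{\GL{n}}e_{U_{\lambda_n}}$ lies in the Yokonuma--Hecke algebra and satisfies $e_{\GL{n}}\R^{(1)^n}=q^{n(n-1)}e_{\GL{n}}$. So for $\lambda=(1)^n$ the bound $q^{2n-2}$ is false once $n\ge 3$; the paper's own remark after Corollary~\ref{1_1_1} lists eigenvalues up to $q^6$ for $n=3$.

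The paper gives no separate proof; the corollary is stated as a consequence of Proposition~\ref{Ariki_Koike_Yok}. The sound way from that proposition to type $(1)^n$ is the one you mention under ``main obstacle'' and then set aside. Iterating Theorem~\ref{eigenvalues_box} gives $\R^{(1)^n}=\R_n\R_{n-1}\cdots\R_2$, where $\R_m$ is the $(m-1,1)$-transform of the upper-left $\GL{m}$. These factors commute pairwise, since each $\R_m$ commutes with $\CC[\GL{m-1}]\subset\CC[M_{(m-1,1)}]$. They are also diagonalizable. Hence every eigenvalue of $\R^{(1)^n}$ is a product of eigenvalues of the $\R_m$, and each of these lies in $\{1,\dots,q^{2m-2}\}$ by Proposition~\ref{Ariki_Koike_Yok}. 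This puts the eigenvalues in $\{1,q,\dots,q^{n(n-1)}\}$, since $n(n-1)=\sum_{m=2}^n(2m-2)$. These are integer powers of $q$, which is all the remark after Conjecture~\ref{Conjecture} uses.

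The exponent $2n-2$ is right only if the corollary is read as a statement about the $(n-1,1)$-transform inside $\mathcal{H}(\GL{n},U_{\lambda_n})$. In that case it is Proposition~\ref{Ariki_Koike_Yok} verbatim, as your fallback says. Either way, you should read off the joint spectrum of the commuting factors rather than try to force a conjugation identity for the full Borel transform.
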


\section{The case $\lambda=(k,n-k)$}
According to the Theorem \ref{eigenvalues_box}, the case of the partition $\lambda=(k,n-k)$ plays a central role in the description of eigenvalues of the Radon transform for other partitions. In this Section we provide several results that might be useful in the future research and state the Conjecture \ref{Conjecture}. We assume without loss of generality that $k\le n-k$.\\

\subsection{Basis Description for $\mathcal{H}(\GL{n},U_{(k,n-k)})$} Here we give an explicit basis description for $\mathcal{H}(\GL{n},U_{(k,n-k)})$.\\

Let us consider the action of the group $\GL{n}$ on an $n$-dimensional vector space $\mathbb{F}_q^n$ over $\mathbb{F}_q$ and let us fix a decomposition $\mathbb{F}_q^n = V\oplus W$ where $\dim_{\mathbb{F}_q}(V)=k,\dim_{\mathbb{F}_q}(W)=n-k$. Suppose that $g\in \GL{n}$ is an arbitrary element of the form
\begin{equation}\label{block_matrix}
    g=\begin{pmatrix}
    A & B\\
    C & D
\end{pmatrix}
\end{equation}
where 
\[
    A\in \Hom (V,V),\ C\in \Hom(V,W),\ B\in \Hom(W,V) \text{ and } D\in \Hom(W,W).
\]
The left and right $U_{(k,n-k)}$-action on $g$ allows us to add rows of $C,D$ to $A$ and $B$ correspondingly and to add columns of $A,C$ to $B$ and $D$ respectively.\\

Evidently, the basis of $\mathcal{H}(\GL{n},U_{(k,n-k)})=e_{U_{(k,n-k)}}\CC[\GL{n}]e_{U_{(k,n-k)}}$ is enumerated by the double coset $U_{(k,n-k)}\backslash \GL{n}/ U_{(k,n-k)}$. However, we can give a more precise description of such data in terms of subspaces $V$ and $W$.\\

\begin{prop}\label{Hecke_basis_k_n-k}
    The elements of the double coset $U_{(k,n-k)}\backslash \GL{n}/ U_{(k,n-k)}$ are in $1$ to $1$ correspondence with the following data:
    \[
    C\in \Hom(V,W),\ A\in \Hom(\ker C,V),\ D\in \Hom(W,W/\im C),\ B\in \Hom(\ker D, V/\im A)
    \]
    such that $A$ is injective, $D$ is surjective and $B$ is an isomorphism.
\end{prop}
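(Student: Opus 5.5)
The plan is to compute the two-sided action explicitly and extract a complete set of invariants. Write $u_X=\begin{pmatrix}1&X\\0&1\end{pmatrix}$ with $X\in\Hom(W,V)$, so that $U_{(k,n-k)}=\{u_X\}$. For $g$ as in \eqref{block_matrix} a direct multiplication gives
\[
u_X g u_Y=\begin{pmatrix} A+XC & B+XD+AY+XCY\\ C & D+CY\end{pmatrix}.
\]
From this formula I will read off the data of the statement, check that it is invariant, check that invertibility of $g$ forces the stated conditions, and finally prove that the data separate double cosets and that every admissible datum occurs.

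\textbf{Invariants.} First, $C$ is unchanged. Next, $A$ changes by $XC$, which vanishes on $\ker C$, so $A|_{\ker C}\in\Hom(\ker C,V)$ is invariant. Dually, $D$ changes by $CY$, so the composite $\bar D:W\to W/\im C$ is invariant; this is the $D$ of the statement.

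The subtle invariant is $B$, which I interpret as a Schur-complement type map. Let $\ker\bar D=\{w: Dw\in\im C\}$. For such $w$ choose $v\in V$ with $Cv=Dw$, and set
\[
\bar B(w):=Bw-Av \mod A(\ker C).
\]
Changing $v$ by an element of $\ker C$ changes $Av$ by an element of $A(\ker C)$, so $\bar B$ is well defined. For invariance, the new lift is $v+Yw$. Substituting into the formula above, the terms $XCv$, $AYw$ and $XCYw$ cancel, and one is left with $Bw-Av$. So $\bar B\in\Hom(\ker\bar D,V/\im A|_{\ker C})$ is an invariant of the double coset.

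\textbf{Conditions from invertibility.} If $A|_{\ker C}$ killed a vector $v\neq 0$, then $g$ would kill $v$, so $A|_{\ker C}$ is injective. The dual argument applied to the row space shows that $\bar D$ is surjective. The two spaces $\ker\bar D$ and $V/A(\ker C)$ have equal dimension, both equal to $\operatorname{rk}C+n-k-\dim W+\dots$ by a short rank count. If $\bar B(w)=0$ with $w\neq0$, then $Bw-Av=a'$ for some $a'\in A(\ker C)$, and a suitable vector of the form $(-v-v',w)$ is killed by $g$. Hence $\bar B$ is injective, and therefore an isomorphism.

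\textbf{Bijectivity.} For surjectivity onto admissible data, I will pick splittings $V=\ker C\oplus V'$ and $W=\im C\oplus W'$, build $g$ blockwise with $A|_{V'}=0$ and the $\im C$-component of $D$ on $W'$ equal to $0$, and check that $g$ is invertible.

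For injectivity I will bring an arbitrary $g$ with given data into this normal form. Since $C|_{V'}:V'\to\im C$ is an isomorphism, a choice of $X$ with $XC|_{V'}=-A|_{V'}$ kills $A$ on $V'$. A choice of $Y$ similarly kills the $\im C$-part of $D$ on a complement of $\ker\bar D$. After that, the remaining freedom is the residual $X,Y$ that preserve these normalizations. I must then show that this residual freedom adjusts $B$ exactly modulo the indeterminacy not seen by $\bar B$, namely on a complement of $\ker\bar D$ and modulo $A(\ker C)$.

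This last step, showing that $B$ is pinned down by $\bar B$ up to the remaining group action, is the main obstacle; I expect it to be a careful but routine linear-algebra bookkeeping argument using the same splittings. As an alternative I could compare orbit sizes with the count of admissible data, but I would try the normal-form argument first.
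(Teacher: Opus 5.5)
Your invariants and your invertibility conditions are correct. In fact your $\bar B(w)=Bw-Av \bmod A(\ker C)$ improves on the paper: the paper's $B$ is only defined after choosing complements and a normal form, while yours is choice-free. Your cancellation check of its invariance is right, and so are the three singularity arguments. The dimension count is garbled as written; both $\ker\bar D$ and $V/A(\ker C)$ simply have dimension $\operatorname{rk}C$.

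\textbf{The gap.} All of this only gives a well-defined map from double cosets to admissible data. The proposition asserts that this map is a bijection, and you leave exactly that undone. You never show that two elements with the same data lie in the same double coset, and you do not check that the element you build for surjectivity is invertible. That is the substance of the statement, so as it stands the proposal proves only well-definedness.

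Your planned normalization also makes the missing step harder than it needs to be. If you kill the $\im C$-part of $D$ only on a complement of $\ker\bar D$, then $D|_{\ker\bar D}$ stays free. The leftover $(X,Y)$ then do not form a stabilizer, and ``$B$ is pinned down'' is not the right claim to prove.

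\textbf{How to close it.} This is what the paper does, and it is short.
\begin{itemize}
\item \emph{Normal form.} Since $CY$ ranges over all of $\Hom(W,\im C)$, choose $Y$ with $D(W)\subseteq W'$ on all of $W$. Choose $X$ with $A|_{V'}=0$. Then $\ker\bar D=\ker D$, you may take $v=0$, and $\bar B=B|_{\ker D}\bmod \im A$.
\item \emph{Residual action.} Two such normal forms with the same $(C,A|_{\ker C},\bar D)$ have identical $A,C,D$ blocks. The map $u_X(\cdot)u_Y$ preserves these blocks iff $XC=0$ and $CY=0$. In that case $XCY=0$ and $B\mapsto B+XD+AY$.
\item \emph{Orbit of $B$.} Since $X$ is arbitrary on $W'$ and $D\colon W\to W'$ is onto, the maps $XD$ are exactly those vanishing on $\ker D$. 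Since $A|_{\ker C}$ is injective with image $\im A$, the maps $AY$ are exactly $\Hom(W,\im A)$. Splitting $W=\ker D\oplus W''$ shows their sum is exactly $\{F: F(\ker D)\subseteq \im A\}$, which is the kernel of $B\mapsto \bar B$. Hence equal $\bar B$ gives the same double coset.
\item \emph{Surjectivity.} Realize given data with $A|_{V'}=0$, $D(W)\subseteq W'$, $B|_{W''}=0$ and $B(\ker D)\subseteq V''$, where $V=\im A\oplus V''$. If $g(x,w)=0$, then $Cx=Dw=0$ because $\im C\cap W'=0$. Next $Ax=Bw=0$ because $\im A\cap V''=0$. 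Finally $x=w=0$ by injectivity of $A|_{\ker C}$ and $B|_{\ker D}$, so $g$ is invertible.
\end{itemize}
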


\begin{proof}
    Firstly, note that the data above is correctly (i.e. uniquely) defined in view of the left and right $U_{(k,n-k)}$-action. If we look at the matrix $g$ from \eqref{block_matrix}, we can note that $U_{(k,n-k)}$ does not change $C$.\\

    Let us fix some complementary subspaces $V'\subset V$ and $W'\subset W$ such that $V=\ker C \oplus V'$ and $W=\im C \oplus W'$. Then there is a unique representative of $A\in \Hom(V,V)$ up to a left $U_{(k,n-k)}$-action such that $A|_{V'}=0$ (since $g\in \GL{n}$, it follows that $A|_{\ker C}$ must be injective). Also, there is a unique representative of $D$ up to the right $U_{(k,n-k)}$-action such that $D$ maps onto $W'$.\\

    Now, the maximal subgroup of $U_{(k,n-k)}$ that fixes $A$ as above under the left action is given by all the elements as follows
    \[
    \begin{pmatrix}
    1 & X\\
    0 & 1
\end{pmatrix}, \quad X|_{\im C}=0.
    \]
    The maximal subgroup of $U_{(k,n-k)}$ fixing $D$ as above under the right action is given by similar elements with a condition that $\im X \subset \ker C$. We are allowed to act by both subgroups on $g$, but in order to see the last piece of data we will need a different decomposition of $V$ and $W$.\\

    Let $W=\ker D \oplus W''$ and $V=\im A \oplus V''$. We consider
    \[
    D: \ker D \oplus W'' \rightarrow \im C \oplus W'.
    \]
    There exists a unique representative $B$ (considering only the right action) under the right action of the first subgroup such that $B|_{W''}=0$. Similarly, there is a unique $B$ under the left action of the second subgroup such that $B$ maps onto $V''$.\\

    Thus we may find a unique linear map $B: \ker D \rightarrow V''$ corresponding to $g$, which must be an isomorphism due to the rank consideration. By identifying $W/\im C$ with $W'$ and $V/\im A$ with $V''$ we see that the correspondence from the Proposition is bijective.
\end{proof}

\begin{rem}\label{Non-canonical_multiplication}
    The labeling of the basis of $\mathcal{H}(\GL{n},U_{(k,n-k)})$ from the Proposition \ref{Hecke_basis_k_n-k} does not seem to help much with description of the multiplication rule in the algebra. Indeed, consider two elements $g,g'\in \GL{n}$. In order to compute the product of $e_{U_{(k,n-k)}}ge_{U_{(k,n-k)}} \cdot e_{U_{(k,n-k)}} g' e_{U_{(k,n-k)}}$, we need to find the data from the Proposition \ref{Hecke_basis_k_n-k} associated with
    \[
    \begin{pmatrix}
    A & B\\
    C & D
    \end{pmatrix}\cdot
    \begin{pmatrix}
        1 & U\\
        0 & 1
    \end{pmatrix}\cdot
    \begin{pmatrix}
    A' & B'\\
    C' & D'
    \end{pmatrix}=
    \begin{pmatrix}
    AA'+AUC'+BC' & AB'+AUD'+BD'\\
    CA'+CUC'+DC' & CB'+CUD'+DD'
    \end{pmatrix}
    \]
    for any $U$. Here $A,B,C,D,A'$ etc. stand for some lifts of the data corresponding to $g,g'$. The element $CA'+CUC'+DC'$, for example, is supposed to represent an element from $\Hom(V,W)$. However,
    \[
    A'\in \Hom(\ker C', V), \quad C\in \Hom(V,W) \quad \Rightarrow  \quad CA'\in \Hom(\ker C',W),
    \]
    so the term $CA'$ can not be lifted canonically.
\end{rem}

\begin{rem}
    Even if we could find the multiplication table for the Hecke algebra \linebreak $\mathcal{H}(\GL{n},U_{(k,n-k)})$, computation of the eigenvalues of the Radon transform still remains to be a difficult problem. In the case when $n=2$ or $3$ we are able to make explicit computations due to the fact that any partition of $n$ involves only $1$ or $2$ -- this allows us to break up the Radon transform into several commuting blocks associated with one-dimensional characters of the corresponding unipotent $U_2$ subgroups of $\GL{n}$. For partitions with entries $k$ bigger than $2$ the representation theory of the corresponding unipotent subgroups $U_k$ becomes significantly complicated. 
\end{rem}

\subsection{Coefficient Calculation}

Now, we seek to calculate the coefficients of the right action of the Radon transform $\R$ on $\CC[\GL{n}/U_{(k,n-k)}]$ for the case $\lambda = (k,n-k)$.\\

\begin{prop}
    Let $g\in \GL{n}$. Recall from Proposition \ref{Radon Expansion} that
    \begin{equation}\label{Radon_coefficients_prop}
        \R\cdot gU_{\lambda} = \sum_{u\in U_{\lambda},u^{op}\in U_{\lambda}^{op}}guu^{op}U_{\lambda}
    \end{equation}
    where
    \[
    u=\begin{pmatrix}
        1 & A\\
        0 & 1
    \end{pmatrix},
    \quad u^{op}=\begin{pmatrix}
        1 & 0\\
        B & 1
    \end{pmatrix}.
    \]
    For fixed $g,u$ and $u^{op}$ the term $guu^{op}U_{\lambda}$ in \eqref{Radon_coefficients_prop} appears exactly $q^{(k-r)(n-k-r)}$ times where $r$ is the rank of $B$.
\end{prop}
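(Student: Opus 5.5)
We need to count the pairs $(u',u'^{op})\in U_\lambda\times U_\lambda^{op}$ with $gu'u'^{op}U_\lambda = guu^{op}U_\lambda$, i.e.\ with $u'u'^{op}\in uu^{op}U_\lambda$. Multiplying on the left by $g^{-1}$ reduces the problem to counting pairs with $u'u'^{op}U_\lambda = uu^{op}U_\lambda$. The plan is to write $u'=\begin{pmatrix}1&A'\\0&1\end{pmatrix}$, $u'^{op}=\begin{pmatrix}1&0\\B'&1\end{pmatrix}$ and compute the block products explicitly:
\[
uu^{op}=\begin{pmatrix}1+AB & A\\ B & 1\end{pmatrix}.
\]
Right multiplication by $\begin{pmatrix}1&X\\0&1\end{pmatrix}$ sends this to
\[
\begin{pmatrix}1+AB & (1+AB)X+A\\ B & BX+1\end{pmatrix}.
\]

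The condition becomes the existence of an $X\in \Hom(W,V)$ such that
\[
B'=B,\qquad 1+A'B=1+AB,\qquad A'=(1+AB)X+A,\qquad 1=BX+1.
\]
Hence $B'=B$ is forced, and $X$ must satisfy $BX=0$ and $(A'-A)B=0$. Writing $Y=A'-A$, the equations read $YB=0$, $BX=0$ and $Y=(1+AB)X$. Given $X$ with $BX=0$, the matrix $Y=(1+AB)X$ automatically satisfies
\[
YB=XB+ABXB=XB,
\]
so an extra condition $XB=0$ is needed as well. Equivalently, once we define $Y$ from $X$ the requirement is $BX=0$ and $XB=0$. Since $(1+AB)X = X + A(BX)=X$, we get $Y=X$. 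Thus the solutions are exactly $A'=A+X$ with $X\in\Hom(W,V)$ satisfying $BX=0$ and $XB=0$, and the map $X\mapsto A'$ is injective. The count therefore equals the number of such $X$.

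It remains to count $\{X: BX=0,\ XB=0\}$, where $B:V\to W$ has rank $r$ and $X:W\to V$. The condition $XB=0$ means $X$ vanishes on $\im B$, which has dimension $r$ in the $(n-k)$-dimensional space $W$. The condition $BX=0$ means $\im X\subset\ker B$, which has dimension $k-r$. So $X$ ranges over $\Hom(W/\im B,\ker B)$, a space of dimension $(n-k-r)(k-r)$ over $\mathbb{F}_q$. This gives exactly $q^{(k-r)(n-k-r)}$ solutions, independent of $g$ and $A$.

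The main point requiring care is the double bookkeeping. First, right $U_\lambda$-cosets are compared as sets, so we need the full equation with some $X$ rather than equality of representatives. Second, we must verify that different $X$ give different pairs $(u',u'^{op})$, which holds since $A'=A+X$. I expect no real obstacle beyond checking that the block identities $YB=XB$ and $Y=X$ are correct under the constraints.
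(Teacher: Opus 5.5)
Your proof is correct and follows essentially the same route as the paper. The paper solves $uu^{op}=u_1'u'^{op}u_2'$ blockwise, obtaining $B'=B$ and $A'=A-C'$ with $BC'=C'B=0$, and then counts such $C'$ as elements of $\Hom(W/\im B,\ker B)$. That is exactly your computation with $X=-C'$, since your correcting element $\begin{pmatrix}1&X\\0&1\end{pmatrix}$ is $u_2'^{-1}$.
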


\begin{proof} We will calculate the coefficients in the following way. Given a fixed $g$ and a pair $u, u^{op}$, we will find the number of triples $u_1', u_2'\in U_{\lambda}, u^{\prime op}\in U_{\lambda}^{op}, $ where $gu u^{op} = g u_1' u^{\prime op} u_2'$.\\


Let
\[
    u_1'=\begin{pmatrix}
1 & A' \\
0 & 1 
\end{pmatrix},\quad u_2' = \begin{pmatrix}
1 & C' \\
0 & 1 
\end{pmatrix},\quad 
u^{\prime op}=\begin{pmatrix}
1 & 0 \\
B' & 1 
\end{pmatrix}.
\]

Now,
\begin{equation}\label{u_uop}
    u u^{op} = \begin{pmatrix}
1+AB & A \\
B & 1
\end{pmatrix}.
\end{equation}

On the other side,
\[
    u_1' u^{\prime op} u_2' = \begin{pmatrix}
1+A'B' & A'+C'+A'B'C' \\
B' & 1+B'C'
\end{pmatrix}.
\]

We wish to solve the following equation for $A',B'$ and $C'$ with fixed $A,B$:
\[
    \begin{pmatrix}
1+AB & A \\
B & 1 
\end{pmatrix} = \begin{pmatrix}
1+A'B' & A'+C'+A'B'C' \\
B' & 1+B'C'
\end{pmatrix}.
\]

Notably, we have these equations:
\begin{align*}
    B &= B',\\
    1+AB &= 1+A'B',\\
    1 &= 1 + B'C',\\
    A &= A'+C' + A'B'C'.
\end{align*}

Performing some cancellations and substitutions we obtain
\begin{align*}
    B &= B',\\
    AB &= A'B,\\
    0 &= BC',\\
    A &= C' + A'.
\end{align*}

Note that we can substitute $A = C'+A'$ into the second equation to get 
\[
    (C'+A')B = A'B \ \Leftrightarrow \
    C'B + A'B = A'B \ \Leftrightarrow \
    C'B = 0.
\]

So, we are left with the following
\begin{align*}
    B &= B',\\
    0 &= BC' = C'B,\\
    A &= C' + A'.
\end{align*}

Now, let $rk(B) = r$. In addition, let $B, B'$ be $n-k$ by $k$ matrices and $A, A', C$ be $k$ by $n-k$ matrices. Recall that $A$ and $B$ are fixed matrices and we want to find the number of $A', B'$ and $C'$ that satisfy the above relations.\\

Clearly, we only have one choice for $B'$, as it must be equal to $B$. Also, the choice of $A'$ is based on the choice of $C'$ from the third equation. So, the coefficient only depends on the number of choices of $C'$.\\

Recall the setting of the Proposition \ref{Hecke_basis_k_n-k}. Note that since $C'B = 0$, $C'$ must send the image of $B$ to $0$, so it suffices to determine its action on the complementary space of the image. Specifically, if we take $B$ to be a linear transformation from a vector space $V$ to $W$, we need to determine the action of $C'$ on $W / \im B$. Note that the latter space has dimension $n-k-r$. Since $BC'$ is also $0$, $C'$ must send $W$ to the kernel of $B$ as well. The dimension of the kernel of $B$ is $k-r$, so the number of elements in it is $q^{k-r}$. Therefore, we have $q^{k-r}$ many choices for every basis element in $W / \im B$, so the coefficient is $q^{(k-r)(n-k-r)}$.\\
\end{proof}

It is possible to compute the trace of the Radon transform and the trace of its square.\\

\begin{prop}\label{trace_and_square}
    For the Radon transform acting on $\CC[\GL{n}/U_{(k,n-k)}]$ we have
    \[
    \tr (\R)=|\GL{n}|=\prod_{i=0}^{n-1}(q^n-q^i)
    \]
    and
    \[
    \tr(\R^2)=\prod_{i=0}^{n-1}(q^n-q^i) \sum_{r = 0}^{k}
\frac{\displaystyle\prod_{i=0}^{r-1} (q^k - q^i)\,(q^{n-k} - q^i)}
{\displaystyle\prod_{i=0}^{r-1} (q^r - q^i)} q^{(n-k-r)(k-r)}.
    \]
\end{prop}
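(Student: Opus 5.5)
The plan is to work with the matrix of $\R$ in the basis of cosets $xU_\lambda$, $\lambda=(k,n-k)$, and to compute both traces by direct counting, using the coefficient Proposition just proved. At the level of functions, $(\R f)(x)=\sum_{u\in U_\lambda}\sum_{\bar u\in U_\lambda^{op}} f(xu\bar u)$. So the matrix entry $M_{xU,\,yU}$ equals the number of pairs $(u,\bar u)$ with $xu\bar u\,U_\lambda=yU_\lambda$. This is well defined, since replacing $x$ by $xu_0$ only reparametrizes $u$. I write $u,\bar u$ as in the previous Proposition, with blocks $A$ and $B$, and $r=\operatorname{rk}B$.

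\emph{Trace.} For a diagonal entry we need $u\bar u\in U_\lambda$. By \eqref{u_uop} the lower-left block of $u\bar u$ is $B$, so this forces $B=0$. Then $u\bar u=u\in U_\lambda$ for every $A$. Hence each diagonal entry equals $q^{k(n-k)}=|U_\lambda|$. Summing over the $|\GL{n}|/|U_\lambda|$ cosets gives $\tr(\R)=|\GL{n}|=\prod_{i=0}^{n-1}(q^n-q^i)$.

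\emph{Trace of the square.} By Proposition \ref{symmetric_Rd} the matrix $M$ is symmetric, so
\[
\tr(\R^2)=\sum_{x,y}M_{x,y}^2 .
\]
Fix a row $x$. The $q^{2k(n-k)}$ pairs $(u,\bar u)$ are partitioned according to the coset $xu\bar u\,U_\lambda$ they produce, and $M_{x,y}$ is the size of the class landing on $y$. The coefficient Proposition says that a pair whose $B$ has rank $r$ lies in a class of size exactly $q^{(k-r)(n-k-r)}$. Therefore
\[
\sum_y M_{x,y}^2=\sum_{(u,\bar u)}\bigl(\text{size of its class}\bigr)=q^{k(n-k)}\sum_{r=0}^{k}N_r\,q^{(k-r)(n-k-r)},
\]
where the factor $q^{k(n-k)}$ counts the free choices of $A$, and $N_r$ is the number of $(n-k)\times k$ matrices of rank $r$ over $\mathbb{F}_q$. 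Multiplying by the number $|\GL{n}|/q^{k(n-k)}$ of rows gives $|\GL{n}|\sum_r N_r q^{(k-r)(n-k-r)}$.

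It remains to insert the standard count
\[
N_r=\prod_{i=0}^{r-1}\frac{(q^k-q^i)(q^{n-k}-q^i)}{q^r-q^i}.
\]
To justify it, choose an $r$-dimensional image in $\mathbb{F}_q^{n-k}$ and a surjection onto it, or equivalently count ordered pairs of $r$-frames and divide by $|GL_r(\mathbb{F}_q)|$.

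The main point needing care is the claim that the class sizes in a fixed row are exactly the multiplicities from the preceding Proposition. One must check that two pairs $(u,\bar u)$ giving the same coset $xu\bar u\,U_\lambda$ have the same $B$. This holds because the lower-left block of $u\bar u\,u_2$ is still $B$ for every $u_2\in U_\lambda$. With that, each class is exactly the solution set counted in that Proposition, and its size depends only on $r$.
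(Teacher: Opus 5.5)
Your proposal is correct and follows essentially the same route as the paper. For the trace, each diagonal entry equals $|U_{(k,n-k)}|$; for $\tr(\R^2)$, you use symmetry to write it as $\sum a_{ij}^2$ and then apply the coefficient Proposition together with the count of rank-$r$ matrices. The only difference is bookkeeping: you sum the class size over all $q^{k(n-k)}N_r$ pairs, whereas the paper counts the $q^{r(n-r)}N_r$ distinct cosets in each row and squares their multiplicity, so your version avoids that intermediate count.
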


\begin{proof}
The first part easily follows from observation of elements on the diagonal. From the previous Proposition we have that all the diagonal coefficients of $\R$ are equal to $q^{k(n-k)}=|U_{(k,n-k)}|$. Therefore, the trace is $|\GL{n} / U_{(k,n-k)}| \cdot q^{k(n-k)} = |\GL{n}| = \prod_{i=0}^{n-1}(q^n-q^i)$.\\


Let $\R$ act by the matrix $a_{ij}$ with respect to the standard basis. Recall from the Proposition \ref{symmetric_Rd} that $a_{ij}=a_{ji}$. The trace of the square of the matrix is $\sum_{i, j} a_{ij}a_{ji}$, but we can rewrite it as $\sum_{i, j} a_{ij}^2$, which is the sum of all the elements squared.\\

The number of times the coefficient $q^{(k-r)(n-k-r)}$ shows up is the number of choices of $A$ that result in distinct outcomes times the number of choices for $B$ of rank $r$. It follows from the last paragraph in the proof of the Proposition \ref{trace_and_square} that the number of such choices for $A$ is the size of the group $U_{(k,n-k)}$ divided by the number of choices for $C'$ (with conditions $BC'=C'B=0$), which is $q^{rk+r(n-k)-r^2}=q^{r(n-r)}$.\\

Then, the number of choices for $B$ is the number of matrices of rank $r$, which is equal to
\[
    \frac{\displaystyle\prod_{i=0}^{r-1} (q^k - q^i)\,(q^{n-k} - q^i)}
{\displaystyle\prod_{i=0}^{r-1} (q^r - q^i)}.
\]
We can see it as follows: in order to specify a linear map $B: \mathbb{F}_q^{k} \rightarrow \mathbb{F}_q^{n-k}$ of rank $r$ firstly we need to find a subspace $L\subset \mathbb{F}^{n-k}$ of dimension $r$, which can be done in 
\[
\begin{bmatrix}
    n-k\\
    r
\end{bmatrix}_q:=\frac{(q^{n-k}-1)\cdots (q^{n-k}-q^{r-1})}{(q^r-1)\cdots (q^r-q^{r-1})}
\]
ways. Then we just have to specify a surjective map $\Bar{B}:\mathbb{F}_q^k \twoheadrightarrow L$. There are
\[
(q^k-1)\cdots (q^k-q^{r-1})
\]
ways to do so.\\

Thus, for every $r$ from $0$ to $\min(k, n-k)=k$, the coefficient $q^{(k-r)(n-k-r)}$ occurs
\[\frac{\displaystyle\prod_{i=0}^{r-1} (q^k - q^i)\,(q^{n-k} - q^i)}
{\displaystyle\prod_{i=0}^{r-1} (q^r - q^i)} q^{r(n-r)}\]
times.\\

Then $\sum_{ij}a_{ij}^2$ should be equal to
\begin{equation*}
|\GL{n} / U_{(k,n-k)}| \sum_{r = 0}^{k}
\frac{\displaystyle\prod_{i=0}^{r-1} (q^k - q^i)\,(q^{n-k} - q^i)}
{\displaystyle\prod_{i=0}^{r-1} (q^r - q^i)} q^{r(n-r)} \cdot \left(q^{(k-r)(n-k-r)}\right)^2=
\end{equation*}
\begin{equation*}
=\prod_{i=0}^{n-1}(q^n-q^i) \sum_{r = 0}^{k}
\frac{\displaystyle\prod_{i=0}^{r-1} (q^k - q^i)\,(q^{n-k} - q^i)}
{\displaystyle\prod_{i=0}^{r-1} (q^r - q^i)} q^{(k-r)(n-k-r)}.
\end{equation*}
\end{proof}

According to what we have observed so far we formulate the following Conjecture.

\begin{conj}\label{Conjecture}
    The eigenvalues of the Radon transform $\R$ for a fixed $\lambda$ acting on $\CC[\GL{n}/U_{\lambda}]$ are given by powers of $q$.
\end{conj}

\begin{rem}
    The Radon transform for the $\lambda=(k,n-k)$ case can be restricted to the Yokonuma-Hecke subalgebra $\mathcal{H}(\GL{n},U_{\lambda_n})$ in the Hecke algebra $\mathcal{H}(\GL{n},U_{(k,n-k)})$. For the purpose of this Remark, let $\R_{\lambda_m},m\ge 1$ be the Radon transform in $\mathcal{H}(\GL{m},U_{\lambda_m})$, $R_{(k,n-k)}$ is the Radon transform in $\mathcal{H}(\GL{n},U_{(k,n-k)})$. If we could prove the Conjecture for the case of $\mathcal{H}(\GL{n},U_{\lambda_n})$, then since
    \begin{equation}\label{block_triang}
        \R_{\lambda_n} = \R_{(k,n-k)} \cdot (\R_{\lambda_k} \cdot \R_{\lambda_{n-k}}),
    \end{equation}
    we can prove the partial eigenvalue Conjecture (i.e. provided that $\R_{\lambda_k}$ and $\R_{\lambda_{n-k}}$ are invertible) for the Radon transform of type $(k,n-k)$ acting on $\mathcal{H}(\GL{n},U_{\lambda_n})$. Here we consider embedding of the algebras $\mathcal{H}(\GL{k},U_{\lambda_k})$ and $\mathcal{H}(\GL{n-k},U_{\lambda_{n-k}})$ into $\mathcal{H}(\GL{n},U_{\lambda_n})$ which correspond to the block-diagonal embedding $\GL{k}\times \GL{n-k}\hookrightarrow \GL{n}$.\\

    Note that due to the Proposition \ref{Ariki_Koike_Yok} and Corollary \ref{triang_unipotent_Yok} we know that the eigenvalues of the Radon transform in the Yokonuma-Hecke algebra of type $\lambda=(1)^n$ are given by integer powers of $q$ (here we also observe that all the eigenvalues in question are non-zero).
\end{rem}



\subsection{Coefficients for the Parabolic Case $\lambda=(k,n-k)$}
We conclude this section with computation of the matrix coefficients for the action of the Radon transform $\R$ on the Hecke algebra $\mathcal{H}(\GL{n}, P_{(k,n-k)})$. Note that the problem is more approachable in this case since $\mathcal{H}(\GL{n},P_{(k,n-k)})$ is a $k+1$-dimensional commutative algebra due to \cite[Theorem 3.1]{CIK}. As usual, we assume that $k\le n-k$.\\

Recall the Bruhat decomposition \eqref{Bruhat_decomposition_B} from the Theorem \ref{bruhat}. Since $B_n\subset P_{(k,n-k)}$, any element $g\in \GL{n}\subset \mathcal{H}(\GL{n},P_{(k,n-k)})$ can be brought to a permutation matrix from $S_n$. Furthermore, left and right action by $\GL{k}\times \GL{n-k}$ on $g$ allows us to bring any permutation matrix to the following form
    \[
    p_i:=\begin{blockarray}{ccccc}
      & k-i & i & i & n-k-i\\
        \begin{block}{c(cccc)}
           k-i & 1 & 0 & 0 & 0\\
           i & 0 & 0 & 1 & 0\\
           i & 0 & 1 & 0 & 0\\
           n-k-i & 0 & 0 & 0 & 1\\
        \end{block}
    \end{blockarray} .
    \]
    for some $0\le i \le k$. Note that left and right multiplication of $g$ on the left or on the right by an element from $P_{(k,n-k)}$ does not change the rank of its $n-k \times k$ submatrix from the lower left corner. Therefore, the elements
    \[
        e_i := e_{P_{(k,n-k)}} p_i e_{P_{(k,n-k)}} \in \mathcal{H}(\GL{n}, P_{(k,n-k)})
    \]
    form a basis of the algebra.\\

\begin{prop}
    The matrix coefficients of $\R$ in $\mathcal{H}(\GL{n}, P_{(k,n-k)})$ with respect to the basis $e_i$ are given by
    \[
    R_{ij}=\sum_{\substack{0\le t\le i,\\ t+s=j, s\ge 0}}q^{(n-k)k-i^2+t(n-t)}\prod_{m=0}^{i-t-1}(\frac{(q^i-q^m)^2}{(q^{i-t}-q^m)})\prod_{l=0}^{s-1}(\frac{(q^{k-t}-q^l)(q^{n-k-t}-q^l)}{(q^{s}-q^l)}).
    \]
\end{prop}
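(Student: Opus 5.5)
We will compute $\R\cdot e_i$ directly in the group algebra, in the same spirit as the coefficient computation preceding Proposition \ref{trace_and_square}. Since $e_{P_{(k,n-k)}}$ commutes with $e_{U_{(k,n-k)}}$ and $e_{U^{op}_{(k,n-k)}}$ (as in the proof of Proposition \ref{parabolic-unipotent}), we have
\[
\R e_i=\sum_{u\in U_{(k,n-k)},\,u^{op}\in U^{op}_{(k,n-k)}} e_{P}\,p_i\,u\,u^{op}\,e_{P}, \qquad P:=P_{(k,n-k)} .
\]
Each summand $e_P g e_P$ equals $e_j$, where $j$ is the rank of the lower-left $(n-k)\times k$ block of $g$; this invariant was noted just before the Proposition. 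Hence $R_{ij}$ will be the number of pairs $(u,u^{op})$, with $u=\begin{pmatrix}1&A\\0&1\end{pmatrix}$ and $u^{op}=\begin{pmatrix}1&0\\B&1\end{pmatrix}$, for which the lower-left block of $p_i u u^{op}$ has rank $j$. (If the convention is instead $e_i\R$, we will use the symmetry of Proposition \ref{symmetric_Rd} to swap sides.)

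\textbf{Step 1: reduce to a rank count.} By \eqref{u_uop}, $uu^{op}=\begin{pmatrix}1+AB&A\\B&1\end{pmatrix}$. Write $p_i=\begin{pmatrix}X&Y\\Z&W\end{pmatrix}$, where $Z$ is the rank-$i$ partial identity sending the last $i$ coordinates of $\mathbb{F}_q^k$ to the first $i$ coordinates of $\mathbb{F}_q^{n-k}$, and $W=1-E$ with $E$ the idempotent onto the first $i$ coordinates. The lower-left block of $p_iuu^{op}$ is
\[
M=Z(1+AB)+WB=Z+(ZA+W)B .
\]
The $(n-k)\times(n-k)$ matrix $N:=ZA+W$ has its first $i$ rows equal to rows of $A$, and its remaining rows are those of the identity.

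\textbf{Step 2: stratify by $t$.} We reduce $M$ using the block structure. Let $t$ be the rank of the $i\times i$ submatrix of $A$ that pairs the first $i$ coordinates of $\mathbb{F}_q^{n-k}$ with the last $i$ coordinates of $\mathbb{F}_q^{k}$. This is the rank of $N$ restricted to the first $i$ coordinates. The expected outcome is that $M$ is equivalent to a block sum of an identity of size $t$ and a free $(n-k-t)\times(k-t)$ matrix, and that the choice of $B$ makes $\operatorname{rk} M=t+s$ with the complementary block of rank $s$. This matches the structure of the formula: the sum runs over $t\le i$ and $s=j-t$. The count of $i\times i$ matrices of rank $i-t$ produces $\prod_{m<i-t}(q^i-q^m)^2/(q^{i-t}-q^m)$. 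The count of $(n-k-t)\times(k-t)$ matrices of rank $s$ produces the second product, exactly as in the proof of Proposition \ref{trace_and_square}. The remaining free entries of $A$ and $B$ will be collected into the power $q^{(n-k)k-i^2+t(n-t)}$.

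\textbf{Main obstacle.} The hard part is the reduction in Step 2. We must make a change of variables in $(A,B)$ that is bijective and that simultaneously row-reduces $N$ and puts $M$ in normal form, so that the fibres over each pair $(t,s)$ have uniform size. We would first split $\mathbb{F}_q^k=\mathbb{F}_q^{k-i}\oplus\mathbb{F}_q^i$ and $\mathbb{F}_q^{n-k}=\mathbb{F}_q^i\oplus\mathbb{F}_q^{n-k-i}$ and write $A$ and $B$ as $2\times2$ block matrices. We then use the invertible part of the relevant block to eliminate the other blocks of $B$ through affine shifts, which leaves the counts unchanged. After that, we verify that the exponents add up to $(n-k)k-i^2+t(n-t)$. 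As consistency checks we will use two facts: summing $R_{ij}$ over $j$ must give $|U_{(k,n-k)}|^2=q^{2k(n-k)}$, and $R_{00}$ must agree with the $k=1$ example computed earlier in the paper.
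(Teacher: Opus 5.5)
Your reduction to a counting problem is correct and is the paper's starting point. $R_{ij}$ is the number of pairs $(A,B)$ with $\operatorname{rk}M=j$, and in block form $M=\left(\begin{smallmatrix}A_3B_1+A_4B_3&1+A_3B_2+A_4B_4\\ B_3&B_4\end{smallmatrix}\right)$, where $A_3$ is the $i\times i$ block you single out.

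\textbf{Step 2 uses the wrong parameter.} You stratify by $\operatorname{rk}A_3$, but the forced identity block of $M$ sits on the part of $1+A_3B_2$ that $A_3$ cannot reach. Its size is therefore $i-\operatorname{rk}A_3$, not $\operatorname{rk}A_3$. Already for $n=2$, $k=i=1$ one has $M=1+ab$. Here $a=0$ forces $\operatorname{rk}M=1$, whereas for $a\neq 0$ the rank drops to $0$ at $b=-a^{-1}$. This is the opposite of your predicted normal form. The $t$ of the formula is the corank $i-\operatorname{rk}A_3$. With your definition, the number of admissible $A_3$ would be the number of rank-$t$ matrices, and the leftover $B$-exponent would be $(i-t)(n-i+t)$ instead of $t(n-t)$. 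So your bookkeeping would not reproduce the statement. Your text is also inconsistent: it defines $t$ as a rank and then counts matrices of rank $i-t$.

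Two smaller points. First, $e_P$ does not commute with $e_{U^{op}}$. What you need is $e_Me_{U^{op}}=e_{U^{op}}e_M$ together with $e_P=e_Ue_M$, and this still yields your formula for $e_i\R$. Second, Proposition \ref{symmetric_Rd} cannot be used to swap sides, because $R_{ij}$ is not symmetric: for $n=2$, $R_{01}=q(q-1)$ but $R_{10}=q-1$. The correct justification is $\R e_P=e_P\R$ together with commutativity of $\mathcal{H}(\GL{n},P_{(k,n-k)})$.

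\textbf{The core reduction is not carried out.} The step you call the main obstacle is the whole content of the proof, and you only gesture at it. The paper does it as follows.

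Left multiplication by $\left(\begin{smallmatrix}1&-A_4\\0&1\end{smallmatrix}\right)$ removes $A_4$, which is therefore free.

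Next choose $g,h\in\GL{i}$ with $gA_3h=\operatorname{diag}(1_r,0)$, where $r=\operatorname{rk}A_3$. Multiply on the left by $\operatorname{diag}(g,1)$ and on the right by $\operatorname{diag}(1,g^{-1})$. Then substitute $B_1\mapsto h^{-1}B_1$, $B_2\mapsto h^{-1}B_2g^{-1}$, $B_4\mapsto B_4g^{-1}$, which is a bijection for fixed $A$. Split $h^{-1}B_1$ into rows $B_{11},B_{12}$ of heights $r,i-r$, write $h^{-1}B_2g^{-1}=\left(\begin{smallmatrix}B_{21}&B_{22}\\ B_{23}&B_{24}\end{smallmatrix}\right)$, and write $B_4g^{-1}=(B_{41}\ B_{42})$ accordingly. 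This brings $M$ to
\[
\begin{pmatrix} B_{11} & 1_r+B_{21} & B_{22}\\ 0 & 0 & 1_{i-r}\\ B_3 & B_{41} & B_{42}\end{pmatrix}.
\]

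Now shift $B_{21}\mapsto B_{21}-1_r$ and clear $B_{22},B_{42}$ by row operations with the block $1_{i-r}$. This gives $\operatorname{rk}M=(i-r)+\operatorname{rk}\left(\begin{smallmatrix}B_{11}&B_{21}\\ B_3&B_{41}\end{smallmatrix}\right)$. With $t=i-r$, the last matrix runs exactly once over all $(n-k-t)\times(k-t)$ matrices.

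The free blocks $A_1,A_2,A_4$ contribute $q^{k(n-k)-i^2}$. The free blocks $B_{12},B_{22},B_{23},B_{24},B_{42}$ contribute $q^{t(k-i)+t(i+r)+t(n-k-i)}=q^{t(n-t)}$. Multiplying by the number of $i\times i$ matrices of rank $i-t$ and by the number of $(n-k-t)\times(k-t)$ matrices of rank $s=j-t$ gives exactly the stated $R_{ij}$.
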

\begin{proof}
    We note that
    \begin{equation}\label{Rd_parabolic}
    \R \cdot e_i = \sum_{\substack{U\in {\rm Mat}_{k \times n-k}(\mathbb{F}_q) \\ U^{op}\in {\rm Mat}_{n-k\times k}(\mathbb{F}_q)}}e_{P_{(k,n-k)}}p_i \cdot \begin{pmatrix}
        1 & U\\
        0 & 1
    \end{pmatrix}\cdot
    \begin{pmatrix}
        1 & 0\\
        U^{op} & 1
    \end{pmatrix}
    e_{P_{(k,n-k)}}.
    \end{equation}
    Let 
    \[
    U= \begin{blockarray}{ccc}
     & i & n-k-i\\
     \begin{block}{c(cc)}
         k-i & A_1 & A_2\\
         i & A_3 & A_4\\
     \end{block}
    \end{blockarray}\ ,
    \quad
    U^{op}=\begin{blockarray}{ccc}
     & k-i & i\\
     \begin{block}{c(cc)}
         i & B_1 & B_2\\
         n-k-i & B_3 & B_4\\
     \end{block}
    \end{blockarray}\ .
    \]

    The lower left $n-k \times k$ submatrix of the matrix between the projectors $e_{P_{(k,n-k)}}$ from \eqref{Rd_parabolic} is given by
    \[
    \begin{blockarray}{ccc}
     & k-i & i\\
     \begin{block}{c(cc)}
         i & A_3B_1+A_4B_3 & 1+A_3B_2+A_4B_4\\
         n-k-i & B_3 & B_4\\
     \end{block}
    \end{blockarray}\ .
    \]
    We may apply
    \[
    \begin{pmatrix}
        1 & -A_4\\
        0 & 1
    \end{pmatrix}
    \]
    to it on the left to get rid of $A_4$
    \[
    \begin{blockarray}{ccc}
     & k-i & i\\
     \begin{block}{c(cc)}
         i & A_3B_1 & 1+A_3B_2\\
         n-k-i & B_3 & B_4\\
     \end{block}
    \end{blockarray}\ ,
    \]
    which does not change the rank of the matrix. Furthermore, by $\GL{i}$ conjugation of $1+A_3B_2$ and introduction of an auxiliary element $h$ for $1+ gA_3 h h^{-1}B_2 g^{-1}$ we may assume without loss of generality that $A_3$ has the form
    \[
    \begin{blockarray}{ccc}
     & r & i-r\\
     \begin{block}{c(cc)}
         r & 1 & 0\\
         i-r & 0 & 0\\
     \end{block}
    \end{blockarray}\ ,
    \]
    where $r={\rm rk} A_3$.\\

    Now, by making an appropriate shift of the matrix $B_2$ and assuming that
    \[
    B_1= \begin{blockarray}{ccc}
     &  & k-i\\
     \begin{block}{cc(c)}
          & r & B_{11}\\
          & i-r & B_{12}\\
     \end{block}
    \end{blockarray}\ ,
    \quad
    B_2=\begin{blockarray}{ccc}
     & r & i-r\\
     \begin{block}{c(cc)}
         r & B_{21} & B_{22}\\
         i-r & B_{23} & B_{24}\\
     \end{block}
    \end{blockarray}\ ,
    B_4=\begin{blockarray}{ccc}
     & r & i-r\\
     \begin{block}{c(cc)}
         n-k-i & B_{41} & B_{42}\\
     \end{block}
    \end{blockarray}\ ,
    \]
    we can bring the former $(n-k)\times k$ matrix to the form
    \[
    \begin{pmatrix}
        B_{11} & B_{21} & B_{22}\\
        0 & 0 & 1\\
        B_3 & B_{41} & B_{42}
    \end{pmatrix}.
    \]
    Therefore, the summand from \eqref{Rd_parabolic} corresponding to such a collection of matrices is equal to $e_j$ where
    \[
    j=i-r +{\rm rk}\begin{pmatrix}
        B_{11} & B_{21}\\
        B_3 & B_{41}
    \end{pmatrix}.
    \]
    
    By observing that $A_1,A_2,A_4,B_{12},B_{23},B_{24}, B_{22}$ and $B_{42}$ are free variables in this formula, we may apply the counting method from the previous subsection to obtain the matrix coefficient of the Radon transform:
    \[
        \R \cdot e_i =\sum_{j=0}^k\sum_{\substack{0\le t\le i,\\ t+s=j}}q^{(n-k)k-i^2+t(n-t)}\prod_{m=0}^{i-t-1}(\frac{(q^i-q^m)^2}{(q^{i-t}-q^m)})\prod_{l=0}^{s-1}(\frac{(q^{k-t}-q^l)(q^{n-k-t}-q^l)}{(q^{s}-q^l)})e_j.
    \]
\end{proof}

Now we prove the following.

\begin{prop}\label{max_parabolic}
    The eigenvalues of the matrix $R_{ij}$ are given by
    \[
    q^{k(n-k)-k+(n-2k+1)t+t^2}, \qquad 0\le t \le k.
    \]
\end{prop}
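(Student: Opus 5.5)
The plan is to use the commutativity of $\mathcal{H}(\GL{n},P_{(k,n-k)})$ \cite[Theorem 3.1]{CIK}. The permutation module $\CC[\GL{n}/P_{(k,n-k)}]$, i.e.\ functions on the Grassmannian of $k$-planes, is then multiplicity free. It decomposes as $\bigoplus_{t=0}^{k} V_t$, where $V_t$ is the unipotent constituent appearing first in $\CC[\mathrm{Gr}(t,n)]$, so that $\CC[\mathrm{Gr}(t,n)] = \CC[\mathrm{Gr}(t-1,n)] \oplus V_t$. By Lemma \ref{intertwine} and Proposition \ref{parabolic-unipotent}, $\R$ acts on each $V_t$ by a scalar $\rho_t$. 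The $k+1$ eigenvalues of the $(k+1)\times(k+1)$ matrix $R_{ij}$ are exactly these scalars, because the right regular representation of the commutative algebra $\mathcal{H}(\GL{n},P_{(k,n-k)})$ on itself is the sum of its $k+1$ characters. So it suffices to compute $\rho_t$ for each $t$ and show $\rho_t = q^{k(n-k)-k+(n-2k+1)t+t^2}$. As sanity checks: $t=k$ should give $\rho_k=|U_{(k,n-k)}|^2=q^{2k(n-k)}$ (the constant function, Perron–Frobenius), and $k=1$ should recover the values $q^{n-2}$, $q^{2n-2}$ of the mirabolic example. Both are consistent with the claimed formula.

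To compute $\rho_t$, I would use a triangular change of basis rather than attacking $R_{ij}$ directly. Replace the double-coset basis $e_i$, indexed by the rank $i$ of the lower-left block (equivalently $\dim(L\cap L_0)=k-i$), by the cumulative incidence functions
\[
f_m(L) := \#\{M\in \mathrm{Gr}(m,n) : M\subset L\cap L_0\},\qquad 0\le m\le k .
\]
Each $f_m$ is a $q$-binomial combination of the $e_i$ with $i\le k-m$, so the transition matrix is unitriangular up to powers of $q$. The span of $f_0,\dots,f_m$, together with its left translates, is precisely the image of $\CC[\mathrm{Gr}(m,n)]$ under the incidence map into functions on $\mathrm{Gr}(k,n)$. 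This gives a $\GL{n}$-stable filtration whose successive quotients are the $V_t$, with $t$ running in the reverse order of $m$. Because $\R$ is $\GL{n}$-equivariant, it preserves this filtration, and $\rho_t$ is the corresponding diagonal entry of $\R$ in the basis $f_m$.

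Computing those diagonal entries is the key step. I would compute $\R f_m$ directly from \eqref{Rd_parabolic}: apply $\sum_{U,U^{op}}$ to a fixed $m$-plane $M\subset L_0$ and track only the top-filtration term. Concretely, count the pairs $(U,U^{op})$ for which the image of $M$ remains inside $L_0$ modulo lower terms. This is the same counting as in the proof of the preceding Proposition: the block of $U^{op}$ acting on $M$ must vanish, and the remaining blocks are free. The count should come out as a pure power of $q$, and matching exponents with the formula fixes the correspondence between $t$ and $m$. If that bookkeeping becomes messy, a fallback is to verify that the claimed $q$-powers are roots of $\det(R_{ij}-\lambda)$. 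One would use the trace identity $\sum_i R_{ii}$ from the formula for $R_{ij}$ (only the $t=i$, $s=0$ term survives on the diagonal) together with its triangular part, and induct on $k$ via the $q$-binomial Vandermonde identities implicit in $R_{ij}$.

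The main obstacle is this leading-term count. One has to prove that the off-filtration contributions really land in strictly lower filtration pieces, and that the diagonal count is exactly $q^{k(n-k)-k+(n-2k+1)t+t^2}$ rather than a sum of several powers of $q$. The first thing I would try is the case $k=1$, which is the mirabolic example, and then $k=2$ by hand, to pin down the indexing before doing the general count.
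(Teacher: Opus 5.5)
Your framework is sound, but it stops exactly where the content of the proposition begins. Multiplicity-freeness, the incidence filtration $W_0\subset\cdots\subset W_k$ by images of $\CC[\mathrm{Gr}(m,n)]$ (injective because $m\le k\le n-k$), and the triangular family $f_m(L)=\binom{\dim(L\cap L_0)}{m}_q$ do give $\R f_m=\rho_m f_m+\sum_{m'<m}c_{m'}f_{m'}$.

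\textbf{The gap.} You never evaluate $\rho_m$, and the count you propose does not isolate it. Counting pairs $(u,u^{op})$ with $uu^{op}M\subset L_0$, i.e.\ $B|_M=0$ with the other blocks free, gives $q^{2k(n-k)-m(n-k)}$. The value you must reach (the formula at $t=k-m$) is $q^{2k(n-k)-m(n+1-m)}$.

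Already for $k=m=1$ your count is $q^{n-1}$. That is the diagonal matrix entry $\#\{(u,u^{op}):uu^{op}L_0=L_0\}$, not the eigenvalue $q^{n-2}$; the difference $q^{n-2}(q-1)$ is the $f_0$-coefficient of $\R f_1$. In general every $f_{m'}$ with $m'<m$ is nonzero on the support of $f_m$. So $\rho_m$ only emerges after a $q$-binomial inversion, as an alternating sum of the values $(\R f_m)(L)$ over $\dim(L\cap L_0)\le m$. Showing that this alternating sum of counts collapses to a single power of $q$ is a nontrivial $q$-hypergeometric identity, and that is the missing step.

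\textbf{How the paper closes it.} The paper takes a different route:
\begin{enumerate}
\item Counting pairs $(A,B)$ with $uu^{op}V=V'$ yields $\R=\sum_{i=0}^k q^{k(n-k)-i^2}|\GL{i}|A_i$ in the $q$-Johnson Bose--Mesner algebra.
\item The eigenvalues of the $A_i$ are the known $q$-Eberlein values $P_i(j)$.
\item The resulting double sum is collapsed by two applications of the $q$-Chu--Vandermonde identity.
\end{enumerate}
Some such explicit summation, or explicit spherical functions, is unavoidable in your approach too.

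\textbf{The fallback.} It does not replace that summation. $\sum_i R_{ii}$ only gives the sum of the $k+1$ eigenvalues. $R_{ij}$ is not triangular in the basis $e_i$: its row and column $0$ have no zero entries. Verifying that each claimed power of $q$ is a root of $\det(R_{ij}-\lambda)$ is no easier than the original problem.

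\textbf{Indexing.} With your definition of $V_t$, the constants lie in $V_0$, so $\rho_t$ should equal the formula evaluated at $k-t$. Your sanity check ``$t=k$ gives the constant function'' contradicts that definition.
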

\begin{proof}
    Let us consider the natural right action of $\mathcal{H}(\GL{n},P_{(k,n-k)})$ on $\CC[\GL{n}/P_{(k,n-k)}]$. Recall that the quotient $\GL{n}/P_{(k,n-k)}$ can be identified with the Grassmann scheme $Gr_q(n, k)$ of all $k$-dimensional $\mathbb{F}_q$-subspaces in $\mathbb{F}_q^n$. In view of the Remark \ref{convol_Hecke_alg} and Proposition \ref{convol_Hecke_mod} we may identify $\CC[\GL{n}/P_{(k,n-k)}]$ with the space of functions on $Gr_q(n, k)$, then the generator $e_i$ acts on it proportionally to the following operator
    \begin{equation}\label{Bose-Mesner_gens}
        (A_i \cdot f)(L)=\sum_{\dim(M\cap L)=k-i}f(M), \quad L\in Gr_q(n,k).
    \end{equation}
    By comparing both actions on a constant function one can see that
    \[
        A_i=v_i e_i
    \]
    where
    \[
    v_i :=\#\{M\in Gr_q(n,k)| \dim(M\cap L)=k-i\}=q^{i^2}\begin{bmatrix}
    k\\
    i
\end{bmatrix}_q \begin{bmatrix}
    n-k\\
    i
\end{bmatrix}_q
    \]
    for a fixed $L\in Gr_q(n,k)$. The operators $A_i$ from \eqref{Bose-Mesner_gens} form the well known $q$-Johnson Bose-Mesner algebra and their eigenvalues on the space of functions above are given by $q$-Eberlein polynomials (see e.g. \cite{LM} and \cite{W})
    \begin{equation}\label{q_Eberlein}
        A_i E_j=P_i(j)E_j, \quad 0\le i,j \le k
    \end{equation}
    where $E_j$ are the primitive idempotents of the algebra and 
    \[
    P_i(j)=\sum_{s=0}^i (-1)^{i-s}q^{sj+{i-s\choose 2}}\begin{bmatrix}
    k-s\\
    i-s
\end{bmatrix}_q \begin{bmatrix}
    k-j\\
    s
\end{bmatrix}_q \begin{bmatrix}
    n-k-j+s\\
    s
\end{bmatrix}_q.
    \]

    Now we can find the expansion of $\R$ in terms of the generators $A_i$. Just like in Proposition \ref{Hecke_basis_k_n-k} we can fix a decomposition $\mathbb{F}_q^n=V\oplus W$. The image of $V$ under an element $uu^{op}$ in \eqref{u_uop} is the $n\times k$ column space
    \[
    \begin{pmatrix}
        1+AB\\
        B
    \end{pmatrix}
    \]
    Let us fix without loss of generality a $k$-dimensional subspace $V'\subset\mathbb{F}_q^n$, such that $\dim(V\cap V')=k-i$ and 
    \[
    V=K\oplus L_1, \quad V' = K \oplus Z_1, \quad Z_1 \subset W.
    \]
    Here $V\cap V'=K$. Since we have $uu^{op}(V)=V'$, the map $B:V\rightarrow W$ should be $0$ on $K$ and induce an isomorphism between $L_1$ and $Z_1$. Therefore, there are exactly
    \[
    |\GL{i}|=\prod_{m=0}^{i-1}(q^i-q^m)
    \]
    choices of $B$. After fixing $B$, we note that the restriction of $AB$ to $L_1$ must be $-1$ modulo $K$ because $Z_1\subset W$, so $A|_{Z_1}=-B^{-1}|_{Z_1}$. Then there are $q^{i(k-i)}$ choices for $A|_{Z_1}$ and, independently, $q^{k(n-k-i)}$ choices for the values of $A$ on a complement of $Z_1$ in $W$. Hence, the total number of pairs $(A,B)$ sending $V$ to a given space $V'$ such that $\dim(V\cap V')=k-i$ is equal to
    \[
    q^{k(n-k)-i^2}|\GL{i}|
    \]
    and
    \begin{equation}\label{BM_expression_for_Radon}
        \R = \sum_{i=0}^k q^{k(n-k)-i^2}|\GL{i}| A_i.
    \end{equation}

    By using \eqref{q_Eberlein} we can compute the eigenvalues by which $\R$ acts on the space of functions on $Gr_q(n,k)$. Namely, $\R E_j = \lambda_j E_j$
    \begin{equation}
        \lambda_j = \sum_{i=0}^{k}q^{k(n-k)-i^2}|\GL{i}|\sum_{s=0}^i (-1)^{i-s}q^{sj+{i-s\choose 2}}\begin{bmatrix}
    k-s\\
    i-s
\end{bmatrix}_q \begin{bmatrix}
    k-j\\
    s
\end{bmatrix}_q \begin{bmatrix}
    n-k-j+s\\
    s
\end{bmatrix}_q.
    \end{equation}
    We can re-write it as
    \[
    \lambda_j = q^{k(n-k)}\sum_{s'=0}^{k-j}q^{s'j}\begin{bmatrix}
    k-j\\
    s'
\end{bmatrix}_q \begin{bmatrix}
    n-k-j+s'\\
    s'
\end{bmatrix}_q
\sum_{i'=0}^{k-s'}(-1)^{i'}q^{{i' \choose 2}-(s'+i')^2}|\GL{s'+i'}|\begin{bmatrix}
    k-s'\\
    i'
\end{bmatrix}_q
    \]
    Note that
    \[
    q^{-(s'+i')^2}|\GL{s'+i'}|=q^{-(s')^2}|\GL{s'}|(1-q^{-s'-1})\cdots(1-q^{-s'-i'}).
    \]
    Let us denote
    \begin{equation}\label{q-Pochhammer}
        (a;q)_n:=\prod_{i=0}^{n-1}(1-aq^i),
    \end{equation}
    then the inner sum becomes
    \[
    q^{-(s')^2}|\GL{s'}|\sum_{i'=0}^{k-s'}(-1)^{i'}q^{{i' \choose 2}}(1-q^{-s'-1})\cdots(1-q^{-s'-i'})\begin{bmatrix}
    k-s'\\
    i'
\end{bmatrix}_q=
    \]
    \[
    =q^{-(s')^2}|\GL{s'}|\sum_{i'=0}^{k-s'}q^{{i' \choose 2}}q^{-\frac{i'(i'+1)}{2}} \frac{(q^{-s'-1};q^{-1})_{i'} (q^{-(s'-k)};q^{-1})_{i'}}{(q^{-1};q^{-1})_{i'}}=
    \]
    \[
    =q^{-(s')^2}|\GL{s'}|\sum_{i'=0}^{k-s'} \frac{(q^{-s'-1};q^{-1})_{i'} (q^{-(s'-k)};q^{-1})_{i'}}{(q^{-1};q^{-1})_{i'}}q^{-i'}=q^{-(s')^2}|\GL{s'}| q^{-(s'+1)(k-s')}
    \]
    due to the $q$-Chu–Vandermonde identity (see \cite[Appendix II.6]{GM}).\\

    Now we have
    \[
     \lambda_j = q^{k(n-k)}\sum_{s'=0}^{k-j}q^{s'j}\begin{bmatrix}
    k-j\\
    s'
\end{bmatrix}_q \begin{bmatrix}
    n-k-j+s'\\
    s'
\end{bmatrix}_q
q^{-(s')^2}|\GL{s'}|q^{-(s'+1)(k-s')}=
    \]
    \[
    =q^{k(n-k-1)}\sum_{s'=0}^{k-j}q^{s'(1-(k-j))}\begin{bmatrix}
    k-j\\
    s'
\end{bmatrix}_q \begin{bmatrix}
    n-k-j+s'\\
    s'
\end{bmatrix}_q
|\GL{s'}|=
    \]
    \[
    =q^{k(n-k-1)}\sum_{s'=0}^{k-j}q^{s'}(q^{-(k-j)};q)_{s'} \begin{bmatrix}
    n-k-j+s'\\
    s'
\end{bmatrix}_q=q^{k(n-k-1)}\sum_{s'=0}^{k-j}\frac{(q^{-(k-j)};q)_{s'}(q^{n-k-j+1};q)_{s'}}{(q;q)_{s'}}q^{s'}=
    \]
    \[
    =q^{k(n-k-1)}q^{(n-k-j+1)(k-j)}.
    \]
    Observe that if $t=k-j$, then
    \[
    \lambda_{k-t}=q^{k(n-k-1)+(n-2k+1+t)t}=q^{k(n-k)-k+(n-2k+1)t+t^2}, \quad 0\le t \le k,
    \]
    and we are done.
\end{proof}

\appendix

\section{Verification of the Polynomial Relation in the Yokonuma-Hecke Algebra}
In this section we demonstrate the Wolfram computation verifying the polynomial relation \eqref{Yok_Rd_relation}. In what follows, we fix some notations
\begin{equation}
    x=g_1,\ y=g_2, Q=q, t=\sum_{i=1}^{q-1}t_1^i t_2^{(q-1)/2-i}, s=\sum_{i=1}^{q-1}t_1^i t_3^{(q-1)/2-i},r=\sum_{i=1}^{q-1}t_2^i t_3^{(q-1)/2-i}.
\end{equation}
Here is the clean Wolfram code that yields $0$ as the result of the verification.




\begin{lstlisting}[language=Mathematica, basicstyle=\tiny]
PacletInstall["https://github.com/NCAlgebra/NC/blob/master/NCAlgebra-6.0.3.paclet?raw=\true",
ForceVersionInstall -> True];
<< NCAlgebra`
<< NCGBX`
Clear[s]
SetNonCommutative[s]
SetMonomialOrder[{t, s, r}, {x, y}];
PrintMonomialOrder[];
rels = {
  x ** y ** x - y ** x ** y,
  
  Q ** x ** x - 1 - t ** x, 
  Q ** y ** y - 1 - r ** y,
  
  x ** t - t ** x,
  y ** t - s ** y,
  x ** s - r ** x,
  y ** s - t ** y,
  x ** r - s ** x,
  y ** r - r ** y,
  
  s ** s ** s - (Q - 1) ** (Q - 1) ** s,
  r ** r ** r - (Q - 1) ** (Q - 1) ** r,
  t ** t ** t - (Q - 1) ** (Q - 1) ** t,
  
  s ** t - t ** s,
  t ** r - r ** t,
  s ** r - r ** s,
  
  s ** t - s ** r,
  r ** s - r ** t,
  t ** s - t ** r}
gb = NCMakeGB[rels, 1000]

NCExpandReplaceRepeated[(Q^2 + Q^2 ** t ** x + 
    Q^3 ** s ** x ** y ** x - 1) ** (Q^2 + Q^2 ** t ** x + 
    Q^3 ** s ** x ** y ** x - Q) ** (Q^2 + Q^2 ** t ** x + 
    Q^3 ** s ** x ** y ** x - Q^2) ** (Q^2 + Q^2 ** t ** x + 
    Q^3 ** s ** x ** y ** x - Q^3) ** (Q^2 + Q^2 ** t ** x + 
    Q^3 ** s ** x ** y ** x - Q^4), gb]
\end{lstlisting}

\end{document}